\newcommand\R{\mathbb{R}}
\newcommand\N{\mathbb{N}}
\newcommand\M{\mathbb{M}}
\newcommand\bneq{\begin{eqnarray*}\left\lbrace \begin{array}{rcl}}
\newcommand\eneq{\end{array} \right.\end{eqnarray*}}
\newcommand\bneqn{\begin{eqnarray}\left\lbrace \begin{array}{rcl}}
\newcommand\eneqn{\end{array} \right.\end{eqnarray}}
\newcommand\nor[2]{\left\|#1\right\|_{#2}}
\newcommand\sgn{\textnormal{sgn}}
\newcommand{\Ra}{\mathcal{R}}
\newcommand\Td{\mathcal{T}}
\newcommand\Nd{\mathcal{N}}
\newcommand{\m}[1]{
\ifdefequal{#1}{1}
{\mathbbm{#1}}
{\mathbb{#1}}
}
\newcommand{\q}[1]{\mathcal{#1}}
\newcommand{\ds}{\displaystyle}
\newcommand{\be}{\begin{gather}}
\newcommand{\ee}{\end{gather}}
\newcommand{\ba}{\begin{align*}}
\newcommand{\ea}{\end{align*}}
\newcommand{\e}{\varepsilon}
\newcommand{\w}{\omega}
\newcommand{\weak}{\rightharpoonup}
\DeclareMathOperator{\Span}{\mathrm{Span}}
\DeclareMathOperator{\Supp}{\mathrm{Supp}}
\renewcommand{\le}{\leqslant}
\renewcommand{\ge}{\geqslant}
\numberwithin{equation}{section}
\theoremstyle{plain}
\newtheorem{thm}{Theorem}[section]
\newtheorem*{thm*}{Theorem}
\newtheorem{prop}[thm]{Proposition}
\newtheorem{cor}[thm]{Corollary}
\newtheorem{lem}[thm]{Lemma}
\theoremstyle{definition}
\newtheorem{definition}[thm]{Definition}
\theoremstyle{remark}
\newtheorem{remark}[thm]{Remark}
\newtheorem{claim}[thm]{Claim}
\begin{document}
\parindent=0pt
\title{Concentration close to the cone for linear waves}
\author{Rapha\"el C\^{o}te}
\address{Institut de Recherche Math\'ematique Avanc\'ee, UMR 7501, Universit\'e de Strasbourg, 7 rue Ren\'e-Descartes, 67084 Strasbourg Cedex, France}
\email{cote@math.unistra.fr}
\author{Camille Laurent}
\address{CNRS UMR 7598 and Sorbonne Universit\'es UPMC Univ Paris 06, Laboratoire Jacques-Louis Lions, F-75005, Paris, France}
\email{camille.laurent@ljll.math.upmc.fr}
\begin{abstract}
%
We are concerned with solutions to the linear wave equation. We give an asymptotic formula for large time, valid in the energy space, via an operator related to the Radon transform. This allows us to show that the energy is concentrated near the light cone. This allows to derive further expressions the exterior energy (outside a shifted light cone). We in particular generalize the formulas of \cite{CoteKenigSchlagEquipart} obtained in the radial setting. In odd dimension, we study the discrepancy of the exterior energy regarding initial energy, and prove in the general case the results of \cite{KLLS15} (which were restricted to radial data).
\end{abstract}
\subjclass[2010]{35L05, 35B40} 
\keywords{linear wave equation, exterior cone, channel of energy, profile decomposition}
\maketitle
\tableofcontents

\medskip
\noindent
\section{Introduction and statement of the results}
\subsection{General results about asymptotic profile}

In this paper, we consider solutions to the linear wave equation in any dimension $d \ge 1$.
\begin{equation} \label{eq:lw}
\begin{cases}
\partial_{tt} u - \Delta u = 0, \\
(u,\partial_t u)_{\left|t=0\right.}=(u_0,u_1),
\end{cases} \quad
(t,x) \in \m R \times \m R^d.
\end{equation}

We are particularly interested in understanding how the energy of $w$ concentrate around the light cone for large times, that is, provide some formulas for quantities which are typically 
\[ \lim_{t \to +\infty} \| \nabla_{x,t} u \|_{\dot H^1 \times L^2(|x| \ge t+R} \]
where $R \in \m R$ is fixed, in terms of the initial data $(u_0,u_1)$. This kind of quantities are very natural when thinking of finite speed of propagation for solutions to the linear wave equation, but are also useful in nonlinear contexts, for example for the channels of energy method, we refer for example to \cite{DuyKMUniversnonrad} for one of the first time it was used in the context of the energy critical non linear wave equation. Such formula where given in the radial setting, notably in \cite{CoteKenigSchlagEquipart} and \cite{KLLS15}, and we aim at generalizing the result therein to non radial linear waves. 

\bigskip

We can formulate our first results on solution of the half-wave equation, that is, consider $e^{t |D|} f$, where $ |D|$ is the operator defined as a multiplier in Fourier space
\[ \widehat{|D| f}(\xi) = |\xi| \hat f(|\xi|), \]
where $\hat f$ is the $d$-dimensional Fourier transform of $f$:
\[ \hat{f}(\xi)=\int_{\R^d} e^{-ix\cdot \xi}f(x) dx. \]
For functions of several variables (say $s$ and other ones), we will consider in an analogous way $|D_s|$, where the Fourier transform is restricted to the $s$ variable.

Our results on the half-wave equation will transfer to the wave equation as its solutions can be written
\begin{gather} \label{def:w_fg}
u=e^{it|D|} f+e^{-it|D|}g \quad \text{where} \quad f :=\frac{1}{2}\left[ u_0+\frac{1}{i|D|}u_1\right] \text{ and }  g:=\frac{1}{2}\left[ u_0-\frac{1}{i|D|}u_1\right].
\end{gather}

We now introduce some notation. Given a function $f$ on $\m R^d$ and $\omega \in \m S^{d-1}$, let $f_\omega^{\pm} : \m R \to \m C$ be such that its 1-dimensional Fourier transform (as a function of $\nu \in \R$) is 
\begin{equation} \label{def:f_omega}
\q F_{\m R} (f_\omega^{\pm})(\nu) = \m 1_{\pm \nu \ge 0} |\nu|^{\frac{d-1}{2}}\hat{f}(\nu \omega).
\end{equation}
We also use the notation
\begin{gather} \label{def:tau}
\tau :=  \frac{d-1}{4} \pi \quad \text{and} \quad c_0 = \frac{1}{\sqrt{2(2\pi)^{d-1}}}. 
\end{gather}

Finally, we define the operator $\Td$ as follows: for a function $v$ defined on $\m R^d$, $\Td v$ is a function of two variables $(s,\omega)$, defined on $\m R \times \m S^{d-1}$ by its (partial) Fourier transform in the first variable $s$:
\begin{align}
\label{def:T}
\q F_{s \to \nu}  (\Td v)(\nu, \omega) = c_0  |\nu|^{\frac{d-1}{2}} (e^{i \tau}  \m1 _{\nu < 0}   + e^{-i\tau} \m 1_{\nu \ge 0} ) \hat v(\nu \omega).
\end{align}
that is,
\[ (\Td v)(s,\omega)=c_0 \left(e^{i \tau}v_\omega^{-}(s)+e^{-i \tau}v_\omega^{+}(s)\right). \]

Our first main result is the description of the asymptotic for large times of solutions of the half-wave equation, and then of the wave equation.

\begin{thm}[Radiation field and concentration of energy on the light cone] \label{th1}
{\ }

1) (Half-wave equation) Let $f \in L^2(\m R^d)$. Then as $t \to +\infty$, the convergence holds
\begin{gather}
\label{eq:expansion_w}
(e^{i t |D|} f)(x) -  \frac{e^{i\tau}}{(2\pi|x|)^{\frac{d-1}{2}}}  f_{x/|x|}^- (|x|-t) \to 0 \quad \text{in} \quad L^2(\m R^d). 
\end{gather}
Furthermore, one has
\begin{gather} \label{eq:conc_cone}
\limsup_{t \to \pm \infty} \| e^{i t |D|} f \|_{L^2(\left| |x| - |t| \right|\ge R)} \to 0 \quad \text{as} \quad R \to +\infty.
\end{gather}

2) (Wave equation) Let $(u_0, u_1) \in \dot H^1 \times L^2(\m R^d)$, and $u$ be the solution to \eqref{eq:lw}. Then as $t \to +\infty$, the convergence holds
\begin{gather} \label{eq:expansion_w_1}
\nabla_{t,x} u(t,x) - \frac{1}{\sqrt{2}|x|^{\frac{d-1}{2}}}(\partial_s \Td u_0 - \Td u_1) \left(|x|-t, \frac{x}{|x|} \right) \times \begin{pmatrix}
-1 \\
x/|x|
\end{pmatrix}  \to 0 \quad \text{in} \quad L^2(\m R^d)^{1+d}.
\end{gather}
Furthermore, one has
\begin{gather} \label{eq:conc_cone_2}
\limsup_{t \to \pm \infty} \| \nabla_{t,x} u(t) \|_{L^2(\left| |x| - |t| \right|\ge R)} \to 0 \quad \text{as} \quad R \to +\infty.
\end{gather}
\end{thm}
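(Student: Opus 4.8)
The plan is to establish part 1) for the half-wave equation and then deduce part 2) by applying it to the derivatives of $u$ through the decomposition \eqref{def:w_fg}. Write $B_t f(x) := \frac{e^{i\tau}}{(2\pi|x|)^{\frac{d-1}{2}}}f_{x/|x|}^-(|x|-t)$ for the conjectured profile. I would first reduce \eqref{eq:expansion_w} to a dense class. Since $e^{it|D|}$ is unitary on $L^2$, and since passing to polar coordinates $x=r\omega$ and applying Plancherel in the variable $\nu$ yields
\[ \|B_t f\|_{L^2}^2 = (2\pi)^{1-d}\int_{\m S^{d-1}}\int_{-t}^{+\infty}|f_\omega^-(s)|^2\,ds\,d\omega \le \|f\|_{L^2}^2, \]
which moreover tends to $\|f\|_{L^2}^2$ as $t\to+\infty$, both $e^{it|D|}$ and $B_t$ are bounded on $L^2$ uniformly in $t$. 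It therefore suffices to prove \eqref{eq:expansion_w} for $f$ with $\hat f\in C_c^\infty(\m R^d\setminus\{0\})$, the general case following by density.

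For such $f$, writing $\xi=\rho\omega$ and $x=r\theta$,
\[ (e^{it|D|}f)(x) = \frac{1}{(2\pi)^d}\int_0^{+\infty}\rho^{d-1}e^{it\rho}\Big(\int_{\m S^{d-1}}e^{i\rho r\,\omega\cdot\theta}\hat f(\rho\omega)\,d\omega\Big)\,d\rho, \]
and I would apply the stationary phase expansion to the inner integral on the sphere, whose nondegenerate critical points are $\omega=\pm\theta$, the Morse indices producing the phases $e^{\mp i\tau}$ with $\tau=\frac{d-1}{4}\pi$. Integrating the $\omega=-\theta$ contribution in $\rho$ and using the substitution $\nu=-\rho$ to match \eqref{def:f_omega} reproduces exactly $B_t f$, while the critical point $\omega=\theta$ produces a term carrying the phase $e^{i\rho(t+r)}$.

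The heart of the matter, and the step I expect to be the main obstacle, is to show that all remaining contributions tend to $0$ in $L^2$. The $\omega=\theta$ term equals $(2\pi)^{-\frac{d-1}{2}}|x|^{-\frac{d-1}{2}}e^{-i\tau}G_\theta(t+r)$ for some $G_\theta\in L^2(\m R)$; since the weight $|x|^{-\frac{d-1}{2}}$ cancels the Jacobian $r^{d-1}$, its squared $L^2$ norm is $(2\pi)^{1-d}\int_{\m S^{d-1}}\int_t^{+\infty}|G_\theta|^2$, a tail vanishing as $t\to+\infty$. For the stationary-phase remainder, the key point is that the subleading terms carry an extra factor $(\rho r)^{-1}$, hence an extra power $r^{-1}$ of spatial decay; combined with the concentration near $r=t$ of the corresponding amplitudes (Schwartz in $t-r$), this forces their $L^2$ norm to decay like $t^{-1}$. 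Finally, on a fixed ball $\{|x|\le R_0\}$, where the expansion is invalid because $\rho$ stays bounded, both $e^{it|D|}f$ and $B_t f$ vanish in $L^2$: the former by nonstationary phase, the $\xi$-gradient of the phase $x\cdot\xi+t|\xi|$ having size $\ge t-R_0$, the latter again as a tail of an $L^2$ function.

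The concentration \eqref{eq:conc_cone} is then immediate: as $\|e^{it|D|}f-B_t f\|_{L^2}\to0$, one has $\limsup_{t\to+\infty}\|e^{it|D|}f\|_{L^2(||x|-t|\ge R)}=\limsup_{t}\|B_t f\|_{L^2(||x|-t|\ge R)}$, bounded by $(2\pi)^{1-d}\int_{\m S^{d-1}}\int_{|s|\ge R}|f_\omega^-(s)|^2\,ds\,d\omega\to0$ as $R\to+\infty$; the limit $t\to-\infty$ reduces to $t\to+\infty$ by conjugation, since $|D|$ is real. For part 2), rather than differentiate $B_t$, I would apply part 1) (and its analogue for $e^{-it|D|}$, obtained by conjugation) to the data multiplied by the relevant Fourier multipliers, using that $(u_0,u_1)\in\dot H^1\times L^2$ guarantees $\partial_{x_j}f,\,|D|f,\,\partial_{x_j}g,\,|D|g\in L^2$. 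Writing $\partial_{x_j}u=e^{it|D|}(\partial_{x_j}f)+e^{-it|D|}(\partial_{x_j}g)$ and $\partial_t u=e^{it|D|}(i|D|f)-e^{-it|D|}(i|D|g)$, the identities $(\partial_{x_j}f)_\omega^-=\omega_j\,\partial_s f_\omega^-$ and $(i|D|f)_\omega^-=-\partial_s f_\omega^-$ (read off from \eqref{def:f_omega}), with the corresponding ones for $g_\omega^+$, turn every profile into $\partial_s f_\omega^-$ and $\partial_s g_\omega^+$ and generate exactly the vector $\begin{pmatrix}-1\\ x/|x|\end{pmatrix}$. Substituting $f+g=u_0$, $i|D|(f-g)=u_1$ and comparing constants via $c_0=\tfrac1{\sqrt2}(2\pi)^{-\frac{d-1}{2}}$ identifies the common scalar factor as $\tfrac{1}{\sqrt2}|x|^{-\frac{d-1}{2}}(\partial_s\Td u_0-\Td u_1)(|x|-t,x/|x|)$, giving \eqref{eq:expansion_w_1}; then \eqref{eq:conc_cone_2} follows from \eqref{eq:conc_cone} applied to each half-wave derivative.
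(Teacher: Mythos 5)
Your architecture is essentially the paper's: stationary phase on the sphere integral with critical points $\omega=\pm\theta$, identification of the $\omega=-\theta$ contribution with the profile $B_tf$, a density reduction based on the uniform-in-$t$ $L^2$ bounds for $e^{it|D|}$ and $B_t$, and the derivation of part 2) from part 1) via the multiplier identities $(\partial_{x_j}f)_\omega^-=\omega_j\partial_s f_\omega^-$, $(i|D|f)_\omega^-=-\partial_s f_\omega^-$ — all of this is correct and matches the paper (your $L^2$-tail treatment of the $\omega=+\theta$ term, in place of nonstationary phase in $\rho$, is a harmless variant, and your fixed-ball nonstationary-phase argument for $|x|\le R_0$ is also fine). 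The gap is exactly at the step you yourself single out as the main obstacle: the disposal of the stationary-phase error on $\{|x|\ge R_0\}$. The expansion does not consist only of ``subleading terms'' that are Schwartz in $t-r$ with extra factors $(\rho r)^{-1}$; after finitely many explicit terms there remains a truncation error $\mathrm{Rem}(\rho,r,\theta)$, for which one only has pointwise bounds $|\mathrm{Rem}|\le C(\rho r)^{-\frac{d-1}{2}-N}$ that are \emph{uniform in $t$ but carry no $t$-decay at all} (the time dependence enters only through the factor $e^{it\rho}$ integrated against it, cf.\ \eqref{estimR}--\eqref{est:err}). Its contribution to $\| e^{it|D|}f-B_tf\|_{L^2(|x|\ge R_0)}$ is therefore only $O(R_0^{-N+\frac12})$, uniformly in $t$: small for $R_0$ large, but \emph{not} tending to $0$ as $t\to+\infty$. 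So your claim that the remainder's $L^2$ norm ``decays like $t^{-1}$'' fails for this part, precisely in the intermediate region $R_0\le |x|\le t/2$, and as written the proof of \eqref{eq:expansion_w} does not close.

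The repair requires an extra idea, and there are two natural ones. (i) Keep your fixed-$R_0$ splitting but change the conclusion: every other contribution vanishes as $t\to+\infty$, so $\limsup_{t\to+\infty}\|e^{it|D|}f-B_tf\|_{L^2}\le C R_0^{-N+\frac12}$ for \emph{every} $R_0$, and letting $R_0\to+\infty$ finishes; this is a two-line fix, but it must be said. (ii) The paper's route, which reverses your logical order: it first computes $\|e^{it|D|}f\|_{L^2(A_{t,R})}^2$ on annuli $A_{t,R}$ around $|x|=t$ (where the pointwise bound \eqref{est:u_ptw} is harmless since $r\approx t$), compares with the conserved norm $\|f\|_{L^2}^2$ to obtain \eqref{eq:w_out}, i.e.\ concentration; then it proves \eqref{eq:expansion_w} by splitting at the $t$-dependent radius $|x|=t/2$, where $\int_{t/2}^{\infty}r^{-2}\,dr\to 0$ kills the pointwise-bounded remainder, while on $\{|x|\le t/2\}$ it invokes \eqref{eq:w_out} for $e^{it|D|}f$ and a tail estimate for the profile. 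In particular, in the paper the concentration statement is an \emph{ingredient} of the proof of the expansion, not a consequence of it as in your write-up; either repair is acceptable, but one of them is needed.
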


Of course, for $g \in L^2(\m R^d)$, one obtains the corresponding  expression for $e^{-i t|D|} g$ by considering the complex conjugate in \eqref{eq:expansion_w}:
\begin{gather}
\label{eq:expansion_g} 
(e^{-i t |D|} g)(x) -  \frac{e^{-i\tau}}{(2\pi|x|)^{\frac{d-1}{2}}}  g_{x/|x|}^+ (|x|-t) \underset{t\to +\infty}{\longrightarrow}  0 \quad \text{in} \quad L^2(\m R^d). 
\end{gather}
This also gives an expansion for $t \to -\infty$. Also, 2) is a rather direct consequence of 1), as we will prove the following equality which has its own interest:
\begin{align} \label{eq:fg_T}
(\partial_s \Td u_0 - \Td u_1)(s,\omega) = 2c_0 \partial_s(  e^{i \tau} f_{\omega}^- + e^{-i \tau} g_{\omega}^+)(s).
\end{align}

This result is therefore a computation of the radiation fields of Friedlander \cite{F:80}. We refer to \cite{BVW:18}; in odd dimension, it can be classically written thanks to the Radon transform (see \cite{MelroseBookGeom}, \cite{LaxPhillipsBook}). Actually, the operator $\Td$ is related to the Radon transform (which we recall below in \eqref{Radondef}), see section  \ref{sectRadsob} and Lemma \ref{lmRadon}. However, as far as we can tell, the correct computation of the convergence (in $L^2$) is new, specially in even dimension. Very recenlty, upon the completion of this work, Li, Shen and Wei performed a related analysis in \cite{LSW21}.


For example, as an easy consequence, we can also compute the energy outside a (shifted) light cone, or the asymptotic energy at $+\infty$ and $-\infty$ in the following sense.

\begin{definition}
Given $R \in \m R$ and a space time function $v$, we denote 
\begin{align*}
E_{ext,R}(v)& := \frac{1}{2} \left( \lim_{t \to +\infty} ( \nor{\nabla v}{L^2(|x| \ge t+R)}^2 +  \nor{\partial_t v}{L^2(|x| \ge t+R)}^2) \right.  \\
& \qquad \qquad \left. + \lim_{t \to -\infty} (\nor{\nabla v}{L^2(|x| \ge |t|+R)}^2 +  \nor{\partial_t v}{L^2(|x| \ge |t|+R)}^2) \right)
\end{align*}
assuming that the limits exist.
\end{definition}

Then there hold

\begin{cor}[Mass outside the light cone] \label{prop:ext_energy}
Let $R \in \m R$. We have the formula
\begin{align}
\MoveEqLeft \lim_{t \to +\infty} \nor{u}{L^2(|x| \ge t+R)}^2 \nonumber \\
&  = \frac{1}{(2\pi)^{d-1}}\int_{\omega \in \m S^{d-1}}\nor{f_\omega^-(s)+e^{-i\frac{\pi}{2}(d-1)} g_\omega^+(s)}{L^2([R,+\infty))}^2  d\omega. \label{equivfinallemma}
\end{align}
Also, in the case of an initial datum $(u_0,u_1) \in \dot H^1 \times L^2$ in the energy space, we have the formula
\begin{align} \label{equivfinallemmaH1}
\MoveEqLeft \lim_{t \to +\infty} \nor{\nabla u}{L^2(|x| \ge t + R)}^2 = \lim_{t \to +\infty} \nor{\partial_{t} u}{L^2(|x| \ge t + R)}^2  \\
& =\frac{1}{(2\pi)^{d-1}}\int_{\omega \in \m S^{d-1}}\nor{e^{i\tau} \partial_{s} f_\omega^-(s)+e^{-i\tau} \partial_{s}g_\omega^+(s)}{L^2([R,+\infty))}^2  d\omega. \nonumber \\
& = \frac{1}{2} \|  \partial_{s}\Td u_{0} - \Td u_{1} \|_{L^2([R,+\infty) \times \m S^{d-1})}^2.  \label{equivfinallemmaH1_2} 
\end{align}
(The first equality in \eqref{equivfinallemmaH1} is equipartition). As a consequence, there is asymptotic orthogonality in the sense that
\begin{align}
\label{eq:ortho_u0_u1}
E_{ext,R}(u) =  \|  \partial_{s}\Td u_{0} \|_{L^2([R,+\infty) \times \m S^{d-1})}^2 + \| \Td u_{1} \|_{L^2([R,+\infty) \times \m S^{d-1})}^2.
\end{align}
\end{cor}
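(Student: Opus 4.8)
The plan is to reduce everything to computing the $L^2$ norm of the explicit asymptotic profiles furnished by Theorem~\ref{th1} over the exterior region $\{|x|\ge t+R\}$. The crucial observation is that, for \emph{any} family of measurable sets $A_t$, the triangle inequality gives $\bigl|\,\nor{u}{L^2(A_t)}-\nor{P}{L^2(A_t)}\,\bigr|\le \nor{u-P}{L^2(\m R^d)}$, where $P$ denotes the relevant profile; since Theorem~\ref{th1} asserts precisely that $u-P\to 0$ in $L^2(\m R^d)$ (adding \eqref{eq:expansion_w} and \eqref{eq:expansion_g} for the mass statement, and using \eqref{eq:expansion_w_1} for the energy statement), it suffices to evaluate $\lim_{t\to+\infty}\nor{P}{L^2(|x|\ge t+R)}^2$. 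Beyond Theorem~\ref{th1} no further analytic input is needed; the content is a careful bookkeeping of weights, phases, and the constants $\tau$ and $c_0$.

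For \eqref{equivfinallemma} the profile is $P(t,x)=(2\pi|x|)^{-(d-1)/2}\bigl(e^{i\tau}f^-_{\omega}(|x|-t)+e^{-i\tau}g^+_{\omega}(|x|-t)\bigr)$ with $\omega=x/|x|$. Writing the integral in polar coordinates $x=r\omega$, the Jacobian $r^{d-1}$ cancels exactly against the weight $(2\pi r)^{-(d-1)}$, and the substitution $s=r-t$ turns $\nor{P}{L^2(|x|\ge t+R)}^2$ into a quantity that is in fact \emph{independent of $t$}, namely $\frac{1}{(2\pi)^{d-1}}\int_{\m S^{d-1}}\int_R^\infty \bigl|e^{i\tau}f^-_\omega(s)+e^{-i\tau}g^+_\omega(s)\bigr|^2\,ds\,d\omega$. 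Factoring out $e^{i\tau}$ and using $2\tau=\frac{(d-1)\pi}{2}$ produces the phase $e^{-i\frac{\pi}{2}(d-1)}$ in front of $g^+_\omega$, which is exactly \eqref{equivfinallemma}.

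For the energy statement I would run the same computation on the vector-valued expansion \eqref{eq:expansion_w_1}. Here the directional vector $\trans(-1,\,x/|x|)$ has the feature that its time component and the Euclidean norm of its spatial component both equal $1$; consequently $|\partial_t u|^2$ and $|\nabla_x u|^2$ have identical profiles, which yields equipartition (the first equality in \eqref{equivfinallemmaH1}) for free, with common value $\frac12\nor{\partial_s\Td u_0-\Td u_1}{L^2([R,+\infty)\times\m S^{d-1})}^2$, i.e. \eqref{equivfinallemmaH1_2}. The same weight cancellation as above (now with $(|x|^{(d-1)/2})^{-2}$ against $r^{d-1}$) produces the $t$-independent value. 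To pass to the $f,g$ form in \eqref{equivfinallemmaH1} one inserts the identity \eqref{eq:fg_T}, writes $(\partial_s\Td u_0-\Td u_1)(s,\omega)=2c_0\,\partial_s\bigl(e^{i\tau}f^-_\omega+e^{-i\tau}g^+_\omega\bigr)(s)$, and uses $2c_0^2=\frac{1}{(2\pi)^{d-1}}$.

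Finally, \eqref{eq:ortho_u0_u1} follows by combining the forward limit with the backward one. For $t\to-\infty$ I would invoke time reversal: $u(-t,x)$ solves \eqref{eq:lw} with data $(u_0,-u_1)$, so the backward exterior limit equals the forward limit for these reversed data, giving $\nor{\partial_s\Td u_0+\Td u_1}{L^2([R,+\infty)\times\m S^{d-1})}^2$. Averaging the two contributions and applying the parallelogram identity $\frac12\bigl(\nor{a-b}{}^2+\nor{a+b}{}^2\bigr)=\nor{a}{}^2+\nor{b}{}^2$ with $a=\partial_s\Td u_0$ and $b=\Td u_1$ yields \eqref{eq:ortho_u0_u1}. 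The only points requiring genuine care are the justification of the negative-time expansion (supplied by the remark following Theorem~\ref{th1}) and the faithful tracking of the phase $\tau$ and the constant $c_0$; there is no real analytic obstacle once Theorem~\ref{th1} is in hand.
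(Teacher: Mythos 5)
Your proposal is correct and follows essentially the same route as the paper's proof: expand $u$ (resp.\ $\nabla_{t,x}u$) via the profiles of Theorem~\ref{th1}, compute the profile's exterior $L^2$ norm in polar coordinates where the Jacobian cancels the weight and the substitution $s=r-t$ makes the integral $t$-independent, read off equipartition from the unit direction vector $\trans(-1,\,x/|x|)$, and obtain \eqref{eq:ortho_u0_u1} by time reversal (data $(u_0,-u_1)$) plus the parallelogram identity. The paper's $o_{L^2}(1)$ bookkeeping is exactly your triangle-inequality step made implicit, so there is nothing to add.
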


(Here and below, $\m R \times \m S^{d-1}$ is equipped with the standard product measure).
The last two formulas \eqref{equivfinallemmaH1_2} and \eqref{eq:ortho_u0_u1} involving $\Td$ reveal the important role of this operator in our analysis. We can reformulate \eqref{eq:ortho_u0_u1} in the following way: denoting $u^e$ [resp. $u^o$] the solution to \eqref{eq:lw} with initial data $(u_0,0)$ [resp. $(0,u_1)$] then
\[ E_{ext,R}(u) = E_{ext,R}(u^e) + E_{ext,R}(u^o). \]

Another consequence of Theorem \ref{th1} is related to profile decomposition in the sense of Bahouri-G\'erard \cite{BahouriGerard}, for which we can prove Pythagorean expansion of the linear energy \emph{with sharp cut-off}: this in turn is useful for the channel of energy method. Let us recall the notion of profile decomposition in our setting.

\begin{definition}
Let $(u_{n,0},u_{n,1})$ be a bounded sequence in $\dot H^1 \times L^2$. We say that is admits a linear profile decomposition $(\vec U_L^j; (\lambda_{j,n})_n, (t_{j,n})_n,(x_{j,n})_n)_j$, with remainder $(\vec w_n^J)_{n,J}$ where the $\vec U_L^j$ and the $\vec w_{n}^J$ are solutions to the linear wave equation, and the parameters $(\lambda_{j,n})_n, (t_{j,n})_n,(x_{j,n})$ are sequences in $[0,+\infty)$, $\m R$ and $\m R^d$ respectively satisfies 
\begin{enumerate}[wide]
\item Decomposition: for all $J \ge 1$, there holds
\begin{align*}
\MoveEqLeft (u_{n,0},u_{n,1}) (x) \\
& = \sum_{j=1}^J \left( \frac{1}{\lambda^{d/2-1}} U_{L}^j\left( - \frac{t_{j,n}}{\lambda_{j,n}}, \frac{x-x_{j,n}}{\lambda_{j,n}} \right), \frac{1}{\lambda^{d/2}} \partial_t U_{L}^j\left( - \frac{t_{j,n}}{\lambda_{j,n}}, \frac{x-x_{j,n}}{\lambda_{j,n}} \right) \right) + \vec w_{n}^J(0),
\end{align*}
where the remainder converges in the adequate Strichartz space $S  = L^{\frac{2d}{d-2}}_{t,x}(\m R^{1+d})$ 
\[ \limsup_{n \to +\infty} \| \vec w_{n}^J \|_{S} \to 0 \quad \text{as} \quad  J \to +\infty. \]
\item Pseudo-orthogonality: for $j \ne k$, 
\begin{gather*}
\text{either:}  \qquad \frac{\lambda_{j,n}}{\lambda_{k,n}} +  \frac{\lambda_{k,n}}{\lambda_{j,n}} \to +\infty, \\
\text{or: } \quad  \forall n,  \ \lambda_{j,n} = \lambda_{k,n} \quad \text{and} \quad \frac{|t_{j,n} - t_{k,n}|}{\lambda_{j,n}} + \frac{|x_{j,n} - x_{k,n}|}{\lambda_{j,n}} \to +\infty.
\end{gather*}
\end{enumerate}
\end{definition}

\begin{prop}[Orthogonality with cut-offs in a profile decomposition] \label{prop:ortho_cutoff}
Let $(u_{n,0},u_{n,1})$ be a bounded sequence of $\dot H^1 \times L^2$, and assume that it admits a profile decomposition with waves and parameters $(\vec U_L^j; (\lambda_{j,n})_n, (t_{j,n})_n,(x_{j,n})_n)_j$, and remainder $(\vec w_n^J)_{n,J}$.

Let $(r_n)_n$ and $(x_n)$ be two sequences of $[0,+\infty)$ and $\m R^d$ respectively. Then 
\begin{align}
\MoveEqLeft \| (u_{n,0},u_{n,1})  \|_{\dot H^1 \times L^2(|x-x_n| \ge r_n)}^2  =  \sum_{j=1}^J  \left\| \nabla_{t,x} U_L^j \left( - \frac{t_{j,n}}{\lambda_{j,n}} \right) \right\|^2_{L^2(|\lambda_{j,n} x + x_{j,n} - x_n| \ge r_n)} \nonumber \\
& +  \| (w_{n,0}^J,w_{n,1}^J)  \|^2_{\dot H^1 \times L^2(|x-x_n| \ge r_n)} + o_n(1). \label{eq:ortho_cutoff}
\end{align}
\end{prop}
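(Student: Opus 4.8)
The plan is to expand the squared norm over the exterior region directly from the decomposition and to reduce everything to the vanishing of cross terms. Write $\vec V_n^j = (V_{n,0}^j, V_{n,1}^j)$ for the Cauchy data at $t=0$ of the $n$-th rescaled, translated and time-shifted copy of the profile $\vec U_L^j$, and set $\chi_n := \m 1_{|x - x_n| \ge r_n}$, so that the relevant bilinear form is $\lan \vec a, \vec b \ran_{\chi_n} := \int_{|x-x_n|\ge r_n} (\nabla a_0 \cdot \overline{\nabla b_0} + a_1 \overline{b_1})\, dx$. Expanding $\|(u_{n,0},u_{n,1})\|^2_{\dot H^1 \times L^2(|x-x_n|\ge r_n)}$ produces the diagonal profile terms $\sum_{j\le J} \lan \vec V_n^j, \vec V_n^j\ran_{\chi_n}$, the diagonal remainder term $\lan \vec w_n^J, \vec w_n^J \ran_{\chi_n}$, and the cross terms $\sum_{j\ne k} \lan \vec V_n^j, \vec V_n^k \ran_{\chi_n}$ and $2\,\mathrm{Re}\sum_{j\le J} \lan \vec V_n^j, \vec w_n^J \ran_{\chi_n}$. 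The change of variables $x = \lambda_{j,n} y + x_{j,n}$ shows that the diagonal profile term equals, \emph{exactly}, the quantity $\|\nabla_{t,x}U_L^j(-t_{j,n}/\lambda_{j,n})\|^2_{L^2(|\lambda_{j,n} y + x_{j,n} - x_n| \ge r_n)}$ appearing on the right of \eqref{eq:ortho_cutoff}, while the diagonal remainder term is precisely the remainder term there. Since $J$ is fixed, only finitely many cross terms remain, and the whole statement reduces to showing that each of them tends to $0$ as $n \to +\infty$.

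The mechanism I would use for the cross terms is a strong-times-weak pairing in the frame of the profile $j$. After rescaling and translating to that frame, the \emph{other} factor --- either $\vec V_n^k$ with $k\ne j$, or the remainder $\vec w_n^J$ --- converges weakly to $0$ in $L^2$: for $\vec V_n^k$ this is the standard consequence of pseudo-orthogonality of the parameters, and for $\vec w_n^J$ it is the defining orthogonality of the remainder. It therefore suffices to prove that the first factor $\chi_n(\nabla V_{n,0}^j, V_{n,1}^j)$, read in the $j$-frame, converges \emph{strongly} in $L^2$. By density I would first replace $\vec U_L^j$ by a solution with smooth, compactly supported Cauchy data, the error being $O(\varepsilon)$ uniformly in $n$ by Cauchy--Schwarz and the boundedness of all energies. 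When the shifted time $t_{j,n}/\lambda_{j,n}$ converges to a finite limit, the profile data in its own frame converge to a fixed $L^2$ function; moreover, up to a subsequence, the rescaled cut-off $\m 1_{|\lambda_{j,n} y + x_{j,n} - x_n| \ge r_n}$ --- the indicator of the exterior of the ball $B(b_n,\rho_n)$ with $b_n = (x_n - x_{j,n})/\lambda_{j,n}$ and $\rho_n = r_n/\lambda_{j,n}$ --- converges almost everywhere to the indicator of a limiting region (the exterior of a ball, a half-space, the empty set, or all of $\m R^d$). Dominated convergence then yields the required strong convergence, and the cross term vanishes.

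The main obstacle is the case of a \emph{dispersed} profile, namely $t_{j,n}/\lambda_{j,n} \to \pm\infty$: then the profile data spread out in its own frame and no longer converge strongly, so the dominated-convergence argument collapses. This is exactly where Theorem~\ref{th1} is decisive. The concentration estimate \eqref{eq:conc_cone_2} shows that the energy of such a profile concentrates in a thin annulus near the light cone of radius $\approx |t_{j,n}|$ centered at $x_{j,n}$, up to an $O(\varepsilon)$ error outside it, which I would use to localize the cross term to this annulus; in odd dimension the localization is exact for compactly supported data via strong Huygens' principle, whereas in even dimension it is only approximate and genuinely relies on \eqref{eq:conc_cone_2}. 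Once localized, the pseudo-orthogonality of the parameters separates the relevant annuli --- or, equivalently, separates the supports in the radiation variable $s = |x|-t$ of the fields furnished by Theorem~\ref{th1} (in particular distinguishing incoming from outgoing radiation when the two diverging times have opposite signs) --- so that the localized cross term vanishes, the radial cut-off only further restricting the domain and being harmless since $0 \le \chi_n \le 1$. Combining the dispersed and non-dispersed cases, summing over the finitely many pairs for fixed $J$, and finally letting $\varepsilon \to 0$ yields \eqref{eq:ortho_cutoff}.
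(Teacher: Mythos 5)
Your overall reduction is the same as the paper's: expand the norm, observe that the diagonal terms rescale exactly onto the right-hand side of \eqref{eq:ortho_cutoff}, and reduce to the vanishing of the finitely many cross terms; and your treatment of the non-dispersed case ($t_{j,n}/\lambda_{j,n}$ with a finite limit) — a.e.\ convergence of the rescaled cut-offs to the indicator of a ball, a half-space, $\emptyset$ or $\m R^d$, dominated convergence to get strong $L^2$ convergence of the cut-off profile data, then a strong-times-weak pairing — is exactly the first part of the paper's proof of Lemma \ref{lem:wH1_conv_cut}. The genuine gap is in the dispersed case $t_{j,n}/\lambda_{j,n} \to \pm\infty$, which is the real content of the proposition, and your localization/separation substitute fails there for two distinct reasons. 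First, the cross term between a dispersed profile and the remainder $\vec w_n^J$ cannot be handled by localization at all: for fixed $J$ the remainder has no concentration structure (only weak vanishing along the profiles' parameters), and after localizing to the annulus of profile $j$ you must pair a weakly null sequence against profile data that escape to spatial infinity in the $j$-frame \emph{without} converging strongly; weak convergence gives nothing against such a sequence (a weakly null sequence of translated bumps can follow the escaping annulus and keep the pairing of order one). Second, for profile–profile terms the assertion that pseudo-orthogonality separates the annuli is false as a statement about sets: with $\lambda_{j,n}=\lambda_{k,n}$, $t_{j,n}=t_{k,n}$ and $|x_{j,n}-x_{k,n}|/\lambda_{j,n}\to\infty$ slowly, the two spherical shells overlap in a set of large measure (orthogonality there comes from the vanishing \emph{angular} measure of the overlap seen from either center); and with $x_{j,n}=x_{k,n}$, $t_{j,n}=-t_{k,n}\to+\infty$, the two annuli are \emph{identical}, orthogonality coming from the pointwise orthogonality of the outgoing and incoming polarizations $(-1,\omega)$ and $(1,\omega)$, which you only gesture at. Turning these observations into a proof is a case analysis at least as heavy as the one you are trying to avoid.

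The idea missing from your proposal is the one the paper builds the whole section around: in the dispersed case one approximates the sharply cut-off data $\m 1_{|x-x_n|\ge r_n}\nabla_{t,x} u(t_n)$ in $L^2$ by the data \emph{at time $t_n$} of a fixed auxiliary free solution $\vec v$ (see \eqref{eq:u_v}), constructed by multiplying the radiation field $h$ of $u$ furnished by Theorem \ref{th1} by the a.e.\ limit $a(\rho,\omega)$ of the cut-offs read in the moving radiation coordinates (Claim \ref{cl:2}, itself a case analysis on the limits \eqref{seq_dicho}), and then reconstructing $(v_0,v_1)$ from $a\,h$ via \eqref{reconstructfg}--\eqref{reconstructv01}. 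Once this is done, conservation in time of the energy pairing of two free waves transfers the large time onto the other factor, $\int \nabla_{t,x} w_n(0)\cdot\overline{\nabla_{t,x} v(t_n)}\,dx = \int \nabla_{t,x} w_n(-t_n)\cdot\overline{\nabla_{t,x} v(0)}\,dx$, and the hypothesis $\vec w_n(-t_n)\weak 0$ — available both for the other profiles (pseudo-orthogonality) and for the remainder (by construction of the profiles as weak limits) — kills the cross term against the \emph{fixed} datum $\vec v(0)$. This single mechanism (Lemma \ref{lem:wH1_conv_cut}) treats profile–profile and profile–remainder terms uniformly; without it, or an equivalent replacement, your dispersed case does not close.
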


Such a decomposition with sharp cut-off is relevant for the channel of energy method in a non linear setting (see \cite{DuyKMUniversnonrad} for example), and was done in the radial setting in \cite{CoteKenigSchlagEquipart}.

Note that an interesting byproduct of the proof is an explicit formula for reconstructing the initial datum of a solution of the wave equation from its radiation field described in \eqref{eq:expansion_w_1}, see \eqref{reconstructfg}, \eqref{reconstructv01}.

\subsection{Odd dimension}

In odd dimension, we are able to precise the previous results and the asymptotic energy outside truncated cones $|x| \ge t+R$ with $R\ge 0$. 

We first consider the easier case $R=0$. From our computations, we can easily recover the following result, which goes back at least to Duyckaerts, Kenig and Merle \cite{DuyKMUniversnonrad}.

\begin{prop}
\label{thmexteriorgeneR0}
Assume $d$ odd and $u$ be a a solution to \eqref{eq:lw} with initial data $(u_0,u_1)\in\dot H^1 \times L^2(\m R^d)$. Then, we have
\begin{align}
E_{ext,0}(u) = \frac{1}{2}\nor{(u_0,u_1)}{\dot H^1 \times L^2(\R^{d})}^{2}. \label{channelgeneR0}
\end{align}
\end{prop}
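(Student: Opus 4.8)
The plan is to start from the $\Td$-formula for the symmetrized exterior energy, namely \eqref{eq:ortho_u0_u1} in Corollary \ref{prop:ext_energy}, which at $R=0$ reads
\begin{align*}
E_{ext,0}(u) = \nor{\partial_s \Td u_0}{L^2([0,+\infty) \times \m S^{d-1})}^2 + \nor{\Td u_1}{L^2([0,+\infty) \times \m S^{d-1})}^2 .
\end{align*}
Comparing with the target $\frac12 \nor{(u_0,u_1)}{\dot H^1 \times L^2}^2 = \frac12(\nor{\nabla u_0}{L^2}^2 + \nor{u_1}{L^2}^2)$, I would reduce the statement to two claims: (i) the global isometries $\nor{\partial_s \Td v}{L^2(\m R \times \m S^{d-1})}^2 = \nor{\nabla v}{L^2}^2$ and $\nor{\Td v}{L^2(\m R \times \m S^{d-1})}^2 = \nor{v}{L^2}^2$, valid in every dimension; and (ii) a parity symmetry, special to odd $d$, forcing the integrals over $[0,+\infty) \times \m S^{d-1}$ and over $(-\infty,0] \times \m S^{d-1}$ to coincide, so that each equals half of the full integral over $\m R \times \m S^{d-1}$.

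Claim (i) is a direct Plancherel computation: applying the one-dimensional Plancherel identity in $s$ to \eqref{def:T}, using $|e^{\pm i \tau}| = 1$, rewriting the resulting $d\nu$-integral over $\m R$ as twice its restriction to $\nu \ge 0$ (via the change $\omega \mapsto -\omega$ on the sphere), and passing to polar coordinates $\xi = \nu \omega$, turns $\nor{\Td v}{L^2(\m R\times \m S^{d-1})}^2$ into $c_0^2\, \pi^{-1} \nor{\hat v}{L^2}^2$, which equals $\nor{v}{L^2}^2$ once the value of $c_0$ is inserted; the derivative version is identical up to the extra factor $|\nu|^2 = |\xi|^2$. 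These identities may already be available from the proof of Theorem \ref{th1}, but they are short enough to record directly.

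For claim (ii), I would introduce the involution $\sigma:(s,\omega)\mapsto(-s,-\omega)$ of $\m R \times \m S^{d-1}$, which preserves the product measure and interchanges $\{s\ge 0\}$ with $\{s \le 0\}$. The crucial point is that in odd dimension $e^{2 i \tau} = e^{i \frac{d-1}{2}\pi} = (-1)^{(d-1)/2}$, so that $e^{i\tau}$ and $e^{-i\tau}$ differ only by this global sign. Substituting $(\nu,\omega)\mapsto(-\nu,-\omega)$ in \eqref{def:T} swaps the two indicators $\m 1_{\nu<0}$ and $\m 1_{\nu \ge 0}$ (up to the null set $\{\nu = 0\}$) while leaving $\hat v(\nu\omega)$ unchanged; the phase swap then factors out as $(-1)^{(d-1)/2}$, giving $(\Td v)\circ\sigma = (-1)^{(d-1)/2}\, \Td v$ and hence $(\partial_s \Td v)\circ \sigma = (-1)^{(d+1)/2}\,\partial_s \Td v$. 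In particular $|\Td v|$ and $|\partial_s \Td v|$ are $\sigma$-invariant, so each half-line norm equals half of the corresponding full-line norm. Feeding this back into the formula above and invoking (i) yields $E_{ext,0}(u) = \frac12 \nor{\nabla u_0}{L^2}^2 + \frac12 \nor{u_1}{L^2}^2$, which is the claim.

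The only place oddness enters — and hence the main (though modest) obstacle — is the phase identity $e^{2i\tau} = (-1)^{(d-1)/2}$ underlying (ii): it is exactly what lets the interchange of the two indicators be absorbed into a single unimodular constant, so that passing to moduli kills it. In even dimension one has instead $e^{2i\tau} = \pm i$, the two half-frequency pieces carry genuinely different phases, and this reflection symmetry breaks down, which is consistent with the paper isolating the odd case here.
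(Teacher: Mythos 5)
Your proof is correct and follows essentially the same route as the paper: start from \eqref{eq:ortho_u0_u1} at $R=0$, use the global isometries of $\Td$ and $\partial_s\Td$ (your claim (i) is exactly Lemma \ref{lem:T_iso}), and use the odd-dimensional reflection symmetry to halve the full-line norms (your claim (ii), including its Fourier-side derivation via $(\nu,\omega)\mapsto(-\nu,-\omega)$ and $e^{2i\tau}=(-1)^{(d-1)/2}$, is precisely \eqref{eq:sym_T} and the paper's proof of it). No gaps; the argument matches the paper's step for step.
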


Then, we consider the case $R>0$ where the previous result cannot hold. We are nonetheless able to determine the solutions $u$ that have vanishing asymptotic energy on the exterior light cone $|x| \ge t+R$ with $R> 0$, that is
\[ E_{ext,R}(u) =0. \]
By finite speed of propagation, initial data which are compactly supported in  $|x| \le R$ obviously satisfy this condition. We will call this space
\[
\mathcal{K}_{R,comp}= \left\{(u_0,u_1)\in\dot H^1 \times L^2 (\m R^d): (u_0,u_1) |_{\{ |x| > R \}} =0\right\}.
\]
where the equality is in the distributional sense.

It turns out that these are not the only examples. We will now need some further notation.

We denote 
\begin{align}
\label{def:Yl}
(Y_{\ell})_{\ell \in \M}
\end{align}
a countable orthonormal basis of spherical harmonics of $\m S^{d-1}$. $Y_{\ell}$ is the restriction to $\m S^{d-1}$ of a harmonic (homogeneous) polynomial. For short, we will denote $l=l(\ell)$ the degree of this polynomial.

The non radiative functions will be the following. Denote for $k \in \m N$,
\[ \alpha_{k}: = -l-d+2k+2. \]
$\alpha_k$ also depends on $\ell$, but here and below, we silence this dependence to keep notations light. Then let
\begin{align} \label{def:gk}
g_{k}(x)= \m 1_{\{|x|>R\}}|x|^{\alpha_{k}}Y_{\ell}\left(\frac{x}{|x|}\right)
\end{align}
Note that $g_k \in L^2 \iff \alpha_k <-d/2$. We introduce
\[
\Nd_{R, \ell}^{0}=\Span \left( g_{k} ;  \text{ for }  k \in \N \text{ such that }\alpha_{k}<-d/2\right)
\]
Similarly, let
\begin{align} \label{def:fk}
f_{k}(x)= \begin{cases}
\ds  \left( \frac{|x|}{R} \right)^{\alpha_{k}}Y_{\ell}\left(\frac{x}{|x|}\right) & \text{ for }|x|>R\\
\ds  \left( \frac{|x|}{R} \right)^{l}Y_{\ell}\left(\frac{x}{|x|}\right) & \text{ for }|x|\le R.
\end{cases}
\end{align}
Note that $f_k \in \dot H^{1} \iff \alpha_{k}<-d/2+1$. Also, the value of $f_{k}$ in $|x|\le R$ is not very important; our choice allows to keep continuity and that the restriction $f_{k}|_{\left\{|x| < R\right\}}$ is a harmonic polynomial, so that $f_{k}$ is orthogonal to (in $\dot H^1$) to functions with compact support in $B(0,R)$. Let
\[ \Nd_{R, \ell}^{1}=\Span \left( f_{k};  \text{ for } k \in \N \text{ such that } \alpha_{k}<-d/2+1\right). \]

For any $\ell\in \M$, we note the space
\[ 
P_{\ell}(R)=\Nd_{R, \ell}^{0} \times \Nd_{R, \ell}^{1}, \quad \text{and} \quad
P(R)= \mathcal{K}_{R,comp} \stackrel{\perp}{\oplus}  \bigoplus_{\ell\in \M}^\perp P_{\ell}(R), \]
(the orthogonality is related to the natural scalar product of $\dot H^1 \times L^2$).
Then we will prove that if $u$ is a linear wave solution which is non radiative, that is such that  $E_{ext,R}(u)=0$, then $(u,\partial_t u)|_{t=0} \in P(R)$ (and the converse is true as well). We actually have a quantitative version of this fact: this is our second main result.

\begin{thm}
\label{thmexteriorgene}
Assume $d$ is odd, $d\geq 3$, and let $R>0$. Let $u$ be the solution to the linear wave equation with initial data $(u,\partial_tu)_{\left|t=0\right.}=(u_0,u_1)\in\dot H^1 \times L^2 $. Then, we have
\begin{align}  
\label{channelprojectgene}
\nor{(u_0,u_1)}{\dot H^1 \times L^2(\R^{d})}^{2}= 2E_{ext,R}(u)+\nor{\pi_{R }(u_0,u_1)}{\dot H^1 \times L^2(\R^{d})}^{2}.
\end{align} 
where $\pi_{R}$ is the orthogonal projection (in $\dot H^1 \times L^2$) onto the space $P(R)$ .

Moreover,  if $(u_0,u_1)\in P(R)$, then the equality
\[ u (t,x) = \sum_{\ell\in \M}v_{\ell}(t,r)Y_{\ell}(\w) \]
holds for all $(t,x)$ in the (outer) truncated cone $\q C_{R}=\left\{(t,x)\in \R^d;|x|-|t|\ge R\right\}$, where
\begin{align}  
\label{formvsolgene}
v_{\ell}(t,r)=\sum_{j=1}^{B }\frac{1}{r^{d+l-j}}\sum_{i=0}^{B-j}d_{i,j}t^{i}.
\end{align} 
 for some $d_{i,j}\in \m C$, and where $B:=\frac{d+1}{2} + l$.
\end{thm}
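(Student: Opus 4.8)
The plan is to transfer everything, via Theorem~\ref{th1} and Corollary~\ref{prop:ext_energy}, to a one–dimensional question about the Radon transform, and then to exploit the fact that in odd dimension $\Td$ is a genuine differential operator.

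\emph{Step 1: reduction to the Radon transform and to spherical modes.} In odd dimension $m:=(d-1)/2\in\mathbb{N}$ and $\tau=m\pi/2$, so the two phase factors in \eqref{def:T} collapse and $\q F_{s\to\nu}(\Td v)=c_0(-i\nu)^m\hat v(\nu\omega)$; by the Fourier slice theorem this means $\Td=c_0(-1)^m\partial_s^m\circ\Ra$, a constant times $\partial_s^{(d-1)/2}$ of the Radon transform $\Ra$ (this is Lemma~\ref{lmRadon}). Feeding this into \eqref{eq:ortho_u0_u1}, the condition $E_{ext,R}(u)=0$ becomes $\partial_s^{m+1}\Ra u_0=0$ and $\partial_s^{m}\Ra u_1=0$ on $(R,+\infty)\times\mathbb{S}^{d-1}$; that is, $\Ra u_0(\cdot,\omega)$ and $\Ra u_1(\cdot,\omega)$ are polynomials in $s$ of degree $\le m$ and $\le m-1$ for $s>R$. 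I would then decompose $u_0=\sum_\ell a_\ell(r)Y_\ell$, $u_1=\sum_\ell b_\ell(r)Y_\ell$, using that $\Ra$ preserves the $\ell$–isotypic components (it sends $a(r)Y_\ell(\omega)$ to $\rho(s)Y_\ell(\omega)$ with $\rho(-s)=(-1)^l\rho(s)$), so that both $E_{ext,R}$ and the spaces $P_\ell(R)$ split over $\ell$ and the problem becomes radial for each mode.

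\emph{Step 2 (the heart): identification of the non–radiative space.} For $s>R$ the hyperplane $\{x\cdot\omega=s\}$ lies in $\{|x|\ge s>R\}$, so the cut–offs in $f_k$, $g_k$ are invisible and $\Ra(\mathbbm{1}_{|x|>R}|x|^\alpha Y_\ell)(s,\omega)=\Ra(|x|^\alpha Y_\ell)(s,\omega)$ there; the latter is homogeneous of degree $\alpha+d-1$ in $s$, equal to $c(\alpha,l)|s|^{\alpha+d-1}Y_\ell(\omega)$ after analytic continuation in $\alpha$. The key computation is to evaluate the constant $c(\alpha,l)$ explicitly, as a ratio of Gamma factors (equivalently via the Gegenbauer kernel of the mode–$\ell$ Radon transform), and to read off: for $\alpha=\alpha_k$ in the stated range this is a monomial in $s$ of admissible degree, hence killed by the relevant derivative, while at the remaining exponents the zeros and poles of $c(\alpha,l)$ make the outgoing part vanish; in all cases $f_k$, $g_k$ are non–radiative, and together with $\q K_{R,comp}$ (whose Radon transform is supported in $|s|\le R$) they give $P(R)\subseteq$ non–radiative space. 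Conversely, inverting the mode–$\ell$ Radon transform shows that a profile whose transform is polynomial of the admissible degree on $s>R$ is, modulo $\q K_{R,comp}$, a combination of the $f_k$ (resp.\ $g_k$), so these exhaust the kernel and $P_\ell(R)=\Nd^0_{R,\ell}\times\Nd^1_{R,\ell}$. The main obstacle is precisely this bookkeeping: tracking the zeros and poles of $c(\alpha,l)$, the parity and degree constraints, and matching them with the thresholds $\alpha_k<-d/2$ and $\alpha_k<-d/2+1$ coming from $L^2$ and $\dot H^1$.

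\emph{Step 3: the quantitative identity.} Let $\Phi(u_0,u_1):=(\partial_s\Td u_0,\Td u_1)$, regarded as an element of $H:=L^2([0,+\infty)\times\mathbb{S}^{d-1})^2$ (the symmetry $(s,\omega)\mapsto(-s,-\omega)$ lets us restrict to $s\ge0$). The case $R=0$, namely Proposition~\ref{thmexteriorgeneR0}, reads $\|(u_0,u_1)\|^2_{\dot H^1\times L^2}=2E_{ext,0}(u)=2\|\Phi(u_0,u_1)\|_H^2$, so $\Phi$ is $\sqrt2$–isometric, and the reconstruction formula (quoted after Theorem~\ref{th1}) shows it is onto $H$. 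Under this identification $E_{ext,R}(u)=\|\mathbbm{1}_{s\ge R}\Phi(u_0,u_1)\|_H^2$, while Step~2 gives $P(R)=\Phi^{-1}(\mathbbm{1}_{s<R}H)$; since $\mathbbm{1}_{s<R}H$ and $\mathbbm{1}_{s\ge R}H$ are orthogonal complements in $H$, the projection $\pi_R$ corresponds to $\mathbbm{1}_{s<R}$, and Pythagoras in $H$ yields $\|(u_0,u_1)\|^2=2\|\mathbbm{1}_{s<R}\Phi\|_H^2+2\|\mathbbm{1}_{s\ge R}\Phi\|_H^2=\|\pi_R(u_0,u_1)\|^2+2E_{ext,R}(u)$, which is \eqref{channelprojectgene}.

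\emph{Step 4: explicit form on the cone.} Assume now $(u_0,u_1)\in P(R)$. By finite speed of propagation the solution on the open cone $\q C_R=\{|x|-|t|\ge R\}$ depends only on the data in $\{|y|\ge R\}$, since the backward light cone from $(t,x)\in\q C_R$ meets $\{t=0\}$ inside $\{|y|\ge|x|-|t|\ge R\}$; in particular the $\q K_{R,comp}$ part contributes nothing there, and on $\{|y|>R\}$ the data is the explicit combination of powers $|y|^{\alpha_k}Y_\ell$. Projecting onto the mode $\ell$, $v_\ell$ solves the reduced radial wave equation
\[ \partial_{tt}v-\partial_{rr}v-\frac{d-1}{r}\partial_r v+\frac{l(l+d-2)}{r^2}\,v=0 \]
with homogeneous power data. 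Seeking $v_\ell$ as a finite sum $\sum_{i,j}d_{i,j}t^i r^{-(d+l-j)}$, one checks that this operator preserves the ansatz and that the recursion on the coefficients $d_{i,j}$ closes exactly at $B=(d+1)/2+l$ (the termination being a manifestation of the strong Huygens principle in odd dimension), which gives \eqref{formvsolgene}; uniqueness and finite speed of propagation then identify this ansatz with $u$ on all of $\q C_R$.
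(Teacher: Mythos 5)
Your Steps 1, 3 and 4 are essentially the paper's own argument (Step 3 is its abstract projection lemma, Lemma \ref{propproject}, combined with the range identification of Lemma \ref{lem:range}; Step 4 is a mild variant of its induction based on the ladder $\Delta f_{k+1}=c_k f_k$ on $\{|x|>1\}$), but Step 2 — which you rightly call the heart — has genuine gaps, at exactly the points the paper's machinery is built to handle. For the inclusion $P(R)\subseteq\ker$, your formula $\Ra(\m 1_{|x|>R}|x|^{\alpha_k}Y_\ell)(s,\omega)=c(\alpha_k,l)|s|^{\alpha_k+d-1}Y_\ell(\omega)$ for $s>R$ presupposes that the hyperplane integral exists, but it diverges whenever $\alpha_k\ge 1-d$: parametrizing $\{\omega\cdot y=s\}$ by $z\in\m R^{d-1}$, the integrand decays only like $|z|^{\alpha_k}$, while $L^2$ membership merely requires $\alpha_k<-d/2$, and $1-d<-d/2$ for $d\ge 3$. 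Concretely, in $d=5$ with $l=0$, the function $g_1=\m 1_{|x|>R}|x|^{-3}$ lies in $L^2(\m R^5)$ yet its Radon integral diverges on every hyperplane. So "the cut-offs are invisible" has no pointwise meaning on part of the relevant range of exponents; to define $\Td g_k$ there, and to know that its restriction to $\{|s|>R\}$ sees only the data on $\{|x|>R\}$, one needs a distributional extension of $\Td$ together with a locality property — precisely the content of the paper's Section \ref{sectRadsob} (Proposition \ref{propdsTH1}), a duality construction relying on oddness of $d$, which your "analytic continuation in $\alpha$" silently assumes. The zero/pole bookkeeping for $c(\alpha,l)$ is also never carried out; the paper explicitly avoids this route (calling it delicate to justify) and instead proves $g_k\in K_R^0$ by induction on $k$, using $\partial_s^2 h_k=c_k h_{k-1}$ on $\{s>1\}$ with base case supplied by the $R=0$ identity \eqref{equivfinallemmaT}.

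The more serious gap is the converse inclusion — that $\q K_{R,comp}$ together with the $g_k$, $f_k$ exhausts the kernels — which you dispatch with "inverting the mode-$\ell$ Radon transform shows that...". No argument is given, and inversion alone cannot give one: the inversion formula requires $\Td_l w$ on all of $\m R$ (the back-projection $\Ra_l^*$ at radius $r$ uses the transform on $[-r,r]$), whereas the kernel condition constrains $\Td_l w$ only on $\{|s|>R\}$; what remains on $[-R,R]$ is an unknown $L^2$ function subject only to range conditions. This is where the bulk of the paper's proof lives: Lemma \ref{lmimaTpoly} shows, by an approximation/duality argument, that for $w$ in the kernel $\Td_l w$ is a polynomial of degree at most $l+\frac{d-5}{2}$ cut off to $[-1,1]$, whence the kernel is finite dimensional; invariance under the dilation semigroup together with Lemma \ref{lmfunctionel} then forces a basis of the form $\ln^p(r)\,r^\alpha$; and stability under the localized Laplacian (Lemmas \ref{N_stab_Delta} and \ref{lem:B_0}) forces $p=0$ and $\alpha\in\{\alpha_k\}$. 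Without a substitute for these steps you only get $P(R)\subseteq K_R^1\times K_R^0$, possibly strictly, and then your Step 3 collapses: the Pythagoras identity needs $P(R)$ to equal $\Phi^{-1}(\m 1_{s<R}H)$ exactly, since for data in the kernel but outside $P(R)$ one would have $E_{ext,R}(u)=0$ while $\nor{\pi_R(u_0,u_1)}{\dot H^1\times L^2}<\nor{(u_0,u_1)}{\dot H^1\times L^2}$, contradicting \eqref{channelprojectgene}.
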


The theorem above is the generalization to non radial data of the main result in \cite{KLLS15} (which was revisited by \cite{LSW21}).

\subsection{Even dimension}

In even dimension, we are able to give a more tractable formula for $E_{ext,0}(u)$. 

\begin{prop} \label{extEnergy_even}
Assume that $d$ is even and let $u$ be a solution to \eqref{eq:lw} with initial data $(u_0,u_1)\in\dot H^1 \times L^2(\m R^d)$. Then, we have
\begin{align*}
&E_{ext,0}(u) =\frac{1}{2}\nor{(u_0,u_1)}{\dot H^1 \times L^2(\R^{d})}^{2}\\
&+   \frac{(-1)^{\frac{d}{2}} }{(2\pi)^{d+1}}\Re\int_{\omega \in \m S^{d-1}} \int_0^\infty \int_0^\infty (rs)^{\frac{d-1}{2}} \frac{\overline{s \widehat{ u_{0}}(s \omega)}  r \widehat{ u_{0}}(-r \omega) -  \overline{ \widehat{ u_{1}}(s \omega)}  \widehat{ u_{1}}(-r \omega)}{r+s}drds d\omega.
\end{align*}
More precisely, there hold
\begin{align*}
\MoveEqLeft 2 \lim_{t \to +\infty} \nor{ \nabla u}{L^2(|x| \ge t)}^2 = E_{ext,0}(u) \\
&+ \frac{2}{(2\pi)^{d+1}} \Re\int_{\omega \in \m S^{d-1}}\int_0^\infty \int_0^\infty \frac{r^{\frac{d+1}{2}}\widehat{ u_{0}}(r \omega) s^{\frac{d-1}{2}}\overline{\widehat{ u_{1}}(s \omega)}}{r-s}drds d\omega.
\end{align*}
\end{prop}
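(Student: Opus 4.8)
The plan is to read off both identities from Corollary~\ref{prop:ext_energy}. By \eqref{eq:ortho_u0_u1} at $R=0$ one has $E_{ext,0}(u)=\|\partial_s\Td u_0\|_{L^2([0,\infty)\times\m S^{d-1})}^2+\|\Td u_1\|_{L^2([0,\infty)\times\m S^{d-1})}^2$, while \eqref{equivfinallemmaH1_2} gives $2\lim_{t\to+\infty}\|\nabla u\|_{L^2(|x|\ge t)}^2=\|\partial_s\Td u_0-\Td u_1\|_{L^2([0,\infty)\times\m S^{d-1})}^2$. Expanding the latter square, the precise formula reduces to the main one plus the single cross term $-2\Re\langle\partial_s\Td u_0,\Td u_1\rangle_{L^2([0,\infty)\times\m S^{d-1})}$, so everything reduces to evaluating half-line $L^2$ norms and one half-line inner product of $\partial_s\Td u_0$ and $\Td u_1$. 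For this I will use, for each fixed $\omega$ and then integrate over $\m S^{d-1}$, the elementary identity valid for $F\in L^2(\m R)$,
\[ \int_0^\infty |F(s)|^2\,ds=\tfrac12\|F\|_{L^2(\m R)}^2+\frac{i}{(2\pi)^2}\,\mathrm{p.v.}\!\iint\frac{\hat F(\nu)\overline{\hat F(\mu)}}{\nu-\mu}\,d\nu\,d\mu, \]
together with its polarization, which follow from $\int_0^\infty e^{is\lambda}\,ds=\pi\delta(\lambda)+i\,\mathrm{p.v.}(1/\lambda)$.

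By Plancherel in $s$ and the definition \eqref{def:T}, the full-line terms $\tfrac12\|\partial_s\Td u_0\|_{L^2(\m R\times\m S^{d-1})}^2$ and $\tfrac12\|\Td u_1\|_{L^2(\m R\times\m S^{d-1})}^2$ equal $\tfrac12\|\nabla u_0\|_{L^2}^2$ and $\tfrac12\|u_1\|_{L^2}^2$, producing the leading term $\tfrac12\|(u_0,u_1)\|_{\dot H^1\times L^2}^2$ of the main formula; the full-line part of the cross term vanishes because $u_0,u_1$ are real (it is a purely imaginary multiple of $\int_{\m R^d}|\xi|\hat u_0(\xi)\overline{\hat u_1(\xi)}\,d\xi$, which is itself real, and the relevant integral collapses to $A-\bar A=0$). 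It remains to treat the principal-value terms. Inserting \eqref{def:T}, $\q F_{s\to\nu}(\Td v)$ carries the amplitude $c_0|\nu|^{(d-1)/2}$ and the phase $e^{i\tau}$ on $\{\nu<0\}$, $e^{-i\tau}$ on $\{\nu\ge0\}$. I will split each double integral over the four sign-quadrants of $(\nu,\mu)$: the two same-sign quadrants carry phase $1$, the two opposite-sign quadrants carry $e^{\pm 2i\tau}$; after the substitutions $\nu=\mp r,\ \mu=\pm s$ with $r,s>0$, the opposite-sign quadrants produce the denominator $r+s$ and the same-sign quadrants the denominator $r-s$.

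The conclusion then rests on two cancellations, one from reality of the data and one special to even dimension. For $E_{ext,0}$, set $\Phi(\nu,\mu)=\int_{\m S^{d-1}}(\nu\mu)^{(d-1)/2}\hat u_1(\nu\omega)\overline{\hat u_1(\mu\omega)}\,d\omega$ (and its analogue with an extra factor $\nu\mu$ for the $u_0$ term). The change of variables $\omega\mapsto-\omega$ combined with $\hat u_j(-\xi)=\overline{\hat u_j(\xi)}$ shows that $\Phi$ is real and symmetric, so $\Phi(\nu,\mu)/(\nu-\mu)$ is antisymmetric and the same-sign quadrants integrate to zero; the surviving opposite-sign quadrants add up to $2i\,\Im$ of one integral, and since $e^{2i\tau}=-i(-1)^{d/2}$ in even dimension this yields exactly the claimed $(-1)^{d/2}(2\pi)^{-(d+1)}$ real integral against $1/(r+s)$, the relative minus sign between the $u_0$ and $u_1$ pieces coming from the extra factor $\nu\mu=(i\nu)\overline{(i\mu)}$ in the $u_0$ term. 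For the precise formula the situation is reversed: there the opposite-sign quadrants come with the phase $e^{2i\tau}+e^{-2i\tau}=2\cos\!\big(\tfrac{d-1}{2}\pi\big)=0$ and hence vanish, while the two same-sign quadrants (which are complex conjugates of one another, again by reality) survive and combine to $\tfrac{2}{(2\pi)^{d+1}}\Re\int\cdots/(r-s)$, matching the statement. The main obstacle I anticipate is the rigorous handling of these principal-value double integrals for merely $\dot H^1\times L^2$ data: one must justify $\int_0^\infty e^{is\lambda}\,ds=\pi\delta+i\,\mathrm{p.v.}(1/\lambda)$ as an operator identity on $L^2$, control the diagonal singularity $1/(\nu-\mu)$, and exchange the $\omega$- and $(\nu,\mu)$-integrations; I would do this by first proving the formulas for Schwartz data and then passing to the limit, the passage being controlled by the $L^2$ bounds already underlying \eqref{eq:ortho_u0_u1}.
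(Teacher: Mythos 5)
Your reduction is exactly the paper's: both proofs start from Corollary \ref{prop:ext_energy}, using \eqref{eq:ortho_u0_u1} for $E_{ext,0}(u)$ and \eqref{equivfinallemmaH1_2} for $2\lim_{t\to+\infty}\nor{\nabla u}{L^2(|x|\ge t)}^2$, so that everything reduces to two half-line squares and one half-line pairing of $\partial_s\Td u_0$ with $\Td u_1$. The computational engine differs, though. The paper first invokes the even-dimensional symmetry \eqref{eq:sym_Tpair} to rewrite $2\|F\|^2_{L^2(\R_+\times\m S^{d-1})}$ as $\|F\|^2_{L^2(\R_+\times\m S^{d-1})}+\|\q H F\|^2_{L^2(\R_-\times\m S^{d-1})}$, and then feeds this combination into Lemma \ref{lmfpmHilbHank}; that lemma is organized (via $f=f^++f^-$ and $\widehat{\sgn\cdot g}=\frac1i\q H\hat g$) so that for the squared terms the diagonal singularity never appears and only the harmless Hankel kernel $1/(r+s)$ survives. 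You instead attack $\|F\|^2_{L^2(\R_+)}$ and $\langle F,G\rangle_{L^2(\R_+)}$ head-on via the Plemelj identity and a four-quadrant frequency splitting, and must then cancel the singular same-sign-quadrant terms by hand. Your route is more self-contained (no symmetry lemma, no Hilbert/Hankel lemma) and makes transparent where the kernels $1/(r+s)$ and $1/(r-s)$ come from and why evenness of $d$ enters twice ($e^{2i\tau}+e^{-2i\tau}=0$ kills the cross-term $1/(r+s)$ part, while $e^{2i\tau}=-i(-1)^{d/2}$ turns $\Im$ into $\Re$ in the squares); the price is that a rigorous version for energy data must interpret every principal-value double integral as a Hilbert-transform pairing (bounded on $L^2$) before your density argument closes, which is precisely the bookkeeping Lemma \ref{lmfpmHilbHank} does once and for all. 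Your constants and signs ($2c_0^2/(2\pi)^2=(2\pi)^{-(d+1)}$, the relative minus sign from the extra factor $\nu\mu$) all check out against the statement.

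Two local repairs are needed. First, the surviving same-sign quadrants $I_{++}$, $I_{--}$ of the cross term are \emph{not} complex conjugates of one another: reality gives $I_{--}=-\overline{I_{++}}$, so they combine to $2i\,\Im I_{++}$, not $2\,\Re I_{++}$. This matters: if they were conjugates, their sum would be real, the Plemelj prefactor $i/(2\pi)^2$ would make the cross contribution purely imaginary, and the final $-2\Re\langle\cdot,\cdot\rangle$ would annihilate it, contradicting the (correct) $1/(r-s)$ term in your own conclusion. The right bookkeeping: after relabeling $\omega\to-\omega$ the two quadrants are equal, each is $i$ times a real quantity, and this $i$ (coming from the $i\nu$ in $\q F(\partial_s\Td u_0)$) combines with the prefactor $i/(2\pi)^2$ to produce the real expression you state. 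Second, all your cancellations lean on reality of $(u_0,u_1)$, which the paper's proof never uses (\eqref{eq:sym_Tpair} holds for complex data); reality is in fact unnecessary for you too, since the single substitution $(\nu,\mu,\omega)\mapsto(-\nu,-\mu,-\omega)$ maps each quadrant onto its opposite and yields all three cancellations at once (same-sign squares cancel pairwise, cross-term opposite-sign quadrants acquire the vanishing factor $e^{2i\tau}+e^{-2i\tau}$, cross-term same-sign quadrants double up) for arbitrary data. With these fixes your argument is a complete and correct alternative proof.
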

This is therefore an equivalent of Proposition \ref{thmexteriorgeneR0}. It extends the results of \cite{CoteKenigSchlagEquipart} where this formula first appeared for radial data (to recover this formula, notice that $\hat u_{0}(-r) = \hat u_{0}(r)$ when $u_0$ is radial). Very recently, Delort also derives a similar formula in \cite{Delort21}.

\subsection{Outline and organisation of the paper}

The proof of Theorem \ref{th1} relies on an adequate stationary phase analysis, which is reminiscent of second microlocalisation. Our main input is a careful bound on the remainder term, to derive $L^2$ type convergence. Corollary  \ref{prop:ext_energy} and Propositions \ref{thmexteriorgeneR0} and \ref{extEnergy_even} are easy consequences; Proposition \ref{prop:ortho_cutoff} requires an extra ingredient, depending on the various cases that the cut-offs can take.

\bigskip

Our main goal is obviously Theorem \ref{thmexteriorgene}. The operator $\Td$ is related to the Radon transform $\q R$, which is defined as follows for a function $f\in \mathcal{S}(\R^{d})$
\begin{align}
\label{Radondef}
\forall (s,\omega) \in \m R \times \m S^{d-1}, \quad (\Ra f)(s,\omega):=\int_{\omega\cdot y=s}f(y)dy.
\end{align}
We will prove in section \ref{sectRadsob} that some variant of the operator $\Ra$ can be extended to a map $L^2(\m R^d) + \dot H^1(\m R^d) \to \q S'(\m R \times \m S^{d-1})$ and that, when $d$ is odd, one has the equality
\begin{align} \label{def:T_odd}
\Td = c_0 (-1)^{\frac{d-1}{2}}\partial_s^{\frac{d-1}{2}} \Ra,
\end{align}
seen as operators on $\q S(\m R^d)$.
We emphasize that $\partial_s^{\frac{d-1}{2}}$ is a differential operator, and so, in odd dimensions, $\Td$ enjoys similar locality properties featured by $\Ra$: this is a key aspect of the analysis. In order to retain these locality properties for data in $L^2$ (or $\dot H^1$ for $\partial_s \Td$), we cannot use Fourier analysis and instead proceed by duality. This is the purpose of section \ref{sectRadsob}.
A special attention is required by the fact that the Radon transform has usually bad decay properties, even for Schwartz class function

Once this is done, we can formulate and prove Theorem \ref{thmexteriorgene}. An abstract lemma shows that it is enough to describe the kernel of $\m 1_{|s| \ge R} \Td: L^2(\m R^d) \to L^2(\m R \times \m S^{d-1})$, and similarly for $\partial_s \Td$. The computation of both kernels is really similar, but has to be carried out separately: we concentrate on $\ker \m 1_{|s| \ge R} \Td$. The computation of this kernel follows from a combination of several observations. 

First, we can restrict to compute harmonics by harmonics, that is for function of the form $w(|x|) Y_\ell(x/|x|)$. Second, denoting $\q N_{\ell}^0$ this kernel, we can prove that its image by $\Td$ is actually a polynomial restricted to $|s| \le R$, with a bound on the degree. As $\Td$ is an isometry on $L^2$, we infer that $\q N_{\ell}^0$ is finite dimensional. Third, an important property is that $\q N_{\ell}^0$ is stable by a semi-group of dilations, from which we infer that it must be made of very specific function $w$, of type $w(r) = r^\alpha \ln^\beta(|r|)$. We prove that $\beta=0$ and that $\alpha$ has to be an integer as a consequence of a further stability property, namely by applying an operator related to the Laplacian $\Delta$ (correctly localized). Finally, we have to prove that all the remaining functions do actually belong to $\q N_{\ell}^0$. This does not follow in an obvious way by direct computations, because of integrability issues due to low decay; instead we use an induction and stability by derivation again.

\bigskip

The next sections are organized as follows. In section \ref{sectproofthm}, we prove Theorem \ref{th1} and Corollary \ref{prop:ext_energy}. As an application, we quickly deduce Propositions \ref{thmexteriorgeneR0}. In section 3, we detail the proof of Proposition \ref{prop:ortho_cutoff}. 
In section \ref{sectRadsob}, we develop a suitable functional framework for the Radon transform in Sobolev space and in section \ref{secRadball}, we  study the operator $\Td$ outside balls and prove Theorem \ref{thmexteriorgene}.

\section{Proof of Theorem \ref{th1} and consequences}
\label{sectproofthm}

Before we proceed with the main proofs, let us first observe that $\q T: L^2(\m R^d) \to L^2(\m R \times \m S^{d-1})$ and $\partial_s \q T: \dot H^1(\m R^d) \to L^2(\m R \times \m S^{d-1})$ are isometries.

\begin{lem} \label{lem:T_iso}
The operator $\q T: L^2(\m R^d) \to L^2(\m R \times \m S^{d-1})$ is a (well defined and) continuous map and
\[ \forall v \in L^2(\m R^d), \quad \| \q T v \|_{L^2(\m R \times \m S^{d-1})} = \| v \|_{L^2(\m R^d)}. \]
Similarly, $\partial_s \q T: \dot H^1 (\m R^d) \to L^2(\m R \times \m S^{d-1})$ is a (well defined and) continuous map and
\[ \forall v \in \dot H^1(\m R^d), \quad \| \partial_s \q T v \|_{L^2(\m R \times \m S^{d-1})} = \| v \|_{\dot H^1}. \]
Also, if $d$ is odd, then one has the symmetry: for $s \in \m R$ and $\omega \in \m S^{d-1}$,
\begin{align}
\label{eq:sym_T}
\q T v(-s,\omega) = (-1)^{\frac{d-1}{2}}\q T v(s,-\omega),
\end{align}
while if $d$ is even
\begin{align}
\label{eq:sym_Tpair}
\q T v(-s,\omega) = (-1)^{\frac{d}{2}}\mathcal{H}(\q T v)(s,-\omega). 
\end{align}
Above, $\q H$ denotes the Hilbert transform with respect to the $s$ variable.
\end{lem}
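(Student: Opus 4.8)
The plan is to verify everything directly from the Fourier-side definition \eqref{def:T}, reducing each claim to a one-dimensional Plancherel identity on the half-line together with the spherical symmetry $\xi = \nu\omega$. Recall that $\q F_{s\to\nu}(\Td v)(\nu,\omega) = c_0|\nu|^{\frac{d-1}{2}}\theta(\nu)\hat v(\nu\omega)$ where $\theta(\nu) = e^{i\tau}\m1_{\nu<0} + e^{-i\tau}\m1_{\nu\ge0}$ has modulus one. The first step is the isometry of $\Td$ on $L^2$. By Plancherel in the $s$ variable,
\begin{align*}
\|\Td v\|_{L^2(\m R\times\m S^{d-1})}^2 = \frac{1}{2\pi}\int_{\m S^{d-1}}\int_{\m R} c_0^2 |\nu|^{d-1}|\hat v(\nu\omega)|^2\, d\nu\, d\omega,
\end{align*}
and since $\frac{c_0^2}{2\pi} = \frac{1}{2(2\pi)^d}$, splitting the $\nu$-integral into $\nu\ge0$ and $\nu<0$ and using $|{-\nu}|^{d-1} = |\nu|^{d-1}$, the two half-lines together reconstitute the full polar integral $\frac{1}{(2\pi)^d}\int_{\m R^d}|\hat v(\xi)|^2\,d\xi = \|v\|_{L^2}^2$ via the polar change of variables $\xi = \nu\omega$, $d\xi = |\nu|^{d-1}\,d\nu\,d\omega$ over $\nu\ge0$ (the factor $2$ from the two signs is exactly compensated). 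This is the crux, and the bookkeeping of the constants $c_0$, the $2\pi$ from Plancherel, and the polar Jacobian is where care is needed. The $\partial_s\Td$ statement on $\dot H^1$ is identical after inserting the multiplier $|\nu|$ from $\partial_s$, which upgrades $|\nu|^{d-1}$ to $|\nu|^{d+1}$ and reproduces $\||\xi|\hat v\|_{L^2}^2 = \|\nabla v\|_{L^2}^2 = \|v\|_{\dot H^1}^2$; continuity and well-definedness follow from the isometry by density of $\q S(\m R^d)$.

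For the symmetries, I would compute the Fourier transform of both sides and compare. For the reflection $s\mapsto -s$, note $\q F_{s\to\nu}(\Td v(-\cdot,\omega))(\nu) = \q F_{s\to\nu}(\Td v)(-\nu,\omega) = c_0|\nu|^{\frac{d-1}{2}}\theta(-\nu)\hat v(-\nu\omega)$. When $d$ is odd, write $\hat v(-\nu\omega) = \hat v(\nu(-\omega))$ and use that $\theta(-\nu) = e^{i\tau}\m1_{\nu>0} + e^{-i\tau}\m1_{\nu\le0}$, so $\theta(-\nu) = \overline{\theta(\nu)}$; here $\tau = \frac{d-1}{4}\pi$, and the key algebraic fact is that $e^{-2i\tau} = e^{-i\frac{d-1}{2}\pi} = (-1)^{\frac{d-1}{2}}$ when $d$ is odd (so $\frac{d-1}{2}\in\m N$), which converts $\overline{\theta(\nu)}$ into $(-1)^{\frac{d-1}{2}}\theta(\nu)$ up to the sign bookkeeping. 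Matching this against $\q F_{s\to\nu}(\Td v(\cdot,-\omega))(\nu) = c_0|\nu|^{\frac{d-1}{2}}\theta(\nu)\hat v(\nu(-\omega))$ yields \eqref{eq:sym_T} after taking inverse Fourier transforms. The even case \eqref{eq:sym_Tpair} is the same computation, except that now $\frac{d-1}{2}\notin\m N$ and $e^{-2i\tau} = e^{-i\frac{d-1}{2}\pi} = \pm i (-1)^{\frac{d}{2}}$ introduces a factor $\operatorname{sgn}(\nu)$ discrepancy between $\theta(-\nu)$ and $(-1)^{\frac{d}{2}}\theta(\nu)$; since the multiplier $-i\operatorname{sgn}(\nu)$ is precisely the Fourier symbol of the Hilbert transform $\q H$ in the $s$ variable, this extra sign is absorbed exactly by inserting $\q H$, producing \eqref{eq:sym_Tpair}.

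The main obstacle I anticipate is not any single calculation but pinning down the phase $\theta(\nu)$ consistently: its complex conjugate, its behaviour under $\nu\mapsto-\nu$, and the interaction of $e^{\pm i\tau}$ with the parity of $\frac{d-1}{2}$ must all be tracked without sign errors, and the even-dimensional case requires recognizing the leftover factor as the Hilbert-transform symbol rather than a pointwise multiplier. I would therefore organize the proof as: (i) the Plancherel isometry computation for $\Td$ and then $\partial_s\Td$; (ii) a short lemma recording $\theta(-\nu)$, $\overline{\theta(\nu)}$ and the values of $e^{-2i\tau}$ in each parity; (iii) the reflection symmetry, odd case, by direct Fourier comparison; (iv) the even case, identifying the residual symbol with $\q H$. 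Throughout I would work on $\q S(\m R^d)$ where all manipulations are legitimate, and extend to $L^2$ and $\dot H^1$ by the density and continuity established in step (i).
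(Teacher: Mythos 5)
Your proposal is correct and follows essentially the same route as the paper: Plancherel in the $s$ variable plus polar coordinates $\xi=\nu\omega$ for the two isometries (with the same bookkeeping of $c_0$, the $2\pi$ from Plancherel, and the compensating factor $2$ from the two half-lines), and a direct Fourier-side comparison for the symmetries, using $e^{i\tau}=(-1)^{\frac{d-1}{2}}e^{-i\tau}$ in odd dimension and $e^{i\tau}=-i(-1)^{\frac{d}{2}}e^{-i\tau}$ in even dimension, where the residual $-i\,\sgn(\nu)$ is recognized as the symbol of $\q H$ in the paper's convention. No gaps; your packaging of the phase into $\theta(\nu)$ and its conjugate is only a cosmetic difference from the paper's computation with indicator functions.
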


\begin{proof}
The point is that the Fourier multiplier defining $\q T$ is of modulus $1$ for all $\nu, \omega$. It suffices to show the equalities of norms for $v \in \q S(\m R^d)$.

Let us first prove the first statement: we compute via Plancherel on $\m R$ and $\m R^d$,
\begin{align*}
 \|  \q T v \|_{L^2(\m R \times \m S^{d-1})}^2 & = \frac{1}{2\pi} \int_{\m S^{d-1}} \int_{\m R} |\q F_{\m R} \q Tv(\nu,\omega)|^2 d\nu d\omega \\
& = \frac{1}{2\pi} c_0^2 \int_{\m S^{d-1}} \int_{\m R}  |\hat v(\nu\omega)|^2 |\nu|^{d-1} d\nu d\omega \\
& = \frac{1}{2\pi} \frac{1}{2 (2\pi)^{d-1}} 2  \int_{\m S^{d-1}} \int_{0}^{\infty}  |\hat v(\nu\omega)|^2 \nu^{d-1} d\nu d\omega \\
& = \frac{1}{(2\pi)^d} \int_{\m R^d} |\hat v(\xi)|^2 d\xi = \| v \|_{L^2(\m R^d)}^2.
\end{align*}
For the second statement, we observe that
\[ \q F_{\m R} (\partial_s \q Tv)(\nu,\omega) = i\nu \q F_{\m R} (\q Tv)(\nu,\omega), \]
so that the same computations give 
\begin{align*}
\MoveEqLeft \| \partial_s \q T v \|_{L^2(\m R \times \m S^{d-1})}^2 = \frac{1}{2\pi} \frac{1}{2 (2\pi)^{d-1}} 2  \int_{\m S^{d-1}} \int_{0}^{\infty}  |\hat v(\nu,\omega)|^2 \nu^{d+1} d\nu d\omega \\
& = \frac{1}{(2\pi)^d} \int_{\m R^d} |i \xi \hat v(\xi)|^2 d\xi =  \frac{1}{(2\pi)^d} \int_{\m R^d} |\q F_{\m R^d}(\nabla v) (\xi)|^2 d\xi = \| \nabla v \|_{L^2(\m R^d)}^2.
\end{align*} 
When $d$ is odd, observe that $e^{i\tau} = (-1)^{\frac{d-1}{2}} e^{-i\tau}$. Therefore, for $\nu \ne 0$,
\begin{align*}
\q F_{s \to \nu}( \q T v(-s,\omega))(\nu,\omega) &=  \q F_{\m R} (\q T v)(-\nu,\omega) \\
& = c_0 |\nu|^{\frac{d-1}{2}} ( e^{i\tau} \m 1_{-\nu < 0} + e^{-i\tau} \m 1_{-\nu \ge 0}) \hat v(-\nu \omega) \\
& =  (-1)^{\frac{d-1}{2}} c_0 |\nu|^{\frac{d-1}{2}} ( e^{i\tau} \m 1_{\nu \le 0} + e^{-i\tau} \m 1_{\nu > 0}) \hat v(\nu (-\omega))) \\
& = (-1)^{\frac{d-1}{2}}\q F_{\m R} (\q T v)(\nu,-\omega).
\end{align*}
When $d$ is even, we have instead $e^{i\tau} = -i(-1)^{\frac{d}{2}} e^{-i\tau}$. Therefore, for $\nu \ne 0$,
\begin{align*}
\q F_{s \to \nu}( \q T v(-s,\omega))(\nu,\omega) &=  \q F_{\m R} (\q T v)(-\nu,\omega) \\
& = c_0 |\nu|^{\frac{d-1}{2}} ( e^{i\tau} \m 1_{-\nu < 0} + e^{-i\tau} \m 1_{-\nu \ge 0}) \hat v(-\nu \omega) \\
& =  -i(-1)^{\frac{d}{2}} c_0 |\nu|^{\frac{d-1}{2}} ( -e^{i\tau} \m 1_{\nu \le 0} + e^{-i\tau} \m 1_{\nu > 0}) \hat v(\nu (-\omega))) \\
& = (-1)^{\frac{d}{2}}\q F_{\m R} (\mathcal{H}\q T v)(\nu,-\omega).
\end{align*}
We conclude in both cases by taking inverse Fourier transform in the $\nu$ variable.
\end{proof}

\begin{proof}[Proof of Theorem \ref{th1}]

We first prove 1), that is the computations for the half-wave equation.

We will first assume that $f\in \mathcal{S}(\R^d)$ are smooth and decaying, and that $ \hat{f} \in \q D(\R^d \setminus \{ 0 \})$ is smooth and has compact support away from 0.

We denote $v$ the solution of the first (outcoming) half wave equation, so that 
\[ \hat{v}(t,\xi)=e^{it|\xi|}\hat{f}(\xi). \]
 The inversion formula gives 
\begin{align*}
v(t,x)&=\frac{1}{(2\pi)^d}\int_{\R^d}e^{ix\cdot\xi}e^{it|\xi|}\hat{f}(\xi)d\xi\\
&=\frac{1}{(2\pi)^d}\int_{0}^{+\infty}\int_{\m S^{d-1}} e^{ir x\cdot\omega }e^{itr}\hat{f}(r\omega)r^{d-1}dr d\omega \\
&=\frac{1}{(2\pi)^d}\int_{0}^{+\infty}r^{d-1}e^{itr} \int_{\m S^{d-1}} e^{ir x\cdot\omega }\hat{f}(r\omega)dr d\omega.
\end{align*}
We will use the  polar coordinates notations $r_x$, $\omega_x$, that is: $r_x :=|x|$ and $\ds \omega_x:=\frac{x}{|x|}$.

We study the second integral thanks to the method of stationary phase, with $r$ fixed as a parameter that vary on a bounded set (relative to the support of $f$) and $r_x$ as a large parameter. For $r\in \R_{+}^{*}$ and $\sigma \in \m S^{d-1}$, we denote $\varphi_{r,\sigma}: \m S^{d-1} \to \m R$ the function defined by
\[ \varphi_{r,\sigma}(\omega)=r \sigma \cdot\omega. \]
The second integral can then be written
\[
\int_{\m S^{d-1}} e^{ir x\cdot\omega }\hat{f}(r\omega)d\omega = \int_{\m S^{d-1}} e^{i r_x \varphi_{r,\omega_x} (\omega)}\hat{f}(r\omega)d\omega.
\]
Observe that for all $\sigma \in \m S^{d-1}$, $\varphi_{r,\sigma}$ has two critical points
\begin{itemize}
\item $\omega_1=\sigma$ with $signature(\varphi_{r,\sigma}''(\omega_1))=(0,-(d-1))$ and $\det(\varphi_{r,\sigma}''(\omega_1))=(-r)^{d-1}$
\item $\omega_2=-\sigma$ with $signature(\varphi_{r,\sigma}''(\omega_2))=(d-1,0)$ and $\det(\varphi_{r,\sigma}''(\omega_2))=r^{d-1}$
\end{itemize}
 Note that the computations of the properties of $\varphi_{r,\sigma}''$ can be obtained for instance by reducing to $\sigma=(0,\dots,0,1)$ by rotation invariance and working in local coordinates $\omega=(x_{1},\dots,x_{d-1},\pm \sqrt{1- (x_{1}^{2}+\dots+x_{d-1}^{2})})$ close to $\pm \sigma$.
 
So, using the oscillatory integral formula, we have
\begin{gather}
\begin{aligned}
 \int_{\m S^{d-1}} e^{ir x\cdot\omega }\hat{f}(r\omega)d\omega & =\left(\frac{2\pi}{rr_x}\right)^{\frac{d-1}{2}}e^{-i\tau}e^{irr_x}\hat{f}(r\omega_x) \\
& \qquad +\left(\frac{2\pi}{rr_x}\right)^{\frac{d-1}{2}}e^{i\tau}e^{-irr_x}\hat{f}(-r\omega_x)+ \text{Rem}(r,r_x,\omega_{x}),
\end{aligned} \\
\label{estimR} \text{with} \quad |\text{Rem}(r,r_x,\omega_{x})| \le \frac{C}{r_x^{\frac{d+1}{2}}}.
\end{gather}
and $\text{Rem}$ has compact support in $\R_{+}^{*}$ as a function of $r$. We refer for instance to Grigis-Sj\"ostrand \cite[Proposition 2.3, p.22]{GrigisSjostrandBook} or \cite[Theorem 7.7.5]{HormanderI}). In these references, the estimates for the oscillatory integral are given for regular compactly supported functions on $\R^{d-1}$; it is easy to obtain the associated result on the compact manifold $\m S^{d-1}$ by working in coordinate charts. The constant $C$ then depends on $\varphi_{r,\sigma}$ and some ($L^{\infty}$) bounds on the derivatives of  $\omega \mapsto \hat{f}(r\omega)$. One can check that once $f$ such that $ \hat{f} \in \q D(\R^d \setminus \{ 0 \})$  is fixed, the constant $C$ in \eqref{estimR} can be made uniform in $r$ and $\omega_{x}$. We also notice that in the above references, the estimate is sometimes written for $r_{x}\geq 1$, but it is easy to check that it remains true for small $r_{x}$, for which it is actually trivial. We also refer to \cite[Theorem 7.7.14]{HormanderI} for a more geometric result on such integral on a hypersurface.

Therefore, we have the pointwise estimate of the error term
\begin{gather} \label{est:err}
\forall t \in \m R, x \in \m R^d, \quad  \left| \int_0^\infty \text{Rem}(r,r_x,\omega_x) r^{d-1} e^{itr} dr \right| \le \frac{C}{|x|^{\frac{d+1}{2}}}.
\end{gather}
We now compute the contribution of the other two terms
 \begin{align*}
\MoveEqLeft \tilde v(t,x) := \left(\frac{1}{r_x}\right)^{\frac{d-1}{2}}\frac{1}{(2\pi)^{\frac{d+1}{2}}} \\
& \times \int_{0}^{+\infty}r^{\frac{d-1}{2}}e^{itr} \left[e^{-i\tau}e^{irr_x}\hat{f}(r\omega_x)+e^{i\tau}e^{-irr_x}\hat{f}(-r\omega_x)\right] dr
\end{align*}
The first term writes
\begin{align*}
\tilde v_1(t,x)&: =\left(\frac{1}{r_x}\right)^{\frac{d-1}{2}}\frac{e^{-i\tau}}{(2\pi)^{\frac{d+1}{2}}}\int_{0}^{+\infty}r^{\frac{d-1}{2}}e^{i(t+r_x)r}\hat{f}(r\omega_x) dr.
\end{align*}
We want an asymptotic as $t \to +\infty$ so that $t+r_x$ is a large positive parameter. Therefore the phase in $r$ is never critical, and we get that for any $N\in \N$, there exists $C_N >0$ such that
\begin{gather}
\label{estimu1}
\forall t  \ge 0, \ \forall x \in \m R^d, \quad |\tilde v_1(t,x)|\le C_N \frac{1}{r_x^\frac{{d-1}}{2} (t+r_x)^N}
\end{gather}
Thus we are left with the second term
\begin{align*}
\tilde v_2(t,x)&:=\left(\frac{1}{r_x}\right)^{\frac{d-1}{2}}\frac{e^{i\tau}}{(2\pi)^{\frac{d+1}{2}}}\int_{0}^{+\infty}r^{\frac{d-1}{2}}e^{ir(t-r_x)}\hat{f}(-r\omega_x) dr\\
&=\left(\frac{1}{r_x}\right)^{\frac{d-1}{2}}\frac{e^{i\tau}}{(2\pi)^{\frac{d+1}{2}}}\int_{-\infty}^{0} |r|^{\frac{d-1}{2}}e^{-ir(t-r_x)}\hat{f}(r\omega_x) dr \\
& = \left(\frac{1}{r_x}\right)^{\frac{d-1}{2}}\frac{e^{i\tau }}{(2\pi)^{\frac{d-1}{2}}}f_{\omega_x}^- (r_x-t).
\end{align*}
(Recall that $f_\omega^-: \m R \to \m C$ has Fourier transform $\q F_{\m R}(f_\omega^-)(\nu) = \m 1_{\nu \le 0 } |\nu|^{\frac{d-1}{2}}\hat{f}(\nu\omega)$; note that it is a Schwartz function because the support of  its Fourier transform is away from zero).

Gathering our computations yields the pointwise estimate, for $t \ge 0$ and $x \in \m R^d$:
\begin{gather} \label{est:u_ptw}
\left| (e^{i t|D|} f)(x) - \frac{e^{i\tau}}{(2\pi |x|)^{\frac{d-1}{2}} }   f_{\omega_x}^- (|x|-t) \right| \le \frac{C}{|x|^{\frac{d+1}{2}}}.
\end{gather}
This will make it quite clear that the solution is concentrated close to the annulus 
\[ A_{t,R} := \left\{ x\in\R^d:  |t|-R \le |x| \le  |t| +R \right\} \]
 for large $R \ge 0$ as $t \to +\infty$ (with $|t| \ge R$). Due to the conservation of $L^2$ norm, we will infer that $e^{it D|} f$ has vanishing $L^2$ norm outside large annuli centered around the sphere of radius $|t|$.

Indeed, we have more precisely
\begin{align*}
\MoveEqLeft \| e^{it |D|} f \|_{L^2(A_{t,R})}^2 =  \int_{\omega \in \m S^{d-1}} \int_{r=t-R}^{t+R} |(e^{it |D|} f)(r\omega)|^2 r^{d-1} drd\omega \\
& =  \frac{1}{(2\pi)^{d-1}} \int_{\m S^{d-1}} \int_{r=t-R}^{t+R} \left( | f_{\omega}^-(r-t) |^2  +  | f_{\omega}^-(r-t) | O \left( \frac{1}{r} \right) + O \left( \frac{1}{r^2} \right) \right) dr d\omega \\
& =  \frac{1}{(2\pi)^{d-1}} \int_{\m S^{d-1}} \left( \int_{-R}^{R}  | f_{\omega}^-(r)|^2 dr + \| f_{\omega}^{-} \|_{L^2(\m R)} O \left(  \int_{t-R}^{t+R} \frac{dr}{r^2} \right)^{1/2} + O \left(  \int_{t-R}^{t+R} \frac{dr}{r^2} \right) |\right) d\omega\\
& = \frac{1}{(2\pi)^{d-1}} \int_{\m S^{d-1}} \int_{-R}^{R}  | f_{\omega}^-(r)|^2 dr d\omega + O\left( \frac{1}{\sqrt{t-R}} \right), 
\end{align*}
 where the implicit constant is uniform in $t \ge R \ge 0$; we used the Cauchy-Schwarz inequality, and the fact that, due to Plancherel identity
\begin{align*}
\int_{\m S^{d-1}} \int_{\m R}  | f_{\omega}^-(r)|^2 dr d\omega & = \frac{1}{2\pi} \int_{\m S^{d-1}} \int_{-\infty}^{0} |\hat f(\nu \omega)|^2 \nu^{d-1} d\nu = \frac{1}{2\pi} \int_{\m R^d} |\hat f(\xi)|^2 d\xi \\
& = (2\pi)^{d-1}  \| f \|_{L^2}^2.
\end{align*}
Let $\e>0$. The above computation shows that for $R$ large enough
\[ \left| \frac{1}{(2\pi)^{d-1}} \int_{\m S^{d-1}} \int_{-R}^{R}  | f_{\omega}^-(r)|^2 dr d\omega - \| f \|_{L^2}^2 \right| \le \e. \]
Therefore, for such $R$,
\[ \limsup_{t \to +\infty} | \| e^{it |D|} f \|_{L^2(A_{t,R})}^2 - \| f \|_{L^2}^2| \le \e, \]
As $\| e^{it |D|} f \|_{L^2} = \| f \|_{L^2}$, we get
\begin{align} \label{eq:w_out}
\lim_{R \to +\infty} \limsup_{t \to +\infty}  \| e^{it |D|} f \|_{L^2({}^c A_{t,R})} =0,
\end{align}
which is \eqref{eq:conc_cone} for $t\to +\infty$.

We can now finish up and prove \eqref{eq:expansion_w}. Due to the pointwise bound \eqref{est:u_ptw}, we have
\begin{align*}
\int_{|x| \ge t/2} \left| (e^{it|D|}f)(x) - \frac{e^{i\tau}}{(2\pi |x|)^{\frac{d-1}{2}} }  f_{\omega_x}^- (|x|-t) \right|^2 dx \le C \int_{t/2}^{+\infty} \frac{dr}{r^2}\underset{t\to +\infty}{\longrightarrow}  0.
\end{align*}
Now, from \eqref{eq:w_out}, one easily has that
\begin{gather} \label{uzerot2}\| e^{it|D|}f \|_{L^2(|x| \le t/2)} \underset{t\to +\infty}{\longrightarrow}  0. \end{gather}
On the other hand, 
\begin{align}
\nonumber
\MoveEqLeft \left\| \frac{1}{|x|^{\frac{d-1}{2}}} f_{\omega_x}^- (|x|-t) \right\|_{L^2(|x| \le t/2)}^2  = \int_{B(0,t/2)} \frac{1}{|x|^{d-1}} |f_{\omega_x}^- (|x|-t)|^2 dx \\
& = \int_{r=0}^{t/2} \int_{\omega \in \m S^{d-1}} |f_{\omega}^- (r-t)|^2 dr d\omega  =  \int_{\omega \in \m S^{d-1}} \int_{r=-t}^{-t/2} |f_{\omega}^-(r)|^2 dr d\omega.\label{calculL2f}
\end{align}
We already saw that
\[  \int_{\omega \in \m S^{d-1}} \int_{\m R} |f_{\omega}^-(r)|^2 dr d\omega = \| f \|_{L^2}^2 <+\infty, \]
so that the above is an exhausting integral, which thus tends to $0$ as $t \to +\infty$. We infer from this and \eqref{uzerot2} that
\[ \int_{|x| \le t/2} \left| (e^{it|D|}f)(x) - \frac{e^{i\tau}}{(2\pi |x|)^{\frac{d-1}{2}} }  f_{\omega_x}^- (|x|-t) \right|^2 dx \underset{t\to +\infty}{\longrightarrow}  0. \]
Hence \eqref{eq:expansion_w} is proved and 1) is complete for the case $ \hat{f} \in \q D(\R^d \setminus \{ 0 \})$. 

For the general case, by density, it is sufficient to notice that for fixed $t$, the maps 
\[  f\mapsto e^{it|D|} f \quad \text{and} \quad f\mapsto \left( x \mapsto  \frac{e^{i\tau}}{(2\pi|x|)^{\frac{d-1}{2}}} f_{x/|x|}^- (|x|-t)) \right) \]
are linear continuous from $L^{2}(\R^{d})$ to $L^{2}(\R^{d})$ with bound uniform in $t$. The first one is obvious due to Plancherel while the second one can be obtained by a computation similar to \eqref{calculL2f}.

\bigskip

Before we prove, 2), let us first derive the formula \eqref{eq:expansion_g} for $w(t) = e^{-it |D|} g$. One can proceed as before, by noticing that  that up to an error term with size as in \eqref{est:err}, the main contribution is 
\begin{align*}
\MoveEqLeft \tilde w(t,x)= \left(\frac{1}{r_x}\right)^{\frac{d-1}{2}}\frac{1}{(2\pi)^{\frac{d+1}{2}}} \\
& \times \int_{0}^{+\infty}r^{\frac{d-1}{2}}e^{-itr} \left[e^{-i\tau} e^{irr_x}\hat{g}(r\omega_x)+e^{i\tau}e^{-irr_x}\hat{g}(-r\omega_x)\right],
\end{align*}
and that this time, the only relevant term is
\begin{align*}
\tilde w_1(t,x)&=\left(\frac{1}{r_x}\right)^{\frac{d-1}{2}}\frac{e^{-i\tau}}{(2\pi)^{\frac{d+1}{2}}}\int_{0}^{+\infty}r^{\frac{d-1}{2}}e^{-i(t-r_x)r}\hat{g}(r\omega_x) dr \\
& =\left(\frac{1}{r_x}\right)^{\frac{d-1}{2}}\frac{e^{-i \tau} }{(2\pi)^{\frac{d-1}{2}}} g_{\omega_x}^+(r_x-t),
\end{align*}
as $\q F_{\m R}(g_\omega^+)(r) = \m 1_{r \ge 0 }r^{\frac{d-1}{2}}\hat{g}(r\omega)$.

Or as mentioned in the introduction, one can also simply take complex conjugate in the expansion of $e^{it|D|} \overline{g}$ and observe that
\[ \overline{(\overline g)_\omega^-(s)} = g_\omega^+(s). \]
Indeed, taking Fourier transform, there hold
\begin{align*}
\q F_{\m R} ( \overline{(\overline g)_\omega^-})(\nu) & = \int_{\m R} e^{-i s \nu} \overline{(\overline g)_\omega^-(s)} ds = \overline{ \int_{\m R} e^{i s \nu} (\overline g)_\omega^-(s) ds} = \overline{\q F_{s \to \nu} ((\overline g)_\omega^-)(-\nu)} \\
& = \m 1_{- \nu \le 0} |\nu|^{\frac{d-1}{2}} \overline{ \hat {\overline g}(-\nu \omega)} =  \m 1_{\nu \ge 0} |\nu|^{\frac{d-1}{2}} \hat g(\nu \omega) =  \q F_{\m R} (g_\omega^+)(\nu).
 \end{align*}

\bigskip

We now turn to 2). 
We recall that with 
\[ f = \ds \frac{1}{2}  \left( u(0) + \frac{1}{i |D|} \partial_t u(0) \right) \quad \text{and} \quad \ds g= \frac{1}{2}  \left( u(0) - \frac{1}{i |D|} \partial_t u(0) \right), \]
we have $f,g \in \dot H^1(\m R^d)$ and
\begin{gather} \label{eq:du_f_g} \nabla_{t,x} u(t) = e^{i t |D|} \begin{pmatrix}
i |D| f \\
\nabla_x f  \end{pmatrix} + e^{-i t |D|} \begin{pmatrix}
-i |D| g\\
\nabla_x g \end{pmatrix} \in L^2(\m R^d,\m C^{1+d}).
\end{gather}
So that for $i=1, \dots, d$, and using \eqref{eq:expansion_w} and \eqref{eq:expansion_g}
\begin{align} \label{eq:dtu}
 \partial_t u(t) & = \frac{1}{(2\pi |x|)^{\frac{d-1}{2}}} \left( e^{i\tau} (i |D| f)_{x/|x|}^- + e^{-i\tau} (-i |D| g)_{x/|x|}^+ \right)(|x|-t) + \e_0(t,x), \\
 \partial_i u(t) & = \frac{1}{(2\pi |x|)^{\frac{d-1}{2}}} \left( e^{i\tau} (\partial_i f)_{x/|x|}^- + e^{-i\tau} (\partial_i g)_{x/|x|}^+ \right)(|x|-t) + \e_i(t,x) \label{eq:diu}
 \end{align} 
where $\| \e_i(t) \|_{L^2(\m R^d)} \to 0$ as $t \to +\infty$. Now we have 
\begin{gather*}
\q F_{\m R} (\partial_i f)_{\omega}^-(\nu)  = \m 1_{\nu < 0} \nu^{\frac{d-1}{2}} \widehat{\partial_i f}(\nu \omega) = \m 1_{\nu < 0} |\nu|^{\frac{d-1}{2}} (i \nu \omega_i) \hat f(\nu \omega) = \omega_i \q F_{\m R}( \partial_\rho f_\omega^-)(\nu),
\end{gather*}
so that 
\begin{align*}
(\partial_i f)_{\omega}^-(s) = \omega_i \partial_s (f_\omega^-)(s) \quad \text{and similarly} \quad  (\partial_i g)_{\omega}^+(s) = \omega_i \partial_s (g_\omega^+)(s). 
\end{align*}
Regarding the time derivatives:
\begin{gather}
\q F_{\m R} (i|D| f)_{\omega}^-(\nu) = \m 1_{\nu < 0} |\nu|^{\frac{d-1}{2}} \widehat{i|D| f}(\nu \omega) = \m 1_{\nu < 0} |\nu|^{\frac{d-1}{2}} (-i\nu) \hat f(\nu \omega) = - \q F_{\m R} (\partial_\rho f_\omega^-)(\nu)
\end{gather}
so that 
\[ (i|D| f)_{\omega}^- = - \partial_s f_\omega^- \quad \text{and similarly} \quad (-i|D| g)_{\omega}^+ = - \partial_s g_\omega^+. \]
This can be sumarized by considering  the function defined for $\omega \in \m S^{d-1}$ and $s \in \m R$ by
\[ h(s, \omega) := \partial_s (e^{i \tau}f_{\omega}^-  + e^{-i\tau} g_{\omega}^+)(s), \]
so that
\begin{gather} \label{eq:u_h} \nabla_{t,x} u(t,x) =   \frac{1}{(2\pi |x|)^{\frac{d-1}{2}}}  h \left( |x|-t, \frac{x}{|x|} \right) \begin{pmatrix} 
-1 \\
x/|x|
\end{pmatrix}
+ \e(t,x),
\end{gather}
where $\e(t) \to 0$ in $L^2(\m R^d,\m C^{1+d})$. It suffices to relate $h$ and $\q T$, which we do now by computing the 1D Fourier transform of $h$ in the $s$ variable:
\begin{align*}
\q F_{s \to \nu} h(\nu,\omega) & = i \nu(e^{i \tau} \q F_{\m R} f_{\omega}^-  + e^{-i\tau} \q F_{\m R}(g_{\omega}^+)(\nu) \\
& = i \nu  |\nu|^{\frac{d-1}{2}}  (e^{i \tau}  \m 1_{\nu \le 0} \hat f + e^{-i \tau}  \m 1_{\nu \ge 0} \hat g)(\nu \omega) \\
& = i \nu  |\nu|^{\frac{d-1}{2}}  \left[ e^{i \tau}  \m 1_{\nu \le 0} \frac{1}{2} \left( \hat u_0 + \frac{1}{i|\nu|} \hat u_1 \right) + e^{-i \tau}  \m 1_{\nu \ge 0}  \frac{1}{2} \left( \hat u_0 - \frac{1}{i|\nu|} \hat u_1 \right)  \right] (\nu \omega) \\
& = \frac{1}{2} i\nu  |\nu|^{\frac{d-1}{2}}  \left[ ( e^{i \tau}  \m 1_{\nu \le 0} +  e^{-i \tau}  \m 1_{\nu \ge 0})  \hat u_0 + \frac{1}{i|\nu|} ( e^{i \tau}  \m 1_{\nu \le 0} -  e^{-i \tau}  \m 1_{\nu \ge 0})  \hat u_1 \right] (\nu \omega) \\
& = \frac{1}{2}  |\nu|^{\frac{d-1}{2}} \left[ i\nu ( e^{i \tau}  \m 1_{\nu \le 0} +  e^{-i \tau}  \m 1_{\nu \ge 0})  \hat u_0 -  ( e^{i \tau}  \m 1_{\nu \le 0} +  e^{-i \tau}  \m 1_{\nu \ge 0})  \hat u_1 \right](\nu \omega) \\
& =  \frac{1}{2c_{0}} \left( i\nu \q F_{s \to \nu} (\q Tu_0)(\nu,\omega) - \q F_{s \to \nu} (\q Tu_1)(\nu,\omega) \right)\\
& = \q F_{s \to \nu} \left( \frac{1}{2 c_0} (\partial_s \q Tu_0 - \q Tu_1)(\nu,\omega) \right).
\end{align*}
Via inverse Fourier transform, we get $h(s, \omega) = \frac{1}{2 c_0}  (\partial_s \q Tu_0 - \q Tu_1)$, which is \eqref{eq:fg_T}, 
and from \eqref{eq:u_h}, we derive \eqref{eq:expansion_w_1}. \eqref{eq:conc_cone_2} follows similarly as for the half-wave case.
\end{proof}
\begin{remark}
Performing similar computations in the case $u_{0}\in L^{2}(\R^{d})$ and $u_{1}=0$, which is $f=g=u_{0}/2$, we can write \eqref{equivfinallemma} in a simplified form, namely
\begin{align}
\MoveEqLeft \lim_{t \to +\infty} \nor{u}{L^2(|x| \ge t+R)}^2  = \frac{1}{2}\int_{\omega \in \m S^{d-1}}\nor{\Td u_{0}}{L^2([R,+\infty))}^2  d\omega. \label{equivfinallemmaT}
\end{align}
\end{remark}
\begin{remark}
Note that it could seem surprising at first that from estimates like \eqref{estimR} where the constants $C$ is strongly dependent on the smooth function $f$ and some of its derivatives, we can deduce some uniform estimates like \eqref{eq:expansion_w} for any $L^{2}$ functions. It should be noticed then that the stationary phase estimates that we use are then combined with $L^{2}$ estimates. They actually prove that the main term that we obtain contains all the $L^{2}$ norm.
\end{remark}
\begin{proof}[Proof of Corollary \ref{prop:ext_energy}]
Now, we turn to the proof of \eqref{equivfinallemma}, that is the computation of the $L^2$ norm outside the ball $B _{t+R}= \left\{x\in\R^d; r_x < t+R\right\}$.
From \eqref{eq:expansion_w} and \eqref{eq:expansion_g}
\[ u(t,x) = (e^{it|D|} f + e^{-it|D|} g)(x) = \frac{1}{(2\pi |x|)^{\frac{d-1}{2}}} \left( e^{i \tau} f_{x/|x|}^- +  e^{-i \tau} g_{x/|x|}^+ \right)(|x|-t) + o_{L^2}(1). \]
The same computations as before gives
\[ \left\| \frac{1}{(2\pi |x|)^{\frac{d-1}{2}}}  f_{x/|x|}^- \right\|_{L^2} = \| f \|_{L^2}, \]
(and the same for $g$, so that, for $t \ge 0$,
\begin{align*}
\MoveEqLeft (2\pi)^{d-1} \| u(t) \|_{L^2(|x| \ge t+R)}^2  = \int_{\omega \in \m S^{d-1}} \int_{t+R}^{+\infty} |u(t,r\omega)|^2 r^{d-1} dr d\omega \\
& = \int_{\omega \in \m S^{d-1}} \int_{t+R}^{+\infty} |e^{i\tau} f_{\omega}^- (r-t) + e^{-i\tau} g_{\omega}^+(r-t)|^2  dr d\omega + o_{L^2}(1) \\
& = \int_{\omega \in \m S^{d-1}} \int_{R}^{+\infty} |e^{i\tau} f_{\omega}^- (r) + e^{-i\tau} g_{\omega}^+(r)|^2  dr d\omega + o_{L^2}(1),
\end{align*}
as desired for the $L^2$ case.

In order to complete the energy space case, we invoke \eqref{eq:expansion_w_1}. As $\partial_s \q Tu_0 - \q Tu_1 \in L^2(\m R \times \m S^{d-1})$, we get as before
\begin{align*}
\MoveEqLeft
2 \| \partial_t u(t) \|_{L^2(|x| \ge t+R)}^2 =  \int_{\omega \in \m S^{d-1}} \int_{t+R}^{+\infty} |\partial_s \q Tu_0 - \q Tu_1)(r-t,\omega) |^2  dr d\omega + o_{L^2}(1) \\
& = \| \partial_s \q Tu_0 - \q Tu_1 \|_{L^2([R,+\infty) \times \m S^{d-1})}^2 , 
\end{align*}
and an analogous computation for $\| \nabla u(t) \|_{L^2(|x| \ge t+R)}^2$: as $|x/|x|| =|-1|$, both limits are equal. \eqref{equivfinallemmaH1} and \eqref{equivfinallemmaH1_2} are proved.

Finally, for \eqref{eq:ortho_u0_u1}, it suffices to notice that $t \mapsto u(-t)$ is the solution to the wave equation with initial data $(u_0,-u_1)$, so by linearity of $\q T$,
\begin{align*}
\MoveEqLeft \lim_{t \to -\infty} \| \partial_t u(t) \|_{L^2(|x| \ge |t|+R)}^2 =  \lim_{t \to +\infty}  \| \partial_t u(-t) \|_{L^2(|x| \ge |t|+R)}^2 \\
& = \frac{1}{2} \| \partial_s \q Tu_0 - \q T(-u_1) \|_{L^2([R,+\infty) \times \m S^{d-1})}  = \frac{1}{2} \| \partial_s \q Tu_0 + \q Tu_1 \|_{L^2([R,+\infty) \times \m S^{d-1})}^2,
\end{align*}
and the same holds for $\nabla_x u$. Therefore, expanding the squares, 
\begin{align*}
E_{ext,R}(u) & = \frac{1}{2} ( \| \partial_s \q Tu_0 - \q Tu_1 \|_{L^2([R,+\infty) \times \m S^{d-1})}^2 +  \| \partial_s \q Tu_0 + \q Tu_1 \|_{L^2([R,+\infty) \times \m S^{d-1})}^2) \\
& =  \| \partial_s \q Tu_0 \|_{L^2([R,+\infty) \times \m S^{d-1})}^2 + \|  \q Tu_1 \|_{L^2([R,+\infty) \times \m S^{d-1})}^2. \qedhere
\end{align*}
\end{proof}

\begin{proof}[Proof of Proposition \ref{thmexteriorgeneR0}]
The goal is to compute $E_{ext,0}(u)$ when the dimension $d$ is odd. In that case, the symmetry \eqref{eq:sym_T} is available, so that 
\begin{align*} 
\MoveEqLeft 2 \| \Td u_1 \|_{L^2([0,+\infty) \times \m S^{d-1})}^2  = 2 \int_{\m S^{d-1}} \int_0^{+\infty} |\Td u_1(s,\omega)|^2 ds d\omega \\
& = \int_{\m S^{d-1}} \int_0^{+\infty} |\Td u_1(s,\omega)|^2 ds d\omega+ \int_{\m S^{d-1}} \int_0^{+\infty} |\Td u_1(-s,-\omega)|^2) ds d\omega \\
& = \int_{\m S^{d-1}} \int_{\m R} |\Td u_1(s,\omega)|^2 dsd\omega = \| \Td u_1 \|_{L^2(\m R \times \m S^{d-1})}^2 = \| u_1 \|_{L^2}^2 ,
\end{align*}
due to the first part of Lemma \ref{lem:T_iso}. As one also has the symmetry $\partial_s \Td u_0(-s,\omega) = (-1)^{\frac{d+1}{2}} \partial_s \Td u_0(s,-\omega)$ (by differentiating \eqref{eq:sym_T}), similar computations show that
\[ 2 \| \partial_s \Td u_0 \|_{L^2([0,+\infty) \times \m S^{d-1})}^2 = \| \partial_s \Td u_0 \|_{L^2(\m R \times \m S^{d-1})}^2 = \| u_0 \|_{\dot H^1}^2, \]
where we used also the second part of Lemma  \ref{lem:T_iso}. Summing up and using \eqref{eq:ortho_u0_u1}, we conclude
\[
E_{ext,0}(u) =  \| \partial_s \Td u_0 \|_{L^2([0,+\infty) \times \m S^{d-1})}^2 +  \| \Td u_1 \|_{L^2([0,+\infty) \times \m S^{d-1})}^2 = \frac{1}{2} ( \| u_0 \|_{\dot H^1}^2 +  \| u_1 \|_{L^2}^2). \qedhere
\]
\end{proof}

To conclude this section, our goal is now to give an expression of the energy outside the light cone in even dimension, so as to prove Proposition \ref{extEnergy_even}. We adopt the following convention for the Hilbert transform $\mathcal{H}$ on the real line: for $f\in \mathcal{S}(\R)$, we denote
\begin{align*}
\mathcal{H} f(s)=p.v. \frac{1}{\pi}\int_{\R}\frac{f(r)}{s-r}dr \quad \text{so that} \quad
i\widehat{\mathcal{H} f}(\xi)=\sgn(\xi)\widehat{ f}(\xi),
\end{align*}
where $\sgn$ denotes the signum function. Also, for functions defined for $(s, \omega) \in \m R \times \m S^{d-1}$, $\q H$ denote the Hilbert transform with respect to the $s$ variable. 

We start with a lemma, for which it is convenient to recall the Hankel transform, defined for $f\in \q D(\m R_+)$ by
\[ Hf(s) = \int_{0}^{\infty} \frac{f(r)}{s+r}dr \]
$H$ extends to a bounded operator on $L^2(\m R_+)$ with norm $\pi$.

\begin{lem}
\label{lmfpmHilbHank}
Let $f\in L^{2}(\R)$ then
\begin{align*}
\| f\|_{L^2(\m R^+)}^2 +   \| \mathcal{H}f \|_{L^2(\m R^-)}^{2} & = \| f\|_{L^2}^2 +  \frac{1}{\pi}\Im \int_0^\infty \int_0^\infty \frac{\overline{\hat f (s)} \hat f(-r)}{r+s}drds\\
& = \frac{1}{2\pi}\| \hat f\|_{L^2}^2  +  \frac{1}{\pi^{2}}\Im \int_0^\infty \int_0^\infty \frac{\overline{\hat f (s)} \hat f(-r)}{r+s}drds\\
&=\frac{1}{2\pi}\| \hat f\|_{L^2}^2  -  \frac{1}{\pi^{2}}\Im  \int_0^\infty (H\widehat{\overline{f}}) (r) \hat f(r)dr,
\end{align*}

Moreover, for $f,g \in L^{2}(\R)$, we have
\begin{align*}
\left<f ,g \right>_{\R_{+}}-\left< \mathcal{H}f , \mathcal{H}g \right>_{\R_{-}}&=&\frac{1}{2i\pi^{2}}\left( \int_0^\infty \int_0^\infty \frac{\hat f (r) \overline{\hat g(s)}}{s-r}drds-\int_0^\infty \int_0^\infty  \frac{\hat f (-r) \overline{\hat g(-s)}}{s-r}drds\right).
\end{align*}\end{lem}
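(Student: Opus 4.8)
The plan is to reduce both identities to the elementary fact that $\mathcal{H}$ is an isometry of $L^2(\m R)$ (its Fourier multiplier $-i\,\sgn(\xi)$ has modulus $1$), combined with a Hardy-space splitting into frequency halves. First I would introduce the projectors $P_{\pm}=\frac12(\Id\pm i\mathcal{H})$, which satisfy $\widehat{P_{\pm}f}=\m 1_{\pm\xi>0}\hat f$, so that $f=P_+f+P_-f$ and $\mathcal{H}f=-i(P_+f-P_-f)$.

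For the first (diagonal) identity, the isometry gives $\|\mathcal{H}f\|_{L^2(\m R^-)}^2=\|f\|_{L^2}^2-\|\mathcal{H}f\|_{L^2(\m R^+)}^2$, so the left-hand side equals $\|f\|_{L^2}^2+\big(\|f\|_{L^2(\m R^+)}^2-\|\mathcal{H}f\|_{L^2(\m R^+)}^2\big)$. The pointwise algebraic identity $|f|^2-|\mathcal{H}f|^2=4\Re\big(P_+f\,\overline{P_-f}\big)$ then reduces the correction term to $4\Re\int_0^\infty P_+f\,\overline{P_-f}\,ds$. Writing $P_+f$ and $P_-f$ as inverse Fourier transforms over the half-lines $\{\xi>0\}$ and $\{\xi<0\}$ and exchanging the order of integration, the $s$-integral produces $\int_0^\infty e^{is(r+\rho)}\,ds=\pi\delta(r+\rho)+i\,\mathrm{p.v.}\frac{1}{r+\rho}$; since $r,\rho>0$ the $\delta$ never fires, leaving the kernel $i/(r+\rho)$. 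This yields $\int_0^\infty P_+f\,\overline{P_-f}=\frac{i}{(2\pi)^2}\iint_{(0,\infty)^2}\frac{\hat f(r)\overline{\hat f(-\rho)}}{r+\rho}\,dr\,d\rho$; taking $4\Re$ and using $\Re(iz)=-\Im z$ together with the symmetry $r\leftrightarrow\rho$ and conjugation gives the stated $\frac{1}{\pi^2}\Im\iint\frac{\overline{\hat f(s)}\hat f(-r)}{r+s}$. The passage between the three displayed lines is then bookkeeping: Plancherel $\|f\|_{L^2}^2=\frac{1}{2\pi}\|\hat f\|_{L^2}^2$ relates the first two, and recognizing $(H\widehat{\overline f})(r)=\int_0^\infty\frac{\overline{\hat f(-\rho)}}{r+\rho}\,d\rho$ (using $\widehat{\overline f}(\rho)=\overline{\hat f(-\rho)}$) rewrites the double integral through the Hankel transform to give the third.

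For the polarized identity I would run the same scheme on $\langle f,g\rangle_{\m R^+}-\langle\mathcal{H}f,\mathcal{H}g\rangle_{\m R^-}$. Using $\langle\mathcal{H}f,\mathcal{H}g\rangle_{\m R}=\langle f,g\rangle_{\m R}$ to replace the integral over $\m R^-$ by $\m R\setminus\m R^+$, together with the expansions above, the expression collapses to the \emph{diagonal} blocks $\int_{\m R}\sgn(s)\big(P_+f\,\overline{P_+g}+P_-f\,\overline{P_-g}\big)\,ds$. Pairing $\sgn$ with the product via Parseval ($\widehat{\sgn}(\xi)=2/(i\xi)$) and computing the convolution $\widehat{P_+f\,\overline{P_+g}}$ turns the positive block into $\frac{i}{2\pi^2}\iint_{(0,\infty)^2}\frac{\hat f(r)\overline{\hat g(s)}}{r-s}\,dr\,ds$ (a half-line principal-value kernel), and the negative block into the analogous term in $\hat f(-r),\overline{\hat g(-s)}$ with the opposite sign; recombining with $1/(2i\pi^2)=-i/(2\pi^2)$ and the sign flip $1/(s-r)=-1/(r-s)$ reproduces the claimed formula.

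The main obstacle is rigor, not structure: none of the double integrals converge absolutely, so the interchanges of integration and the meaning of the kernels $1/(r+s)$ and $\mathrm{p.v.}\,1/(s-r)$ must be justified. I would carry out all the manipulations first for $f,g$ with $\hat f,\hat g\in\q D(\m R\setminus\{0\})$, where the oscillatory $s$-integral is classical, Fubini applies, and every integrand decays, then pass to general $L^2$ data by density. This is legitimate because both sides are continuous bilinear forms on $L^2\times L^2$: the left-hand sides by the $L^2$-boundedness of $\mathcal{H}$, and the right-hand sides by the boundedness of the Hankel transform $H$ (of norm $\pi$, as recalled above) and of the half-line Hilbert transform with kernel $1/(s-r)$.
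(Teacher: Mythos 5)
Your proposal is correct and takes essentially the same approach as the paper's proof: your projectors $P_{\pm}$ are exactly the paper's $f^{\pm} = \m 1_{\R_{\pm}}(D)f$, both arguments reduce the two identities to signum-weighted pairings of these frequency-localized pieces, and your oscillatory kernel $\int_0^\infty e^{isx}\,ds = \pi\delta(x) + i\,\mathrm{p.v.}(1/x)$ encodes the same Fourier-side fact as the paper's identity $\widehat{\sgn \cdot g} = \tfrac{1}{i}\q H \hat{g}$. The remaining differences are cosmetic --- you flip the $\m R^-$ integrals to $\m R^+$ via unitarity of $\mathcal{H}$ before expanding, and you spell out the density/continuity argument that the paper leaves implicit; note also that your constant $\tfrac{1}{\pi^2}$ (which agrees with the paper's own computation) confirms that the $\tfrac{1}{\pi}$ in the first displayed line of the lemma's statement is a typo.
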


\begin{proof}
Denote $f^{+}= 1_{\R_{+}}(D)f$ and $f^{-}= 1_{\R_{-}}(D)f$, then
\[ f=f^{-}+f^{+} \quad \text{and} \quad i\mathcal{H}f=-f^{-}+f^{+}. \]
Therefore,
\begin{align*}
\| f \|_{L^2(\m R^+)}^2 & = \| f^- \|_{L^2(\m R^+)}^2 + \| f^+ \|_{L^2(\m R^+)}^2 +2 \Re  \int_{\m R^+} f^-  \overline{f^+} \\
\| \mathcal{H}f \|_{L^2(\m R^-)}^2 & = \| f^+ \|_{L^2(\m R^-)}^2 + \| f^- \|_{L^2(\m R^-)}^2 - 2 \Re \int_{\m R^-} f^-  \overline{f^+} \\ 
\| f\|_{L^2(\m R^+)}^2 +   \| \mathcal{H}f \|_{L^2(\m R^-)}^{2} & = \| f\|_{L^2}^2  + 2\Re \langle \sgn \cdot f^- ,  f^+ \rangle 
\end{align*}
We denote $ \langle f , g  \rangle=\int_{\m R} f  \overline{g}$ the standard (complex) scalar product.

Using Parseval formula, we get $ \langle \sgn \cdot f^- ,  f^+ \rangle   =\frac{1}{2\pi}\langle \widehat{  \sgn \cdot f^-} , \widehat{f^+} \rangle $. Moreover, recall that
\[ \widehat{  \sgn\cdot g}=\frac{1}{2\pi}\widehat{  \sgn }*\widehat{ g}=\frac{1}{\pi}\frac{1}{i\xi}*\widehat{ g}=\frac{1}{i} \q H \widehat{ g} \]
where $\q H$ is the $\m R$-Hilbert transform. Now
\[  \q H \widehat{f^-} (s) =\frac{1}{\pi} \int_{-\infty}^0 \frac{\widehat{ f}(r)}{s-r} dr = \frac{1}{\pi}  \int_0^{\infty} \frac{\widehat{f}(-r) }{s+r} dr=\frac{1}{\pi}  H \widehat{f}(-\cdot), \]
so that
\[ 2\Re \langle \sgn \cdot f^- ,  f^+ \rangle = \frac{1}{\pi^{2}}\Im  \int_0^\infty \int_0^\infty \frac{\overline{\hat f (s)} \hat f(-r)}{r+s}drds. \]
Concerning the crossed term
\begin{align*}
\MoveEqLeft \left<f ,g \right>_{\R_{+}}-\left< \mathcal{H}f , \mathcal{H}g \right>_{\R_{-}}\\
& = \left<f^-+ f^+ ,g^- + g^+ \right>_{\R_{+}}-\left<f^+-f^- ,g^+-g^- \right>_{\R_{-}}\\
& = \left<f^-, g^+  \right>_{\R_{+}} +  \left< f^- ,g^+ \right>_{\R_{-}} + \left<f^+, g^+  \right>_{\R_{+}} - \left<f^+, g^+  \right>_{\R_{-}} \\
& \qquad \left< f^+ , g^- \right>_{\R_{+}}+\left<f^+  ,g^- \right>_{\R_{-}} + \left<f^- ,g^- \right>_{\R_{+}} - \left<f^- ,g^- \right>_{\R_{-}} \\
& = \left<f^-, g^+  \right> + \left< \sgn  \cdot f^+, g^+  \right> + \left< f^+ , g^- \right> + \left< \sgn \cdot f^- ,g^- \right> \\
& = \left< \sgn \cdot f^+, g^+  \right> + \left< \sgn \cdot f^- ,g^- \right>.
\end{align*}
In the computations above, we used the support properties of the functions $f^{\pm}$ and $g^{\pm}$ in Fourier space. As before, using Parseval Theorem, we get $ \langle \sgn \cdot f^+ ,  f^+ \rangle   =\frac{1}{2\pi}\langle \widehat{  \sgn \cdot f^+} , \widehat{f^+} \rangle $ and using again $\widehat{  \sgn \cdot   f^{\pm}}=\frac{1}{i} \q H \widehat{ f^{\pm}}$, we get
\begin{align*}
\MoveEqLeft \left<f ,g \right>_{\R_{+}}-\left< \mathcal{H}f , \mathcal{H}g \right>_{\R_{-}} = \frac{1}{2i\pi^{2}}\left( \int_0^\infty \int_0^\infty \frac{\hat f (r) \overline{\hat g(s)}}{s-r}drds+\int_{-\infty}^0\int_{-\infty}^0 \frac{\hat f (r) \overline{\hat g(s)}}{s-r}drds\right)\\
&= \frac{1}{2i\pi^{2}}\left( \int_0^\infty \int_0^\infty \frac{\hat f (r) \overline{\hat g(s)}}{s-r}drds-\int_0^\infty \int_0^\infty  \frac{\hat f (-r) \overline{\hat g(-s)}}{s-r}drds\right). \qedhere
\end{align*}
\end{proof}

\begin{proof}[Proof of Proposition \ref{extEnergy_even}]

We start with \eqref{equivfinallemmaH1_2}, and use the change of variable $\omega\leftrightarrow -\omega$ and \eqref{eq:sym_Tpair}, to compute 
\begin{multline} \label{eq:even_Eext}
4 \lim_{t \to +\infty} \nor{\nabla u}{L^2(|x| \ge t)}^2 =  \|  \partial_{s}\Td u_{0} - \Td u_{1} \|_{L^2(\R_{+} \times \m S^{d-1})}^2
+  \|  \mathcal{ H} \partial_{s}\Td u_{0} +  \mathcal{ H}\Td u_{1} \|_{L^2(\R_{-}\times \m S^{d-1})}^2\\
=  \|  \partial_{s}\Td u_{0}\|_{L^2(\R_{+} \times \m S^{d-1})}^2
+ \|  \mathcal{ H} \partial_{s}\Td u_{0}\|_{L^2(\R_{-}\times \m S^{d-1})}^2\\
+ \| \Td u_{1} \|_{L^2(\R_{+} \times \m S^{d-1})}^2
+ \|  \mathcal{ H}\Td u_{1} \|_{L^2(\R_{-}\times \m S^{d-1})}^2\\
- 2 \Re \left< \partial_{s}\Td u_{0} ,\Td u_{1}\right>_{L^2(\R_{+} \times \m S^{d-1})}+ 2\Re\left< \mathcal{ H} \partial_{s}\Td u_{0} , \mathcal{ H}\Td u_{1}\right>_{L^2(\R_{-} \times \m S^{d-1})}.
\end{multline}
Let's give an expression for each of the 3 lines of the last equality above.
Recall \eqref{def:T} and observe that $e^{2i\tau}=-i(-1)^{\frac{d}{2}}$: then, for fixed $\omega$, we use the first part of Lemma \ref{lmfpmHilbHank} with $f$ such that $\hat{f}(\nu)= c_0  |\nu|^{\frac{d-1}{2}} (e^{i \tau}  \m1 _{\nu < 0}   + e^{-i\tau} \m 1_{\nu \ge 0} )\widehat{ u_{1}}(\nu \omega)$. This yields, for the $u_1$ terms (2nd line),
\begin{align*}
&\| \Td u_{1}\|_{L^2(\R_{+} \times \m S^{d-1})}^2
+\|  \mathcal{ H}\Td u_{1}\|_{L^2(\R_{-}\times \m S^{d-1})}^2\\
&=\| \Td u_{1}\|_{L^2(\R\times \m S^{d-1})}^2 + \Im  \frac{1}{2(2\pi)^{d-1}\pi^{2}} e^{2i\tau} \int_{\omega \in \m S^{d-1}} \int_0^\infty \int_0^\infty (rs)^{\frac{d-1}{2}}  \frac{\overline{ \widehat{ u_{1}}(s \omega)} \widehat{ u_{1}}(-r \omega)}{r+s}drds d\omega\\
&= \nor{u_{1}}{L^{2}}^{2}-   \frac{2(-1)^{\frac{d}{2}} }{(2\pi)^{d+1}}\Re\int_{\omega \in \m S^{d-1}} \int_0^\infty \int_0^\infty (rs)^{\frac{d-1}{2}}  \frac{\overline{ \widehat{ u_{1}}(s \omega)}  \widehat{ u_{1}}(-r \omega)}{r+s}drds d\omega.
\end{align*}
For the $u_0$ terms (1st line), we use now $f$ such that  $\hat{f}(\nu)= c_0 i\nu |\nu|^{\frac{d-1}{2}} (e^{i \tau}  \m1 _{\nu < 0}   + e^{-i\tau} \m 1_{\nu \ge 0} )\widehat{ u_{0}}(\nu \omega)$, and thie gives
\begin{align*}
\MoveEqLeft \|  \partial_{s}\Td u_{0}\|_{L^2(\R_{+} \times \m S^{d-1})}^2 +\|  \mathcal{ H} \partial_{s}\Td u_{0}\|_{L^2(\R_{-}\times \m S^{d-1})}^2\\
&=\|  \partial_{s}\Td u_{0}\|_{L^2(\R\times \m S^{d-1})}^2 - \Im \frac{1}{2(2\pi)^{d-1}\pi^{2}}  e^{2i\tau} \int_{\omega \in \m S^{d-1}} \int_0^\infty \int_0^\infty (rs)^{\frac{d+1}{2}}  \frac{\overline{ \widehat{ u_{0}}(s \omega)} \widehat{ u_{0}}(-r \omega)}{r+s}drds d\omega\\
&= \nor{u_{0}}{\dot{H}^{1}}^{2}+   \frac{2(-1)^{\frac{d}{2}} }{(2\pi)^{d+1}}\Re\int_{\omega \in \m S^{d-1}} \int_0^\infty \int_0^\infty (rs)^{\frac{d+1}{2}}  \frac{\overline{ \widehat{ u_{0}}(s \omega)}  \widehat{ u_{0}}(-r \omega)}{r+s}drds d\omega. 
\end{align*}
We now work on crossed terms (the last line of \eqref{eq:even_Eext}): for this, we use the second part of Lemma \ref{lmfpmHilbHank} with $f$ and $g$ such that 
\begin{align*}
\hat{f}(\nu) &= c_0 i\nu |\nu|^{\frac{d-1}{2}} (e^{i \tau}  \m1 _{\nu < 0}   + e^{-i\tau} \m 1_{\nu \ge 0} )\widehat{ u_{0}}(\nu \omega) \\
 \text{and} \quad \hat{g}(\nu) &= c_0  |\nu|^{\frac{d-1}{2}} (e^{i \tau}  \m1 _{\nu < 0}   + e^{-i\tau} \m 1_{\nu \ge 0} )\widehat{ u_{1}}(\nu \omega).
\end{align*}
We obtain
\begin{align*}
\MoveEqLeft -\Re \left< \partial_{s}\Td u_{0} ,\Td u_{1}\right>_{L^2(\R_{+} \times \m S^{d-1})}+\Re\left< \mathcal{ H} \partial_{s}\Td u_{0} , \mathcal{ H}\Td u_{1}\right>_{L^2(\R_{-} \times \m S^{d-1})}\\
& = - \Re\frac{1}{2(2\pi)^{d-1}}\frac{1}{2i\pi^{2}} i\int_{\omega \in \m S^{d-1}}\left(\int_0^\infty \int_0^\infty \frac{r^{\frac{d+1}{2}}\widehat{ u_{0}}(r \omega) s^{\frac{d-1}{2}}\overline{\widehat{ u_{1}}(s \omega)}}{s-r}drds \right. \\
& \qquad \left. + \int_0^\infty \int_0^\infty \frac{r^{\frac{d+1}{2}}\widehat{ u_{0}}(-r \omega) s^{\frac{d-1}{2}}\overline{\widehat{ u_{1}}(-s \omega)}}{s-r}drds\right) d\omega \\
& = - \Re\frac{2}{(2\pi)^{d+1}} \int_{\omega \in \m S^{d-1}}\int_0^\infty \int_0^\infty \frac{r^{\frac{d+1}{2}}\widehat{ u_{0}}(r \omega) s^{\frac{d-1}{2}}\overline{\widehat{ u_{1}}(s \omega)}}{s-r}drds d\omega.
\end{align*}
Summing up the three above expressions yields the desired identity.
\end{proof}

\section{Proof of Proposition \ref{prop:ortho_cutoff}}

In this section, we focus on the proof of Proposition \ref{prop:ortho_cutoff}. When expanding the decomposition of $u$ in order to get \eqref{eq:ortho_cutoff}, we are left with the cross terms: the main point is to show that these cross terms tend to 0. This is the purpose of the following lemma.

\begin{lem} \label{lem:wH1_conv_cut}
Let $\vec u = (u, \partial_t u)$ and, for $n \in \m N$,  $\vec w_n = (w_n, \partial_t w_n)$ be solutions to the linear wave equation \eqref{eq:lw}, bounded in $\q C(\m R, \dot H^1 \times L^2(\m R^d))$. Let $t_n \in \m R$, $x_n \in \m R^d$ and $r_n >0$ be three sequences.  Assume that that $\vec w_n(-t_n) \weak 0$ in $\dot H^1 \times L^2(\m R^d)$. Then
\begin{gather} \label{eq:wH1_cut}
\int_{|x-x_n| \ge r_n} \overline{\nabla_{t,x} w_{n}(0,x)} \cdot  \nabla_{t,x} u(t_n,x)  dx \to 0 \quad \text{as} \quad n \to +\infty.
\end{gather}
\end{lem}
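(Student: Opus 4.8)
The plan is to treat first the pairing \emph{without} the cut-off, and then to control the sharp truncation $\m 1_{|x-x_n|\ge r_n}$ through the radiation field of $u$. Let $B(a,b)(t)=\langle\nabla_{t,x}a(t),\nabla_{t,x}b(t)\rangle_{L^2}$ be the bilinear energy of two linear-wave solutions, which is conserved in $t$. Applying conservation to $u$ and to the time-translated solution $t\mapsto w_n(t-t_n)$, evaluated at $t_n$ and at $0$, gives $\langle\nabla_{t,x}w_n(0),\nabla_{t,x}u(t_n)\rangle=\langle\vec w_n(-t_n),\vec u(0)\rangle_{\dot H^1\times L^2}\to0$, the limit being $0$ because $\vec w_n(-t_n)\weak0$ and $\vec u(0)$ is fixed. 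Hence \eqref{eq:wH1_cut} is equivalent to the vanishing of the complementary integral over the ball $\{|x-x_n|<r_n\}$; I keep both forms available, and I distinguish whether $(t_n)$ is bounded.

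If $(t_n)$ is bounded, I pass to a subsequence with $t_n\to t_\infty$. Then $\vec u(t_n)\to\vec u(t_\infty)$ strongly, so up to an $o(1)$ (using that $\nabla_{t,x}w_n(0)$ is bounded in $L^2$) the cross term equals $\langle\nabla_{t,x}w_n(0),\m 1_{|x-x_n|\ge r_n}\nabla_{t,x}u(t_\infty)\rangle$. Moreover $\vec w_n(0)=S(t_n)\vec w_n(-t_n)\weak0$, since for fixed $\phi$ one has $\langle\vec w_n(0),\phi\rangle=\langle\vec w_n(-t_n),S(-t_n)\phi\rangle$ and $S(-t_n)\phi\to S(-t_\infty)\phi$ strongly. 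Finally, along a further subsequence $\m 1_{|x-x_n|\ge r_n}\nabla_{t,x}u(t_\infty)\to\Psi$ strongly in $L^2$ by dominated convergence (the boundary sphere is Lebesgue-null, and the degenerate cases give a constant limit), and the weak-strong pairing yields $0$.

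The main case is $|t_n|\to\infty$ (say $+\infty$; the other sign uses the expansion as $t\to-\infty$). By \eqref{eq:expansion_w_1}, $\nabla_{t,x}u(t_n)=G_n+o_{L^2}(1)$ with $G_n(x)=\tfrac{1}{\sqrt2|x|^{(d-1)/2}}h_u(|x|-t_n,\omega_x)\,(-1,\omega_x)$, where $h_u:=\partial_s\Td u_0-\Td u_1\in L^2(\m R\times\m S^{d-1})$ and $\omega_x=x/|x|$; so up to $o(1)$ the cross term is $\langle\nabla_{t,x}w_n(0),\m 1_{|x-x_n|\ge r_n}G_n\rangle$. Writing $x=(t_n+s)\omega$, the cut-off becomes $\chi_n(s,\omega)=\m 1_{|(t_n+s)\omega-x_n|\ge r_n}$; after extracting a subsequence making the relevant scalings converge, I claim $\chi_n\to\psi$ almost everywhere for a fixed $\{0,1\}$-valued $\psi$. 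Granting this, dominated convergence gives $\m 1_{|x-x_n|\ge r_n}G_n=\Theta_n+o_{L^2}(1)$, where $\Theta_n$ is the moving profile of the \emph{fixed} function $\psi h_u$. Since the radiation map $(u_0,u_1)\mapsto\partial_s\Td u_0-\Td u_1$ is onto $L^2(\m R\times\m S^{d-1})$ (by Lemma \ref{lem:T_iso} and surjectivity of the radiation map, i.e.\ the reconstruction formula), there is a fixed solution $\hat u$ with radiation field $\psi h_u$, so \eqref{eq:expansion_w_1} applied to the fixed $\hat u$ gives $\Theta_n=\nabla_{t,x}\hat u(t_n)+o_{L^2}(1)$. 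Transporting back exactly as in the no-cut-off reduction, $\langle\nabla_{t,x}w_n(0),\Theta_n\rangle=\langle\vec w_n(-t_n),\vec{\hat u}(0)\rangle+o(1)\to0$.

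The crux is the a.e.\ identification of $\psi$ above: the limit of the sharp cut-off in radiation coordinates over all scalings of $(t_n,x_n,r_n)$. When $x_n/t_n\to a$ and $r_n/t_n\to\rho$ with $(a,\rho)\neq(0,1)$ one reads off $\psi=\m 1_{|\omega-a|\ge\rho}$, whereas the delicate tangential resonance $x_n/t_n\to0$, $r_n/t_n\to1$ (the cut-off sphere becoming asymptotically tangent to the light cone) forces one to expand $|(t_n+s)\omega-x_n|-r_n$ to next order and to check that the limiting boundary $\{s=c+x_\infty\cdot\omega\}$ (with $x_n\to x_\infty$, $r_n-t_n\to c$) remains Lebesgue-null; this is the \enquote{various cases} of the cut-off mentioned in the outline. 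The one conceptual subtlety — that the radiation-field convergence is \emph{not} uniform along the sequence $w_n$ — never arises, because \eqref{eq:expansion_w_1} is invoked only for the two fixed solutions $u$ and $\hat u$, the entire $n$-dependence being carried by the weakly convergent, conserved quantity $\vec w_n(-t_n)$.
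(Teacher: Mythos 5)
Your proposal follows essentially the same route as the paper's proof, with the same architecture: the dichotomy on whether $t_n$ is bounded; in the bounded case, a weak--strong pairing after extracting subsequences so that the ball indicators converge a.e.; in the main case $t_n \to +\infty$, replacing $\nabla_{t,x}u(t_n)$ by its radiation profile via \eqref{eq:expansion_w_1}, establishing a.e.\ convergence of the sharp cut-off written in radiation coordinates $(s,\omega)$, reconstructing a \emph{fixed} solution $\hat u$ whose radiation field is the truncated profile $\psi h_u$ (this is exactly the paper's comparison solution $\vec v$, built through the reconstruction formulas \eqref{reconstructfg}--\eqref{reconstructv01}), and concluding from the conservation of the bilinear energy pairing together with $\vec w_n(-t_n)\weak 0$. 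Your explicit appeal to the conserved bilinear form $B$ is a useful clarification of a step the paper leaves implicit.

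There is, however, a gap at the point you yourself identify as the crux: the a.e.\ convergence of $\chi_n(s,\omega)=\m 1_{|(t_n+s)\omega-x_n|\ge r_n}$ (the paper's Claim \ref{cl:2}). Your case enumeration is not exhaustive, and the one delicate case you describe is treated under extra assumptions that need not hold after extraction. In the tangential regime $x_n/t_n \to 0$, $r_n/t_n \to 1$ you assume $x_n \to x_\infty$ and $r_n - t_n \to c$ with $x_\infty$, $c$ \emph{finite}; but take $x_n = \sqrt{t_n}\, e_1$ and $r_n = t_n$: then $x_n/t_n\to 0$ and $r_n/t_n \to 1$, while $|x_n|\to\infty$, and expanding gives $|(t_n+s)\omega - x_n| - r_n = s - \sqrt{t_n}\,(\omega\cdot e_1) + O(1)$, so that $\chi_n \to \m 1_{\omega\cdot e_1 <0}$ a.e., a limit covered by neither of your two formulas. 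Likewise the regimes $|x_n|/t_n\to+\infty$ and $r_n/t_n\to+\infty$ are not discussed. This is exactly why the paper prescribes limits for the longer list of rescaled quantities \eqref{seq_dicho} and runs a multi-case disjunction, showing in each case that the exceptional set of directions $\omega\cdot\omega_\infty$ is finite and the limiting boundary is Lebesgue-null. The method you sketch --- next-order expansion plus nullity of the limiting boundary --- is the correct one and is what the paper carries out, but as written the verification of the crux claim is incomplete, and completing that case analysis is where the bulk of the paper's proof lies.
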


\begin{proof}
We denote $x_n = \rho_n \omega_n$, where $\rho_n >0$ and $\omega_n \in \m S^{d-1}$. It is enough to prove that for any subsequence, at least one sub-subsequence of \eqref{eq:wH1_cut} converges to 0.  Therefore we can assume that the following sequences converge in $\overline{\m R}$ or $\m S^{d-1}$:
\begin{gather} \label{seq_dicho}
t_n,  \quad \rho_n, \quad \omega_n , \quad \frac{\rho_n}{t_n}, \quad \frac{r_n}{t_n}, \quad \frac{r_n}{\rho_n}, \quad \frac{r_n - \rho_n}{t_n}, \quad \frac{r_n^2}{t_n} - t_n, \quad \frac{1}{t_n} \left( \frac{r_n^2}{\rho_n} - \rho_n \right), \quad \frac{r_n -t_n}{\rho_n}.
\end{gather}
Also observe the following claim

\begin{claim}
We can assume without loss of generality that one of the following four possibilities occur:
\begin{enumerate}
\item (whole space) $\m 1_{B(x_n, r_n)} \to \m 1$ a.e
\item (void) $\m 1_{B(x_n, r_n)} \to 0$ a.e.
\item (ball) There exists $x_\infty \in \m R^d$ and $r_\infty>0$ such that $\m 1_{B(x_n, r_n)} \to \m 1_{B(x_\infty, r_\infty)}$ a.e.
\item (half-space) There exist $\omega_\infty \in \m S^{d-1}$ and $c \in \m R$ such that $\m 1_{B(x_n, r_n)} \to \m 1_{x \cdot \omega_\infty \ge c}$ a.e.
\end{enumerate}
\end{claim}

For the claim: first assume that $\rho_n$ has as a finite limit. If $r_n \to +\infty$, we are in the case (whole space); if $r_n \to 0$, it is the (void) case; and if $r_n \to r_\infty>0$ has a finite positive limit, it is the case (ball). Now assume that $\rho_n \to +\infty$, and let $\omega_\infty$ be the limit of $\omega_n$. If $\rho_n -r_n$ tends to $-\infty$, we are in the (whole space) case; if $\rho_n - r_n \to +\infty$, it is the (void) case. Now if $\rho_n -r_n \to c \in \m R$ has a finite limit, we see that we are in the (half-space) senario.

\bigskip

We can now proceed with the proof of Lemma \ref{eq:wH1_cut} itself. If $t_n$ has a finite limit $t_\infty \in \m R$, then $\vec u(t_n)$ has a strong limit $\vec u(t_\infty)$ in $\dot H^1 \times L^2$. Therefore, by Lebesgue's dominated convergence theorem, we see that $ \m 1_{B(x_n, r_n)} \nabla_{t,x} u(t_n)$ has a strong limit $V\in (L^2)^{1+d}$, by inspecting each scenario of the claim. Moreover, the hypothesis of the Lemma is that $\nabla_{t,x}w_n \weak 0$ in $(L^2)^{1+d}$, therefore
\[  \int_{|x-x_n| \ge r_n} \overline{\nabla_{t,x} w_n} (x) \cdot \nabla_{t,x} u (t_n)(x) dx\to \int 0 \cdot V =0. \]

We now consider the case when $t_n$ has an infinite limit, and we can assume without loss of generality that $t_n \to +\infty$. In this case, our goal is to construct a solution $\vec v \in \q C(\m R, \dot H^1 \times L^2(\m R^d))$ to the linear wave equation \eqref{eq:lw} such that
\begin{gather} \label{eq:u_v}
\| \nabla_{t,x} v(t_n) - \m 1_{|x-x_n| \ge r_n} \nabla_{t,x} u(t_n) \|_{(L^2)^{1+d}} \to 0 \quad \text{as} \quad n \to +\infty.
\end{gather}
Assuming that such a $\vec v$ is constructed, the assumption on the weak convergence of $\vec w_n$ means that
\[ \int \nabla_{t,x} w_n(0,x) \cdot \nabla_{t,x} v(t_n) dx \to 0 \quad \text{as} \quad n \to +\infty, \]
from where we deduce \eqref{eq:wH1_cut} immediately. We therefore focus on the construction of such a $\vec v$.

We recall from \eqref{eq:u_h} that 
\begin{gather*} \nabla_{t,x} u(t,x) = \frac{1}{(2\pi |x|)^{\frac{d-1}{2}}}  h (|x|-t,x/|x|) \begin{pmatrix} 
-1 \\
x/|x|
\end{pmatrix}
+ \e(t,x),
\end{gather*}
where $\e(t,x) \to 0$ in $L^2(\m R^d,\m C^{d+1})$ and
\[ h(\rho,\omega) = \partial_\rho (e^{i \tau}f_{\omega}^-  + e^{-i\tau} g_{\omega}^+)(\rho). \]
The key point of the argument is the following:
\begin{claim} \label{cl:2}
 $\m 1_{|(\rho+t_n) \omega - x_n| \le r_n}$ has a limit for a.e $(\rho, \omega) \in \m R \times  \m S^{d-1}$, which we call $a(\rho,\omega)$.
\end{claim}

Of course, $a(\rho,\omega)$ is measurable and $0 \le a(\rho,\omega) \le 1$ a.e.-$(\rho, \omega) \in \m R \times \m S^{d-1}$. Let us posptone the proof of Claim \ref{cl:2} to the end, and assume it for now. Let us then define
\[ \tilde h(\rho, \omega) =  a(\rho,\omega) h(\rho,\omega). \]
The relevance of the definition comes from the 
\begin{claim} \label{cl3}
\begin{gather} \label{conv:h_th}  \frac{1}{(2\pi |x|)^{\frac{d-1}{2}}} \left( \m 1_{|x-x_n| \ge r_n} h(|x|-t_n,x/|x|) - \tilde h(|x|-t_n,x/|x|) \right) \to 0 \quad \text{in } L^2(\m R^d). 
\end{gather}
\end{claim}

\begin{proof}
As $ \tilde h(\rho, \omega) =  a(\rho,\omega) h(\rho,\omega)$, \eqref{conv:h_th} is equivalent to showing the convergence to 0, as $n \to +\infty$ of the quantity
\[ \int_{\m R} \int_{\m S^{d-1}}  |\m 1_{\rho \omega - x_n| \ge r_n}  - a(\rho-t_n,\omega) | | h(\rho -t_n,\omega)|^2 d\rho d\omega \]
or equivalently that
\begin{gather} \label{eq:cl3_key}
\int_{\m R} \int_{\m S^{d-1}}  |\m 1_{(\rho +t_n) \omega - x_n| \ge r_n}  - a(\rho,\omega) | | h(\rho,\omega)|^2 d\rho d\omega \to 0.
\end{gather}
Now, 
\[ \q F_{\rho \to \nu} h(\nu,\omega) = i \nu   \left( \m 1_{\nu \le 0} e^{i \tau}  \hat f (\nu \omega) +   \m 1_{\nu \ge 0} e^{-i \tau} \hat g (\nu \omega) \right), \]
so that by Parseval
\begin{align} 
\MoveEqLeft \int_{\m R} \int_{\m S^{d-1}} \left|  h(\rho,\omega) \right|^2  d\rho d\omega \\
& = (2 \pi)^{-1} \int_{\m S^{d-1}}\left( \int_{-\infty}^0 |\hat f(r \omega)|^2 |r|^{d+1} dr  +  \int_{0}^{+\infty} |\hat g(r \omega)|^2 r^{d+1} dr\right) d\omega \nonumber \\
& = (2 \pi)^{d-1} \left( \| f \|_{\dot H^1 (\m R^d)}^2 + \| g \|_{\dot H^1 (\m R^d)}^2\right) =  (2 \pi)^{d-1}2^{-1} \| (u_0,u_1) \|_{\dot H^1 \times L^2(\m R^d)}^2. \label{bd:h_omega}
\end{align}
Also, $|\m 1_{(\rho +t_n) \omega - x_n| \ge r_n}  - a(\rho,\omega) | \le 2$. Using Claim \ref{cl:2}, we see that Lebesgue's dominated convergence theorem applies and gives the convergence \eqref{eq:cl3_key}. Hence \eqref{conv:h_th} holds.
\end{proof}

It now suffices to define $\vec v$ from $\tilde h(\rho,\omega)$, which we do by following the steps, backwards, of getting $h(\rho,\omega)$ from $\vec u$. More precisely, define $\tilde f, \tilde g \in \dot H^1(\m R^d)$ by their Fourier transform
\begin{gather}\label{reconstructfg} \hat {\tilde f}(\xi) = -\frac{1}{i|\xi|} \frac{e^{-i \tau}}{|\xi|^{\frac{d-1}{2}}} \q F_{\m R} \tilde h(-|\xi|,-\xi/|\xi|), \quad \hat {\tilde g}(\xi) = \frac{1}{i |\xi|} \frac{e^{i\tau}}{|\xi|^{\frac{d-1}{2}}} \q F_{\m R} \tilde h(|\xi|, \xi/|\xi|).\end{gather}
Then it follows that for $(\nu, \omega) \in \m R \times \m S^{d-1}$
\[ \q F_{\m R} (\tilde h)(\nu,\omega) =  i \nu |\nu|^{\frac{d-1}{2}} \left( \m 1_{\nu \le 0} e^{i \tau}  \hat {\tilde f} (\nu \omega) +   \m 1_{\nu \ge 0} e^{-i \tau} \hat {\tilde g} (\nu \omega) \right), \]
so that for all $\omega \in \m S^{d-1}$, $\rho \in \m R$, and with the notation \eqref{def:f_omega}
\begin{gather} \label{eq:f_h_k}
\tilde h(\rho,\omega)\omega = \partial_\rho (e^{i \tau} \tilde f_\omega^- + e^{-i \tau} \tilde g_\omega^+)(\rho).
\end{gather}
Finally let 
\begin{gather}
\label{reconstructv01} v_0 = \tilde f + \tilde g, \quad v_1 = i|D| ( \tilde f- \tilde g ), 
\end{gather}
and denote $\vec v$ the solution to the linear wave equation \eqref{eq:lw} with data $(v_0,v_1)$. Arguing as for \eqref{eq:u_h}, we get
\begin{gather} \label{eq:v_th}
\nabla_{t,x} v(t,x) = \frac{1}{(2\pi |x|)^{\frac{d-1}{2}}} \begin{pmatrix}
-1 \\
x/|x|
\end{pmatrix} \tilde h(|x|-t,x/|x|) + \tilde \e(t,x)
\end{gather}
where $\tilde \e(t,x) \to 0$ in $L^2 (\m R^d, \m C^{d+1})$ as $t \to +\infty$.
Gathering together   \eqref{eq:v_th}, \eqref{conv:h_th} and \eqref{eq:u_h}, we see that \eqref{eq:u_v} holds: we are done, up to the proof of Claim \ref{cl:2}.
\end{proof}

\begin{proof}[Proof of Claim \ref{cl:2}]
We write $|(\rho+t_n) \omega - x_n| \le r_n \iff (\rho+t_n)^2 + \rho_n^2 - 2 \rho_{n}\omega_n \cdot \omega (\rho+t_n) \le r_n^2 \iff \rho \in [\rho_n^-(\omega), \rho_n^+(\omega)]$ where
\[ \rho_n^\pm(\omega) = -t_n + \rho_n \omega_n \cdot \omega \pm \sqrt{\rho_n^2 ((\omega_n \cdot \omega)^2-1) + r_n^2} \]
with the convention that $[\rho_n^-(\omega), \rho_n^+(\omega)]=\emptyset$ if $\rho_n^2 ((\omega_n \cdot \omega)^2-1) + r_n^2<0$.
Hence,
\[ \{ (\rho, \omega) : | (\rho+t_n) \omega - x_n| \le r_n \} = \bigcup_{\omega \in \m S^{d-1}} \left\{ (\rho, \omega) | \  \rho \in [\rho_n^-(\omega), \rho_n^+(\omega)] \right\}. \]
In terms of the rescaled variables $r'_n = r_n/t_n$ and $\rho_n'  = \rho_n/t_n$, this writes
\[ \rho_n^\pm(\omega) = t_n \left(-1 +  \rho_n' \omega_n \cdot \omega \pm \sqrt{{\rho_n'}^2 ((\omega_n \cdot \omega)^2-1) + {r_n'}^2} \right). \]
We claim that there exists a finite set $\q F$ (depending only on the limits of the sequences listed on \eqref{seq_dicho}) such that if $\omega \cdot \omega_\infty \notin \q F$, then $\rho_n^\pm(\omega)$ both have a limit in $\overline{\m R}$ as $n \to +\infty$.

Consider $a \in [-1,1]$ such that at least one of $t_n \left(-1 +  \rho_n' a \pm \sqrt{{\rho_n'}^2 (a^2-1) + {r_n'}^2} \right)$ does not have a limit in $\overline{\m R}$. As $t_n \to +\infty$ and using the fact that all terms have a limit in $\overline{\m R}$, this implies that
\[ -1 +  \rho_n' a + \sqrt{{\rho_n'}^2 (a^2-1) + {r_n'}^2} \to 0 \quad \text{or} \quad -1 +  \rho_n' a - \sqrt{{\rho_n'}^2 (a^2-1) + {r_n'}^2} \to 0. \]

We argue by disjunction of cases: denote $\alpha = \lim \rho_n'$, $\beta = \lim r_n'$, and $\ds \gamma =\lim \frac{r_n'}{\rho_n'}$.

$\bullet$ Assume first that $\alpha$ is finite and non zero, and $\beta$ also is finite. Then
\[ -1 +  \rho_n' a \pm \sqrt{{\rho_n'}^2 (a^2-1) + {r_n'}^2} \to -1 + \alpha a \pm \sqrt{\alpha (a^2-1) + \beta}. \]
Now by studying the variations of the functions $a \mapsto \alpha a \pm \sqrt{\alpha (a^2-1) + \beta}$, one concludes that there exists at most 2 points (for each function) where they take the value $1$; so $\q F$ is a  subset of these (at most 4) points.

$\bullet$ If $\alpha$ is finite and $\beta = +\infty$, $-1 +  \rho_n' a \pm \sqrt{{\rho_n'}^2 (a^2-1) + {r_n'}^2} \to \pm \infty$, so that $\q F$ is empty.

$\bullet$ If $\alpha =0$ and $\beta \ne 1$, then 
\[ -1 + \rho_n' a \pm \sqrt{{\rho_n'}^2 (a^2-1) + {r_n'}^2} \to -1 + \pm \beta \ne 0, \]
and therefore $\q F$ is empty.

$\bullet$ In the case $\alpha=0$ and $\beta =1$, clearly
\[ -1 +  \rho_n' a - \sqrt{{\rho_n'}^2 (a^2-1) + {r_n'}^2} \to -2,  \]
so that $\rho_n^-(\omega)$ always has a limit. For $\rho_n^+(\omega)$ we expand

\begin{align*}
 -1 +  \rho_n' a +  \sqrt{{\rho_n'}^2 (a^2-1) + {r_n'}^2} =  -1 + \rho_n' a + r_n' \left(1 + O(\rho_n'^2) \right)
\end{align*}
Denote $\delta = \lim \frac{r_n -t_n}{\rho_n} = \lim \frac{r_n'-1}{\rho_n'}$ and assume that $a \ne -\delta$, then
\[ \rho_n^+(\omega) = t_n \rho_n' \left( \frac{r_n'-1}{\rho_n'} + a + O(\rho_n') \right) \sim (\delta +a) \rho_n, \]
which has a limit in $\overline{\R}$, even if $\delta=+\infty$. Hence $\q F \subset \{ -\delta \}$.

$\bullet$ If $\alpha = +\infty$ and $\gamma \ne 1$, then $a \ne\pm \sqrt{a^2-1 + \gamma}$ so that 
\[  \rho_n' a \pm \sqrt{{\rho_n'}^2 (a^2-1) + {r_n'}^2} \sim \rho_n' (a \pm \sqrt{a^2-1 + \gamma }). \]
If $a \ne 0$, the limits exist (and are infinite), so that $\q F \subset \{ 0 \}$.

$\bullet$ In the case $\alpha=+\infty$ and $\gamma =1$, (which implies $\beta =+\infty$). Again we see that $\rho_n' a + \sqrt{{\rho_n'}^2 (a^2-1) + {r_n'}^2} \to +\infty$ so that it suffices to consider $\rho_n^-(\omega)$. Then for $a \ne 0$ we are allowed to expand
\begin{align*}
\rho_n' a - \sqrt{{\rho_n'}^2 (a^2-1) + {r_n'}^2} \sim \rho_n'  \frac{1}{2a} \left( 1-\frac{{r_n'}^2}{{\rho_n'}^2}  \right) \sim \frac{1}{2a t_n} \left(  \rho_n- \frac{r_n^2}{\rho_n} \right)
\end{align*}
This sequence has always a limit in $\overline{\m R}$, which is $1$ for at most one value of $a$. In that case, $\q F$ is made of this point and 0.

\bigbreak

We have exhausted all possibilities for the limits, and in all cases, $\q F$ is made of a finite number of points. If $\omega \cdot \omega_\infty \notin \q F$, we denote $\rho^\pm(\omega)$ the limits of $\rho^\pm_n(\omega)$ (whose existence were just shown above). 

Define
\[ \q N = (0,+\infty) \times \{ \omega \in \m S^{d-1} :  \omega \cdot \omega_\infty \in \q F \} \cup \bigcup_{\omega \in \m S^{d-1}} \{ (\rho^-(\omega), \omega), (\rho^+(\omega), \omega) \}. \]
Clearly $\q N$ is a negligeable subset of $(0,+\infty) \times \m S^{d-1}$. Also if $(\rho, \omega) \in (0,+\infty) \times \m S^{d-1} \setminus \q N$, by definition of a limit we see that either $|(\rho+t_n) \omega - x_n| \le r_n$ for all $n$ large enough, or $|(\rho+t_n) \omega - x_n| > r_n$ for all $n$ large enough; equivalently, $\m 1_{|(\rho+t_n) \omega - x_n| > r_n}$ has a limit as $n \to +\infty$.  \qedhere
\end{proof}

We can easily modify the proof of Lemma \ref{lem:wH1_conv_cut} to obtain a result in the setting of solutions to the half-wave equation (in the $L^2$ setting). More precisely, we have the following lemma, whose proof is left to the reader.

\begin{lem} \label{lem:wL2_conv_cut}
Let $f \in L^2(\m R^d)$, and $t_n \in \m R$, $x_n \in \m R^d$ and $r_n >0$ be three sequences. Assume that $w_n$ is a bounded sequence of $L^2(\m R^d)$ such that $e^{-i t_n |D|} w_n \weak 0$ and $e^{i t_n |D|} w_n \weak 0$ in $L^2(\m R^d)$. Then
\[ \int_{|x-x_n| \ge r_n} \overline{w_n} (x) (e^{i t_n |D|} f)(x)  dx \to 0 \quad \text{as} \quad n \to +\infty. \]
\end{lem}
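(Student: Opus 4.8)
The plan is to mirror the proof of Lemma \ref{lem:wH1_conv_cut}, the one genuinely new feature being that the truncation forces us out of the class of half-waves. As there, it suffices to show that every subsequence of
\[ I_n := \int_{|x-x_n| \ge r_n} \overline{w_n}(x)\, (e^{it_n|D|} f)(x)\, dx \]
admits a sub-subsequence converging to $0$, so we may assume that the quantities in \eqref{seq_dicho} converge in $\overline{\m R}$ or $\m S^{d-1}$; Claim \ref{cl:2} then gives $\m 1_{|(\rho+t_n)\omega - x_n| \le r_n} \to a(\rho,\omega)$ a.e. on $\m R \times \m S^{d-1}$, with $0 \le a \le 1$. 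If $t_n \to t_\infty \in \m R$, then $e^{it_n|D|} f \to e^{it_\infty|D|} f$ strongly in $L^2$ by strong continuity of the group, and by the four-case dichotomy (whole space, void, ball, half-space) established in the proof of Lemma \ref{lem:wH1_conv_cut}, $\m 1_{|x-x_n| \ge r_n}(e^{it_n|D|} f)$ converges strongly in $L^2$ to some $V$. Writing $w_n = e^{-it_n|D|}(e^{it_n|D|} w_n)$ and combining $e^{it_n|D|} w_n \weak 0$ with $e^{it_n|D|}\phi \to e^{it_\infty|D|}\phi$ for fixed $\phi$, one checks that $w_n \weak 0$ in $L^2$; hence $I_n \to 0$ by the weak--strong pairing.

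The substantial case is $t_n \to +\infty$, the case $-\infty$ being symmetric with the roles of the two hypotheses exchanged. By \eqref{eq:expansion_w} we have $e^{it_n|D|} f \approx \frac{e^{i\tau}}{(2\pi|x|)^{(d-1)/2}} f_{x/|x|}^-(|x|-t_n)$ in $L^2$. Setting $R(\rho,\omega) := a(\rho,\omega)\, e^{i\tau} f_\omega^-(\rho)$, the Lebesgue argument of Claim \ref{cl3}, with $h$ replaced by $e^{i\tau}f_\omega^-$ (which lies in $L^2(\m R \times \m S^{d-1})$ since $f \in L^2$), yields
\[ \frac{1}{(2\pi|x|)^{(d-1)/2}}\Bigl(\m 1_{|x-x_n| \ge r_n}\, e^{i\tau}f_{x/|x|}^-(|x|-t_n) - R(|x|-t_n,x/|x|)\Bigr) \to 0 \quad \text{in } L^2. \]
The crucial point is that $R$ is a \emph{general} element of $L^2(\m R \times \m S^{d-1})$: multiplication by $a$ destroys the one-sided Fourier support enjoyed by $f_\omega^-$, so $R$ is no longer the radiation field of a half-wave.

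We therefore reconstruct a solution of the \emph{full} wave equation whose radiation field is $R$. Decomposing $\q F_{\m R} R$ into its negative- and nonnegative-frequency parts in $\rho$, we define $\tilde f, \tilde g \in L^2(\m R^d)$ by
\[ \hat{\tilde f}(\xi) = \frac{e^{-i\tau}}{|\xi|^{(d-1)/2}} \q F_{\m R} R(-|\xi|, -\xi/|\xi|), \qquad \hat{\tilde g}(\xi) = \frac{e^{i\tau}}{|\xi|^{(d-1)/2}} \q F_{\m R} R(|\xi|, \xi/|\xi|), \]
in exact analogy with \eqref{reconstructfg} (without the derivative factor, as we are in the $L^2$ setting). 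The weight $|\xi|^{-(d-1)/2}$ is absorbed by the Jacobian $r^{d-1}$ in polar coordinates, so that $\|\tilde f\|_{L^2}^2 + \|\tilde g\|_{L^2}^2 \lesssim \|R\|_{L^2(\m R \times \m S^{d-1})}^2 < \infty$. Putting $\tilde v(t) := e^{it|D|}\tilde f + e^{-it|D|}\tilde g$ and applying \eqref{eq:expansion_w} and \eqref{eq:expansion_g}, we obtain the analogue of \eqref{eq:v_th}, namely $\tilde v(t_n) \approx \frac{1}{(2\pi|x|)^{(d-1)/2}} R(|x|-t_n, x/|x|)$ in $L^2$; combining with the display above shows $\|\tilde v(t_n) - \m 1_{|x-x_n| \ge r_n} e^{it_n|D|} f\|_{L^2} \to 0$.

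It remains to pair with $w_n$. Since $(w_n)$ is bounded, replacing $\m 1_{|x-x_n| \ge r_n} e^{it_n|D|} f$ by $\tilde v(t_n)$ changes $I_n$ by $o(1)$, and by unitarity of $e^{\pm it_n|D|}$,
\[ \int \overline{w_n}\, \tilde v(t_n)\, dx = \overline{\langle e^{-it_n|D|} w_n, \tilde f\rangle + \langle e^{it_n|D|} w_n, \tilde g\rangle}. \]
The first term tends to $0$ because $e^{-it_n|D|} w_n \weak 0$, and the second because $e^{it_n|D|} w_n \weak 0$; this is exactly where \emph{both} hypotheses are used, and it explains why both are needed. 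The main obstacle is thus conceptual rather than computational: one must recognize that the truncated half-wave is asymptotically a genuine full wave and split it into its outgoing and incoming components, each killed by one of the two weak-convergence assumptions. The remaining verifications (convergence of the indicator sets, $L^2$-summability of $\tilde f, \tilde g$, and the error bounds in the expansions) are routine adaptations of the corresponding steps in the proof of Lemma \ref{lem:wH1_conv_cut}.
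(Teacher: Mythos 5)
Your proposal is correct and is exactly the adaptation of the proof of Lemma \ref{lem:wH1_conv_cut} that the paper intends when it leaves this proof to the reader: the same subsequence reduction, the same use of Claim \ref{cl:2} and of the dominated-convergence argument of Claim \ref{cl3}, and the same reconstruction step, here producing a full wave $e^{it|D|}\tilde f+e^{-it|D|}\tilde g$ from the truncated profile, whose outgoing and incoming components are killed respectively by the hypotheses $e^{-it_n|D|}w_n\weak 0$ and $e^{it_n|D|}w_n\weak 0$. Your identification of why both weak-convergence assumptions are needed (truncation destroys the one-sided Fourier support of $f_\omega^-$) is precisely the right point.
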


We finally prove Proposition \ref{prop:ortho_cutoff}: it is similar to the proof of \cite[Corollary 8]{CoteKenigSchlagEquipart} to which we refer for further details. Expanding the norms we see that it suffices to prove that for $i \ne j$
\begin{gather*}
\int_{|x-x_n| \ge r_n} \overline{ \nabla_{t,x} U_L^i \left( - \frac{t_{i,n}}{\lambda_{i,n}}, \frac{x-x_{i,n}}{\lambda_{i,n}} \right) }
\cdot \nabla_{t,x} U_L^j \left( - \frac{t_{j,n}}{\lambda_{j,n}}, \frac{x-x_{j,n}}{\lambda_{j,n}}  \right) dx \to 0, \\
\int_{|x-x_n| \ge r_n}  \overline{\nabla_{x,t} w_{n}^J(0,x)}
\cdot \nabla_{t,x} U_L^j \left( - \frac{t_{j,n}}{\lambda_{j,n}}, \frac{x-x_{j,n}}{\lambda_{j,n}} \right) dx \to 0. 
\end{gather*}
Unscaling the integrals by $\lambda_{j,n}$ and then translating by $x_{j,n}$, we see that these expressions are of the form of \eqref{eq:wH1_cut}: the condition of weak convergence hold for the term in $U_L^i$ due to almost orthogonality of the profile, and for the term in $(w_{n,0}, w_{n,1})$ due to the construction of the profiles $U_L^j$ in terms of weak limit of rescaled and translated of $S(t_{j,n})(u_{n,0},u_{n,1})$.

\section{The operators $\Td$ and $\partial_s \Td$ on Sobolev spaces}
\label{sectRadsob}
\subsection{The Radon transform on the Schwartz class}

In this paragraph, we state the definitions and basic properties of the Radon transform on $\q S(\m R^d)$, for the convenience of the reader.  They are mostly classical: we refer to the paper \cite{Ludwig:66} or the reference book \cite{Helgason:book} for proofs and further details.

\bigskip

We recall the definition of the Radon transform $\Ra f$  of a function $f\in \mathcal{S}(\R^{d})$:
\begin{align*}
\forall (s,\omega) \in \m R \times \m S^{d-1}, \quad (\Ra f)(s,\omega):=\int_{\omega\cdot y=s}f(y)dy.
\end{align*}
 where $dy$ refers here to the surface measure on the hyperplane $\{y\in\R^{d};\omega\cdot y=s\}$. It can be checked that $\Ra f\in \mathcal{S}(\R\times \m S^{d-1})$ and $\Ra f$ is even in the sense that
 \begin{align}
 (\Ra f)(-\omega,-s)= (\Ra f)(\omega,s).
 \end{align}
 An important related operator is its adjoint $\q R^*$ defined for $\varphi\in \mathcal{S}(\R\times \m S^{d-1})$ by
 \begin{align}
\label{Radondualdef}
(\Ra^{*} \varphi)(x):=\int_{\omega\in \m S^{d-1}}\varphi(x\cdot \omega,\omega)d\omega,
\end{align}
so that $\q R^* \varphi \in \q S(\m R^d)$ and for $f\in \mathcal{S}(\R^{d})$ and $\varphi\in \mathcal{S}(\R\times \m S^{d-1})$,  the following duality relation holds:
\begin{align}
\label{dualRadon}
\int_{\R\times \m S^{d-1}} (\Ra f)(s,\omega)\overline{\varphi(s,\omega)}dsd\omega & = \int_{\R^{d}} f(x)\overline{(\Ra^{*} \varphi)(x)}dx.
\end{align}
Note that we have the important unitarity property in $L^{2}$ of the Radon transform (and in fact, in any $\dot H^s$), up to a constant related to $c_0$ (which appeared in \eqref{def:tau}).

\begin{prop}[Unitarity]
\label{lmRadonunit}
For every $f\in \mathcal{S}(\R^{d})$, we have
\begin{align}
\label{unitRadon}
\int_{\R_y}| f(y)|^2dy & = c_{0}^{2}\int_{\R_{s}\times \m S^{d-1}_{\omega}} |\partial_{s}^{\frac{d-1}{2}}\Ra f(\omega,s)|^2ds~d\omega, \\
\label{unitRadonH1}\int_{\R_y}|\nabla f(y)|^2dy & = c_{0}^{2}\int_{\R_{s}\times \m S^{d-1}_{\omega}} |\partial_{s}^{\frac{d+1}{2}}\Ra f(\omega,s)|^2ds~d\omega, \\
\label{actionDRadon} \Ra (\partial_{x_j}f) & = \omega_{j} \partial_{s}\Ra f,\\
\label{actionDeltaRadon}\Ra(\Delta f) & = \partial_{s}^{2}\Ra f.
\end{align}
\end{prop}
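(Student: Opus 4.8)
The plan is to reduce all four identities to the Fourier slice (central slice) theorem, which for $f \in \q S(\m R^d)$ asserts that
\[ \q F_{s \to \nu}(\Ra f)(\nu, \omega) = \hat f(\nu \omega), \qquad (\nu, \omega) \in \m R \times \m S^{d-1}. \]
First I would establish this by Fubini: since $\Ra f(s,\omega)$ is the integral of $f$ over the hyperplane $\{\omega \cdot y = s\}$, one has $\q F_{s\to\nu}(\Ra f)(\nu,\omega) = \int_{\m R} e^{-is\nu} \int_{\omega \cdot y = s} f(y)\, dy\, ds = \int_{\m R^d} e^{-i\nu (\omega \cdot y)} f(y)\, dy = \hat f(\nu\omega)$, the interchange being legitimate on the Schwartz class.

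The two commutation relations then follow immediately. For \eqref{actionDRadon}, applying the slice theorem to $\partial_{x_j} f$ and using $\widehat{\partial_{x_j} f}(\xi) = i \xi_j \hat f(\xi)$ gives $\q F_{s\to\nu}(\Ra(\partial_{x_j} f))(\nu,\omega) = i\nu \omega_j \hat f(\nu\omega) = \omega_j\, \q F_{s\to\nu}(\partial_s \Ra f)(\nu,\omega)$, and inverting the $s$-Fourier transform yields the claim. Then \eqref{actionDeltaRadon} is obtained either directly from $\widehat{\Delta f}(\xi) = -|\xi|^2 \hat f(\xi)$, or by iterating \eqref{actionDRadon}: $\Ra(\Delta f) = \sum_j \omega_j^2\, \partial_s^2 \Ra f = |\omega|^2 \partial_s^2 \Ra f = \partial_s^2 \Ra f$, since $|\omega| = 1$.

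For the unitarity \eqref{unitRadon}, the idea is to apply the one-dimensional Plancherel identity in $s$ for each fixed $\omega$: reading $\partial_s^{\frac{d-1}{2}}$ as the Fourier multiplier $(i\nu)^{\frac{d-1}{2}}$, of modulus $|\nu|^{\frac{d-1}{2}}$ (this is the natural reading when $d$ is even and the exponent is a half-integer, and only the modulus enters since we take $L^2$ norms), the slice theorem gives
\[ \int_{\m R} \left| \partial_s^{\frac{d-1}{2}} \Ra f(s,\omega) \right|^2 ds = \frac{1}{2\pi} \int_{\m R} |\nu|^{d-1} |\hat f(\nu\omega)|^2\, d\nu. \]
Integrating over $\omega \in \m S^{d-1}$, folding the half-line $\nu < 0$ onto $\nu > 0$ via the change of variables $(\nu,\omega) \mapsto (-\nu, -\omega)$ (under which $\hat f(\nu\omega)$ is preserved and the sphere measure is invariant), and passing to polar coordinates $\xi = \nu\omega$ turns the right-hand side into $\frac{1}{\pi} \int_{\m R^d} |\hat f(\xi)|^2 d\xi = \frac{(2\pi)^d}{\pi} \| f \|_{L^2}^2 = 2(2\pi)^{d-1} \|f\|_{L^2}^2$ by the $d$-dimensional Plancherel theorem. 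Multiplying by $c_0^2 = \frac{1}{2(2\pi)^{d-1}}$ gives exactly \eqref{unitRadon}. The identity \eqref{unitRadonH1} is proved in the same way, replacing $|\nu|^{d-1}$ by $|\nu|^{d+1} = |\nu|^2 |\nu|^{d-1}$ and using $\int_{\m R^d} |\xi|^2 |\hat f|^2 d\xi = (2\pi)^d \|\nabla f\|_{L^2}^2$.

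I expect the only genuinely delicate point to be the interpretation of $\partial_s^{\frac{d-1}{2}}$ as a fractional multiplier in even dimension; once this operator is agreed to be defined spectrally, with symbol of modulus $|\nu|^{\frac{d-1}{2}}$, the argument is a bookkeeping of constants together with the Fubini and Plancherel interchanges, all harmless on $\q S(\m R^d)$. This is, incidentally, the same mechanism already used in the computation of Lemma \ref{lem:T_iso}.
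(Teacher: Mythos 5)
Your proof is correct: the Fourier slice identity, the multiplier derivations of \eqref{actionDRadon} and \eqref{actionDeltaRadon}, and the Plancherel bookkeeping (including the folding $(\nu,\omega)\mapsto(-\nu,-\omega)$ and the constant $c_0^2=\frac{1}{2(2\pi)^{d-1}}$) all check out. It is, however, not the route the paper itself takes: the paper's proof of Proposition \ref{lmRadonunit} is essentially a citation to Lax--Phillips (and to Helgason/Ludwig), together with the observation that \eqref{unitRadonH1} follows by combining \eqref{unitRadon} with \eqref{actionDRadon}; the paper only \emph{remarks} that a proof could instead be assembled from Lemma \ref{lem:T_iso} once the relation $\Td = m_d(D_s)\Ra$ of Lemma \ref{lmRadon} is in place. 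What you have written is precisely that remarked-upon alternative, carried out in full: your ``Fourier slice theorem'' is identity \eqref{eq:R_F} of Lemma \ref{lmRadon}, and your $L^2$ computation is the mechanism of the proof of Lemma \ref{lem:T_iso}, applied to the multiplier of modulus $c_0|\nu|^{\frac{d-1}{2}}$. Your route buys two things: it is self-contained (no forward reference to Lemma \ref{lmRadon}, which in the paper appears only after the proposition), and it is genuinely uniform in the parity of $d$, since only the modulus of the symbol of $\partial_s^{\frac{d-1}{2}}$ enters --- so the even-dimensional case, where this operator is fractional, is handled rigorously rather than by the paper's assertion that the odd-dimensional proof ``holds for even dimension as well.'' You also obtain \eqref{unitRadonH1} directly by the same computation with $|\nu|^{d+1}$, instead of deducing it from \eqref{unitRadon} and \eqref{actionDRadon} as the paper does; both deductions are equally easy. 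What the paper's route buys is brevity and consistency with the classical literature it builds on.
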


\begin{proof}
We refer to {\cite[Theorem 3.13 page 31]{Laxbook}} where the proof is done in odd dimension, but holds for even dimension as well.  \eqref{unitRadonH1} is obtained by combining \eqref{unitRadon} and \eqref{actionDRadon}. Note also that Lemma \ref{lem:T_iso} can actually provide a proof of this Lemma \ref{lmRadonunit} once the link between $\Ra$ and $\Td$ is precised, as will be done in Lemma \ref{lmRadon} below. We also refer to Lemma 2.1 of \cite{Helgason:book}
\end{proof}

\begin{prop}[Inverse]
For every $f\in \mathcal{S}(\R^{d})$, we have
\begin{align*}
f=\Ra^{*} (c_0 |D_s|^{d-1}) \Ra f.
\end{align*}
\end{prop}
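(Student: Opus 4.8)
The plan is to reduce everything to the Fourier slice (projection) theorem, which states that the one–dimensional Fourier transform of $\Ra f$ in the $s$ variable is the radial slice of $\hat f$:
\[ \q F_{s \to \nu}(\Ra f)(\nu, \omega) = \hat f(\nu \omega). \]
This is immediate from the definitions: writing $\q F_{s\to\nu}(\Ra f)(\nu,\omega) = \int_{\m R} e^{-is\nu}\int_{\omega\cdot y = s}f(y)\,dy\,ds$ and recognizing the inner double integral as the integral of $e^{-i(\omega\cdot y)\nu}f(y)$ over all of $\m R^d$ gives $\hat f(\nu\omega)$. The smoothness and decay of $\Ra f \in \q S(\m R\times\m S^{d-1})$ recalled above make every manipulation licit for $f \in \q S(\m R^d)$.

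Given the slice theorem, I would compute $\Ra^*(|D_s|^{d-1}\Ra f)$ directly. Since $|D_s|^{d-1}$ is the multiplier $|\nu|^{d-1}$ in $s$, the slice theorem gives $\q F_{s\to\nu}(|D_s|^{d-1}\Ra f)(\nu,\omega) = |\nu|^{d-1}\hat f(\nu\omega)$, so by $1$D Fourier inversion evaluated at $s = x\cdot\omega$ and then applying $\Ra^*$ (see \eqref{Radondualdef}),
\[ \Ra^*(|D_s|^{d-1}\Ra f)(x) = \frac{1}{2\pi}\int_{\m S^{d-1}}\int_{\m R} e^{i\nu(x\cdot\omega)}|\nu|^{d-1}\hat f(\nu\omega)\,d\nu\,d\omega. \]
The negative-frequency half of the $\nu$-integral is turned into the positive one by the change of variables $(\nu,\omega)\mapsto(-\nu,-\omega)$, which leaves $e^{i\nu(x\cdot\omega)}$ and $\hat f(\nu\omega)$ invariant and uses that the measure on $\m S^{d-1}$ is even, so the right-hand side equals $\frac{2}{2\pi}\int_{\m S^{d-1}}\int_0^\infty e^{i\nu(x\cdot\omega)}\nu^{d-1}\hat f(\nu\omega)\,d\nu\,d\omega$. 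Recognizing the last integral as the $d$-dimensional inversion formula $f(x) = \frac{1}{(2\pi)^d}\int_{\m R^d}e^{ix\cdot\xi}\hat f(\xi)\,d\xi$ in polar coordinates $\xi = \nu\omega$, I obtain $\Ra^*(|D_s|^{d-1}\Ra f) = 2(2\pi)^{d-1}f$, i.e. the claimed inversion with constant $c_0^2 = \frac{1}{2(2\pi)^{d-1}}$ (consistent with the unitarity normalization in \eqref{unitRadon}).

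Alternatively, and more in the spirit of the preceding results, I would bypass the explicit computation by leveraging the already-established unitarity. Factor $|D_s|^{d-1} = (\partial_s^{\frac{d-1}{2}})^{\ast}\, \partial_s^{\frac{d-1}{2}}$ as $L^2(\m R)$ multipliers (both sides have symbol $|\nu|^{d-1}$). Then \eqref{unitRadon} reads $\|f\|_{L^2}^2 = c_0^2\langle \partial_s^{\frac{d-1}{2}}\Ra f, \partial_s^{\frac{d-1}{2}}\Ra f\rangle = c_0^2\langle \Ra f, |D_s|^{d-1}\Ra f\rangle = c_0^2\langle f, \Ra^*|D_s|^{d-1}\Ra f\rangle$, using the adjoint relation \eqref{dualRadon}. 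Polarizing this identity over $\q S(\m R^d)$ yields $\langle f,\, g - c_0^2\Ra^*|D_s|^{d-1}\Ra g\rangle = 0$ for all $f,g\in\q S(\m R^d)$, whence the formula.

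The points requiring care are: (i) the even-dimensional case, where $\partial_s^{\frac{d-1}{2}}$ and $|D_s|^{d-1}$ are nonlocal operators and must be treated as Fourier multipliers rather than differential operators, so one should check that $|D_s|^{d-1}\Ra f$ still decays well enough for $\Ra^*$ to be applied and to land back in $\q S(\m R^d)$, despite the generically poor decay of $\Ra f$ at spatial infinity; and (ii) the bookkeeping of the evenness symmetry $(\Ra f)(-s,-\omega) = (\Ra f)(s,\omega)$, which is exactly what converts $\nu^{d-1}$ over $\{\nu\ge0\}\times\m S^{d-1}$ into $|\nu|^{d-1}$ over $\m R\times\m S^{d-1}$ and produces the factor $2$ in the constant. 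These are the only genuine subtleties; the rest is routine once the slice theorem is in hand.
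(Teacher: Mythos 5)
Your proof is correct, but there is no internal proof to compare it with: the paper states this proposition without proof, as one of the classical facts recalled at the start of the section with references to \cite{Ludwig:66} and \cite{Helgason:book}. Your first computation is the standard argument: the Fourier slice identity you start from is exactly what the paper proves later as \eqref{eq:R_F} in Lemma \ref{lmRadon}, and the rest is the change of variables $(\nu,\omega)\mapsto(-\nu,-\omega)$ plus the $d$-dimensional Fourier inversion formula in polar coordinates. Since $\hat f$ is Schwartz, every integral you write converges absolutely and Fubini applies, so your worry (i) is not actually needed for the identity: one never has to know that $|D_s|^{d-1}\Ra f$ is Schwartz (in even dimension it is not), only that it is the bounded continuous function given by the convergent Fourier integral, to which $\Ra^*$ applies directly. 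Your second argument, via the factorization $|D_s|^{d-1}=(\partial_s^{\frac{d-1}{2}})^*\partial_s^{\frac{d-1}{2}}$, the unitarity \eqref{unitRadon}, the duality \eqref{dualRadon} and polarization, is also valid and closer in spirit to the surrounding section; the only point to make explicit there is that \eqref{dualRadon}, stated for Schwartz $\varphi$, must be extended to the bounded continuous $\varphi=|D_s|^{d-1}\Ra f$, which follows from the same Fubini computation that proves \eqref{dualRadon} itself.

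The one substantive discrepancy is the constant, and there your computation is right and the printed statement is not. You obtain $\Ra^*|D_s|^{d-1}\Ra f = 2(2\pi)^{d-1} f$, i.e.\ $f = c_0^2\,\Ra^*|D_s|^{d-1}\Ra f$ with $c_0^2=\frac{1}{2(2\pi)^{d-1}}$, whereas the proposition as stated carries a single factor $c_0$. The value $c_0^2$ is forced by consistency: pairing an inversion formula $f=c\,\Ra^*|D_s|^{d-1}\Ra f$ with $f$ and using \eqref{dualRadon} gives $\|f\|_{L^2}^2=c\,\big\| |D_s|^{\frac{d-1}{2}}\Ra f\big\|_{L^2}^2$, which matches \eqref{unitRadon} only for $c=c_0^2$, the multipliers $|\nu|^{\frac{d-1}{2}}$ and $(i\nu)^{\frac{d-1}{2}}$ having equal modulus. (The same slip occurs in part 3 of Proposition \ref{lmRadldual}.) So your derivation not only proves the statement but also corrects its normalization.
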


Note that in odd dimension, $(c_0 |D_s|^{d-1})$ is a differential operator.

The extension of the Radon transform to distributions presents some difficulties mainly coming from the fact that $\Ra^{*}$ does not obviously preserve decay: for example, it does not map $\mathcal{S}(\R^{d})$ or $ \mathcal{\q D}(\R^{d})$ into itself. 

One can however easily extend $\Ra$ to compactly supported distributions $\mathcal{E}'(\R^{d})$.

\begin{prop}[Radon transform on $\q E'$]
 $\Ra^{*}$ maps (continuously) $\mathcal{E}(\R\times \m S^{d-1})= \q C^{\infty}(\R\times \m S^{d-1})$ into $\mathcal{E}(\R^{d})= \q C^{\infty}(\R^{d})$. 
 As a consequence, for $u \in \mathcal{E}'(\R^{d})$, one defines its Radon transform $\Ra u \in \mathcal{E}'(\R\times \m S^{d-1})$ by the formula
\begin{align}
\label{RaDistrib}
\left< \Ra u,\varphi \right>_{\mathcal{E}'(\R\times \m S^{d-1}),\mathcal{E}(\R\times \m S^{d-1})}
:=\left<  u,\Ra^{*}\varphi \right>_{\mathcal{E}'(\R^{d}),\mathcal{E}(\R^{d})}.
\end{align}
Furthermore,
\begin{align}
\label{actionDeltaRadondual}
\forall \varphi\in \mathcal{S}(\R\times \m S^{d-1}), \quad \Delta \Ra^{*} \varphi & = \Ra^{*}(\partial_{s}^{2}\varphi),
\end{align}
so that  \eqref{actionDeltaRadon} also hold in $\mathcal{E'}(\R^{d})$:
\begin{align}
\label{actionDeltaRadonD} 
\forall u\in \mathcal{E'}(\R^{d}), \quad \Ra(\Delta u) & = \partial_{s}^{2}\Ra u. 
\end{align}
\end{prop}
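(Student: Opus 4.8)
The plan is to establish the three assertions in order: first the mapping property of $\Ra^*$ on $\mathcal{E}(\R \times \m S^{d-1})$, which is exactly what makes the dual definition \eqref{RaDistrib} legitimate; then the intertwining identity \eqref{actionDeltaRadondual}; and finally to transfer \eqref{actionDeltaRadon} to $\mathcal{E}'(\R^d)$ by pure duality.

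For the first assertion I would differentiate under the integral sign in \eqref{Radondualdef}. For fixed $\omega \in \m S^{d-1}$ the map $x \mapsto \varphi(x \cdot \omega, \omega)$ is smooth, and since the integration in \eqref{Radondualdef} runs over the \emph{compact} manifold $\m S^{d-1}$, differentiation under the integral is justified and gives, for every multi-index $\alpha$,
\[ \partial_x^\alpha (\Ra^* \varphi)(x) = \int_{\m S^{d-1}} \omega^\alpha \, (\partial_s^{|\alpha|} \varphi)(x \cdot \omega, \omega)\, d\omega. \]
This already shows $\Ra^* \varphi \in \q C^{\infty}(\R^d)$. For continuity in the Fréchet topologies I would observe that if $|x| \le K$ then $|x \cdot \omega| \le K$, so the right-hand side involves only the values of $\varphi$ on the compact set $[-K,K] \times \m S^{d-1}$; hence each seminorm $\sup_{|x| \le K} |\partial_x^\alpha \Ra^* \varphi|$ is controlled by a constant depending only on $K$ times a seminorm of $\varphi$ on $[-K,K] \times \m S^{d-1}$. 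This yields the continuity of $\Ra^*: \mathcal{E}(\R \times \m S^{d-1}) \to \mathcal{E}(\R^d)$, and its transpose defines $\Ra: \mathcal{E}'(\R^d) \to \mathcal{E}'(\R \times \m S^{d-1})$ continuously via \eqref{RaDistrib}; consistency with the Schwartz-class definition is precisely \eqref{dualRadon}.

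The identity \eqref{actionDeltaRadondual} is the only genuine computation, and it falls out of the displayed formula with $|\alpha| = 2$: summing over $j$,
\[ \Delta (\Ra^* \varphi)(x) = \sum_{j=1}^d \int_{\m S^{d-1}} \omega_j^2 \, (\partial_s^2 \varphi)(x \cdot \omega, \omega)\, d\omega = \int_{\m S^{d-1}} |\omega|^2 \, (\partial_s^2 \varphi)(x \cdot \omega, \omega)\, d\omega, \]
and since $|\omega| = 1$ on $\m S^{d-1}$ the last integral is exactly $\Ra^*(\partial_s^2 \varphi)(x)$.

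Finally, for \eqref{actionDeltaRadonD}, I would argue by duality. As $u \in \mathcal{E}'(\R^d)$ implies $\Delta u \in \mathcal{E}'(\R^d)$, the left-hand side is well defined, and for any $\varphi \in \mathcal{E}(\R \times \m S^{d-1})$ one chains the definitions:
\[ \lan \Ra(\Delta u), \varphi \ran = \lan \Delta u, \Ra^* \varphi \ran = \lan u, \Delta \Ra^* \varphi \ran = \lan u, \Ra^*(\partial_s^2 \varphi) \ran = \lan \Ra u, \partial_s^2 \varphi \ran = \lan \partial_s^2 \Ra u, \varphi \ran, \]
using in turn \eqref{RaDistrib}, the symmetry of $\Delta$ (that is, $\lan \Delta u, \psi \ran = \lan u, \Delta \psi \ran$ for $u \in \mathcal{E}'$ and $\psi \in \mathcal{E}$), the identity \eqref{actionDeltaRadondual}, \eqref{RaDistrib} once more, and the definition of the distributional derivative $\partial_s^2$ on $\R \times \m S^{d-1}$. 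I do not expect a serious obstacle here: the only points requiring genuine care are the justification of differentiation under the integral together with the seminorm bound in the first step, and keeping track of the (sesquilinear versus bilinear) convention in the pairing, so that the conjugation appearing in \eqref{dualRadon} is handled consistently throughout.
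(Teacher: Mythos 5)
Your proof is correct, and it is genuinely different from what the paper does: the paper offers no argument at all for this proposition, simply citing Ludwig \cite{Ludwig:66} (Section 4, in particular Theorem 4.9). Your route is the natural self-contained one, and all three steps hold up: differentiation under the integral in \eqref{Radondualdef} is legitimate because $\m S^{d-1}$ is compact and the integrand is jointly smooth, giving $\partial_x^\alpha (\Ra^*\varphi)(x) = \int_{\m S^{d-1}} \omega^\alpha (\partial_s^{|\alpha|}\varphi)(x\cdot\omega,\omega)\,d\omega$; the seminorm bound $\sup_{|x|\le K}|\partial_x^\alpha \Ra^*\varphi| \le |\m S^{d-1}| \sup_{|s|\le K,\,\omega}|\partial_s^{|\alpha|}\varphi|$ gives Fr\'echet continuity, hence the transpose acts $\mathcal{E}'(\R^d)\to\mathcal{E}'(\R\times\m S^{d-1})$; the identity \eqref{actionDeltaRadondual} indeed reduces to $\sum_j \omega_j^2 = |\omega|^2 = 1$; and the duality chain for \eqref{actionDeltaRadonD} is sound (note the signs work out since $\Delta$ and $\partial_s^2$ are second order, so no $(-1)$ factors appear, and the real coefficients make the sesquilinear/bilinear issue harmless, as you observe). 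What your approach buys is a complete, elementary verification that makes the locality-by-duality machinery of Section \ref{sectRadsob} self-contained; what the paper's citation buys is brevity and access to the finer content of Ludwig's Theorem 4.9 (support and range properties beyond the bare mapping statement). The only point you could tighten: consistency of \eqref{RaDistrib} with the classical definition requires \eqref{dualRadon} for $\varphi \in \mathcal{E}$ rather than $\varphi \in \mathcal{S}$, which is fine because a compactly supported $f$ has $\Ra f$ compactly supported in $s$, so the Fubini argument still applies; this is a one-line remark, not a gap.
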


\begin{proof}
See \cite[Section 4]{Ludwig:66}, and in particular Theorem 4.9.
\end{proof}

An important result for our purpose is the description of the range of the Radon transform. We will make extensive use of the following result, which describes the images of Schwartz class functions.

\begin{thm}\cite[Theorem 2.1]{Ludwig:66}
\label{thmLudwig}
$g\in \mathcal{S}(\m S^{d-1}\times \R)$ can be written $g=\Ra f$ for some $f\in \mathcal{S}(\R^{d})$ if and only if the following two conditions are fulfilled
\begin{enumerate}
\item \label{condi1} $g$ is even, that is $g(\w,s)=g(-\w,-s)$ for any $(\w,s)\in \m S^{d-1}\times \R$.
\item if $Y_{\ell}(\w)$ is a spherical harmonics of degree $l$ and if $0\le k<l$ integers, then
\begin{align}
\label{condimoment}
\int_{\w\in \m S^{d-1}}\int_{s\in\R}g(s,\w)s^{k}Y_{\ell}(\w)dsd\w=0.
\end{align}
\end{enumerate}
\end{thm}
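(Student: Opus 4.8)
The plan is to rely on the Fourier slice theorem, which is the single identity tying $\Ra$ to the Fourier transform: taking the one-dimensional Fourier transform of $\Ra f$ in $s$ and inserting the definitions of $\Ra$ and of $\hat{\cdot}$ yields
\begin{align}
\label{eq:sliceplan}
\q F_{s \to \sigma}(\Ra f)(\sigma,\omega) = \hat f(\sigma \omega), \qquad (\sigma,\omega) \in \m R \times \m S^{d-1}.
\end{align}
Both implications will be read off from \eqref{eq:sliceplan}: for necessity I would extract the two conditions from the right-hand side, and for sufficiency I would \emph{define} a candidate $\hat f$ through \eqref{eq:sliceplan} and check that it lies in $\q S(\m R^d)$.

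For necessity, evenness is immediate since the hyperplanes $\{\omega \cdot y = s\}$ and $\{(-\omega)\cdot y = -s\}$ coincide, giving $\Ra f(s,\omega) = \Ra f(-s,-\omega)$. For the moment conditions I would differentiate \eqref{eq:sliceplan} $k$ times in $\sigma$ at $\sigma = 0$, which gives
\[ \int_{\m R} s^k (\Ra f)(s,\omega)\,ds = i^k \left. \partial_\sigma^k \hat f(\sigma\omega) \right|_{\sigma = 0} = i^k \sum_{|\alpha| = k} \frac{k!}{\alpha!}(\partial^\alpha \hat f)(0)\, \omega^\alpha; \]
this is the restriction to $\m S^{d-1}$ of a homogeneous polynomial of degree $k$ in $\omega$, which expands over spherical harmonics of degrees $k, k-2, \dots$ and is therefore $L^2(\m S^{d-1})$-orthogonal to every $Y_\ell$ of degree $l > k$. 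That is exactly \eqref{condimoment}.

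The substance is the converse. Given $g$ satisfying both conditions, I would set $\hat f(\sigma\omega) := \q F_{s\to\sigma}(g)(\sigma,\omega)$ for $\sigma > 0$; the evenness of $g$ makes this independent of the polar representation of a point, so $\hat f$ is well defined on $\m R^d \setminus \{0\}$, where it is smooth and, since $g \in \q S$, rapidly decreasing in $|\xi|$ uniformly in $\omega$. The whole difficulty is smoothness at the origin. To handle it I would decompose $g(s,\omega) = \sum_\ell g_\ell(s) Y_\ell(\omega)$: condition \eqref{condimoment} says $\int_{\m R} g_\ell(s) s^k\,ds = 0$ for $k < l$, so $\q F_s(g_\ell)$ vanishes to order $l$ at $\sigma = 0$, while evenness forces $g_\ell(-s) = (-1)^l g_\ell(s)$ and hence fixes the parity of $\q F_s(g_\ell)$. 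Writing $Y_\ell(\xi/|\xi|) = |\xi|^{-l}\tilde Y_\ell(\xi)$ with $\tilde Y_\ell$ a homogeneous harmonic polynomial of degree $l$, one then gets $\q F_s(g_\ell)(|\xi|)\,Y_\ell(\xi/|\xi|) = h_\ell(|\xi|^2)\,\tilde Y_\ell(\xi)$ for a smooth $h_\ell$, so each term is smooth across $0$. Summing over $\ell$ should produce a smooth, rapidly decreasing $\hat f$, whence $f := \q F_{\m R^d}^{-1}\hat f \in \q S(\m R^d)$; finally \eqref{eq:sliceplan} for this $f$ together with the definition of $\hat f$ gives $\q F_s(\Ra f) = \q F_s(g)$, i.e. $\Ra f = g$.

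The hard part will be the last step, controlling the sum over $\ell$. The termwise smoothness is where both hypotheses enter essentially — the moment conditions create the vanishing order $l$ and evenness supplies the matching parity — but to conclude $\hat f \in \q S(\m R^d)$ one must sum the spherical-harmonic expansion with estimates uniform enough to dominate all derivatives, using the rapid decay in $\ell$ furnished by $g \in \q S(\m S^{d-1}\times \m R)$. This is precisely the polar/homogeneous smoothness criterion that underlies \cite[Theorem 2.1]{Ludwig:66}, to which I would appeal for the remaining bookkeeping.
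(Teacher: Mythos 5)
First, a remark on the comparison itself: the paper contains no proof of this statement — it is quoted as a black-box result from Ludwig's article — so your attempt can only be judged on its own merits, not against an internal argument.

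Your necessity direction is complete and correct. The slice identity $\q F_{s\to\sigma}(\Ra f)(\sigma,\omega)=\hat f(\sigma\omega)$, evenness from $\{\omega\cdot y=s\}=\{(-\omega)\cdot y=-s\}$, and the computation showing that $\int_{\m R} s^k(\Ra f)(s,\omega)\,ds$ is the restriction to $\m S^{d-1}$ of a homogeneous polynomial of degree $k$ in $\omega$, hence $L^2(\m S^{d-1})$-orthogonal to every $Y_\ell$ of degree $l>k$, are all sound. The termwise part of your sufficiency argument is also right: evenness fixes the parity $(-1)^l$ of $\q F_s(g_\ell)$, the moment conditions make it vanish to order $l$ at $\sigma=0$, and Hadamard's lemma plus the smooth-even-function criterion give $\q F_s(g_\ell)(\sigma)=\sigma^l h_\ell(\sigma^2)$ with $h_\ell$ smooth, so each summand $h_\ell(|\xi|^2)\tilde Y_\ell(\xi)$ is smooth across the origin.

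The genuine gap is the one you flagged but did not close: smoothness of each term of $\sum_\ell h_\ell(|\xi|^2)\tilde Y_\ell(\xi)$ says nothing about the sum. To conclude that $\hat f$ is $C^\infty$ at $0$ and lies in $\q S(\m R^d)$ one needs bounds on all derivatives of $h_\ell$, uniform in $\sigma$, that decay rapidly in $\ell$. These are not free: dividing by $\sigma^l$ forces one to control derivatives of $\q F_s(g_\ell)$ of order at least $l$ (with constants coming from the integral remainder formula), and this must be beaten by the decay in $\ell$ furnished by $g\in\q S(\m R\times\m S^{d-1})$ through powers of $\Delta_{\m S^{d-1}}$. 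This uniform bookkeeping is precisely the technical content of the theorem — it is what Ludwig, and Helgason (who proves the same range theorem by exactly your slice-theorem route), actually carry out. Your proposal instead closes this step by appealing to \cite[Theorem 2.1]{Ludwig:66} itself, i.e.\ to the statement being proven, so as written the argument is circular: it either needs the uniform-in-$\ell$ estimates done explicitly, or it reduces to the citation the paper already makes, in which case nothing has been proven.
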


In proving Theorem \ref{thmexteriorgene}, it will be important for us to know the Radon transform of product of radial functions and spherical harmonics. This makes use of the Gegenbauer polynomials, whose definition and properties are recalled below.

\begin{definition}
\label{defGegen}
Let $\lambda\in \R^{*}$. For any $l\in \N$, we define the Gegenbauer polynomials $C_{l}^{\lambda}$ by iteration: $C_{0}^{\lambda}(t)=1$, $C_{1}^{\lambda}(t)=2\lambda t$ and for $l \ge 2$,
\begin{align*}
\quad C_{l}^{\lambda}(t)=\frac{1}{l}\left(2t(l+\lambda-1)C_{l-1}^{\lambda}(t)-(l+2\lambda-2)C_{l-2}^{\lambda}(t)\right).
\end{align*}
For $\lambda=0$, the Gegenbauer polynomials are the Chebychev polynomials where similar formula holds, but will not be used here since it corresponds to the even dimension $d=2$.
\end{definition}

\begin{prop}[{\cite[Section 22, page 773]{AS64}}]
\label{propGegen}
$C_{l}^{\lambda}$ is a polynomial of degree $l$ which is an even (resp. odd) function if $l$ is even (resp. odd).

The polynomials $t\mapsto C_{l}^{\lambda}(t)$ are orthogonal on the interval $[-1,1]$ for the weight function $(1-t^{2})^{\lambda-\frac{1}{2}}$. More precisely, for $l$, $m\in \N$, $\lambda>-1/2$, we have
\begin{align*}
\int_{-1}^{1}C_{l}^{\lambda}(t)C_{m}^{\lambda}(t)(1-t^{2})^{\lambda-\frac{1}{2}}dt= \delta_{l,m} \frac{\pi2^{1-2\lambda}\Gamma(l+2\lambda)}{l! (l+\lambda)\Gamma(\lambda)^{2}}.
\end{align*}
\end{prop}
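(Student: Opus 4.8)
The plan is to treat the two assertions of the proposition separately, taking the recurrence of Definition~\ref{defGegen} as the sole definition of $C_l^\lambda$. The statement about degree and parity is a direct induction on $l$. The base cases $C_0^\lambda = 1$ and $C_1^\lambda(t)=2\lambda t$ have degrees $0,1$ and the asserted parities. For the inductive step, read the recurrence as $l\,C_l^\lambda(t) = 2t(l+\lambda-1)C_{l-1}^\lambda(t) - (l+2\lambda-2)C_{l-2}^\lambda(t)$: the first term has degree $\deg C_{l-1}^\lambda+1 = l$ and the second has degree $l-2$, so $\deg C_l^\lambda = l$ with leading coefficient $k_l = \tfrac{2(l+\lambda-1)}{l}k_{l-1}$, hence $k_l = \tfrac{2^l\Gamma(l+\lambda)}{l!\,\Gamma(\lambda)}\neq 0$. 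Moreover multiplication by $t$ turns the parity $l-1$ of $C_{l-1}^\lambda$ into parity $l$, while $C_{l-2}^\lambda$ already has parity $l$; thus $C_l^\lambda$ has parity $l$.

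For orthogonality I would anchor the analysis on the generating function: for $|x|<1$ and $t\in[-1,1]$ one has $\sum_{l\ge 0}C_l^\lambda(t)x^l = (1-2tx+x^2)^{-\lambda} =: G(x,t)$. This is checked by verifying that the Taylor coefficients of $G$ obey the same recurrence and initial data: differentiating in $x$ yields $(1-2tx+x^2)\partial_x G = 2\lambda(t-x)G$, and extracting the coefficient of $x^{l-1}$ reproduces exactly the recurrence of Definition~\ref{defGegen}. From $G$ (or from the recurrence) one then derives the Gegenbauer differential equation $(1-t^2)y''-(2\lambda+1)t\,y'+l(l+2\lambda)y=0$ for $y=C_l^\lambda$, and rewrites it in Sturm--Liouville form $\frac{d}{dt}\bigl[(1-t^2)^{\lambda+1/2}y'\bigr]+l(l+2\lambda)(1-t^2)^{\lambda-1/2}y=0$. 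Since $\lambda>-1/2$ the factor $(1-t^2)^{\lambda+1/2}$ vanishes at $t=\pm1$, so the boundary terms in Green's formula drop out; comparing the equations for $C_l^\lambda$ and $C_m^\lambda$ and using that the eigenvalues $l(l+2\lambda)$ are pairwise distinct for $l\neq m$ (again because $\lambda>-1/2$) gives $\int_{-1}^1 C_l^\lambda C_m^\lambda (1-t^2)^{\lambda-1/2}\,dt=0$ for $l\neq m$.

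It remains to compute $h_l:=\int_{-1}^1 (C_l^\lambda(t))^2(1-t^2)^{\lambda-1/2}\,dt$, which I expect to be the main obstacle, since it demands tracking the $\Gamma$-factors exactly. I would obtain a recursion for $h_l$ from the three-term recurrence: pairing it in the weighted $L^2$ inner product against $C_{l-1}^\lambda$ and using orthogonality together with the leading coefficients $k_l$ above yields the standard norm-ratio identity for orthogonal polynomials, which here evaluates to $h_l/h_{l-1} = \frac{(l+2\lambda-1)(l+\lambda-1)}{l(l+\lambda)}$. This reduces everything to the single initial value $h_0 = \int_{-1}^1 (1-t^2)^{\lambda-1/2}\,dt = B(\tfrac12,\lambda+\tfrac12)=\sqrt{\pi}\,\Gamma(\lambda+\tfrac12)/\Gamma(\lambda+1)$; telescoping the ratio and simplifying the product of $\Gamma$-factors (using the duplication formula $\Gamma(2\lambda)=\tfrac{2^{2\lambda-1}}{\sqrt{\pi}}\Gamma(\lambda)\Gamma(\lambda+\tfrac12)$) produces the claimed value $h_l = \pi 2^{1-2\lambda}\Gamma(l+2\lambda)/(l!\,(l+\lambda)\Gamma(\lambda)^2)$. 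Alternatively, one can bypass both the Sturm--Liouville step and the recursion by integrating the product of two generating functions, $\int_{-1}^1 G(x,t)G(y,t)(1-t^2)^{\lambda-1/2}\,dt$, which by a classical beta-integral evaluation depends only on $xy$: matching powers of $(xy)^l$ then re-proves orthogonality and delivers $h_l$ in one stroke. Since this is precisely the classical result recorded in \cite{AS64}, in practice I would simply cite it, the above serving as a self-contained justification.
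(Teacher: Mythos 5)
Your proposal is sound, but note that the comparison here is lopsided: the paper gives no proof of Proposition \ref{propGegen} at all --- it records these properties as classical facts, with the citation \cite[Section 22, page 773]{AS64} standing in for the argument, and they are only consumed later (in Proposition \ref{lmRadldual} and Lemma \ref{lmimaTpoly}). What you wrote is a genuinely different thing, namely a self-contained derivation from the recurrence of Definition \ref{defGegen}: induction for degree and parity; the generating function $(1-2tx+x^2)^{-\lambda}$ verified through the first-order equation $(1-2tx+x^2)\partial_x G=2\lambda(t-x)G$; the Sturm--Liouville form whose boundary weight $(1-t^2)^{\lambda+\frac{1}{2}}$ vanishes at $t=\pm1$ (this is where $\lambda>-\frac{1}{2}$ enters, together with the distinctness of the eigenvalues $l(l+2\lambda)$, since $l(l+2\lambda)-m(m+2\lambda)=(l-m)(l+m+2\lambda)\neq 0$ for $l\neq m$); and the norm recursion $h_l/h_{l-1}=(l+2\lambda-1)(l+\lambda-1)/\bigl(l(l+\lambda)\bigr)$ telescoped from $h_0=\sqrt{\pi}\,\Gamma(\lambda+\tfrac{1}{2})/\Gamma(\lambda+1)$ via the duplication formula. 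I checked the ratio, the initial value, and the telescoping against the closed form in the statement, and they agree exactly. Two caveats, neither fatal: the passage from the generating function (or recurrence) to the second-order Gegenbauer ODE is asserted rather than carried out, and it is the one nontrivial verification in your route; and the nonvanishing of the leading coefficient $k_l=2^{l}\Gamma(l+\lambda)/(l!\,\Gamma(\lambda))$ requires $l+\lambda-1\neq 0$ for all $l\ge 2$, which holds for $\lambda>-\frac{1}{2}$ (and in the paper's application $\lambda=\frac{d}{2}-1\ge\frac{1}{2}$) but not for every $\lambda\in\R^{*}$ allowed in Definition \ref{defGegen}: for instance $C_{2}^{-1}\equiv 1$ has degree $0$, so the degree claim should be read on the range where it is actually used. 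As for what each approach buys: the bare citation keeps the paper lean, since nothing downstream depends on how these facts are proved; your derivation pins down the normalization constant independently of the reference, which has real value because Gegenbauer conventions vary across the literature --- and your closing remark that in practice one would simply cite \cite{AS64} shows the two routes are compatible rather than competing.
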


We will be using mostly in the specific case $\lambda=\frac{d}{2}-1$, because those polynomial appear when computing the Radon transform of functions involving spherical harmonics.

\begin{prop}
\label{lmRadldual}
1. If $h(s,\omega)=g(s)Y_{\ell}\left(\omega\right)$, where $g\in \q C^{\infty}(\R)$ and $g(-s)=(-1)^{l}g(s)$, then $(\Ra^{*} h)(x)=W(|x|)Y_{\ell}\left(\frac{x}{|x|}\right)$ where
\begin{align}
\label{formuleGegendual}
W(r)=(\Ra_{l}^{*}w)(s) :=\frac{|\m S^{d-1} |}{C_{l}^{d/2-1}(1)}\int_{-1}^{1} C_{l}^{d/2-1}(t)g(rt)\left(1-t^{2}\right)^{\frac{d-3}{2}}dt.
\end{align}
We have $W(r)=r^{l}v(r^{2})$ where $v\in \q C^{\infty}(\R)$. 

\label{lmRadl}
2. Similarly, if $f(x)=w(|x|)Y_{\ell}\left(\frac{x}{|x|}\right)$, where $w(r)=r^{l}v(r^{2})$ and $v\in \mathcal{S} (\m R)$, then $(\Ra f)(s,\omega)=k(s)Y_{\ell}(\omega)$ where
\begin{align}
\label{formuleGegen}
k(s)=(\Ra_{l}w)(s) :=\frac{|\m S^{d-1} |}{C_{l}^{d/2-1}(1)}\int_{|s|}^{+\infty} C_{l}^{d/2-1}\left(\frac{s}{r}\right)w(r)r^{d-2}\left(1-\frac{s^{2}}{r^{2}}\right)^{\frac{d-3}{2}}dr.
\end{align}
The function $k$ is in $\mathcal{S} (\m R)$ and satisfies $k(-s)=(-1)^{l}k(s)$. 

3. Furthermore, under the notations of 2., $w=\Ra_{l}^{*} (c_0 |D_s|^{d-1}) \Ra_{l} w$ and moreover
\begin{align*}
\int_{0}^{+\infty}|w(r)|^{2}r^{d-1}d = c_0 \int_{-\infty}^{+\infty} \left| |D_s|^{\frac{d-1}{2}}\Ra_{l}w\right|^{2}ds.
\end{align*}
\end{prop}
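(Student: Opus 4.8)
The plan is to reduce all three parts to the \textbf{Funk--Hecke formula}, which expresses the average of a spherical harmonic $Y_\ell$ against a zonal kernel as a multiple of $Y_\ell$ with the Gegenbauer polynomial $C_l^{d/2-1}$ as the multiplier, and then to obtain parts 2 and 3 from the global identities (the inversion formula and Proposition \ref{lmRadonunit}) by projecting onto the $\ell$-th harmonic. First I would treat part 1. Writing $x = r\eta$ with $r=|x|$, $\eta = x/|x|$, the definition \eqref{Radondualdef} gives
\[ (\Ra^* h)(x) = \int_{\m S^{d-1}} g(r\,\eta\cdot\omega)\,Y_\ell(\omega)\,d\omega. \]
Funk--Hecke applied to the zonal function $t \mapsto g(rt)$ evaluates this as $Y_\ell(\eta)$ times exactly the Gegenbauer integral in \eqref{formuleGegendual}, so $(\Ra^* h)(x) = W(|x|)\,Y_\ell(x/|x|)$. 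For the structure $W(r)=r^l v(r^2)$: differentiating under the integral shows $W\in \q C^\infty$; the parity of $g$ gives $W(-r)=(-1)^l W(r)$; and the orthogonality of $C_l^{d/2-1}$ to polynomials of lower degree (Proposition \ref{propGegen}), together with the parity of $g$, forces $\int_{-1}^1 C_l^{d/2-1}(t)\,t^m (1-t^2)^{\frac{d-3}{2}}dt = 0$ for $m<l$, whence $W^{(m)}(0)=0$ for $m<l$. A smooth function of parity $(-1)^l$ whose derivatives at $0$ vanish up to order $l-1$ is of the form $r^l v(r^2)$ with $v$ smooth (the standard smooth even/odd function lemma), which is the claim.

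For part 2, I would parametrize the hyperplane $\{\omega\cdot y = s\}$ by $y = s\omega + z$ with $z\in\omega^\perp$, then pass to polar coordinates $z = \rho\theta$, $\theta\in\m S^{d-2}\cap\omega^\perp$, $\rho\ge 0$. With $r=\sqrt{s^2+\rho^2}$, the point $y/|y| = (s/r)\,\omega + (\rho/r)\,\theta$ sits at fixed latitude $s/r$ from $\omega$, so the inner average is the zonal mean-value form of Funk--Hecke on the subsphere,
\[ \int_{\m S^{d-2}} Y_\ell\!\left(\tfrac{s}{r}\,\omega + \tfrac{\rho}{r}\,\theta\right) d\theta = \frac{|\m S^{d-2}|}{C_l^{d/2-1}(1)}\,C_l^{d/2-1}(s/r)\,Y_\ell(\omega). \]
Integrating in $\rho$ and substituting $\rho=\sqrt{r^2-s^2}$, so that $\rho^{d-2}d\rho = r^{d-2}(1-s^2/r^2)^{\frac{d-3}{2}}dr$, reproduces the kernel of \eqref{formuleGegen}, i.e. $\Ra f = (\Ra_l w)(s)\,Y_\ell(\omega)$. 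The parity $C_l^{d/2-1}(-s/r)=(-1)^l C_l^{d/2-1}(s/r)$ gives $k(-s)=(-1)^l k(s)$, and since $f$ is Schwartz (here $r^l Y_\ell$ is a harmonic polynomial and $v\in\q S$, so $f$ extends smoothly and decays rapidly), $\Ra f\in \q S(\R\times\m S^{d-1})$ by the general property of the Radon transform, whence $k\in\q S(\R)$.

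For part 3, I would feed $f = w\,Y_\ell$ into the global inversion formula $f = \Ra^*(c_0|D_s|^{d-1})\Ra f$. By part 2, $\Ra f = (\Ra_l w)\,Y_\ell$; since $|D_s|^{d-1}$ acts only in $s$ and preserves parity, $(c_0|D_s|^{d-1})\Ra f = \tilde g(s)\,Y_\ell(\omega)$ with $\tilde g = c_0|D_s|^{d-1}\Ra_l w$ of parity $(-1)^l$, so part 1 applies and yields $\Ra^*(\tilde g\,Y_\ell) = (\Ra_l^*\tilde g)(|x|)\,Y_\ell(x/|x|)$; comparing with $f=w\,Y_\ell$ and using $Y_\ell\not\equiv 0$ gives $w=\Ra_l^*(c_0|D_s|^{d-1})\Ra_l w$. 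For the isometry I would specialize \eqref{unitRadon} to $f=w\,Y_\ell$: the left side factors as $\int_0^\infty |w(r)|^2 r^{d-1}dr\cdot\|Y_\ell\|_{L^2(\m S^{d-1})}^2$ and the right side as $c_0^2\int_\R |\partial_s^{\frac{d-1}{2}}\Ra_l w|^2 ds\cdot\|Y_\ell\|_{L^2(\m S^{d-1})}^2$; as $Y_\ell$ is $L^2$-normalized and $|D_s|^{\frac{d-1}{2}}$ has the same $L^2$-norm as $\partial_s^{\frac{d-1}{2}}$, the stated identity follows.

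The main obstacle is twofold: establishing the zonal-averaging (Funk--Hecke) identity in the exact normalization that matches \eqref{formuleGegendual}--\eqref{formuleGegen}, and, within part 1, the regularity statement $W(r)=r^l v(r^2)$ with $v$ smooth, which I would handle via the moment-vanishing argument from Gegenbauer orthogonality combined with the parity of $g$. Everything else is a faithful projection of the already-established global formulas onto a single harmonic.
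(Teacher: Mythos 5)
Your proposal is correct in structure, but note that the paper does not actually prove this proposition: its entire proof is the one-line citation to Lemmata 5.1, 5.2 and Theorem 5.1 of \cite{Ludwig:66}. You instead give a self-contained derivation, and it is the natural one: part 1 is the Funk--Hecke theorem applied to the zonal kernel $t\mapsto g(rt)$, combined with Gegenbauer orthogonality (which kills the moments of order $m<l$, so $W^{(m)}(0)=0$) and the smooth even/odd (Hadamard--Whitney) lemma to get $W(r)=r^l v(r^2)$; part 2 is the latitude mean-value identity on the subsphere $\m S^{d-2}\cap\omega^\perp$ --- which does follow from Funk--Hecke by writing $\int_{\m S^{d-1}}F(\xi\cdot\omega)Y_\ell(\xi)\,d\xi$ in the coordinates $\xi=t\omega+\sqrt{1-t^2}\,\theta$ and letting $F$ vary --- followed by the substitution $\rho=\sqrt{r^2-s^2}$; part 3 is the projection of the paper's global inversion and unitarity statements onto the $\ell$-th harmonic, which is legitimate precisely because part 2 shows $\Ra$ preserves the form $w\otimes Y_\ell$, and because $f=w\otimes Y_\ell$ with $w(r)=r^lv(r^2)$, $v\in\q S(\R)$, is genuinely Schwartz. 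What your route buys is a self-contained argument (modulo the classical Funk--Hecke theorem) that makes visible where parity, the weight $(1-t^2)^{\frac{d-3}{2}}$, and Schwartz decay are each used; what the citation buys is brevity, Ludwig's own proof being of the same nature. One caveat: do not claim the Funk--Hecke output matches \eqref{formuleGegendual} ``exactly''. A careful execution of your argument produces the constant $|\m S^{d-2}|$, not the $|\m S^{d-1}|$ displayed in \eqref{formuleGegendual}--\eqref{formuleGegen} (test $d=3$, $l=0$, $g\equiv 1$: then $\Ra^{*}(Y_0)=|\m S^{2}|\,Y_0$, while the displayed formula returns $|\m S^{2}|^2/|\m S^{1}|\,Y_0$); similarly, your projection of \eqref{unitRadon} gives the factor $c_0^2$, not the $c_0$ written in part 3. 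Both appear to be typos in the paper's statement; your method yields the correct constants, so state them and flag the discrepancy rather than asserting exact agreement.
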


\begin{proof}
These results are the content of Lemmata 5.1 and 5.2, and of Theorem 5.1 of \cite{Ludwig:66} 
\end{proof}

\subsection{{Extension for the operators $\Td$ and $\partial_s \Td$}}


Let us first relate the operator $\Td$ and the Radon transform, and, in particular, proves relation \eqref{def:T_odd}. We start with expressing the Radon transform via a partial Fourier transform.

\begin{lem}
\label{lmRadon}
The map defined on $\q S(\m R^d)$ by $\mathcal{F}_{\nu \to s}^{-1}\left[\hat f(\nu\omega)\right]$ is the Radon transform in the direction $\omega$. That is 
\begin{align} \label{eq:R_F}
\Ra f(s,\omega):=\int_{\omega\cdot y=s}f(y)dy=\mathcal{F}_{\nu \to s}^{-1}\left[\hat f(\nu\omega)\right](s).
\end{align}
As a consequence, one has the equality as operators $\q S(\m R^d) \to \q S'(\m R \times \m S^{d-1})$:
\[ \q T = m_{d}(D_{s})\Ra \quad \text{where} \quad m_{d}(\nu) :=c_{0} |\nu|^{\frac{d-1}{2}}\left(e^{i\tau }\m 1_{ \nu<0} +e^{-i\tau }\m 1_{ \nu \geq0} \right). \]
In particular, if the space dimension $d$ is odd, then \eqref{def:T_odd} holds:
\[ \q T = c_0 (-1)^{\frac{d-1}{2}} \partial_s^{\frac{d-1}{2}} \Ra. \]
\end{lem}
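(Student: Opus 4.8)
The plan is to prove the three assertions in order, the first being the classical Fourier slice theorem, from which the factorization and the odd-dimensional reduction follow by symbol computations. First I would establish \eqref{eq:R_F}. Fix $\omega\in\m S^{d-1}$ and $f\in\q S(\m R^d)$. Since it is recalled above that $\Ra f\in\q S(\m R\times\m S^{d-1})$, the partial Fourier transform $\q F_{s\to\nu}(\Ra f)(\cdot,\omega)$ is well defined, and I would compute it directly:
\[
\q F_{s\to\nu}(\Ra f)(\nu,\omega)=\int_{\m R}e^{-i\nu s}\left(\int_{\omega\cdot y=s}f(y)\,dy\right)ds=\int_{\m R^d}e^{-i\nu(\omega\cdot y)}f(y)\,dy=\hat f(\nu\omega),
\]
where the middle equality is Fubini's theorem, using that writing $\m R^d$ as the disjoint union of the hyperplanes $\{\omega\cdot y=s\}$ turns the product of the surface measure and $ds$ into the Lebesgue measure $dy$; the double integral is absolutely convergent because $f$ is Schwartz and $|e^{-i\nu s}|=1$. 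Inverting the $1$D Fourier transform in $\nu$ gives \eqref{eq:R_F}.

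Then the factorization is immediate. By \eqref{def:T}, $\q T$ is defined through $\q F_{s\to\nu}(\q Tv)(\nu,\omega)=m_d(\nu)\hat v(\nu\omega)$, and substituting $\hat v(\nu\omega)=\q F_{s\to\nu}(\Ra v)(\nu,\omega)$ from the slice theorem yields $\q F_{s\to\nu}(\q Tv)(\nu,\omega)=m_d(\nu)\q F_{s\to\nu}(\Ra v)(\nu,\omega)$, which is exactly the statement $\q T=m_d(D_s)\Ra$ as operators $\q S(\m R^d)\to\q S'(\m R\times\m S^{d-1})$ (the multiplier $m_d(D_s)$ acting in the $s$ variable, legitimately since $\Ra v$ is Schwartz).

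Finally, for odd $d$ I would identify $m_d(D_s)$ with a differential operator. Writing $k=\tfrac{d-1}{2}\in\m N$ so that $\tau=\tfrac{k\pi}{2}$, the symbol of $c_0(-1)^k\partial_s^k$ is $c_0(-1)^k(i\nu)^k=c_0(-1)^ki^k\nu^k$, and I would match it with $m_d(\nu)$ by splitting on the sign of $\nu$: for $\nu\ge0$ one has $\nu^k=|\nu|^k$ and $(-1)^ki^k=e^{ik\pi}e^{ik\pi/2}=e^{3ik\pi/2}=e^{-ik\pi/2}=e^{-i\tau}$ (using $e^{2ik\pi}=1$), which matches the $\m1_{\nu\ge0}$ branch; for $\nu<0$ one has $\nu^k=(-1)^k|\nu|^k$, so $(-1)^ki^k\nu^k=i^k|\nu|^k=e^{ik\pi/2}|\nu|^k=e^{i\tau}|\nu|^k$, which matches the $\m1_{\nu<0}$ branch. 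Hence $m_d(D_s)=c_0(-1)^{(d-1)/2}\partial_s^{(d-1)/2}$ and \eqref{def:T_odd} follows.

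The only delicate point is this last phase bookkeeping in part three — verifying that the two sign branches of $m_d$ really are the positive and negative frequency parts of the integer power $(i\partial_s)^{(d-1)/2}$ — which is where the precise value $\tau=\tfrac{d-1}{4}\pi$ and the integrality of $(d-1)/2$ are essential; the slice theorem and the factorization are routine once the measure disintegration in the first display is granted. The conceptual payoff, as emphasized in the introduction, is that in odd dimension $\q T$ inherits the locality of a differential operator composed with $\Ra$.
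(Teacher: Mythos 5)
Your proposal is correct and follows essentially the same route as the paper: the Fourier slice identity via Fubini on the hyperplane disintegration of Lebesgue measure, the factorization $\q T=m_d(D_s)\Ra$ by substitution into \eqref{def:T}, and a symbol computation identifying $m_d(D_s)$ with $c_0(-1)^{\frac{d-1}{2}}\partial_s^{\frac{d-1}{2}}$ in odd dimension. The only cosmetic difference is that the paper verifies the last step by splitting into the cases $d\equiv 1$ and $d\equiv 3$ modulo $4$, whereas you split on the sign of $\nu$ for general odd $d$; both bookkeepings are equivalent and correct.
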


\begin{proof}
For fixed $\omega$, we compute for $f\in \mathcal{S}(\R^{d})$ the 1D Fourier transform of $s \mapsto \Ra f(s,\omega)$. Then
\begin{align*}
(\q F_{\m R} \Ra f(\cdot,\omega))(\nu) & = \int  \Ra f(s,\omega) e^{-i \nu s} ds = \int_s \int_{\omega\cdot y=s} f(y) e^{-i \nu s} dy ds \\
& =\int_s \int_{y\in s \omega+ \omega^\perp } f(y) e^{-i \nu \omega \cdot y} dy ds \\
& = \int_{x \in \m R^d} f(x) e^{-i \nu \omega \cdot x} dx = \hat f(\nu \omega).
\end{align*}
(We used Fubini's theorem with $\m R^d = \bigcup_{s \in \m R} (s \omega+ \omega^\perp)$ for the last line). This proves \eqref{eq:R_F}.
The second equality is then direct via 1D Fourier transform, using the definition \eqref{def:T}.

If furthermore $d$ is odd, recalling that $\tau =  \frac{d-1}{4} \pi$, we distinguish the odd cases modulo 4:
\begin{itemize}
\item if $d=4k+1$, $m_{d}(\nu)=(-1)^{k}c_{0}|\nu|^{2k}$, so that $\Td = c_0 \partial_s^{2k} \q R$,
\item if $d=4k+3$, $m_{d}(\nu)=i c_0 |\nu|^{2k+1}(-1)^{k+1}(-\sgn(\nu))=i(-1)^{k+1} c_0\nu^{2k+1}$, so that $\Td = -c_0 \partial_s^{2k+1} \q R$.
\end{itemize}
This yield the last equality, that is \eqref{def:T_odd}.
\end{proof}

In order to apply homogeneity arguments, we would like to extend the previous applications to other spaces, in particular, containing homogeneous function of the form $|x|^{\alpha}Y_{\ell}\left(\frac{x}{|x|}\right)$ with $-d<\alpha<1-d/2$ that are not in $\dot{H}^{1}$ because of the behaviour close to zero. The purpose of this section is then to properly define $\partial_{s}\Td$ and $\Td$ in some larger distributional sense.

We will present the statement in a context adapted to $\dot H^1$ and $L^2$, as it is our interest here. One could proceed via partial Fourier transform, specially in view of Lemma \ref{lem:T_iso}. However, we crucially relie on locality properties in our argument, which follow from that of the Radon transform. In order to achieve this, it is natural to proceed by duality, and for this, the restriction to odd dimension appears naturally. We begin with definition for the adjoint of $\Td$.

\begin{definition} \label{defTzhdual}
We define a map $\Td^{*}: \mathcal{S}(\R\times \m S^{d-1}) \to \mathcal E(\m R^d)$, by letting for $\varphi\in \mathcal{S}(\R\times \m S^{d-1})$,
\begin{align*}
\Td^{*}\varphi & :=  c_{0}\Ra^{*} \partial_{s}^{\frac{d-1}{2}} \varphi.
\end{align*}
\end{definition}

We also need $L^2$ type spaces with symmetry.
\begin{definition}
We denote:
\begin{align*}
L^{2}_{odd}(\m R \times \m S^{d-1} ) & :=\left\{g\in L^{2}(\m R \times \m S^{d-1}); \ g(s,\w,s)=-g(-s,-\w), a.e. \right\}, \\
L^{2}_{even}(\m R \times \m S^{d-1}) & :=\left\{g\in L^{2}(\m R \times \m S^{d-1}); \ g(s,\w)=g(-s,-\w), a.e. \right\}.
\end{align*}
\end{definition}

\begin{prop}
\label{propdsTH1} 
Assume $d\ge 3$ is odd. Then, the operator $\Td$ can be extended
\[ \Td:  \bigcup_{\rho \in \R}H^{\rho}(\R^{d})  \to \mathcal{S}'(\R\times \m S^{d-1}) \]
so that it satisfies
\begin{enumerate}
\item if $f\in \mathcal{S}(\R^{d})$, $\Td f=c_{0}(-1)^{\frac{d-1}{2}} \partial_{s}^{\frac{d-1}{2}}\Ra f$,
\item if $u\in H^{\rho}(\R^{d})$ for some $\rho\in \R$ and $\varphi\in \mathcal{S}(\R\times \m S^{d-1}))$,  
\[ \left<\Td u,\varphi \right>_{\mathcal{S}'(\R\times \m S^{d-1}),\mathcal{S}(\R\times \m S^{d-1}))} =  c_{0}\left<  u,\Ra^{*} \partial_{s}^{\frac{d-1}{2}}\varphi \right>_{H^{\rho}(\R^{d}),H^{-\rho}(\R^{d})}. \]

\item \label{itemactionDeltaRadonDhs}   if $u\in H^{\rho}(\R^{d})$ then
\begin{align} \label{actionDeltaRadonDhs} 
\Td(\Delta u) & = \partial_{s}^{2}\Td u.
\end{align}
\item \label{supporprop}if $\ds u,~v\in \bigcup_{\rho \in \R}H^{\rho}(\R^{d})$ satisfy $u_{|\{|x|>R\}}=v_{|\{|x|>R\}}$ in the distributional sense, then 
$(\Td u)_{|\{|s|> R\}}=(\Td v)_{|\{|s|> R\}}$. (Here and below, $\{ |s| > R \} = \{ (s,\omega) \in \m R \times \m S^{d-1} ; |s| >R \}$ takes into account both variables $(s,\omega)$).
\end{enumerate}
Furthermore,  $\partial_s \Td$ extends to a linear map $\dot H^1 \to \mathcal{S}'(\R\times \m S^{d-1})$ as follows: if $u\in \dot{H}^{1}(\R^{d})$ and $\varphi\in \mathcal{S}(\R\times \m S^{d-1}))$, 
\begin{align} \label{formdualH1}\left<\partial_{s} \Td u,\varphi \right>_{\mathcal{S}'(\R\times \m S^{d-1}),\mathcal{S}(\R\times \m S^{d-1})}=  c_{0} \left< \nabla u,\nabla \Ra^{*} \partial_{s}^{\frac{d-3}{2}}\varphi \right>_{L^{2}(\R^{d})^{d},L^{2}(\R^{d})^{d}}.
\end{align}
Therefore, $\partial_s \Td$ is defined on $\bigcup_{\rho \in \R}H^{\rho}(\R^{d})+\dot{H}^{1}(\R^{d})$, and enjoy the locality property:
if $\ds u,~v\in \bigcup_{\rho \in \R}H^{\rho}(\R^{d})+\dot{H}^{1}(\R^{d})$ satisfy 
\[ u_{|\{|x|>R\}}=v_{|\{|x|>R\}} \]
in the distributional sense, then 
\[ (\partial_{s}\Td u)_{|\{|s|> R\}}=(\partial_{s}\Td v)_{|\{|s|> R\}}. \]
\end{prop}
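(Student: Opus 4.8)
The plan is to define both extensions by duality against the adjoint operator $\Td^{*}$ of Definition \ref{defTzhdual}, and then to read off every listed property from three ingredients already available: the identities $\Td = c_{0}(-1)^{\frac{d-1}{2}}\partial_{s}^{\frac{d-1}{2}}\Ra$ and $\Td^{*} = c_{0}\Ra^{*}\partial_{s}^{\frac{d-1}{2}}$ (Lemma \ref{lmRadon} and Definition \ref{defTzhdual}); the $L^{2}$-boundedness of $\Td^{*}$, which is nothing but the adjoint of the isometry of Lemma \ref{lem:T_iso}; and the intertwining relation \eqref{actionDeltaRadondual}, $\Delta\Ra^{*}\varphi = \Ra^{*}\partial_{s}^{2}\varphi$, together with the elementary support-shrinking property of $\Ra^{*}$. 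The restriction $d\ge 3$ odd enters twice: it makes $\partial_{s}^{\frac{d-1}{2}}$ a genuine \emph{differential} operator (which is what yields locality), and it keeps $\partial_{s}^{\frac{d-3}{2}}$ of nonnegative order for the $\dot H^{1}$ statement.

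\emph{Mapping properties of $\Td^{*}$, and items (1)--(3).} The technical backbone is to show that $\Td^{*}$ maps $\q S(\R\times\m S^{d-1})$ into $\bigcap_{\rho\in\R}H^{\rho}(\R^{d})$. Since $\Td^{*}$ is the $L^{2}$-adjoint of the isometry $\Td$, it is bounded $L^{2}(\R\times\m S^{d-1})\to L^{2}(\R^{d})$, so $\Td^{*}\varphi\in L^{2}$; iterating \eqref{actionDeltaRadondual} gives $\Delta^{k}\Td^{*}\varphi = \Td^{*}\partial_{s}^{2k}\varphi$, which lies in $L^{2}$ for every $k$ because $\partial_{s}^{2k}\varphi\in\q S$, whence $\Td^{*}\varphi\in H^{2k}$ for all $k$, i.e. $\Td^{*}\varphi\in\bigcap_{\rho}H^{\rho}$, with continuous dependence on $\varphi$. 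Consequently, for $u\in H^{\rho}$ the pairing in item (2) is well defined and continuous in $\varphi$, so $\Td u\in\q S'(\R\times\m S^{d-1})$; and combining \eqref{dualRadon} with integration of $\partial_{s}^{\frac{d-1}{2}}$ by parts shows it coincides with $c_{0}(-1)^{\frac{d-1}{2}}\partial_{s}^{\frac{d-1}{2}}\Ra f$ on $\q S$, giving (1). For the commutation \eqref{actionDeltaRadonDhs}, I would dualize: for $u\in H^{\rho}$, $\langle\Td(\Delta u),\varphi\rangle = c_{0}\langle\Delta u,\Ra^{*}\partial_{s}^{\frac{d-1}{2}}\varphi\rangle = c_{0}\langle u,\Delta\Ra^{*}\partial_{s}^{\frac{d-1}{2}}\varphi\rangle$, and \eqref{actionDeltaRadondual} rewrites $\Delta\Ra^{*}\partial_{s}^{\frac{d-1}{2}}\varphi = \Ra^{*}\partial_{s}^{\frac{d-1}{2}}\partial_{s}^{2}\varphi$, so this equals $\langle\Td u,\partial_{s}^{2}\varphi\rangle = \langle\partial_{s}^{2}\Td u,\varphi\rangle$.

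\emph{The crux: locality, item \ref{supporprop}.} Here the odd dimension is decisive. Let $\varphi\in\q D(\R\times\m S^{d-1})$ be supported in $\{|s|>R\}$. Since $\partial_{s}^{\frac{d-1}{2}}$ is a differential operator, $\partial_{s}^{\frac{d-1}{2}}\varphi$ is still supported in $\{|s|>R\}$; and because $\Ra^{*}\psi(x) = \int_{\m S^{d-1}}\psi(x\cdot\omega,\omega)\,d\omega$ vanishes whenever $|x|\le R$ (then $|x\cdot\omega|\le R$ for every $\omega$), the test function $\Td^{*}\varphi = c_{0}\Ra^{*}\partial_{s}^{\frac{d-1}{2}}\varphi$ is supported in $\{|x|>R\}$. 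Thus if $u = v$ on $\{|x|>R\}$, then $w := u-v$ is a compactly supported distribution with $\Supp w\subset\{|x|\le R\}$, and $\langle\Td w,\varphi\rangle = c_{0}\langle w,\Ra^{*}\partial_{s}^{\frac{d-1}{2}}\varphi\rangle = 0$ by disjointness of supports, which is exactly $(\Td u)_{|\{|s|>R\}} = (\Td v)_{|\{|s|>R\}}$.

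\emph{The operator $\partial_{s}\Td$ on $\dot H^{1}$.} For a datum $u\in\dot H^{1}$ only $\nabla u\in L^{2}$ is controlled, so I would pair against $\nabla\Ra^{*}\partial_{s}^{\frac{d-3}{2}}\varphi$ as in \eqref{formdualH1} rather than against $\Ra^{*}\partial_{s}^{\frac{d+1}{2}}\varphi$; the two expressions agree after one integration by parts using \eqref{actionDeltaRadondual}, which also shows consistency with the $H^{\rho}$-definition on overlaps (and defines $\partial_{s}\Td$ on the sum $\bigcup_{\rho}H^{\rho}+\dot H^{1}$ by linearity). That $\Ra^{*}\partial_{s}^{\frac{d-3}{2}}\varphi\in\dot H^{1}$ follows from the dual of \eqref{actionDRadon}, $\partial_{x_{j}}\Ra^{*}\psi = \Ra^{*}(\omega_{j}\partial_{s}\psi)$: with $\psi_{j}:=\omega_{j}\varphi\in\q S$ one gets $\partial_{x_{j}}\Ra^{*}\partial_{s}^{\frac{d-3}{2}}\varphi = \Ra^{*}\partial_{s}^{\frac{d-1}{2}}\psi_{j} = c_{0}^{-1}\Td^{*}\psi_{j}\in L^{2}$, so its gradient is square-integrable. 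Locality for $\partial_{s}\Td$ then follows exactly as above, since $\nabla\Ra^{*}\partial_{s}^{\frac{d-3}{2}}\varphi$ is again supported in $\{|x|>R\}$ whenever $\varphi$ is supported in $\{|s|>R\}$. The main obstacle throughout is precisely this interplay: one must keep each auxiliary function inside a space that pairs with a possibly very rough, homogeneous, or slowly decaying datum, while \emph{simultaneously} preserving support information; it is the combination of the $L^{2}$ isometry of Lemma \ref{lem:T_iso} with the locality of the differential operator $\partial_{s}^{\frac{d-1}{2}}$ in odd dimension that reconciles these two demands.
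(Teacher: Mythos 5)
Your proposal is correct and follows essentially the same route as the paper's proof: extension by duality against $\Td^{*}=c_{0}\Ra^{*}\partial_{s}^{\frac{d-1}{2}}$, the key lemma that $\Td^{*}$ maps $\q S(\R\times \m S^{d-1})$ continuously into every $H^{\rho}(\R^{d})$ (from $L^{2}$-boundedness plus iteration of \eqref{actionDeltaRadondual}), consistency checks on $\q S$ and on $\bigcup_{\rho}H^{\rho}\cap \dot H^{1}$, and locality deduced from the facts that $\partial_{s}^{\frac{d-1}{2}}$ is a genuine differential operator in odd dimension and that $\Ra^{*}$ of a function supported in $\{|s|>R\}$ vanishes on $\{|x|\le R\}$. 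The only cosmetic difference is that you obtain the $L^{2}$ bound on $\nabla\Ra^{*}\partial_{s}^{\frac{d-3}{2}}\varphi$ from the intertwining identity $\partial_{x_{j}}\Ra^{*}\psi=\Ra^{*}(\omega_{j}\partial_{s}\psi)$, whereas the paper gets it by a duality and Cauchy--Schwarz computation; the two arguments are equivalent.
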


As mentioned above, the proof of Proposition \ref{propdsTH1} is essentially done by duality. The starting point for extending $\Td$ is the next property.

\begin{lem}
\label{lmTd*Hs}Assume $d$ is odd.

For any $\rho \in \R$, $\Td^{*}$ maps continuously $\mathcal{S}(\R\times \m S^{d-1})$ into $H^{\rho}(\R^{d})$. 

Moreover, $\nabla \Ra^{*} \partial_{s}^{\frac{d-3}{2}}$ maps (continuously) $L^{2}(\R\times \m S^{d-1}) \to L^{2}(\R^{d})^{d}$, and $\mathcal{S}(\R\times \m S^{d-1}) \to H^{\rho}(\R^{d})^{d}$ for all $\rho \in \m R$. 
\end{lem}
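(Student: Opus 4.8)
The plan is to compute everything through the partial Fourier transform in the $s$ variable and then pass to polar coordinates on the frequency side, where a crucial cancellation removes the apparent singularity at the origin. Throughout, for $\varphi \in \q S(\R\times\m S^{d-1})$ I write $\psi(\nu,\omega) := \q F_{s\to\nu}\varphi(\nu,\omega)$; since $\varphi$ is Schwartz, so is $\psi$, and in particular $(1+\nu^2)^\rho\big(|\psi(\nu,\omega)|^2 + |\psi(-\nu,-\omega)|^2\big)$ is integrable over $(0,+\infty)\times\m S^{d-1}$, with a bound by finitely many Schwartz seminorms of $\varphi$ (depending on $\rho$).

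First I would establish a Fourier representation for $\Ra^*$. Writing $\varphi(s,\omega) = \frac{1}{2\pi}\int e^{is\nu}\psi(\nu,\omega)\,d\nu$ and inserting this into $\Ra^*\varphi(x) = \int_{\m S^{d-1}}\varphi(x\cdot\omega,\omega)\,d\omega$, I split the $\nu$-integral into $\{\nu>0\}$ and $\{\nu<0\}$; on the second piece I substitute $\omega\mapsto-\omega$, and in both I pass to the variable $\xi = |\nu|\,\omega$ (so that $d\nu\,d\omega = |\xi|^{-(d-1)}d\xi$). Before doing so one must check that $\Ra^*\varphi$ is a genuine tempered distribution: this follows from the decay estimate $\Ra^*\varphi(x) = O(|x|^{-1})$, obtained from $\int_{\m S^{d-1}}(1+|x|\,|\theta\cdot\omega|)^{-N}d\omega = O(|x|^{-1})$ (concentration of surface measure near the equator), which even gives $\Ra^*\varphi \in L^2(\R^d)$ for $d\ge 3$. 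The computation then yields
\[ \q F_{\R^d}(\Ra^*\varphi)(\xi) = (2\pi)^{d-1}|\xi|^{-(d-1)}\big(\psi(|\xi|,\xi/|\xi|) + \psi(-|\xi|,-\xi/|\xi|)\big). \]

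Next, applying this with $\partial_s^{(d-1)/2}\varphi$ in place of $\varphi$ (so $\psi$ is replaced by $(i\nu)^{(d-1)/2}\psi$, legitimate since $d$ is odd), the factors $|\xi|^{-(d-1)}$ and $|\xi|^{(d-1)/2}$ combine to give
\[ \q F_{\R^d}(\Td^*\varphi)(\xi) = c_0(2\pi)^{d-1}|\xi|^{-(d-1)/2}\big(i^{(d-1)/2}\psi(|\xi|,\xi/|\xi|) + (-i)^{(d-1)/2}\psi(-|\xi|,-\xi/|\xi|)\big). \]
Computing the $H^\rho$ norm in polar coordinates $\xi = r\omega$, the weight $|\xi|^{-(d-1)}$ from squaring is exactly cancelled by the Jacobian $r^{d-1}$, leaving
\[ \|\Td^*\varphi\|_{H^\rho(\R^d)}^2 \lesssim \int_{\m S^{d-1}}\int_0^\infty (1+r^2)^\rho\big(|\psi(r,\omega)|^2 + |\psi(-r,-\omega)|^2\big)\,dr\,d\omega, \]
which is finite and controlled by Schwartz seminorms of $\varphi$; this gives the first assertion together with its continuity. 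For the second operator I repeat the computation with $\nabla\Ra^*\partial_s^{(d-3)/2}$: the factor $i\xi$ from $\nabla$, the factor $|\xi|^{(d-3)/2}$ from the derivatives, and $|\xi|^{-(d-1)}$ from $\Ra^*$ again collapse to the single power $|\xi|^{-(d-1)/2}$, so the same cancellation applies. Taking $\rho=0$ gives $\|\nabla\Ra^*\partial_s^{(d-3)/2}\varphi\|_{L^2}^2 \lesssim \|\psi\|_{L^2(\R\times\m S^{d-1})}^2 = 2\pi\|\varphi\|_{L^2}^2$, and density of $\q S$ in $L^2$ yields the continuous extension $L^2 \to L^2(\R^d)^d$; the weighted version yields $\q S \to H^\rho(\R^d)^d$.

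The main obstacle --- indeed the only delicate point --- is the justification of the representation of $\q F_{\R^d}(\Ra^*\varphi)$: since $\Ra^*$ does not preserve Schwartz decay, one must verify that $\Ra^*\varphi$ is tempered (via the $O(|x|^{-1})$ decay, valid for $d\ge 3$) before taking its Fourier transform and interchanging integrals. Everything else is bookkeeping of powers of $|\xi|$, where the oddness of $d$ (making $(d-1)/2$ and $(d-3)/2$ integers, so the powers of $\partial_s$ are genuine differential operators) and the exact exponent $(d-1)/2$ are precisely what render the singular weight $|\xi|^{-(d-1)}$ integrable against the polar Jacobian.
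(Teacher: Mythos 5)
Your proof is correct, but it takes a genuinely different route from the paper's. The paper argues entirely by duality on the physical side: the case $\rho=0$ for $\Td^{*}$ follows from the duality relation \eqref{dualRadon} combined with the unitarity \eqref{unitRadon} of $c_0\partial_s^{\frac{d-1}{2}}\Ra$; higher regularities are obtained by applying $\Delta^{k}$ and the intertwining identity \eqref{actionDeltaRadondual}, after reducing the statement to $\rho\in\m N$; and the bound for $\nabla\Ra^{*}\partial_s^{\frac{d-3}{2}}$ comes from \eqref{actionDRadon}, \eqref{unitRadon} and Cauchy--Schwarz, again by duality against $f\in\q S(\R^d)$. You instead compute $\q F_{\R^d}(\Ra^{*}\varphi)$ explicitly via the partial Fourier transform in $s$ and polar coordinates, and read everything off on the frequency side; your formula and the cancellation of $|\xi|^{-(d-1)}$ against the Jacobian $r^{d-1}$ are exactly right (this is the dual counterpart of the computation proving Lemma \ref{lem:T_iso}), and the Fubini interchange is legitimate because the frequency-side integrand is absolutely integrable. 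What your approach buys: it is self-contained (it re-derives rather than cites the Plancherel-type identity for $\Ra$), it treats every $\rho\in\R$ in one stroke without reduction to even integers, and it produces the sharp operator norm $c_0^{-1}=\sqrt{2(2\pi)^{d-1}}$ for the $L^{2}\to L^{2}$ bound. What the paper's approach buys: by always testing against Schwartz functions on $\R^{d}$, it never has to address temperedness of $\Ra^{*}\varphi$ or Fourier inversion for a non-decaying function at all.

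One aside in your write-up is false and should be deleted, though it costs you nothing: the decay $\Ra^{*}\varphi(x)=O(|x|^{-1})$ does \emph{not} give $\Ra^{*}\varphi\in L^{2}(\R^{d})$ for $d\ge 3$ --- on the contrary, $(1+|x|)^{-1}$ fails to be square integrable at infinity in every dimension $d\ge 2$, and generically $\Ra^{*}\varphi\notin L^{2}(\R^{d})$: your own formula shows that $|\xi|^{-(d-1)}\bigl(\psi(|\xi|,\xi/|\xi|)+\psi(-|\xi|,-\xi/|\xi|)\bigr)$ is square integrable near $\xi=0$ only when $\psi(0,\omega)+\psi(0,-\omega)$ vanishes, which is exactly why $\Td^{*}$ (with its $\frac{d-1}{2}$ derivatives providing this vanishing) behaves better than $\Ra^{*}$. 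Fortunately your argument uses only that $\Ra^{*}\varphi$ is tempered, which the $O(|x|^{-1})$ bound (indeed mere boundedness) does give; alternatively, once you note that the frequency-side function is in $L^{1}(\R^{d})$, $\Ra^{*}\varphi$ is automatically its inverse Fourier transform, hence bounded, continuous and tempered, and no separate decay estimate is needed.
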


\begin{proof} It is enough to prove the results for all $\rho \in \m N$.

Regarding $\Td^{*}$, the case $\rho=0$ is given by duality from \eqref{dualRadon} and \eqref{unitRadon}. Let $k \in \m N$. For $\varphi\in \mathcal{S}(\R\times \m S^{d-1})$, in view of \eqref{actionDeltaRadondual}, we see that $\Delta^{k}\Td^{*}\varphi\in L^{2}(\R^{d})$ for any $k\in \N$ so that $\Td^{*}$ maps $\mathcal{S}(\R\times \m S^{d-1})$ into $H^{2k}(\R^{d})$. It gives the result.

For $\nabla \Ra^{*} \partial_{s}^{\frac{d-3}{2}}$, we also work by duality, first for $\rho=0$. \eqref{dualRadon} gives for $j\in \{1,...,d\}$, after several integration by parts 
\begin{align*}
\int_{\R\times \m S^{d-1}} \partial_{s}^{\frac{d-3}{2}}(\Ra \partial_{x_{j}}f)(s,\omega)\overline{\varphi(s,\omega)}dsd\omega & = (-1)^{\frac{d-1}{2}} \int_{\R^{d}} f(x)\partial_{x_{j}}\overline{(\Ra^{*} \partial_{s}^{\frac{d-3}{2}}\varphi)(x)}dx.
\end{align*}
Using \eqref{actionDRadon} for the left hand side, this gives
\begin{align*}
\int_{\R^{d}} f(x) \partial_{x_{j}}\overline{(\Ra^{*} \partial_{s}^{\frac{d-3}{2}}\varphi)(x)}dx =  (-1)^{\frac{d-1}{2}}\int_{\R\times \m S^{d-1}} \omega_{j}\partial_{s}^{\frac{d-1}{2}}(\Ra f)(s,\omega)\overline{\varphi(s,\omega)}dsd\omega
\end{align*}
Due to \eqref{unitRadon} and Cauchy-Schwarz inequality, we conclude
\begin{align*}
\int_{\R^{d}} f(x) \partial_{x_{j}}\overline{(\Ra^{*}\partial_{s}^{\frac{d-3}{2}}\varphi)(x)}dx\le c_{0}\nor{\varphi}{L^{2}(\R\times \m S^{d-1})}\nor{f}{L^{2}(\R^{d})}.
\end{align*}
This implies $\nor{\partial_{x_{j}}\Ra^{*} D_{s}^{\frac{d-3}{2}}\varphi}{L^{2}(\R^{d})}\le c_{0}\nor{\varphi}{L^{2}(\R\times \m S^{d-1})}$ by duality, which is the case $\rho=0$. As for the other term, we treat the other regularities by applying $\Delta $ which gives the same result by changing $\varphi$ to $\partial_{s}^{2}\varphi$.
\end{proof}

\begin{proof}[Proof of Proposition \ref{propdsTH1}]
\eqref{dualRadon} gives directly that for every $f\in \mathcal{S}(\R^{d})$ and $\varphi\in \mathcal{S}(\R\times \m S^{d-1})$
\begin{align}
\label{dualTd}
\int_{\R\times \m S^{d-1}} (\Td f)(s,\omega)\overline{\varphi(s,\omega)}dsd\omega & = \int_{\R^{d}} f(x)\overline{(\Td^{*} \varphi)(x)}dx.
\end{align}

Therefore, as a by-product of Lemma \ref{lmTd*Hs}, we can extend $\Td$ as an operator on $H^\rho$ and $\partial_{s}\Td$ on $\dot H^1$. Indeed, for any $u\in H^{\rho}(\R^{d})$, with $\rho\in \R$, we can define a distribution $\Td u\in \mathcal{S}'(\R\times \m S^{d-1})$ by 
\begin{align}
\label{RaDistribHs}
\left<\Td u,\varphi \right>_{\mathcal{S}'(\R\times \m S^{d-1}),\mathcal{S}(\R\times \m S^{d-1}))}:=\left<  u,\Td^{*}\varphi \right>_{H^{\rho}(\R^{d}),H^{-\rho}(\R^{d})}.
\end{align}
The bracket defines a distribution due to Lemma \ref{lmTd*Hs}. \eqref{dualTd} gives that \eqref{RaDistribHs} coincides with \eqref{def:T_odd} for $u\in\mathcal{S}(\R^{d})$.

With this new definition, by duality, we still have the formula 
\begin{align*}
\forall u\in H^{\rho}(\R^{d}), \quad \Td(\Delta u) & = \partial_{s}^{2}\Td u,
\end{align*}
where $\Delta$ has to be understood as an operator $H^{\rho}(\R^{d}) \to H^{\rho-2}(\R^{d})$ (both being subspaces of $\mathcal{S}'(\R^{d})$), while $\partial_{s}^{2}$ is understood acting on $\mathcal{S}'(\R\times \m S^{d-1})$. That means the equality makes sense in $\mathcal{S}'(\R\times \m S^{d-1})$.

Similarly, for $u\in \dot{H}^{1}$, formula \eqref{formdualH1} and Lemma \ref{lmTd*Hs} allow to define $\partial_{s}\Td\in \mathcal{S}'$.
\bigskip

We have extended $\partial_s \Td$ in two ways, on $ \cup_{\rho \in \R}H^{\rho}(\R^{d})$ and on $\dot{H}^{1}(\R^{d})$. In order to see that it indeed defines an extension $\cup_{\rho \in \R}H^{\rho}(\R^{d})+\dot{H}^{1}(\R^{d}) \to \mathcal{S}'(\R\times \m S^{d-1})$, it only remains to check that for $u\in  \cup_{\rho \in \R}H^{\rho}(\R^{d})\cap\dot{H}^{1}(\R^{d})$, the two definitions coincide. That relies on verifying that for $u\in  \cup_{\rho \in \R}H^{\rho}(\R^{d})\cap\dot{H}^{1}(\R^{d})$ and $\varphi\in \mathcal{S}(\R\times \m S^{d-1}))$, we have
\[ - c_{0}\left<  u,\Ra^{*} \partial_{s}^{\frac{d-1}{2}+1}\varphi \right>_{H^{\rho}(\R^{d}),H^{-\rho}(\R^{d})}=c_{0} \left< \nabla u,\nabla \Ra^{*} D_{s}^{\frac{d-3}{2}}\varphi \right>_{L^{2}(\R^{d})^{d},L^{2}(\R^{d})^{d}}, \]
which is easily done using again \eqref{actionDeltaRadondual}.

\bigskip

We now prove the support properties \eqref{supporprop}. A similar result is contained in \cite[Theorem 4.9]{Ludwig:66} for $\Ra$, and follows from duality; we give a proof in the case of $\Td$ for completeness. Note that for this point, we use very strongly that the dimension is odd and that the operators are local.

We first notice that $u-v\in \mathcal{E}'(\R^{d})\subset \cup_{\rho\in \R}H^{\rho}(\R^{d})$ is supported in $B(0,R)$. Let $\varphi\in C^{\infty}_{0}(\R\times \m S^{d-1})$ so that $\varphi$ is supported in $\{|s|> R\}\times \m S^{d-1}$, that is $\varphi(s,\omega)=0$ for $|s|\le R$ and in particular, $\partial_{s}^{\frac{d-1}{2}}\varphi =0$ for $|s|\le R$. By the definition  \eqref{Radondualdef} of $\Ra^{*}$, it is clear that it implies $(\Ra^{*}\partial_{s}^{\frac{d-1}{2}}\varphi)(x)=0$ for $|x|\le R$. Now, we can compute
\begin{align*}
\left< \Td u - \Td v,\varphi \right>_{\mathcal{S}'(\R\times \m S^{d-1}),\mathcal{S}(\R\times \m S^{d-1}))} & = \left<  u-v,\Td^{*} \varphi \right>_{H^{\rho}(\R^{d}),H^{-\rho}(\R^{d})}\\
 & = c_{0}\left<  u-v,\Ra^{*} \partial_{s}^{\frac{d-1}{2}} \varphi \right>_{H^{\rho}(\R^{d}),H^{-\rho}(\R^{d})}.
\end{align*}
This is zero thanks to the respective support properties of $u-v$ and $(\Ra^{*}\partial_{s}^{\frac{d-1}{2}}\varphi)$.  A very similar computation yields the result for $\partial_s \Td$.
\end{proof}
We define the scaling operator on $\R^{d}$, (resp. $\R\times \m S^{d-1}$), for $\lambda>0$ by 
\begin{align*}
M_{\lambda}(x) & = \lambda x, \quad x\in \R^{d}\\
M_{\lambda}(s,\omega) & = (\lambda s,\omega), \quad (s,\omega)\in \R\times \m S^{d-1}.
\end{align*}
(with a slight abuse of notation), so that for $u\in \mathcal{S}'(\R^{d})$, $f\in \mathcal{S}(\R^{d})$ and $v\in  \mathcal{S}'(\R\times \m S^{d-1})$, $\varphi\in \mathcal{S}(\R\times \m S^{d-1})$, we have
\begin{align*}
\left< u\circ M_{\lambda},f \right>_{\mathcal{S}'(\R^{d}),\mathcal{S}(\R^{d})}  & = \lambda^{ -d}\left<u, f \circ M_{1/\lambda}\right>_{\mathcal{S}'(\R^{d}),\mathcal{S}(\R^{d})}\\
\left<v \circ M_{\lambda},\varphi \right>_{\mathcal{S}'(\R\times \m S^{d-1}),\mathcal{S}(\R\times \m S^{d-1}))}
 & = \lambda^{ -1}\left<v, \varphi\circ M_{1/\lambda}\right>_{\mathcal{S}'(\R\times \m S^{d-1}),\mathcal{S}(\R\times \m S^{d-1}))}
\end{align*}
We say that a distribution $u\in \mathcal{S}'(\R^{d})$ is homogeneous of order $\alpha$ if $u\circ M_{\lambda}=\lambda^{\alpha}u$.

\begin{lem}
\label{lmRahomogen}Assume $d$ is odd.

If $u\in H^{\rho}(\R^{d})$, then $(\partial_{s}\Td) (u\circ  M_{\lambda})= \lambda^{1- \frac{d-1}{2}}(\partial_{s}\Td u) \circ M_{\lambda}$.

In particular, if $u$ is homogeneous of order $\alpha$, then $\Td u$ is homogeneous of order $\alpha+\frac{d-1}{2}$ and $\partial_{s}\Td u$ is homogeneous of order $\alpha-1+\frac{d-1}{2}$ (the latter is also true if $u \in \dot H^1(\m R^d)$).
\end{lem}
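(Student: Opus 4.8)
The plan is to reduce the whole statement to the scaling behavior of the \emph{adjoint} operators $\Ra^{*}$ and $\Td^{*}$, and then transport this through the duality definitions of $\Td$ and $\partial_s \Td$ on $H^{\rho}$ and $\dot H^1$. Since $\Td$ and $\partial_s \Td$ are only defined distributionally on these spaces, I cannot simply compute on Schwartz functions and pass to the limit; instead I will work directly from \eqref{RaDistribHs} and \eqref{formdualH1}. The one genuinely delicate point is bookkeeping: the scaling conventions on $\R^d$ and on $\m R \times \m S^{d-1}$ carry different Jacobian factors ($\lambda^{-d}$ versus $\lambda^{-1}$), and each of $\partial_s^{(d-1)/2}$ and $\nabla$ contributes its own power of $\lambda$. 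So the proof is essentially a careful accounting of exponents, with no conceptual obstacle beyond avoiding sign or power slips.

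First I would record the elementary scaling of $\Ra^{*}$. Directly from its definition \eqref{Radondualdef}, $(\Ra^{*}\psi)(x/\lambda)=\int_{\m S^{d-1}}\psi((x\cdot\omega)/\lambda,\omega)\,d\omega$, hence $(\Ra^{*}\psi)\circ M_{1/\lambda}=\Ra^{*}(\psi\circ M_{1/\lambda})$, i.e.\ $\Ra^{*}$ intertwines the two scalings. Combining this with $(\partial_s^{k}\psi)\circ M_{1/\lambda}=\lambda^{k}\,\partial_s^{k}(\psi\circ M_{1/\lambda})$ and the formula $\Td^{*}=c_{0}\Ra^{*}\partial_s^{\frac{d-1}{2}}$ of Definition \ref{defTzhdual}, I obtain the key relation $(\Td^{*}\varphi)\circ M_{1/\lambda}=\lambda^{\frac{d-1}{2}}\Td^{*}(\varphi\circ M_{1/\lambda})$.

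Next, for $u\in H^{\rho}$ I would feed this into \eqref{RaDistribHs}: writing $\langle \Td(u\circ M_\lambda),\varphi\rangle=\langle u\circ M_\lambda,\Td^{*}\varphi\rangle$, using the $\R^d$-convention to pick up $\lambda^{-d}$, then the scaling of $\Td^{*}$ to pick up $\lambda^{\frac{d-1}{2}}$, then \eqref{RaDistribHs} again and finally the $\m R\times\m S^{d-1}$-convention (a factor $\lambda$ to move $M_\lambda$ back onto $\Td u$). The powers combine to $-d+\frac{d-1}{2}+1=-\frac{d-1}{2}$, giving $\Td(u\circ M_\lambda)=\lambda^{-\frac{d-1}{2}}(\Td u)\circ M_\lambda$. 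Applying $\partial_s$ and using $\partial_s(v\circ M_\lambda)=\lambda\,(\partial_s v)\circ M_\lambda$ yields the claimed identity $\partial_s\Td(u\circ M_\lambda)=\lambda^{1-\frac{d-1}{2}}(\partial_s\Td u)\circ M_\lambda$. The homogeneity statements then follow immediately: if $u\circ M_\lambda=\lambda^{\alpha}u$, comparing the two expressions for $\Td(u\circ M_\lambda)$ forces $(\Td u)\circ M_\lambda=\lambda^{\alpha+\frac{d-1}{2}}\Td u$, and likewise $(\partial_s\Td u)\circ M_\lambda=\lambda^{\alpha-1+\frac{d-1}{2}}\partial_s\Td u$.

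Finally, for the $\dot H^1$ assertion I would run the identical argument with \eqref{formdualH1} in place of \eqref{RaDistribHs}. Here one tracks the scaling of $\nabla u$ (a factor $\lambda$), the $\lambda^{-d}$ convention, and the scaling of $\nabla\Ra^{*}\partial_s^{\frac{d-3}{2}}$, whose powers again total $-\frac{d-1}{2}$ before transferring $M_\lambda$ back; one finds $\partial_s\Td(u\circ M_\lambda)=\lambda^{1-\frac{d-1}{2}}(\partial_s\Td u)\circ M_\lambda$ verbatim, so the homogeneity of $\partial_s\Td u$ of order $\alpha-1+\frac{d-1}{2}$ follows as before. I expect the only real care to be needed in verifying that the exponent sums collapse correctly in both the $H^{\rho}$ and the $\dot H^1$ computations.
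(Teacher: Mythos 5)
Your proposal is correct and follows essentially the same route as the paper: both rest on the scaling relation for the adjoint, $\Td^{*}(\varphi\circ M_{1/\lambda})=\lambda^{-\frac{d-1}{2}}(\Td^{*}\varphi)\circ M_{1/\lambda}$ (derived from the intertwining of $\Ra^{*}$ and the scaling of $\partial_s^{k}$), transported through the duality definitions with the $\lambda^{-d}$ and $\lambda^{-1}$ conventions, and your exponent bookkeeping checks out. The only (harmless) differences are stylistic: you first establish the scaling of $\Td$ on $H^{\rho}$ and then apply $\partial_s$, where the paper runs the duality computation with $\partial_s\Td$ directly, and you additionally write out the $\dot H^1$ case via \eqref{formdualH1}, which the paper leaves implicit.
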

\begin{proof}
We easily get that for $\varphi\in \mathcal{S}(\R\times \m S^{d-1})$,  $\Ra^{*}  (\varphi \circ M_{1/\lambda})(x)=(\Ra^{*} \varphi)(x/\lambda)$, so that $\Td^{*}  (\varphi \circ M_{1/\lambda})=c_{0}\Ra^{*} \partial_{s}^{\frac{d-1}{2}}  (\varphi \circ M_{1/\lambda})=\lambda^{-\frac{d-1}{2}}(\Td^{*} \varphi)\circ  M_{1/\lambda}$. So, we compute
\begin{align*}
\MoveEqLeft \left<(\partial_{s}\Td u) \circ M_{\lambda},\varphi \right>_{\mathcal{S}'(\R\times \m S^{d-1}),\mathcal{S}(\R\times \m S^{d-1}))} \\
& = -\lambda^{ -1}\left< u,(\Td^{*}\partial_{s})  (\varphi \circ M_{1/\lambda}) \right>_{H^{s}(\R^{d}),H^{-s}(\R^{d})}\\
& = -\lambda^{ -2-\frac{d-1}{2}}\left< u,(\Td^{*}\partial_{s} \varphi)\circ  M_{1/\lambda} \right>_{H^{s}(\R^{d}),H^{-s}(\R^{d})} \\
& = -\lambda^{-2-\frac{d-1}{2}+d}\left< u\circ  M_{\lambda},(\Td^{*} \partial_{s}\varphi) \right>_{H^{s}(\R^{d}),H^{-s}(\R^{d})}\\
& = \lambda^{-1+ \frac{d-1}{2}}\left< (\partial_{s}\Td) (u\circ  M_{\lambda}), \varphi) \right>_{\mathcal{S}'(\R\times \m S^{d-1}),\mathcal{S}(\R\times \m S^{d-1}))}. \qedhere
\end{align*}
\end{proof}

We saw in Lemma \ref{lem:T_iso} that $\Td$ was, in some sense, isometric on $L^2$ (and $\partial_s \Td$ on $\dot H^1)$. Below, we precise the range.

\begin{lem} \label{lem:range}
We consider here the restriction of $\Td$ to $L^{2}(\R^{d})$ (which we still denote $\Td$). We saw that $\Td$ is an isometry from $L^{2}(\R^{d}) \to L^{2}(\R \times \m S^{d-1})$. Then
\[ Range(\Td) = \begin{cases}
L^{2}_{even}(\R \times \m S^{d-1}) & \text{ if } d \equiv 1 [4] \\
L^{2}_{odd}(\R \times \m S^{d-1})& \text{ if } d \equiv 3 [4]
\end{cases}. \]
Similarly, the restriction $\partial_s \Td: \dot H^1 (\R^{d}) \to L^{2}(\m S^{d-1}\times \R)$ is isometric and
\[ Range(\partial_s \Td) = \begin{cases}
L^{2}_{odd}(\R \times \m S^{d-1}) & \text{ if } d \equiv 1 [4] \\
L^{2}_{even}(\R \times \m S^{d-1})& \text{ if } d \equiv 3 [4]
\end{cases}. \]
\end{lem}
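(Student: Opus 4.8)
The plan is to prove the two inclusions $\mathrm{Range}(\Td) \subseteq L^2_{even/odd}$ and the reverse separately: the first is a soft consequence of the symmetries already recorded in Lemma \ref{lem:T_iso}, while the second is an explicit inversion in the partial Fourier variable $s\to\nu$, made possible by the fact that the multiplier defining $\Td$ never vanishes for $\nu\neq 0$.

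For the easy inclusion I would simply read off the parity from \eqref{eq:sym_T}. Since $d$ is odd, $\Td v(-s,\omega)=(-1)^{\frac{d-1}{2}}\Td v(s,-\omega)$, and $(-1)^{\frac{d-1}{2}}=+1$ when $d\equiv 1\,[4]$, $=-1$ when $d\equiv 3\,[4]$; this says exactly that $\Td v\in L^2_{even}$, resp. $L^2_{odd}$. Differentiating this relation in $s$ flips the sign, giving $\partial_s\Td v(-s,\omega)=(-1)^{\frac{d+1}{2}}\partial_s\Td v(s,-\omega)$, so that $\partial_s\Td v$ lands in $L^2_{odd}$ when $d\equiv 1\,[4]$ and in $L^2_{even}$ when $d\equiv 3\,[4]$. (For $v\in\dot H^1\setminus L^2$ the relation for $\partial_s\Td v$ follows by density of $\q S$ and continuity of $\partial_s\Td:\dot H^1\to L^2$.) This yields the four inclusions ``$\subseteq$''.

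For surjectivity I would invert in Fourier. Recall from Lemma \ref{lmRadon} that $\q F_{s\to\nu}(\Td v)(\nu,\omega)=m_d(\nu)\hat v(\nu\omega)$ with $m_d(\nu)=c_0|\nu|^{\frac{d-1}{2}}(e^{i\tau}\m 1_{\nu<0}+e^{-i\tau}\m 1_{\nu\ge0})$, and note that $m_d(\nu)\neq 0$ for $\nu\neq 0$ with $m_d(-\nu)/m_d(\nu)=e^{2i\tau}=(-1)^{\frac{d-1}{2}}$. Given $g$ in the relevant symmetry space, set $G=\q F_{s\to\nu}g$ and define $\Phi(\nu,\omega)=G(\nu,\omega)/m_d(\nu)$. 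Membership of $g$ in $L^2_{even}$ (resp. $L^2_{odd}$) translates, via the substitution $s\mapsto -s$ in the $1$D Fourier transform, into $G(-\nu,-\omega)=\pm G(\nu,\omega)$, and a direct sign check shows the sign matches $m_d(-\nu)/m_d(\nu)$ precisely so that $\Phi(-\nu,-\omega)=\Phi(\nu,\omega)$. This symmetry is exactly what allows one to define $\hat v$ consistently on $\m R^d$ by $\hat v(\nu\omega):=\Phi(\nu,\omega)$, the map $(\nu,\omega)\mapsto \nu\omega$ being a two-to-one cover of $\m R^d\setminus\{0\}$ identifying $(\nu,\omega)$ with $(-\nu,-\omega)$. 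Finally, the Plancherel computation of Lemma \ref{lem:T_iso} run backwards, using $|m_d(\nu)|^2=c_0^2|\nu|^{d-1}$, gives $\|v\|_{L^2}=\|g\|_{L^2(\m R\times\m S^{d-1})}<\infty$, so $v\in L^2(\m R^d)$ and $\Td v=g$. The case of $\partial_s\Td$ is identical after replacing the symbol $m_d(\nu)$ by $i\nu\, m_d(\nu)$ and using the $\dot H^1$ isometry; the extra factor $i\nu$ is odd exactly when $m_d$ is even, which is what exchanges the roles of $L^2_{even}$ and $L^2_{odd}$ relative to $\Td$.

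The main obstacle is purely the bookkeeping of signs: one must verify, in each of the four cases, that the parity forced on $G$ by membership in $L^2_{even/odd}$ combines with the sign $e^{2i\tau}=(-1)^{\frac{d-1}{2}}$ (and, for $\partial_s\Td$, with the odd factor $i\nu$) to produce precisely the evenness $\Phi(-\nu,-\omega)=\Phi(\nu,\omega)$ needed to push $\Phi$ down to a well-defined $\hat v$ on $\m R^d$. The only analytic points to watch are that $\{\nu=0\}$ is a null set, so dividing by $m_d(\nu)$ is harmless in $L^2$, and that the weighted bound coming from $|m_d(\nu)|^2=c_0^2|\nu|^{d-1}$ is exactly the one identifying $\|v\|_{L^2}$ (resp. $\|v\|_{\dot H^1}$) with $\|g\|_{L^2}$.
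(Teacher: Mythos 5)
Your proof is correct, but it takes a genuinely different route from the paper's. For the hard inclusion the paper never constructs preimages: it observes that the range of an isometry is closed and contained in the relevant symmetry subspace, and then proves \emph{density} of the range by invoking Ludwig's characterization of the range of the Radon transform on Schwartz functions (Theorem \ref{thmLudwig}); concretely, it decomposes $h$ into spherical harmonics and approximates each component on the Fourier side by $|\nu|^{\frac{d-1}{2}}\hat g_\ell$ with $\hat g_\ell$ of the right parity and with derivatives up to order $l-1$ vanishing at the origin (these constraints encode the evenness and moment conditions of Ludwig's theorem). You instead invert the multiplier $m_d(\nu)$ (resp.\ $i\nu\, m_d(\nu)$) directly: the parity hypothesis on $g$ is exactly what makes $\Phi=G/m_d$ invariant under $(\nu,\omega)\mapsto(-\nu,-\omega)$, hence well defined as a function $\hat v$ on $\m R^d\setminus\{0\}$ through the two-to-one polar parametrization, and Plancherel run backwards identifies $\|v\|_{L^2}$ (resp.\ $\|v\|_{\dot H^1}$) with $\|g\|_{L^2}$. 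Your route is more elementary and self-contained: it bypasses Ludwig's theorem and the closedness-plus-density scheme entirely (in particular the paper's final ``these conditions can clearly be met''), and it produces an explicit isometric inverse on the symmetry subspace. What the paper's route buys is thematic coherence with the Radon-transform machinery of Sections \ref{sectRadsob}--\ref{secRadball} and an explicit link to the classical moment conditions. Two small points you should tighten: in the $\dot H^1$ case one needs $d\ge 3$ (true here, as $d$ is odd and $\ge 3$ in this section) so that $G/(i\nu m_d(\nu))$, after multiplication by the Jacobian $\nu^{d-1}$, is locally integrable near $\nu=0$ and $\hat v$ genuinely defines a tempered distribution lying in $\dot H^1$; and the sentence ``the extra factor $i\nu$ is odd exactly when $m_d$ is even'' is garbled --- $i\nu$ is always odd, and that is precisely what flips the parity of the multiplier and exchanges the roles of $L^2_{even}$ and $L^2_{odd}$ between $\Td$ and $\partial_s\Td$.
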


\begin{proof}
The extension and unitarity comes from \eqref{unitRadon}. Concerning the range of $\Td$, we assume that $d=4k+1$ (that is $d \equiv 1 [4]$) to fix ideas, and it is enough to prove that $L^{2}_{even}(\m S^{d-1}\times \R) \subset \overline{Range(\Td)}$: indeed, $Range(\Td)$ is closed since $\Td$ is an isometry,  and is clearly contained in $L^{2}_{even}(\m S^{d-1}\times \R)$.  

In view of Theorem \ref{thmLudwig}, it suffices to prove that any function in $h\in L^{2}_{even}(\m S^{d-1}\times \R)$ can be approximated by a function of the form $\partial_{s}^{2k}g$ where $g\in \mathcal{S}(\m S^{d-1}\times \R)$ is even and satisfies \eqref{condimoment}. Decompose $h(\w,s)=\sum_{\ell \in \m M} h_{\ell} (s)Y_{\ell}(\w)$ (recall that $(Y_{\ell})_{\ell \in \m M}$ form an orthonormal basis of spherical harmonics of $L^{2}(\m S^{d-1})$ of degree $l = l(\ell)$).  The condition that $h$ be even can be written $h_{\ell}(-s)=(-1)^{l}h_{\ell}(s)$. Given $\e>0$, we are looking for $g_{\ell}\in \mathcal{S}(\R)$ so that 
\begin{itemize}
\item $\sum_{k,l} \nor{h_{k,l}-|D_{s}|^{\frac{d-1}{2}}g_{k,l}}{L^{2}(\R)}^{2}\le \e$,
\item $g_{k,l}(-s)=(-1)^{l}g_{k,l}(s)$,
\item $\ds \int_{s\in\R} g_{k,l}s^{j}ds =0$ for $j=0,\cdots, l-1$.
\end{itemize}
Translating this conditions in the Fourier side, yields
\begin{itemize}
\item $\sum_{k,l} \nor{\widehat{h}_{k,l}-|\xi_{s}|^{\frac{d-1}{2}}\widehat{g}_{k,l}}{L^{2}(\R_{\xi})}^{2}\le \e$,
\item  $\widehat{g}_{k,l}(-\xi_{s})=(-1)^{l}\widehat{g}_{k,l}(\xi_{s})$
\item $\ds \frac{d^{j}}{d\xi^{j}}\widehat{g}_{k,l}(0) =0$ for $j=0,\cdots, l-1$.
\end{itemize}
These conditions can clearly be met:  this gives the result for $Range(\Td)$. 

One can argue in a similar way in dimension $d \equiv 3 [4]$, and for $\partial_s \Td$.
\end{proof}


\section{The Radon transform outside a ball}
\label{secRadball}

Our goal in this section is to prove Theorem \ref{thmexteriorgene}. We will mostly study properties of the Radon transform on $L^2$ or $\dot H^1$. Throughout all this section,
\[ \text{we assume that the dimension } d \text{ is odd.} \]

We define the operator
\[ \m 1_{|s| \ge R}: L^2(\m R \times \m S^{d-1}) \to L^2(\m R \times \m S^{d-1}), \quad (\m 1_{|s| \ge R} f)(s,\omega) = \m 1_{|s| \ge R} f(s , \omega). \]
This is obviously an orthogonal projection. We will be interested in the operators
\[ \m 1_{|s| \ge R} \Td:  L^2(\m R^d) \to  L^2(\m R \times \m S^{d-1}) \quad \text{and} \quad \m 1_{|s| \ge R} \partial_s \Td:  \dot H^1 (\m R^d) \to  L^2(\m R \times \m S^{d-1}). \]

\begin{definition}
Denote the kernels
\[ K_{R}^{0}= \ker (\m 1_{|s|\ge R}\Td)\subset L^{2}(\R^{d}) \quad \text{and} \quad K_{R}^{1}= \ker (\m 1_{|s|\ge R}\partial_{s}\Td)\subset \dot H^{1}(\R^{d}) \]
 respectively, and $\pi_R$ and $\pi_R^1$ the orthogonal projections on $K_{R}$ and $K_{R}^{1}$ respectively.
\end{definition}

\begin{lem} \label{propproject}
Let $H, H'$ be two Hilbert spaces, $\phi : H \to H'$ a unitary operator (that is isometric and bijective) and $p$ an orthogonal projection on $H'$. Then, denoting $\pi: H \to H'$ the orthogonal projection on $\ker (p \phi)$, there hold
\[ \forall f \in H, \quad \| f \|_{H}^2 = \| (p \phi )(f) \|_{H'}^2 + \| \pi (f) \|_{H}^2. \]
\end{lem}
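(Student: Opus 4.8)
The plan is to treat this as a general Hilbert-space fact: a Pythagorean decomposition induced by the orthogonal projection $p$, transported back to $H$ through the unitary $\phi$. The essential observation is that conjugating $p$ by $\phi$ produces the orthogonal projection onto $\ker(p\phi)$, after which everything reduces to the isometry of $\phi$.

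First I would use that $\phi$ is unitary, so $\|f\|_H = \|\phi f\|_{H'}$ for every $f \in H$. Writing the orthogonal splitting $\phi f = p\phi f + (\Id - p)\phi f$ and using that $p$ is an orthogonal projection (so $\mathrm{Range}(p) \perp \mathrm{Range}(\Id-p)$), the Pythagorean theorem gives
\[ \|f\|_H^2 = \|p\phi f\|_{H'}^2 + \|(\Id-p)\phi f\|_{H'}^2. \]
Since $p\phi = (p\phi)$ is precisely the operator in the statement, it remains only to identify $\|(\Id-p)\phi f\|_{H'}$ with $\|\pi f\|_H$.

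Second, I would give an explicit formula for $\pi$. Set $P := \phi^*(\Id-p)\phi$. Using $\phi^*\phi = \Id_H$, $\phi\phi^* = \Id_{H'}$, together with $(\Id-p)^2 = \Id-p$ and $(\Id-p)^* = \Id-p$, a direct computation yields $P^2 = P$ and $P^* = P$, so $P$ is an orthogonal projection on $H$. Its range is $\phi^*(\Id-p)\phi(H) = \phi^{-1}\bigl(\mathrm{Range}(\Id-p)\bigr) = \phi^{-1}(\ker p)$, and this coincides with $\ker(p\phi)$ because $p\phi f = 0 \iff \phi f \in \ker p$. By uniqueness of the orthogonal projection onto a given closed subspace, $P = \pi$.

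Finally, combining these two points, $\|\pi f\|_H = \|\phi^*(\Id-p)\phi f\|_H = \|(\Id-p)\phi f\|_{H'}$, where the last equality uses that $\phi^*$ is an isometry; substituting into the Pythagorean identity gives the claim. The computation is elementary, and the only step requiring a little care is checking that the conjugate $\phi^*(\Id-p)\phi$ is genuinely the orthogonal projection onto $\ker(p\phi)$ — that is, matching its range with the kernel and verifying self-adjointness — but this is bookkeeping rather than a real obstacle.
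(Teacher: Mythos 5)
Your proof is correct and follows essentially the same route as the paper: both arguments conjugate the projection through the unitary (you work with $\phi^*(\Id-p)\phi = \pi$, the paper with $\phi^{-1}p\phi = 1-\pi$), identify it with the orthogonal projection onto $\ker(p\phi)$, and conclude by the Pythagorean theorem plus the isometry of $\phi$. Your version merely spells out the idempotence/self-adjointness/range verification that the paper compresses into ``one computes,'' which is fine.
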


\begin{proof}
Consider $\phi^{-1} p \phi: H \to H$. One computes that it is an orthogonal projection with kernel $N = \ker (p \phi)$, that is $\phi^{-1} p \phi = 1-\pi$. Therefore, Pythagorean theorem yields that
\[ \forall f \in H, \quad \| f \|_H^2 = \| (\phi^{-1} p \phi)(f) \|_H^2 + \| (1-\phi^{-1} p \phi)(f) \|_{H}^2. \]
Now, $\phi^{-1}: H' \to H$ is isometric, so that $ \| (\phi^{-1} p \phi)(f) \|_H =  \| (p \phi)(f) \|_{H'}$, and $1-\phi^{-1} p \phi =\pi$ so that the above equality writes
\[ \forall f \in H, \quad \| f \|_H^2 = \| (p \phi)(f) \|_{H'}^2 + \| \pi(f) \|_{H}^2. \qedhere \]
\end{proof}

As a direct consequence of the above lemma and of Lemma \ref{lem:range}, we get that
\begin{align}
\label{proj:L2}
\forall u_1 \in L^2(\m R^d), \quad \| u_1 \|_{L^2}^2 & = \| \m 1_{|s|\ge R}\Td u_1 \|_{L^2(\m R \times \m S^{d-1})}^{2} + \| \pi_R^{0} u_1 \|_{L^2(\m R^d)}^2, \\
\label{proj:H1} \forall u_0 \in \dot H^1(\m R^d), \quad \| u_0 \|_{\dot H^1}^2 & = \| \m 1_{|s|\ge R}\partial_s \Td u_0 \|_{L^2(\m R \times \m S^{d-1})}^{2} + \| \pi_R^1 u_0 \|_{\dot H^1 (\m R^d)}^2. 
\end{align}

Our main goal in this paragraph will be to give explicit expressions of the kernels $K_R^{0}$ and $K_R^1$, and to relate them to the space $P(R)$ defined in the introduction. We emphasize that for this, we will make an essential use that $d$ is odd.

\bigskip
 
The main object of this section is to obtain the following theorem. We denote
\[  \dot H^s(|x| \le R) = \{ f \in \dot H^s(\m R^d) ; \Supp f \subset \overline{B(0,R)} \}, \]
and recall \eqref{def:Yl} that 
$(Y_\ell)_{\ell \in \m M}$ is an orthonormal basis of spherical harmonics, $l=l(\ell)$ is the degree of $Y_\ell$ and 
\[ \alpha_k = -l-d+2k \]
(it also depends on $\ell$), and we defined in the introduction the functions $f_k$ (adapted to the $\dot H^1$ context) and $g_k$ (adapted to the $L^2$ context), see \eqref{def:gk}-\eqref{def:fk}.

\begin{thm}

\label{thmKer}
Assume $d$ is odd. Then
\begin{align}
\label{decomp_ker_L2}
K_{R}^{0}=  L^2(|x| \le R) \stackrel{\perp}{\oplus} \Nd_{R}^{0} \quad \text{where} \quad \Nd_{R}^{0}= \bigoplus^{\perp}_{\ell\in \M}\Nd_{R, \ell}^{0}
\end{align}
(here $\perp$ means $L^2$-orthogonality) and
\begin{align*}
\Nd_{R, \ell}^0 = \Span \left(g_k ; \quad  k \in\N^{*}, \ \alpha_{k}< - d/2 \right).
\end{align*}

Similarly, there hold
\begin{align}
\label{decomp_ker_H1} 
K_R^1 = \dot H^1(|x| \le R) \stackrel{\perp}{\oplus} \Nd_{R}^{1}  \quad \text{where} \quad \Nd_{R}^{1}= \bigoplus^{\perp}_{\ell\in \M}\Nd_{R, \ell}^{1},
\end{align}
(here $\perp$ means $\dot H^1$-orthogonality)  and
\begin{align*}
\Nd_{R, \ell}^1 = \Span \left( f_k ; \quad  k\in\N^{*}, \ \alpha_{k}<1-d/2\right).
\end{align*}
\end{thm}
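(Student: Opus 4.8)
The plan is to establish both identities in parallel, using throughout that $d$ is odd so that $\Td=c_0(-1)^{\frac{d-1}{2}}\partial_s^{\frac{d-1}{2}}\Ra$ is \emph{local} (Lemma \ref{lmRadon}), that $\Td$ (resp. $\partial_s\Td$) is an $L^2$- (resp. $\dot H^1$-) isometry (Lemma \ref{lem:T_iso}), that $\Td\Delta=\partial_s^2\Td$ (item \eqref{actionDeltaRadonDhs}), and that $\Td,\partial_s\Td$ are covariant under dilations (Lemma \ref{lmRahomogen}). I focus on $K_R^0$; the argument for $K_R^1$ is identical after replacing $L^2$ by $\dot H^1$, $\Td$ by $\partial_s\Td$, the profiles $g_k$ by $f_k$, and the threshold $-d/2$ by $1-d/2$. \emph{Reductions.} By the locality property \eqref{supporprop}, any $u$ supported in $\overline{B(0,R)}$ agrees with $0$ on $\{|x|>R\}$, hence $\Td u$ vanishes on $\{|s|>R\}$; thus $L^2(|x|\le R)\subset K_R^0$. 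Since functions supported in $\{|x|\le R\}$ and in $\{|x|\ge R\}$ are $L^2$-orthogonal, it remains to identify $\Nd_R^0:=K_R^0\cap L^2(|x|\le R)^\perp$, which is exactly the set of $u\in K_R^0$ supported in $\{|x|\ge R\}$. As $\Ra$ (hence $\Td$) respects the spherical-harmonic decomposition (Proposition \ref{lmRadldual}), $\m1_{|s|\ge R}\Td$ is block-diagonal, and it suffices to work sector by sector: fix $\ell$, write $u=w(|x|)Y_\ell$, and determine $\Nd_{R,\ell}^0$, the space of such $u$ supported in $\{|x|\ge R\}$ with $\Td u$ supported in $\{|s|\le R\}$.

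\emph{Finite dimensionality.} For $u\in\Nd_{R,\ell}^0$ the isometry gives $u=\Td^*\Td u=c_0\Ra^*\partial_s^{\frac{d-1}{2}}(\Td u)$, where $\Td u=v(s)Y_\ell$ is supported in $[-R,R]$. Feeding this into the dual Gegenbauer formula \eqref{formuleGegendual} and imposing that $w(|x|)=0$ for $|x|<R$ converts the kernel condition into a homogeneous Gegenbauer/Abel-type integral equation for $v$ on $(0,R)$, whose solution space consists of polynomials in $s$ of degree $<B=\frac{d+1}{2}+l$. Since $\Td$ is isometric, $\dim\Nd_{R,\ell}^0=\dim\Td(\Nd_{R,\ell}^0)$ is therefore finite, bounded by the dimension of this polynomial space. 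I expect the solvability analysis behind this degree bound to be one of the two delicate points.

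\emph{Monomial structure.} Knowing $\Nd_{R,\ell}^0$ is finite dimensional, I use the dilation covariance of Lemma \ref{lmRahomogen}: the generator $r\partial_r$ preserves a finite dimensional space, forcing each radial profile $w$ on $\{r>R\}$ to be a combination of generalized monomials $r^{\alpha}(\ln r)^{\beta}Y_\ell$. By locality \eqref{supporprop}, $\Td u$ coincides on $\{|s|>R\}$ with $\Td$ of the homogeneous extension of $u$; for $\beta=0$ the latter is homogeneous in $s$, and vanishing on $\{|s|>R\}$ then forces it to vanish away from $s=0$. A logarithmic factor precludes this, so $\beta=0$; and the requirement that the homogeneous distribution $\Td(|x|^{\alpha}Y_\ell)$ be supported at the origin selects exactly the arithmetic progression $\alpha=\alpha_k=-l-d+2k$ (the zeros of the Gamma-type normalisation of the homogeneous Radon transform). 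The intertwining $\Td\Delta=\partial_s^2\Td$, under which $\Delta(r^{\alpha_k}Y_\ell)$ is a multiple of $r^{\alpha_{k-1}}Y_\ell$, is what organises this progression. Finally $r^{\alpha_k}\in L^2(r^{d-1}dr)$ near infinity iff $\alpha_k<-d/2$, which gives the inclusion $\Nd_{R,\ell}^0\subseteq\Span(g_k:\alpha_k<-d/2)$, hence \eqref{decomp_ker_L2} up to the reverse inclusion.

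\emph{Converse inclusion, the main obstacle.} It remains to verify that each $g_k=\m1_{|x|>R}|x|^{\alpha_k}Y_\ell$ with $\alpha_k<-d/2$ genuinely lies in $\Nd_{R,\ell}^0$, i.e. that $\Td g_k$ is supported in $\{|s|\le R\}$. This is the honestly difficult step, because for the very negative exponents the homogeneous functions decay too slowly for the defining integrals to converge, so a direct Fourier/Gegenbauer computation is not available. Instead I would argue by induction on $k$: writing $\Delta g_k=c_k\,g_{k-1}+b_k$, where $b_k$ is supported on the sphere $\{|x|=R\}$ (the boundary terms produced by the cut-off), the relation $\partial_s^2\Td g_k=\Td(\Delta g_k)=c_k\Td g_{k-1}+\Td b_k$ shows, using the inductive hypothesis on $g_{k-1}$ and locality \eqref{supporprop} for $b_k$, that $\partial_s^2\Td g_k$ is supported in $\{|s|\le R\}$; thus $\Td g_k$ is an affine function of $s$ on each of $\{s>R\}$ and $\{s<-R\}$, and since $g_k\in L^2$ forces $\Td g_k\in L^2$, that affine part must vanish. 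The base case is the most rapidly decaying admissible profile, where the integral formula \eqref{formuleGegen} converges and the vanishing can be checked directly (together with the analogous descent by one derivative for $\partial_s\Td$). Running the same scheme for $\partial_s\Td$ on $\dot H^1$—with threshold $\alpha_k<1-d/2$ and the profiles $f_k$, whose interior values are chosen harmonic precisely so that $f_k$ is $\dot H^1$-orthogonal to $\dot H^1(|x|\le R)$—yields \eqref{decomp_ker_H1} and completes the proof of Theorem \ref{thmKer}.
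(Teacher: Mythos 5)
Your proposal follows the same skeleton as the paper's proof: split off $L^2(|x|\le R)$ by locality, reduce to spherical harmonics via $\Td_l$, prove each sector $\Nd_{R,\ell}^0$ is finite dimensional because its image under $\Td_l$ consists of polynomials of bounded degree, use the dilation semigroup (Lemma \ref{lmfunctionel}) to force profiles $r^{\alpha}(\ln r)^{\beta}$, use the intertwining $\Td\Delta=\partial_s^2\Td$ to pin down the exponents, and finish the converse inclusion by a descending induction on $k$ with $\partial_s^2 h_k=c_k h_{k-1}$. However, the step you yourself call ``the honestly difficult step'' contains a genuine gap: the base case of that induction. You claim that for the most rapidly decaying admissible profile ``the integral formula \eqref{formuleGegen} converges and the vanishing can be checked directly.'' This fails in the radial sector. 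Take $l=0$, $d=5$: the bottom admissible profile is $g=\m 1_{|x|>1}|x|^{-3}\in L^2(\m R^5)$ (exponent $\alpha=2-d-l=-3<-d/2$), and for $|s|>1$ the integrand of \eqref{formuleGegen} behaves like $r^{-3}\cdot r^{3}\cdot(1-s^2/r^2)\to 1$ as $r\to\infty$, so the integral is $+\infty$; equivalently, $\Ra g(s,\omega)=c\int_0^\infty(s^2+\rho^2)^{-3/2}\rho^3\,d\rho$ diverges. The same happens in the $\dot H^1$ case already for $d=3$, $l=0$, where the bottom profile is $|x|^{-1}$ outside the ball. So no direct computation is available there; this is precisely the integrability obstruction the paper points out (``it is not easy \dots to justify the computations when the functions do not have enough decay''), and it cannot be waved away, since the induction needs the base case in every sector, including $\ell$ with $l(\ell)=0$.

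The paper's resolution of the base case is the one idea entirely missing from your proposal, and it is dynamical rather than computational: the bottom profile $g_0$ is harmonic on $\{|x|>R\}$, hence $u(t,x)=g_0(x)$ solves the wave equation on the exterior cone $\{|x|>|t|+R\}$; by finite speed of propagation the solution with data $(g_0,0)$ coincides with $g_0$ there, and the exterior-energy formulas of Corollary \ref{prop:ext_energy} (namely \eqref{equivfinallemmaT}, resp.\ \eqref{equivfinallemmaH1}) then force $\|\Td g_0\|_{L^2(\{s\ge R\}\times\m S^{d-1})}=0$, i.e.\ $h_0=0$. With that base case, your inductive step is exactly the paper's (locality \eqref{supporprop} plus $\Td\Delta=\partial_s^2\Td$ give $\partial_s^2h_k=c_kh_{k-1}$ on $\{|s|>R\}$, and $L^2$ integrability kills affine functions), so it goes through. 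Two further points in your write-up are asserted rather than proved: (i) finite dimensionality --- your ``Gegenbauer/Abel integral equation'' via the inversion formula is not carried out, whereas the paper proves Lemma \ref{lmimaTpoly} by approximating $w$ by test functions supported in $\{|r|>1\}$ and exploiting that, $d$ being odd, the kernel $C_l^{d/2-1}(s/r)(1-s^2/r^2)^{\frac{d-3}{2}}$ is a polynomial in $s$ of degree at most $l+d-3$, which enough $s$-derivatives annihilate; (ii) your selection of the exponents through ``zeros of the Gamma-type normalisation'' of the Radon transform of homogeneous distributions, and your one-line exclusion of logarithmic factors, are unjustified (and would require handling $\Td$ on homogeneous distributions that are singular at the origin); the paper instead derives both from a self-contained argument (Lemmas \ref{N_stab_Delta} and \ref{lem:B_0}): iterating the localized Laplacian $\check\Delta$ inside the finite-dimensional kernel forces $(\alpha-2k)(\alpha-2k+d-2)=l(l+d-2)$ for some $k$, and a minimality argument eliminates $p\ge1$.
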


\begin{remark}
The Kernel of the partial Radon transform has already been computed by Quinto \cite[(3.14)]{Quinto:83} in different (weighted) spaces, namely $L_{p}^{2}(E)$, defined by its norm $\nor{f}{E,p}=\sqrt{2}\nor{|x|^{\frac{d-1}{2}}(1-|x|^{-2})^{p/2}f}{L^{2}(\{|x|\ge 1\})}$. He proves there (\cite[Corollary 3.4]{Quinto:83}), that the null space of $\m 1_{|s| \ge 1} \q R: L_{p}^{2}(E)\rightarrow L_{p}^{2}(E')$ is the closure of the span $|x|^{-d-k}Y_{\ell}(x/|x|)$ where $0 \le k<l$ and $k-l$ is even.

We however do not relie on this result, and actually use a different approach of proof.
\end{remark}

We will first consider the $L^2$ case, that is prove \eqref{decomp_ker_L2}, and then treat the $\dot H^1$ case for which the proof is analoguous, and we will only highlight the differences.

\begin{proof}[Proof of \eqref{decomp_ker_L2}]

As the dilation $f \mapsto \frac{1}{R^{d/2}} f(\cdot/R)$ is an isometry on $L^2(\m R^d)$, we can assume without loss of generality that $R=1$.

\bigskip

\emph{Step 1: Reduction to spherical harmonics.}

We define $\Nd_{1}^{0}$ to be the $L^2$-orthogonal complement of $L^2(|x| \le 1)$ in $K_1^0$:
\[ K_1^0 =: L^2(|x| \le 1) \stackrel{\perp}{\oplus} \Nd_{1}^{0}, \]
so that we have the explicit description
\[  \Nd_{1}^{0} = \left\{ f  \in L^2(\m R^d); f =0 \text{ on } \{ |x| \le 1 \} \text{ and } \Td f =0 \text{ on } \{ |s| > 1 \} \right\}. \]

It is convenient to introduce the following notation: if $w : \m R \to \m R$ and $Y : \m S^{d-1} \to \m R$ are two functions, then we define
\[ w \otimes Y : \m R^d \to \m R, \quad x \mapsto w(|x|) Y \left( \frac{x}{|x|} \right). \]

As all the functions we consider have symmetry in the $s$ variable, we keep track of it in the following definitions. For this, we denote
\[ L^2_{rad,l} = \left\{ w \in L^2(\m R, |r|^{d-1} dr) ; \forall r \in \m R \text{ a.e.}, \ w(-r) = (-1)^{l} w(r)  \right\}, \]
which we endow with the natural Hilbert norm:
\[ \| w \|_{L^2_{rad}} : =  |\m S^{d-1}|^{1/2}\| w \|_{L^2([0,+\infty), r^{d-1} dr)}. \]
The symmetry we impose on functions $w \in L^2_{rad,l}$ is essentially technical (the information required is given for $r \ge 0$); it is given for coherence purpose with the definition of $\q R_l$ (in Proposition \ref{lmRadldual}), mostly in Step 2 below. Then, for $\ell \in \m M$, let
\[ L^2_\ell := \{ f \in L^2(\m R^d); \exists w \in L^2_{rad,l},   f = w \otimes Y_\ell \}, \]
so that
 \[ L^2(\m R^d) = \bigoplus_{\ell \in \m M}^\perp L^2_\ell, \]
and the map $L^2_{rad} \to L^2_\ell$, $w \mapsto  w \otimes Y_\ell$ is a bijective isometry up to a constant:
\[ \|  w \otimes Y_\ell \|_{L^2} = \| Y_\ell \|_{L^2(\m S^{d-1})} \| w \|_{L^2_{rad}}. \]
The main point of this step is that $\Td$ preserve the structure in $L^2_\ell$. More precisely, denote
\[ \Td_l := c_0 (-1)^{\frac{d-1}{2}}\partial_s^{\frac{d-1}{2}} \q R_l, \]
then due to Proposition \ref{lmRadldual}, $\Td_l$ can be extended to an isometry from $L^2_{rad,l}$ to $L^2(\m R)$ (and arguing as in Lemma \ref{lem:range}, it is actually bijective):
\begin{align} \label{Tl_iso}
\forall w \in L^2_{rad}, \quad \| \Td_l w \|_{L^2} = \| w \|_{L^2_{rad}},
\end{align}
and we have the formula 
\[ \forall w \in L^2_{rad,l}, \quad \Td (w \otimes Y_\ell) =  \Td_l(w) \otimes Y_\ell. \]

We will now fix $\ell \in \m M$ and study the kernel
\[ \Nd_{1,\ell}^0 := \ker (\m 1_{|s| \le 1} \Td_l) = \{ w \in L^2_{rad,l} ; w \otimes Y_\ell \in  \Nd_{1}^{0} \}, \]
so that
\[ \Nd_{1}^0 = \bigoplus_{\ell \in \m M}^\perp (\Nd_{1,\ell}^0 \otimes \{ Y_{\ell} \}). \]

\bigskip

\emph{Step 2: $\Nd_{1,\ell}^0$ is finite dimensional.}

Let us first give an insight of the range of $\Td_\ell$ when restricted to $\Nd_{1,\ell}^0$.

\begin{lem}
\label{lmimaTpoly} Let $w\in \Nd_{1,\ell}^0$. Then there exists a polynomial $P$ such that $\ds \deg P \le l+\frac{d-5}{2}$ (with the convention that $\deg 0 = -\infty$), and
\[ \forall s \in \m R, \quad (\Td_{l}w)(s) = \m 1_{|s|\le 1}P(s). \] 
Also $P$ has the parity of $\ds l+\frac{d-1}{2}$.
\end{lem}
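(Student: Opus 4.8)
The plan is to show that the restriction of $\Td_l w$ to the open interval $(-1,1)$ is a polynomial of degree at most $N:= l+\frac{d-5}{2}$. Since $w\in \Nd_{1,\ell}^0$ means exactly that $w$ is supported in $\{|r|\ge 1\}$ and that $\Td_l w=0$ on $\{|s|>1\}$, once the restriction to $(-1,1)$ is identified as such a polynomial $P$, we immediately get $\Td_l w = \m 1_{|s|\le 1}P$ with the announced degree bound. To identify $\Td_l w$ as a polynomial of degree $\le N$ on $(-1,1)$, it is equivalent to prove that $\partial_s^{N+1}(\Td_l w)=0$ in $\q D'((-1,1))$, i.e. that this distribution annihilates every $\chi\in C_c^\infty((-1,1))$.

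The natural tool is duality, exactly as in the extension of $\Td$ carried out in Section \ref{sectRadsob}. Using $\langle \Td_l w,\varphi\rangle = c_0\langle w, \Ra_l^*\partial_s^{\frac{d-1}{2}}\varphi\rangle$ (the $\ell$-sector of the definition via $\Td^*$, with the pairing taken in the $L^2_{rad}$ sense), and integrating by parts in $s$, I would write, for $\chi\in C_c^\infty((-1,1))$,
\[ \langle \partial_s^{N+1}\Td_l w,\chi\rangle = (-1)^{N+1}c_0\,\big\langle w,\ \Ra_l^*\,\partial_s^{\frac{d-1}{2}+N+1}\chi\big\rangle = (-1)^{N+1}c_0\,\big\langle w,\ \Ra_l^*\,\partial_s^{\,l+d-2}\chi\big\rangle, \]
where I used $\frac{d-1}{2}+N+1 = l+d-2$. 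The crux is then the claim that $\Ra_l^*\big(\partial_s^{\,l+d-2}\chi\big)$ \emph{vanishes on $\{|r|\ge 1\}$}. To see this, fix $|r|\ge 1$, apply the explicit formula \eqref{formuleGegendual} for $\Ra_l^*$, substitute $u=rt$ (so that the support $(-1,1)\subset(-|r|,|r|)$ of $\partial_s^{l+d-2}\chi$ stays inside the integration range), and integrate by parts $l+d-2$ times with no boundary contribution. Since $d$ is odd, $\frac{d-3}{2}$ is a nonnegative integer, so $u\mapsto C_l^{d/2-1}(u/r)\big(1-u^2/r^2\big)^{\frac{d-3}{2}}$ is a polynomial in $u$ of degree $l+(d-3)=l+d-3$; differentiating it $l+d-2$ times — one more time than its degree — gives $0$. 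Hence $\Ra_l^*\partial_s^{l+d-2}\chi$ is supported in $\{|r|\le 1\}$, and as $w$ is supported in $\{|r|\ge 1\}$ the pairing vanishes. This yields $\partial_s^{N+1}\Td_l w=0$ on $(-1,1)$, hence the degree bound.

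The parity assertion follows from the symmetry of $\Td$. Applying \eqref{eq:sym_T} to $v=w\otimes Y_\ell$ and using $Y_\ell(-\omega)=(-1)^l Y_\ell(\omega)$ (as $Y_\ell$ has degree $l$) gives $\Td_l w(-s)=(-1)^{\,l+\frac{d-1}{2}}\Td_l w(s)$, so $P$ has the parity of $l+\frac{d-1}{2}$.

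I expect the main obstacle to be the integration-by-parts/degree-count identity showing that $\Ra_l^*$ maps $\partial_s^{\,l+d-2}\chi$ to a function supported in $\{|r|\le 1\}$: this is where the oddness of $d$ (which turns the weight $(1-u^2/r^2)^{\frac{d-3}{2}}$ into a polynomial) and the sharp bookkeeping of degrees are genuinely used. By contrast, justifying the duality pairing in the $L^2$ setting is routine given the framework of Section \ref{sectRadsob}. I would favor this duality route over the more direct approach of plugging $w$ into the pointwise formula \eqref{formuleGegen} and differentiating under the integral sign, since the latter runs into convergence issues caused by the slow decay of a general $w\in L^2$ (one would then have to approximate $w$ by smooth functions supported in $\{|r|\ge 1\}$, respecting the parity, which is possible but less clean).
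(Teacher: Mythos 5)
Your proof is correct, but it is genuinely not the paper's proof — in fact it is dual to it. The paper also reduces the lemma to showing $\partial_s^{k}(\Td_l w)=0$ in $\q D'((-1,1))$ for $k\ge l+\tfrac{d-3}{2}$, and it rests on exactly the same degree count (Gegenbauer factor of degree $l$ times the weight $(1-s^2/r^2)^{\frac{d-3}{2}}$, a polynomial of degree $d-3$ because $d$ is odd, so $l+d-2$ derivatives kill it); but it handles the low decay of a general $w\in L^2_{rad,l}$ by the route you explicitly set aside as "possible but less clean": approximate $w$ in $L^2_{rad}$ by $\chi\in\q D(\{|r|>1\})\cap L^2_{rad,l}$, apply the \emph{forward} formula \eqref{formuleGegen} to $\chi$, observe that the high $s$-derivatives of $\Td_l\chi$ vanish classically on $(-1,1)$, and transfer this to $q=\Td_l w$ using the isometry \eqref{Tl_iso} and an $\e$-argument against $\varphi^{(k)}$. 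You instead keep $w$ untouched and move all derivatives onto the test function via the \emph{dual} transform, proving that $\Ra_l^*\partial_s^{l+d-2}\chi$ is supported in $\{|r|\le 1\}$ and concluding from $\Supp w\subset\{|r|\ge 1\}$; this is the adjoint of the paper's observation. What each buys: your route avoids any approximation and $\e$-bookkeeping, but it needs the sector-level duality pairing for rough $w$, which is where the residual work sits — note in particular that Proposition \ref{lmRadldual} states \eqref{formuleGegendual} under the parity hypothesis $g(-s)=(-1)^lg(s)$, which a general $\partial_s^{l+d-2}\chi$ does not satisfy, so you should either decompose $\chi$ by parity (only one parity component pairs nontrivially anyway), or cleaner, keep the factor $Y_\ell$ and pair $\Td(w\otimes Y_\ell)$ against $\partial_s^{N+1}\chi\cdot Y_\ell$ using Proposition \ref{propdsTH1}(2) with $\rho=0$ (Lemma \ref{lmTd*Hs} guarantees the pairing is a genuine $L^2$ inner product), and then run your change-of-variables/integration-by-parts argument on the Funk--Hecke integral. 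The paper's route, conversely, never needs the dual Gegenbauer formula or mapping properties of $\Ra_l^*$ beyond what Section \ref{sectRadsob} provides, at the cost of the approximation step. Your parity argument via \eqref{eq:sym_T} and $Y_\ell(-\omega)=(-1)^lY_\ell(\omega)$ is correct and is actually more explicit than the paper, which leaves the parity claim implicit (it follows there from the parity statement in Proposition \ref{lmRadldual}).
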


\begin{proof}
By definition, $q(s)=\Td_{l}w$ is an $L^{2}(\R)$ function supported on $[-1,1]$. We would like to use formula \eqref{formuleGegen} but we have to be careful of integrability issues, so we work by duality instead. We prove that $q^{(k)}=0$ on $]-1,1[$ in the sense of distributions, for $k\ge l+\frac{d-3}{2}$, and proceed via smooth approximations. 

Let $\varphi \in \q D (]-1,1[)$. For $\e>0$, let $\chi\in \q D(\{|r| > 1\})\cap L^2_{rad,l}$ such that $\nor{w-\chi}{L^{2}_{rad}}\le \e$. Since $\chi\in L^2_{rad,l}$ is smooth and compactly supported far from zero, it can be written $\chi(s)= r^{l} v(r^{2})$ for some $v\in \q D(\R)\subset \q S(\m R)$. In particular, we can apply Lemma \ref{lmRadl} to compute $\Td_{l}\chi$ and its derivatives. Using that $\chi$ is supported in $\{ |r| > 1 \}$, formula \eqref{formuleGegen} gives for $|s|<1$:
\begin{align*} 
(\Ra_{l}\chi )(s)& = \frac{|\m S^{d-1} |}{C_{l}^{\lambda}(1)}\int_{|s|}^{+\infty} C_{l}^{\lambda}\left(\frac{s}{r}\right)\chi(r)r^{d-2}\left(1-\frac{s^{2}}{r^{2}}\right)^{\frac{d-3}{2}}dr\\
& = \frac{|\m S^{d-1} |}{C_{l}^{\lambda}(1)}\int_{1}^{+\infty} C_{l}^{\lambda}\left(\frac{s}{r}\right)\chi(r)r^{d-2}\left(1-\frac{s^{2}}{r^{2}}\right)^{\frac{d-3}{2}}dr.
\end{align*} 
We now differentiate $k + \frac{d-1}{2}$ times (using that $\Ra_{l}\chi$ is a smooth function in $]-1,1[$) to obtain
\begin{align*} 
\left( \frac{d^{k}}{ds^{k}} \Td_{l}\chi \right)(s)=c_{0}(-1)^{\frac{d-1}{2}}\frac{|\m S^{d-1} |}{C_{l}^{\lambda}(1)}\int_{1}^{+\infty} \chi(r)r^{d-2}\frac{d^{k}}{ds^{k}}\partial_{s}^{\frac{d-1}{2}} \left[C_{l}^{\lambda}\left(\frac{s}{r}\right)\left(1-\frac{s^{2}}{r^{2}}\right)^{\frac{d-3}{2}}\right]dr
\end{align*} 
Since $C_{l}^{\lambda}$ is a polynomial of order $l$ and $\left(1-\frac{s^{2}}{r^{2}}\right)^{\frac{d-3}{2}}$ is a polynomial of order $d-3$ ($d$ is odd!), the right hand side is zero if $k+\frac{d-1}{2}> l+d-3$. In particular,  for $k\ge l+\frac{d-3}{2}$
\[ \forall s \in [-1,1], \quad \left( \frac{d^{k}}{ds^{k}} \Td_{l}\chi \right)(s)=0. \]
As $\Td_{l}$ is isometric (see \eqref{Tl_iso}), we have 
 \[ \nor{q-\Td_l \chi}{L^{2}_{sym}}=\nor{w-\chi}{L^{2}_{rad}} \le \e. \]
 Since $q=0$ for $|s|\ge 1$, there holds $\nor{\Td_l \chi}{L^{2}_{sym}(|s|\ge 1)} \le \e$ and 
\begin{align*} 
\left|\int_{\R} \varphi^{(k)}(s) q(s)ds\right|& \le \left|\int_{|s| < 1} \varphi^{(k)}(s) \left[q(s)-\Td_l \chi\right]ds\right|+\left|\int_{|s| < 1} \varphi^{(k)}(s) \Td_l \chi ds\right|\\
& \le 2 \e \nor{\varphi^{(k)}}{L^{2}}.
\end{align*} 
Therefore, we obtain that $\ds \int_{\R} \varphi^{(k)}(s) q(s)ds=0$ for any $\varphi \in \q D (]-1,1[)$. Hence, $q$ is a polynomial of degree less or equal to $l+\frac{d-5}{2}$ on $]-1,1[$. Since $q$ is a $L^{2}$ function that is zero on $\{|s|\ge 1\}$, it gives the result.
\end{proof}

\begin{remark}
It is likely that the previous method applies well to other spaces like $H^{-\rho}$ (as the Radon transform was extended to these spaces), as long as its elements can be approximated by functions with compact support in $\{ |s| \ge 1 \}$.
\end{remark}

\begin{cor}
$\Nd_{1,\ell}^0$ is finite dimensional, of dimension $K_{\ell}\le  \left\lfloor \frac{l}{2}+\frac{d-1}{4} \right\rfloor$.
\end{cor}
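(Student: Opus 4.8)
The plan is to combine the injectivity of $\Td_{l}$ with the rigidity provided by Lemma \ref{lmimaTpoly}. Since $\Td_{l}$ is an isometry from $L^2_{rad,l}$ onto $L^2(\m R)$ by \eqref{Tl_iso}, it is in particular injective, so its restriction to $\Nd_{1,\ell}^0$ is injective and $\dim \Nd_{1,\ell}^0 = \dim \Td_{l}(\Nd_{1,\ell}^0)$. It therefore suffices to bound the dimension of the image, and this is exactly what Lemma \ref{lmimaTpoly} makes finite.

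Indeed, by Lemma \ref{lmimaTpoly}, for every $w \in \Nd_{1,\ell}^0$ there is a polynomial $P$ with $\deg P \le l + \frac{d-5}{2}$ and of the parity of $l + \frac{d-1}{2}$ such that $\Td_{l}w = \m 1_{|s|\le 1} P$. Hence $\Td_{l}$ maps $\Nd_{1,\ell}^0$ linearly and injectively into the finite-dimensional space
\[ V := \left\{ \m 1_{|s| \le 1} P(s) : P \text{ is a polynomial with } \deg P \le l + \tfrac{d-5}{2} \text{ of parity } l + \tfrac{d-1}{2} \right\}, \]
so that $K_\ell = \dim \Nd_{1,\ell}^0 \le \dim V$, which already proves finite dimensionality.

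It remains to count $\dim V$. Write $D := l + \frac{d-5}{2}$ for the maximal degree; since $d$ is odd this is an integer, and the imposed parity $l + \frac{d-1}{2} = D + 2$ agrees with the parity of $D$. Thus $V$ is spanned by the monomials $s^j$ with $0 \le j \le D$ and $j \equiv D \pmod 2$, whose number is $\lfloor D/2 \rfloor + 1$ (and $0$ when $D<0$, a case forced only by $d=3$, $l=0$, which is consistent). Because shifting by the integer $1$ commutes with the floor,
\[ \left\lfloor \tfrac{D}{2} \right\rfloor + 1 = \left\lfloor \tfrac{l}{2} + \tfrac{d-5}{4} + 1 \right\rfloor = \left\lfloor \tfrac{l}{2} + \tfrac{d-1}{4} \right\rfloor, \]
which yields the announced bound $K_\ell \le \lfloor l/2 + (d-1)/4 \rfloor$.

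The substantive work has already been carried out in Lemma \ref{lmimaTpoly}: granting it, the corollary is a pure dimension count, and the only point requiring genuine care is the parity bookkeeping together with the floor arithmetic that makes the final constant match the stated value exactly.
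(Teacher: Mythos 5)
Your proof is correct and takes essentially the same approach as the paper: Lemma \ref{lmimaTpoly} confines $\Td_{l}(\Nd_{1,\ell}^0)$ to the finite-dimensional space of truncated polynomials with the stated degree bound and parity, and the isometry \eqref{Tl_iso} (hence injectivity of $\Td_{l}$) transfers the dimension bound back to $\Nd_{1,\ell}^0$. Your explicit monomial count and floor arithmetic, including the degenerate case $d=3$, $l=0$, reproduce exactly the paper's bound $\left\lfloor \frac{l}{2}+\frac{d-1}{4}\right\rfloor$.
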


\begin{proof}
The space of symmetric polynomials of degree at most $m$ has dimension $\frac{m}{2}+1$ or $\frac{m+1}{2}$ depending on the parity of $m$ and even/odd polynomial; in any case, it is at most $\left\lfloor \frac{m}{2} + 1 \right\rfloor$. Lemma \ref{lmimaTpoly} thus implies that $\Td_{l}\Nd_{1,\ell}^0$ is contained in a finite dimensional subspace of dimension less than $\left\lfloor \frac{l}{2}+\frac{d-1}{4} \right\rfloor$. Since $\Td_l$ is an isometry on its Range, as seen in Step 1, $\Nd_{1,\ell}^0$ is therefore finite dimensional with the same dimension.
\end{proof}

\bigskip

\emph{Step 3: $\Nd_{1,\ell}^0$ is spanned by functions of the type $\ln(|r|)^p r^\alpha$, $\alpha \in \m C$, $p \in \m N$.}

We now have some precise information about the image of $\Nd_{1,\ell}^0$ by $\Td_{l}$, so that it only remains to invert it. Since $\Td_l$ is invertible in the appropriate $L^2$-related spaces, it might be possible to directly use Lemma \ref{lmRadldual} to recover $\Nd_{1,\ell}^{0}$ by applying the inverse of $\Td_\ell$ to functions which are the product of a polynomial by an indicatrix function. Yet, we prefer to apply homogeneity arguments that yield directly the result that $\Nd_{1,\ell}^0$, being finite dimensional, can only contain the restriction of homogeneous distributions. 

\begin{lem}
For any $\lambda>1$, define the dilation/restriction operator $S_{\lambda}$ acting on  functions $w: \m R \to \m R$ by  
\begin{align}  
\label{Alam}
\forall r \in \m R, \quad (S_{\lambda} w)(r) := \m 1_{|r|> 1} w(\lambda r).
\end{align} 
Then for any $\ell \in \m M$, $S_\lambda$ maps $\Nd_{1,\ell}^0$ into itself.
\end{lem}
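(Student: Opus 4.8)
The plan is to verify directly that $S_\lambda w$ satisfies the two defining conditions of $\Nd_{1,\ell}^0$: that it is supported in $\{|r|\ge 1\}$, and that $\Td_l(S_\lambda w)$ vanishes on $\{|s|>1\}$. The support condition is immediate from the cut-off $\m 1_{|r|>1}$ appearing in \eqref{Alam}, and the membership $S_\lambda w\in L^2_{rad,l}$ is routine: the parity $w(-r)=(-1)^l w(r)$ is preserved by $S_\lambda$, and the change of variables $\rho=\lambda r$ gives $\|S_\lambda w\|_{L^2_{rad}}^2=|\m S^{d-1}|\int_1^{+\infty}|w(\lambda r)|^2 r^{d-1}\,dr\le \lambda^{-d}\|w\|_{L^2_{rad}}^2<+\infty$. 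Hence the entire content of the lemma lies in controlling $\Td_l(S_\lambda w)$ on $\{|s|>1\}$.

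The key idea is to compare $S_\lambda w$ with the plain dilate $w_\lambda:=w(\lambda\,\cdot\,)$, for which scaling is available. Since $\lambda>1$, the truncation discards nothing beyond $\{|r|\le 1\}$, so $S_\lambda w$ and $w_\lambda$ coincide on $\{|r|>1\}$; equivalently $(S_\lambda w)\otimes Y_\ell$ and $w_\lambda\otimes Y_\ell=(w\otimes Y_\ell)\circ M_\lambda$ agree on $\{|x|>1\}$. Both lie in $L^2(\m R^d)\subset \bigcup_{\rho}H^{\rho}(\m R^d)$, so the locality property established in Proposition~\ref{propdsTH1} applies and yields $\Td_l(S_\lambda w)=\Td_l(w_\lambda)$ on $\{|s|>1\}$. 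This is exactly where the oddness of $d$ enters, as it is what makes $\Td$ local.

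It then remains to see that $\Td_l(w_\lambda)$ already vanishes on $\{|s|>1\}$. For this I would use the scaling covariance $\Td(u\circ M_\lambda)=\lambda^{-\frac{d-1}{2}}(\Td u)\circ M_\lambda$, which follows from the very computation carried out in the proof of Lemma~\ref{lmRahomogen} (applied to $\Td$ rather than to $\partial_s\Td$, and consistent with the homogeneity exponent $\alpha+\frac{d-1}{2}$ recorded there); restricted to the $\ell$-th harmonic sector it reads $\Td_l(w_\lambda)(s)=\lambda^{-\frac{d-1}{2}}(\Td_l w)(\lambda s)$. Since $w\in\Nd_{1,\ell}^0$, the function $\Td_l w$ is supported in $\{|s|\le 1\}$; and for $|s|>1$ we have $|\lambda s|=\lambda|s|>1$ because $\lambda>1$, so $(\Td_l w)(\lambda s)=0$. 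Thus $\Td_l(w_\lambda)=0$ on $\{|s|>1\}$, and combining with the locality identity gives $\Td_l(S_\lambda w)=0$ on $\{|s|>1\}$. Together with the support and parity observations, this shows $S_\lambda w\in\Nd_{1,\ell}^0$.

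The step I expect to be the main obstacle is the passage through locality. The naive reflex would be to scale $S_\lambda w$ directly, but $S_\lambda$ is a dilation \emph{followed by} a truncation and does not intertwine cleanly with $\Td_l$ on its own. The resolution is to trade the truncation for the locality of $\Td$ (valid precisely in odd dimension) and to run the scaling argument on the untruncated $w_\lambda$, where the homogeneity of $\Td$ is transparent. One small point to keep straight is the direction of the inequality: the hypothesis $\lambda>1$ simultaneously guarantees that truncation and dilation agree on $\{|r|>1\}$ and that $\{|s|>1\}$ is carried into $\{|\lambda s|>1\}$, so both halves of the argument push in the same direction.
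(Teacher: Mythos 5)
Your proof is correct and takes essentially the same route as the paper's: both rest on exactly the two ingredients you identify, namely the locality of $\Td$ in odd dimension (Proposition \ref{propdsTH1}) and the dilation covariance of $\Td$ (Lemma \ref{lmRahomogen} adapted from $\partial_s\Td$ to $\Td$, with the same exponent $\lambda^{-\frac{d-1}{2}}$). The only difference is cosmetic: the paper writes $\m 1_{|x|\ge 1}\bigl((w\otimes Y_\ell)\circ M_\lambda\bigr)=\bigl(\m 1_{|x|\ge \lambda}(w\otimes Y_\ell)\bigr)\circ M_\lambda$ and applies scaling before locality, testing against functions supported in $\{|s|>1\}$, whereas you apply locality first and then scale the untruncated dilate, which is legitimate since all objects involved are genuine $L^2$ functions.
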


\begin{proof}
Let $w \in\Nd_{1,\ell}^0$, and consider $v =  (S_\lambda w) \otimes Y_\ell$, so that $\Td_l (S_\lambda w) \otimes Y_\ell = \Td(v)$.

Observe that 
\[ v = \m 1_{|x| \ge 1} ((w \otimes Y_\ell) \circ M_\lambda) = ( \m 1_{|x| \ge \lambda} (w \otimes Y_\ell)) \circ M_\lambda. \]
Therefore, using Lemma \ref{lmRahomogen}, adapted to $\Td$ instead of $\partial_{s}\Td$, we infer
\[ \Td (v)=\lambda^{-\frac{d-1}{2}}\left(\Td( \m 1_{|x| \ge \lambda} (w \otimes Y_\ell) )\right) \circ M_{\lambda}. \]
Now, let $\varphi\in \q D(\R\times \m S^{d-1})$ so that $\varphi$ is supported in $\{|s|> 1\}$. We get
\begin{multline*} 
 \left< \Td (v),\varphi \right>_{\mathcal{S}'(\R\times \m S^{d-1}),\mathcal{S}(\R\times \m S^{d-1})} \\
 = \lambda^{-\frac{d-1}{2}-1}\left< \Td( \m 1_{|x| \ge \lambda} (w \otimes Y_\ell) ),\varphi \circ M_{1/\lambda}\right>_{\mathcal{S}'(\R\times \m S^{d-1}),\mathcal{S}(\R\times \m S^{d-1})}.
  \end{multline*} 
The assumption on $\Supp(\varphi)$ implies that $\varphi\circ M_{1/\lambda}$ is supported in $\{|s|> \lambda\}$. Applying Proposition \ref{propdsTH1} to $\m 1_{|x| \ge \lambda } (w \otimes Y_\ell)$, we get that 
\[ (\Td (\m 1_{|x| \ge \lambda } (w \otimes Y_\ell))_{\left|\{|s|> \lambda\}\times \m S^{d-1}\right.}=(\Td (w\otimes Y_\ell))_{\left|\{|s|> \lambda\}\times \m S^{d-1}\right.}=0, \]
 since $w \in \Nd_{1,\ell}^0$ and $\lambda>1$. So, we have proved that for any test function $\varphi$ supported in $\{|s|> 1\}$
 \[ \left< \Td (v),\varphi \right>_{\mathcal{S}'(\R\times \m S^{d-1}),\mathcal{S}(\R\times \m S^{d-1})}=0, \]
in other words, $\Td (v)=0$ on $\{|s|> 1\}$:  hence $\m 1_{|s| > 1} \Td_l(S_\lambda w) =0$ and $S_{\lambda} w \in \Nd_{1,\ell}^0$. 
\end{proof}

We now state a general fact, which describes finite dimensional spaces of 1D functions invariant by scaling.

\begin{lem}
\label{lmfunctionel}
Let $N\subset L^{1}_{loc}([1,+\infty))$ be a \emph{finite dimensional} vector space such that  for any $\lambda>1$, $S_{\lambda}(N) \subset N$. Then, there exist a finite set $I$, $(\alpha_{i})_{i\in I} \subset \m C$, $(p_{i})_{i \in I} \subset \N$  so that 
\[ N=\Span \left( r \mapsto \log(r)^{j}r^{\alpha_{i}}; i\in I, 0\le j\le p_{i}-1 \right). \]
\end{lem}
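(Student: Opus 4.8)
The plan is to pass to the logarithmic variable and recognise $N$ as a finite-dimensional space of functions on a half-line invariant under the one-parameter semigroup of translations; such spaces are solution spaces of constant-coefficient linear ODEs. Concretely, I set $r=e^{t}$ and associate to $w\in L^{1}_{loc}([1,+\infty))$ the function $\tilde w(t):=w(e^{t})\in L^{1}_{loc}([0,+\infty))$; this is a linear bijection, and the dilation/restriction $S_{\lambda}$ with $\lambda=e^{a}$ (for $\lambda>1$, i.e. $a>0$) becomes the forward translation $(T_{a}\tilde w)(t)=\tilde w(t+a)$ on $[0,+\infty)$ (the cut-off $\m 1_{|r|>1}$ turns into $\m 1_{t>0}$, which is harmless a.e.). Writing $\tilde N:=\{\tilde w:w\in N\}$, the hypotheses say that $\tilde N$ is a finite-dimensional subspace of $L^{1}_{loc}([0,+\infty))$ with $T_{a}(\tilde N)\subset\tilde N$ for every $a>0$, and the conclusion becomes $\tilde N=\Span(t\mapsto t^{j}e^{\mu_{i}t})$ for finitely many $\mu_{i}\in\m C$ and $0\le j\le p_{i}-1$, which transports back to $\log(r)^{j}r^{\alpha_{i}}$ with $\alpha_{i}=\mu_{i}$.

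Next I would build the associated semigroup of structure matrices. Fix a basis $v_{1},\dots,v_{n}$ of $\tilde N$; since $T_{a}v_{j}\in\tilde N$ there are scalars with $T_{a}v_{j}=\sum_{k}c_{kj}(a)v_{k}$, and $T_{a+b}=T_{a}T_{b}$ forces $c(a+b)=c(a)c(b)$ for $a,b>0$. To obtain regularity of $a\mapsto c(a)$ I would test against smooth compactly supported functions: as the $v_{j}$ are linearly independent in $L^{1}_{loc}$, one can choose $\psi_{1},\dots,\psi_{n}\in C^{\infty}_{c}((0,+\infty))$ with the matrix $G=(\int v_{k}\psi_{i})_{i,k}$ invertible. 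Then $\int(T_{a}v_{j})\psi_{i}=\int v_{j}(s)\psi_{i}(s-a)\,ds$ is $C^{\infty}$ in $a\in(0,+\infty)$ (differentiation under the integral, the support remaining inside $(0,+\infty)$), and solving the linear system with the constant invertible matrix $G$ shows $c(a)$ is smooth on $(0,+\infty)$. The same computation with $a\to0^{+}$, using continuity of translation in $L^{1}_{loc}$, gives $c(0^{+})=\mathrm{Id}$; combined with $c(a)=c(a+b_{0})c(b_{0})^{-1}$ for a small fixed $b_{0}$ (where $c(b_{0})$ is invertible, being close to $\mathrm{Id}$), this yields that $c$ is differentiable at $0^{+}$.

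I would then upgrade $\tilde N$ to a space of smooth functions stable under $d/dt$. Mollifying in the translation parameter, for $\rho\in C^{\infty}_{c}((0,+\infty))$ the function $v_{j}^{\rho}(t):=\int v_{j}(t+u)\rho(u)\,du$ is smooth in $t$, and by Fubini it equals $\sum_{k}\big(\int c_{kj}(u)\rho(u)\,du\big)v_{k}$, hence lies in $\tilde N$; a short duality argument (if a linear functional on $\tilde N$ kills every $v_{j}^{\rho}$, then letting $\rho$ concentrate near $0^{+}$ and using $c(0^{+})=\mathrm{Id}$ forces it to kill every $v_{j}$) shows that every element of $\tilde N$ has a smooth representative on $(0,+\infty)$. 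Once smoothness is known, the relation $v_{j}(t+a)=\sum_{k}c_{kj}(a)v_{k}(t)$ holds pointwise, and differentiating it in $a$ at $a=0$ (legitimate now that $c$ is differentiable at $0^{+}$) gives $v_{j}'=\sum_{k}c_{kj}'(0)v_{k}\in\tilde N$, so $\tilde N$ is invariant under the derivation $D=d/dt$.

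Finally I would invoke the elementary structure of a finite-dimensional space $W$ of smooth functions stable under $D$: decomposing $W$ into generalised eigenspaces $W_{\mu_{i}}=\ker(D-\mu_{i})^{\dim W}$, on each factor $D-\mu_{i}$ is nilpotent, so $f\in W_{\mu_{i}}$ is exactly $q(t)e^{\mu_{i}t}$ with $q$ a polynomial, and the map $qe^{\mu_{i}t}\mapsto q$ identifies $W_{\mu_{i}}$ with a finite-dimensional space of polynomials stable under ordinary differentiation. Such a polynomial space is necessarily $\Span(1,t,\dots,t^{p_{i}-1})$ with $p_{i}=\dim W_{\mu_{i}}$ (take a polynomial of maximal degree and differentiate repeatedly to produce one of each lower degree, then compare dimensions), whence $W_{\mu_{i}}=\Span(t^{j}e^{\mu_{i}t}:0\le j\le p_{i}-1)$ and $\tilde N=\bigoplus_{i}W_{\mu_{i}}$ has the claimed form; transporting back via $t=\log r$ gives $N=\Span(\log(r)^{j}r^{\alpha_{i}})$. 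The main obstacle is the regularity bootstrap of the two middle paragraphs: passing from an $L^{1}_{loc}$ space whose relations hold only almost everywhere, with exceptional sets depending on the translation parameter, to genuinely smooth functions obeying a clean constant-coefficient equation. The normalisation $c(0^{+})=\mathrm{Id}$ and its one-sided differentiability are the crucial technical inputs; everything after is classical linear algebra.
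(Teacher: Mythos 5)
Your proof is correct, and its skeleton is the same as the paper's: pass to the logarithmic variable $t=\log r$, recognise the dilations as a one-parameter semigroup of translations acting on a finite-dimensional space, and conclude by linear algebra. Where you genuinely diverge is in rigor and in the endgame. The paper's proof simply writes ``denote $A$ the infinitesimal generator of $\beta\mapsto S_{e^{\beta}}$'', puts $A$ in Jordan form, exponentiates a Jordan block explicitly, and then \emph{evaluates the basis functions at $s=0$} to read off $g_{j}(\lambda)$ as a combination of $\lambda^{j}e^{\lambda\alpha}$ — implicitly assuming both that the semigroup is differentiable in the parameter (so that a generator exists) and that elements of $N$, a priori only in $L^{1}_{loc}$, are continuous functions that can be evaluated pointwise. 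Your two middle paragraphs supply exactly these missing ingredients: the test-function argument with the invertible matrix $G$ gives smoothness of the structure matrices $c(a)$, the mollification-plus-duality step shows every element of $\tilde N$ has a smooth representative, and only then do you differentiate the translation relation to get stability under $d/dt$. Your conclusion via generalised eigenspaces of the derivation is equivalent to the paper's Jordan-form computation (the derivation restricted to $\tilde N$ \emph{is} the generator $A$), just packaged as an ODE-structure argument rather than an explicit matrix exponential. In short: same approach, but your version is a complete proof where the paper's is a sketch; the price is length, the gain is that the passage from a.e.-defined $L^{1}_{loc}$ relations to pointwise smooth identities — the real content of the lemma — is actually justified.
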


\begin{proof}
Notice that all the $(S_{\lambda})_{\lambda>1}$ are commuting applications: $S_{\lambda}S_{\beta}=S_{\lambda \beta}$. Also, in the logarithmic variable $s=\log(r)$, $s\ge 0$, $S_{\lambda}$ is the translation with generator the derivation. That is, if $w \in N$ and $v: s \mapsto w(e^{s})$, we have with this representation $S_{e^{\beta}} v(s)=1_{s\ge 0} v(s+\beta)$, and this defines a semigroup. Denote $A$ the infinitesimal generator of $\beta\mapsto S_{e^{\beta}}$. Choose a basis of $N$ so that $A$ has a Jordan form: it is block diagonal block and each diagonal block (of size say $p+1$) takes the form
\begin{align*} 
J=\begin{bmatrix}
\alpha & 1 &  &   &  & \\
    & \alpha & 1 &  &(0)  &  \\
		 &  &\alpha &1  &  & \\
		 & (0)   & & \ddots  &\ddots  &  \\
  & & & &\ddots  &1 \\
     & &  &   &  & \alpha
\end{bmatrix} 
\quad \text{so that} \quad
e^{\lambda J}=e^{\lambda \alpha}
\begin{bmatrix}
1& \lambda &\frac{\lambda^{2}}{2}  &\cdots   &  &\frac{\lambda^{p}}{p!} \\
    &1 & \lambda &  & &  \\
		 &  &1 &\lambda  &  & \\
		 & (0)   & & \ddots  &\ddots  &  \\
  & & & &\ddots  &\lambda \\
     & &  &   &  & 1
\end{bmatrix}.
\end{align*} 
In particular, in this base $(g_{0},\cdots,g_{p})$ we can write for any $s, \lambda \ge 0$
\begin{align*} 
g_{0}(s+\lambda)& = e^{\lambda \alpha}g_{0}(s), \\
g_{1}(s+\lambda)& = \lambda e^{\lambda \alpha}g_{0}(s)+ e^{\lambda \alpha}g_{1}(s), \\
  & \vdots \\
g_{p}(s+\lambda)& = \frac{\lambda^{p}}{p!} e^{\lambda \alpha}g_{0}(s)+ \cdots+  e^{\lambda \alpha}g_{p}(s).
\end{align*}
Taking $s=0$ in this equalities gives
\[ g_j(\lambda) = \frac{\lambda^{j}}{j!} e^{\lambda \alpha} g_0(0) + \cdots + \cdots +e^{\lambda} g_j(0), \]
that is denoting $f_j: s \mapsto s^j e^{\alpha s}$, there hold
\[ \begin{bmatrix}
g_0(0) & g_1(0) & g_2(0) &   &  & g_p(0) \\
    & g_0(0) & g_1(0) &  &  &  \\
		 &  &g_0(0) & g_1(0)  &  & \\
		 & (0)   & & \ddots  &\ddots  &  \\
  & & & &\ddots  &g_1(0) \\
     & &  &   &  & g_0(0)
\end{bmatrix}   \begin{bmatrix} 
f_0 \\ f_1 \\  f_2 \\ \vdots \\ f_{p-1} \\ f_p
\end{bmatrix} = \begin{bmatrix} 
g_0 \\ g_1 \\ g_2 \\ \vdots \\ g_{p-1} \\ g_p
\end{bmatrix}. \]
Observe that $g_0(0) \ne 0$ (otherwise $g_0 \equiv 0$ which would contradict $(g_j)_j$ being a base), so that the above matrix is invertible, and the $(f_j)_{0 \le j \le p}$ form a base of each block of the Jordan base of $A$ in $N$. We get the result getting back to the original variable $r=e^{s}$.
\end{proof}

Gathering together the above two results, we infer that $\q N_{1,\ell}^0$ admit a base made of functions $w \in L^2_{rad,l}$ such that for $r \ge 0$, 
\begin{align} \label{def:w_ja}
w(r) = \m 1_{r \ge 1} \ln(r)^p r^\alpha
\end{align}
for some $p \in \m N$ and $\alpha \in \m C$. Denote $\q B$ the set of couples $(p,\alpha) \in \m N \times \m C$ which appear in this base: due to Lemma \ref{lmfunctionel}, $\q B$ is a finite union of $\{ (0,\alpha_i),(1,\alpha_i), \dots, (p_i,\alpha_i) \}$. 

Finally, we state a second stability result, to be used in the following step.

\begin{lem} \label{N_stab_Delta}
Consider the operator $\check \Delta: f \mapsto \m 1_{|x| > 1} \Delta f$. 
Then $\check \Delta$ (is well defined and) maps $ \q N_{1}^0 \cap L^2_{\ell}$ to itself: for all $f = w \otimes Y_\ell \in \q N_{1}^0 \cap L^2_{\ell}$, we also have $\m 1_{|x| > 1} \check \Delta f \in \q N_{1}^0 \cap L^2_{\ell}$.
\end{lem}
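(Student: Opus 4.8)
The plan is to reduce everything to the radial profile and then to combine the intertwining identity \eqref{actionDeltaRadonDhs} with the locality property of Proposition~\ref{propdsTH1}. Fix $f = w \otimes Y_\ell \in \q N_{1}^0 \cap L^2_{\ell}$, so that $w \in L^2_{rad,l}$ vanishes on $\{|r| \le 1\}$ and, by the description of $\q N_1^0$ in Step~1, $\Td f = 0$ on $\{|s| > 1\}$. On the open set $\{|x| > 1\}$ the function $f$ is smooth, and the classical radial Laplacian gives $\Delta(w \otimes Y_\ell) = (L_\ell w) \otimes Y_\ell$ there, where $L_\ell w := w'' + \tfrac{d-1}{r} w' - \tfrac{l(l+d-2)}{r^2} w$. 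Hence $\check\Delta f = \m 1_{|x|>1}\Delta f = (\m 1_{r>1} L_\ell w)\otimes Y_\ell$, which is of the product form, so $\check \Delta f \in L^2_\ell$ once we check integrability and symmetry. For integrability I would invoke the explicit basis from Step~3: by \eqref{def:w_ja} it suffices to treat $w(r) = \m 1_{r\ge 1}\ln(r)^p r^\alpha$ with $\alpha < -d/2$, for which $L_\ell w$ is a finite combination of terms $\ln(r)^q r^{\alpha-2}$ ($q \le p$); since $\alpha - 2 < -d/2$ a fortiori, these lie in $L^2_{rad,l}$ (indeed with strictly better decay). The parity $w(-r) = (-1)^l w(r)$ is preserved by $L_\ell$ (each of the three terms keeps the sign $(-1)^l$), so $L_\ell w \in L^2_{rad,l}$ and $\check\Delta f \in L^2_\ell$; this settles well-definedness.

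It remains to verify the two conditions defining $\q N_1^0$. The first, that $\check\Delta f$ vanishes on $\{|x| \le 1\}$, is immediate from the cut-off $\m 1_{|x|>1}$. For the second, that $\Td(\check\Delta f) = 0$ on $\{|s| > 1\}$, the key observation is that $\check\Delta f$ and the full distribution $\Delta f$ coincide on the open set $\{|x| > 1\}$ in the distributional sense (the two differ only by a single-layer distribution carried by $\{|x|=1\}$, which is irrelevant for the restriction to $\{|x|>1\}$). Since $\check\Delta f \in L^2 \subset \bigcup_\rho H^\rho$ and $\Delta f \in H^{-2} \subset \bigcup_\rho H^\rho$, the locality property (item \ref{supporprop} of Proposition~\ref{propdsTH1}, with $R=1$) applies and yields $\Td(\check\Delta f) = \Td(\Delta f)$ on $\{|s|>1\}$. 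Now I apply \eqref{actionDeltaRadonDhs} to $u = f \in H^0$ to get $\Td(\Delta f) = \partial_s^2 \Td f$, and since $\Td f = 0$ on the open set $\{|s|>1\}$ its distributional derivatives vanish there as well, so $\partial_s^2 \Td f = 0$ on $\{|s|>1\}$. Chaining these equalities gives $\Td(\check\Delta f) = 0$ on $\{|s|>1\}$, whence $\check\Delta f \in \q N_1^0 \cap L^2_\ell$.

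The only genuinely delicate point is the distributional bookkeeping at the sphere $\{|x|=1\}$: because $f$ has a jump across $\{|x|=1\}$, the global distribution $\Delta f$ contains singular terms supported there, and one must be careful that $\check\Delta f$ — defined as $\m 1_{|x|>1}$ times the \emph{classical} Laplacian — is precisely the restriction of $\Delta f$ to $\{|x|>1\}$, so that Proposition~\ref{propdsTH1} may be invoked with $u = \check\Delta f$ and $v = \Delta f$. I expect this to be the main obstacle to state cleanly; once it is granted, the intertwining identity $\Td\Delta = \partial_s^2\Td$ together with locality makes the argument essentially formal, and it is exactly here that oddness of $d$ (through the locality of $\Td = c_0(-1)^{\frac{d-1}{2}}\partial_s^{\frac{d-1}{2}}\Ra$) is used.
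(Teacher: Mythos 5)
Your proof is correct and follows essentially the same route as the paper's: reduce to the explicit basis functions \eqref{def:w_ja} for the $L^2_\ell$ membership, then combine the intertwining identity \eqref{actionDeltaRadonDhs} with the locality property (item \eqref{supporprop} of Proposition~\ref{propdsTH1}) to conclude that $\Td(\check\Delta f)$ vanishes on $\{|s|>1\}$. The ``delicate point'' you flag about the layer distribution on $\{|x|=1\}$ is precisely what the paper's appeal to Proposition~\ref{propdsTH1} handles, so your extra bookkeeping is a faithful unpacking of the same argument rather than a gap or a departure.
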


This is of course strongly connected to the explicit special form of $w \in \q N_{1,\ell}^0$.

\begin{proof}
For $w$ as in \eqref{def:w_ja}, a direct computation yields that $\m 1_{|x| >1} (\Delta (w \otimes Y_\ell)) \in L^2$ (it is a smooth function on $\{ |x| >1 \}$) , and recalling the form of the Laplacian in spherical coordinates, this function belongs to $L^2_\ell$. Now, from \eqref{actionDeltaRadonDhs}, we also have that
\[ \Td \Delta(w \otimes Y_\ell) = \partial_s^2 \Td(w \otimes Y_\ell), \]
so that $\m 1_{|s| >1} \Td \Delta(w \otimes Y_\ell)  =0$. By Proposition \ref{propdsTH1}, $\Td (\m 1_{|x| >1} \Delta(w \otimes Y_\ell)$ and $\Td \Delta(w \otimes Y_\ell)$ coincide on $\{ |s| >1 \}$, and are both $0$ there. Hence $\m 1_{|x| > 1}  \Delta(w \otimes Y_\ell) \in \q N_1^0$.
\end{proof}

\bigskip

\emph{Step 5: $\q N_1^0 \cap L^2_\ell$ is spanned by the $g_k$.}

We recall the following formula,  valid for $p \in \m N$ and $\alpha \in \m C$: for $x \ne 0$,
 \begin{multline}  
\label{formuLaplpowerlog}
\Delta \left[\log(|x|)^{p}|x|^{\alpha}Y_{\ell} \left( \frac{x}{|x|} \right) \right]  = \left[\left[\alpha (\alpha+d-2)-l (l+d-2)\right]\log(|x|)^{p} \right. \\
 \left. +p(2\alpha+d-2) \log(|x|)^{p-1}+p(p-1)\log(|x|)^{p-2} \right]  |x|^{\alpha-2}Y_{\ell} \left( \frac{x}{|x|} \right) 
\end{multline} 
(with the convention that $|x|^0=1$; for the convenience of the reader, a derivation of this formula is presented in Appendix \ref{app:a}). We now claim:

\begin{lem} \label{lem:B_0}
Let $(p,\alpha) \in \q B$. Then $p=0$, $\alpha < -d/2$ and $\alpha = -l - d + 2(k+1)$ for some $k \in \m N$.
\end{lem}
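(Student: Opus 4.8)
The plan is to play the two structural facts just established against one another: $\q N_{1,\ell}^0$ is finite dimensional with a basis of monomials $\m 1_{r\ge 1}\log(r)^j r^{\beta}$ indexed by $\q B$ (this is Step 3, via Lemma \ref{lmfunctionel} and \eqref{def:w_ja}), and it is stable under $\check\Delta$ (Lemma \ref{N_stab_Delta}, in its radial form), whose action on such a monomial is given explicitly by \eqref{formuLaplpowerlog}. Writing $c(\alpha):=\alpha(\alpha+d-2)-l(l+d-2)=(\alpha-l)(\alpha+l+d-2)$ for the coefficient of the top logarithmic power in \eqref{formuLaplpowerlog}, the action of $\check\Delta$ on $\m 1_{r\ge 1}\log(r)^{p}r^{\alpha}Y_\ell$ equals $c(\alpha)\,\m 1_{r\ge 1}\log(r)^{p}r^{\alpha-2}Y_\ell$ plus terms carrying a strictly lower logarithmic power but the same exponent $\alpha-2$. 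Since the monomials $\m 1_{r\ge 1}\log(r)^{j}r^{\beta}Y_\ell$ are linearly independent and those indexed by $\q B$ span $\q N_{1,\ell}^0$, and since $\check\Delta$ preserves $\q N_{1,\ell}^0$, I get the key dichotomy: for every $(p,\alpha)\in\q B$, either $c(\alpha)=0$ or else $(p,\alpha-2)\in\q B$.

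First, membership in $\q B$ forces each monomial to lie in $L^{2}(\m R_+,r^{d-1}\,dr)$, so that $\int_1^\infty \log(r)^{2p}\,r^{2\Re\alpha+d-1}\,dr<\infty$, whence $\Re\alpha<-d/2$; in particular every exponent appearing in $\q B$ has real part $<-d/2<0\le l$. To pin down $\alpha$, I iterate the dichotomy starting from $(p,\alpha)$: as long as $c$ does not vanish it produces $(p,\alpha-2),(p,\alpha-4),\dots$ inside the \emph{finite} set $\q B$, so the chain must stop, i.e.\ there is a smallest $m\ge 0$ with $c(\alpha-2m)=0$. The roots of $c$ are $l$ and $-l-d+2=\alpha_0$; since all exponents along the chain are $<l$, the value $l$ cannot be reached, so $\alpha-2m=\alpha_0$, giving $\alpha=\alpha_0+2m=-l-d+2(m+1)=\alpha_m$. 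Thus $\alpha$ is real of the asserted form, and the bound $\alpha<-d/2$ follows from the integrability above; moreover the chain descends exactly to the bottom element $(p,\alpha_0)\in\q B$.

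It remains to show $p=0$, which is where I expect the only real subtlety: $\check\Delta$ is not diagonal on the monomials, so I must exploit the log-lowering subleading term of \eqref{formuLaplpowerlog} at the bottom of the chain. Suppose $p\ge 1$ for some $(p,\alpha)\in\q B$; by the previous paragraph its chain reaches $(p,\alpha_0)\in\q B$ with $\alpha_0=-l-d+2$. Applying $\check\Delta$ to $\m 1_{r\ge 1}\log(r)^{p}r^{\alpha_0}Y_\ell$, the top term vanishes because $c(\alpha_0)=0$, but the next coefficient $p(2\alpha_0+d-2)=p(-2l-d+2)$ is nonzero since $-2l-d+2\le -1$. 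Hence the image has a nonzero $\m 1_{r\ge 1}\log(r)^{p-1}r^{\alpha_0-2}Y_\ell$ component, forcing $(p-1,\alpha_0-2)\in\q B$. But $\alpha_0-2=-l-d$ is not of the form $\alpha_k=-l-d+2(k+1)$ for any $k\in\m N$, contradicting the conclusion of the second paragraph applied to the element $(p-1,\alpha_0-2)$. Therefore no element of $\q B$ has $p\ge 1$, i.e.\ $p=0$, which completes the proof.
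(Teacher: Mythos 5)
Your proof is correct and takes essentially the same approach as the paper: both rest on the stability of $\q N_{1,\ell}^0$ under $\check\Delta$ (Lemma \ref{N_stab_Delta}) combined with formula \eqref{formuLaplpowerlog}, the finiteness of $\q B$, and the roots $l$ and $-l-d+2$ of the indicial polynomial $\alpha(\alpha+d-2)-l(l+d-2)$ to pin down $\alpha$, then use the nonvanishing subleading $\log$-coefficient $p(2\alpha_0+d-2)$ at the bottom exponent to rule out $p\ge 1$. The only differences are organizational: you phrase the iteration as a membership dichotomy in $\q B$ rather than via linear independence of the iterates $\check\Delta^k(w\otimes Y_\ell)$, and you conclude $p=0$ by observing that $-l-d$ is not an admissible exponent, where the paper instead uses minimality of $\alpha$ and generates the infinite family $(0,-l-d-2k)$ contradicting finiteness.
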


\begin{proof}
Denote $w \in \q N_{1,\ell}^0$ such that
\begin{align*} 
\forall r \ge 1, \quad w(r) = \ln(r)^p r^\alpha.
\end{align*}

1) As $w \in L^2_{rad,l}$,  $\alpha < -d/2$. 

\bigskip

2) We next prove that $\alpha = -l - d + 2(k+1)$ for some $k \in \m N$.

As $\q N_{1}^0 \cap L^2_{\ell}$ is isometric to $\q N_{1,\ell}^0$, it is finite dimensional. On the other hand, it is stable by $\check \Delta$, due to Lemma \ref{N_stab_Delta}. Therefore, for all $k \in \m N$, $\check \Delta^k (w \otimes Y_\ell) \in \q N_{1}^0 \cap L^2_{\ell}$. Now, as $\check \Delta$ has the same action as $\Delta$ for $\{ |x| >1 \}$, direct computations which follow from \eqref{formuLaplpowerlog} give that for $|x| > 1$,
\[ \tilde \Delta^k (w \otimes Y_\ell) (x) = C_{k}\log(|x|)^{p}|x|^{\alpha-2k}Y_{\ell}(\frac{x}{|x|}) + \sum_{j=0}^{p-1} c_{j,k}\log(|x|)^{j}|x|^{\alpha-2k}Y_{\ell}(\frac{x}{|x|}), \]
for some coefficients $c_{j,k} \in \m C$ and where $C_0=1$ and by induction, for $k \in \m N$,
\[ C_{k+1} = ((\alpha-2k)(\alpha-2k + d-2) - l (l+d-2)) C_k. \]
However the functions $(r \mapsto \log(r)^{j}|x|^{\alpha-2k})_{k \in \m N}$ are linearly independent (recall $\alpha < -d/2$), so that the $(\tilde \Delta^k (w \otimes Y_\ell))_k$ too: as $\q N_{1}^0 \cap L^2_{\ell}$ is finite dimensional, it implies that for some $k \in \m N$, $C_{k+1}=0$, in other words,
\[ (\alpha-2k)(\alpha-2k + d-2) - l (l+d-2) =0. \]
As $\alpha -2k <0$, we infer that $\alpha-2k = -l -d+2$.

\bigskip

3) Let us finally prove that $p=0$. We argue by contradiction and assume that $p \in \m N^*$. Then, thanks to the structure of $\q B$ already precised, we have $(1,\alpha) \in \q B$. Without loss of generality we can furthermore assume that $\alpha$ is minimal for this property. Now, we compute that for $|x| > 1$
\begin{multline*} 
\Delta \left[ \log(|x|)|x|^{\alpha}Y_{\ell} \left( \frac{x}{|x|} \right) \right] \\
=\left[\left[\alpha (\alpha+d-2)-l (l+d-2)\right]\log(|x|)+(2\alpha+d-2) \right] |x|^{\alpha-2}Y_{\ell} \left( \frac{x}{|x|} \right).
\end{multline*} 
 If $\alpha \ne -l -d+2$, by linear independence, we would have $(1,\alpha-2) \in \q B$, which would contradict the minimality of $\alpha$. Hence $\alpha = - l - d+2$ and as $\alpha <-d/2$, $2\alpha +d-2 <0$ is not null, so that $(0,\alpha-2) = (0,-l -d) \in \q B$. Applying $\check \Delta$ et using \eqref{formuLaplpowerlog} repetitively, we would get $(0, -l -d-2k) \in \q B$ for all $k \in \m N$, which contradicts that $\q B$ is finite. Therefore $p=0$.
\end{proof}

Let $N_\ell \in \m N$ be the maximum of the $k$ such that $(0,-l-d+2k) \in \q B$. Then by applying repetitively $\check \Delta$ (and using \eqref{formuLaplpowerlog}) to $w \otimes Y_\ell$ where $w \in L^2_{rad,l}$ and $w(r) = r^{N_\ell}$ for $r \ge 1$, we get that for all $k \in \llbracket 1, N_\ell \rrbracket$, $(0,-l-d+2k) \in \q B$.
Recalling the definition of the $g_k$ \eqref{def:gk}, we can reformulate this by saying that
\begin{align} \label{N10_span_1}
\q N_{1}^0 \cap L^2_\ell = \Span (g_k; k \in \llbracket 1, N_\ell \rrbracket).
\end{align}

\bigskip

\emph{Step 6: Conclusion.}

We now complete the description of $\q N_{1}^0 \cap L^2_\ell$, that is we prove that
\[ \q N_{1}^0 \cap L^2_\ell = \Span (g_k; k \in \m N, \alpha_k < -d/2). \]
For this, it suffices to prove that if $\alpha_k < -d/2$, then $g_k \in \q N_{1}^0 \cap L^2_\ell$.

It is certainly possible to prove it by direct computation using the formulae of Lemma \ref{lmRadldual} and \ref{formuleGegendual}. Similar computations are made for instance in Quinto \cite[Formula (3,14)]{Quinto:83} (see also \cite[Section 7.3-7.4 p795]{GR:2007} for related computations). Yet, it is not easy (but certainly doable) to justify the computations when the functions do not have enough decay. 

Instead, we will use some ideas related to Lemma \ref{N_stab_Delta}.

\begin{lem}
Let $k \in \m N$ such that $\alpha_k < -d/2$. Then $g_k \in \q N_{1}^0 \cap L^2_\ell$.
\end{lem}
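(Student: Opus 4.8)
The plan is to prove this by induction on $k$, moving \emph{upward} in $k$, using the identity $\Td(\Delta u)=\partial_s^2\Td u$ together with the locality property of $\Td$, rather than the direct Gegenbauer computation (which is delicate because of the slow decay). First I would record that the admissible indices form an initial segment: since $\alpha_k=-l-d+2k$ is increasing in $k$, the set $\{k\ge 1:\alpha_k<-d/2\}$ is either empty (nothing to prove) or equals $\{1,\dots,k_{\max}\}$ for some $k_{\max}\ge1$, and for each such $k$ one has $g_k\in L^2(\m R^d)$ (equivalently $\int_1^\infty r^{2\alpha_k}r^{d-1}\,dr<\infty$), $g_k\in L^2_\ell$, and $g_k=0$ on $\{|x|\le1\}$. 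So it only remains to show $\m1_{|s|>1}\Td g_k=0$.

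The key computation is that, by \eqref{formuLaplpowerlog} with $p=0$, on the open set $\{|x|>1\}$ one has $\Delta g_k = c_k\, g_{k-1}$, where $c_k=\alpha_k(\alpha_k+d-2)-l(l+d-2)$ and $g_{k-1}=\m1_{|x|>1}|x|^{\alpha_k-2}Y_\ell$ (note $\alpha_k-2=\alpha_{k-1}$); moreover $c_1=0$, since $\alpha_1=-l-d+2$ makes $|x|^{\alpha_1}Y_\ell$ harmonic away from the origin. Thus $\Delta g_k$ and $c_k g_{k-1}$ agree, as distributions, on $\{|x|>1\}$, and both belong to $\bigcup_\rho H^\rho(\m R^d)$. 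The locality property \eqref{supporprop} of Proposition \ref{propdsTH1} then gives $\Td(\Delta g_k)=c_k\,\Td g_{k-1}$ on $\{|s|>1\}$, while \eqref{actionDeltaRadonDhs} (with $\rho=0$, as $g_k\in L^2=H^0$) gives $\Td(\Delta g_k)=\partial_s^2\Td g_k$ in $\mathcal{S}'(\m R\times\m S^{d-1})$. Combining, on $\{|s|>1\}$ we obtain $\partial_s^2 \Td g_k = c_k\, \Td g_{k-1}$.

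Now I run the induction. When $k=1$ the right-hand side vanishes because $c_1=0$. When $k\ge2$ the index $k-1$ is again admissible ($\alpha_{k-1}<\alpha_k<-d/2$), so by the induction hypothesis $g_{k-1}\in\q N_{1,\ell}^0$, that is $\Td g_{k-1}=0$ on $\{|s|>1\}$, and the right-hand side again vanishes. Hence $\partial_s^2\Td g_k=0$ on $\{|s|>1\}$, so for a.e.\ $\omega$ the function $s\mapsto \Td g_k(s,\omega)$ is affine on each of $(1,+\infty)$ and $(-\infty,-1)$. Since $\Td g_k\in L^2(\m R\times\m S^{d-1})$ (because $\Td$ is an $L^2$-isometry, Lemma \ref{lem:T_iso}), an affine function that is square integrable on an unbounded interval must be identically zero; therefore $\Td g_k=0$ on $\{|s|>1\}$. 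Combined with $g_k=0$ on $\{|x|\le1\}$, this yields $g_k\in\q N_{1,\ell}^0\subset\q N_1^0\cap L^2_\ell$, closing the induction.

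The main obstacle is the passage from "$\Delta g_k = c_k g_{k-1}$ on $\{|x|>1\}$" to "$\Td(\Delta g_k)=c_k\Td g_{k-1}$ on $\{|s|>1\}$" for these non-smooth, slowly decaying data: this is precisely the point where one cannot manipulate the divergent Gegenbauer integral of \eqref{formuleGegen} directly, and must instead rely on the distributional extension of $\Td$ and its locality, as established in Proposition \ref{propdsTH1} (the same mechanism already used in Lemma \ref{N_stab_Delta}, but here exploited in the increasing direction rather than the decreasing one). The remaining ingredients — the pointwise Laplacian identity, the vanishing of $c_1$, and the "affine $+$ $L^2$ $\Rightarrow 0$" step — are routine.
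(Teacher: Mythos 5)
Your proof is correct, and its engine is the same as the paper's: the exterior identity $\Delta g_k = c_k g_{k-1}$ on $\{|x|>1\}$ from \eqref{formuLaplpowerlog}, the locality property (part \eqref{supporprop}) and the intertwining relation (part \eqref{itemactionDeltaRadonDhs}) of Proposition \ref{propdsTH1}, the resulting equation $\partial_s^2 \Td g_k = c_k \Td g_{k-1}$ on $\{|s|>1\}$, and an upward induction in $k$ closed by the observation that an affine function lying in $L^2$ of an unbounded interval must vanish. The genuine difference is the base case. The paper starts at the harmonic exponent $2-l-d$ and proves that $\m 1_{|s|\ge 1}\Td g$ vanishes there by a wave-equation argument: since $g$ is harmonic on $\{|x|>1\}$, the static function $u(t,x)=g(x)$ solves $\Box u=0$ outside the cone, finite speed of propagation identifies it there with the evolution of the data $(g,0)$, and the radiation formula \eqref{equivfinallemmaT} then forces $\|\Td g\|_{L^2(\{s\ge 1\}\times \m S^{d-1})}=0$. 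You instead note that harmonicity means precisely that the constant vanishes at this exponent ($c=0$), so your base case is just another instance of the inductive step: $\partial_s^2 \Td g = \Td(\Delta g) = 0$ on $\{|s|>1\}$, hence $\Td g$ is affine there, hence zero. This is a tidy simplification: it keeps the whole lemma inside the Radon-transform framework of this section and dispenses with the appeal to Theorem \ref{th1} and to finite speed of propagation. One last remark on indexing: your restriction to $k\ge 1$ (with the Section 5 convention $\alpha_k=-l-d+2k$) is the right reading of the statement, since the exponent $\alpha_0=-l-d$ genuinely does not lie in the kernel; the paper's proof, which labels the harmonic exponent as $k=0$, is implicitly using the introduction's shifted convention $\alpha_k=-l-d+2k+2$, and the two parametrizations describe exactly the same family of functions.
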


\begin{proof}
Observe that, as $\alpha_k < -d/2$, $g_k \in L^2_\ell$.

 Since $\Delta$ is a differential operator, we have $(\Delta g_{k})_{\{ |x| > 1 \}}=\Delta (g_{k})_{\{ |x| > 1 \}}$ in the sense of distribution (the $\Delta$ being an operator on distributions either in $\R^{d}$ or in $  \{ |x| > 1 \}$).
$(g_{k})_{\{ |x| > 1 \}}$ is a smooth function, so formula \eqref{formuLaplpowerlog} (with $p=0$), applies for $k \ge 1$, in the classical sense, to give\[\Delta (g_{k})_{\{ |x| > 1 \}}= c_k (g_{k-1})_{\{ |x| > 1 \}},   \]
for some $c_k \in \m R$, either in the classical sense or in the distributional sense in $\mathcal{D}'(|x| > 1)$. Thanks to the previous remark, we obtain
\[(\Delta g_{k})_{\{ |x| > 1 \}}= c_k (g_{k-1})_{\{ |x| > 1 \}}  \]
Using now part \eqref{supporprop} of Proposition \ref{propdsTH1}, it gives
\[ (\Td ( \Delta g_k))|_{\{ s > 1 \}} =  c_{k }(\Td g_{k-1})_{\{ s > 1 \}}. \]
Using this time part \eqref{itemactionDeltaRadonDhs} of Proposition \ref{propdsTH1} and after restricting to $\{ s > 1 \}$, there holds
\[ (\Td ( \Delta g_k))|_{\{ s > 1 \}} = (\partial_s^2 \Td g_k)_{\{ s > 1 \}}. \]
As $g_k \in L^2_\ell$, denote $h_k \in L^2(\{ |s| > 1 \})$ such that $(\Td g_k)|_{\{|s| > 1\}} = h_k \otimes Y_\ell$: we obtained for $k \ge 1$
\begin{align} \label{diff:hk}
\partial_s^2 h_k = c_k h_{k-1}.
\end{align}
Similarly, and by definition of $\alpha_0$ and spherical harmonic, we get
\[\Delta (g_{0})_{\{ |x| > 1 \}}= 0.  \]
%

Let us prove by induction on $k$ (such that $\alpha_k <-d/2$) that $h_k=0$.

For $k=0$, the function $u(t,x)= g_0(x)$ is solution of $\Box u=0$ on  $\{ |x| > |t|+1 \}$ (outside of the light cone). In particular, by finite speed of propagation, the solution $w$ to the wave equation with initial data $(g_0,0)$, satisfies $w(t,x)=g_{0}(x)$ on $\{ |x| > |t|+1 \}$. In particular, \eqref{equivfinallemmaT} gives 
\begin{align*}
0=2 \lim_{t \to +\infty} \nor{w}{L^2(|x| \ge t+1)}^2  & =  \nor{\Td g_0}{L^2(\{s \ge 1\}\times \m S^{d-1})}^2,
\end{align*}
This proves that $h_0=0$.

Let $k \ge 1$ and assume that $h_{k-1} =0$. In view of \eqref{diff:hk}, we infer that there exists $\alpha, \beta \in \m R$ such that $h_k(s) = \alpha s+ \beta$ for $s >1$. As $h_k \in L^2(\{ |s| > 1 \},ds)$, $\alpha=\beta =0$ and $h_k =0$. This completes the induction.

Now, $h_k=0$ precisely means that $\m 1_{|s| \ge 1} \Td g_k =0$, and so $g_k \in \q N_1^0$.
\end{proof}

This completes \eqref{decomp_ker_L2}, that is the proof of Theorem \ref{thmKer} in the $L^2$ case.
\end{proof}

\begin{proof}[Proof of \eqref{decomp_ker_H1}]

 As $g \mapsto \frac{1}{R^{d/2-1}} g(\cdot/R)$ is an isometry on $\dot H^1(\m R^d))$, we can assume as before that $R=1$. In this proof, all orthogonalities are meant with respect to the $\dot H^1$ scalar product.

\bigskip

\emph{Step 1: Reduction to spherical harmonics}

We now define $\q N_1^1$ to be the $\dot H^1$-orthogonal complement of $\dot H^1(|x| \le 1)$ in $K_1^1$:
\[ K_1^1 = \dot H^1(|x| \le 1) \stackrel{\perp}{\oplus} \q N_1^1. \]
The explicit description of $ \q N_1^1$ is now different from the $L^2$, as it involves the harmonic extension of in $B(0,1)$: 
\[ \q N_1^1 = \left\{ f \in \dot H^1(\m R^d); \Delta f =0 \text{ on } \{ |x| < 1 \} \text{ and } \partial_s \Td f =0 \text{ on } \{ |s| > 1 \} \right\}. \]

Analoguously to the $L^2$ case, we define
\[ \dot H^1_{rad,l} = \left\{ w \in \dot H^1(\m R, |r|^{d-1} dr); \forall r \in \m R \text{ a.e.}, \ w(-r) = (-1)^{l+1} w(r) \right\}. \]
This time, we equip $\dot H^1_{rad,l}$ with a family of norm which are all equivalent, but adapted to the $Y_\ell$: 
\[ |\m S^{d-1}|^{-1} \| w \|_{\dot H^1_{rad,l}}^2 :=  \| \partial_r w \|_{L^2([0,+\infty), |r|^{d-1} dr)}^2 + l(l+d-2) \left\| \frac{w}{r} \right\|_{L^2([0,+\infty), |r|^{d-1} dr)}^2. \]
Then, we can define
\[ \dot H^1_\ell := \{ f \in \dot H^1 (\m R^d); \exists w \in \dot H^1_{rad,l},   f = w \otimes Y_\ell \}, \]
so that
 \[ \dot H^1(\m R^d) = \bigoplus_{\ell \in \m M}^\perp H^1_\ell, \]
 and the map $\dot H^1_{rad,l} \to \dot H^1 _\ell$, $w \mapsto  w \otimes Y_\ell$ is a bijective isometry up to a constant, for the right norm:
\[ \|  w \otimes Y_\ell \|_{\dot H^1} =\| w \|_{\dot H^1_{rad,l}}. \]
Again, $\partial_s \Td$ preserve this structure: $\partial_s \Td = c_0 (-1)^{\frac{d-1}{2}}\partial_s^{\frac{d+1}{2}} \q R_l$ can be extended to a (bijective) isometry from $\dot H^1_{rad,l}$ to $\dot H^1(\m R)$
\begin{align} \label{dTl_iso}
\forall w \in \dot H^1_{rad,l}, \quad \| \partial_s \Td_l w \|_{\dot H^1(\m R)} = \| w \|_{\dot H^1_{rad,l}},
\end{align}
and the commutative diagram still holds
\[ \forall w \in \dot H^1_{rad,l}, \quad \partial_s \Td (w \otimes Y_\ell) =  (\partial_s \Td_l w) \otimes Y_\ell. \]

As mentioned above, we will use the harmonic extension on $B(0,1)$, that is the operator $\q P$ such that, for $f \in \dot H^1$, $\q Pf$ satisfies $\q Pf(x)= f(x)$ for $|x| \ge 1$ and $\Delta \q P f =0$ on $B(0,1)$, so that
\[ \q N_1^1 = \left\{ f \in \Im(\q P); \partial_s \Td f =0 \text{ on } \{ |s| > 1 \} \right\}. \]
$\q P$ is actually the $\dot H^1$-orthogonal projector on $\dot H^1(|x| \le 1)^\perp$. Observe that the action of $\q P$ on $\dot H^1_\ell$ is simple: for $w \in \dot H^1_{rad,l}$,
\[ \q P(w \otimes Y_\ell)(x) = \begin{cases}
(w \otimes Y_\ell)(x)  = w(|r|) Y_\ell(x/|x|) & \quad \text{if } |x| \ge 1, \\
w(1) |x|^l Y_\ell(x/|x|)  &\quad \text{if } |x| < 1.
\end{cases} \]
In other words, we can define an operator $\q P_l:  \dot H^1_{rad,l} \to  \dot H^1_{rad,l}$, 
\[ w \mapsto \m 1_{|x| < 1} \sgn(x)^{l+1} |x|^\ell + \m 1_{|x| \ge 1} w, \]
so that for all $\ell \in M$ and $w \in \dot H^1_{rad,l}$, $\q P(w \otimes Y_\ell) = (\q P_l w) \otimes Y_\ell$.

We will now fix $\ell \in \m M$ and study the kernel
\begin{align*}
\Nd_{1,\ell}^1 & := \ker (\m 1_{|s| \le 1} \Td_l) = \{ w \in \dot H^1_{rad,l} ; w \otimes Y_\ell \in  \Nd_{1}^{1} \} \\
& = \left\{ w \in \dot H^1_{rad,l} ; w(r) = cr^l \text{ for } 0 \le r <1 \text{ and } \partial_s \Td_l w =0 \text{ on } \{ |s| \ge 1 \} \right\},
\end{align*}
and there hold
\[ \Nd_1^1= \bigoplus_{\ell \in \m M}^\perp ( \Nd_{1,\ell}^1 \otimes \{ Y_\ell \}). \]

\bigskip

\emph{Step 2: $\Nd_{1,\ell}^1$ is finite dimensional.}

\begin{lem}
\label{lmimaTpolyH1} Let $w\in \Nd_{1,\ell}^0$. Then there exist a polynomial $P$ such that $\ds \deg P \le l+\frac{d-3}{2}$ (with the convention that $\deg 0 = -\infty$), and
\[ \forall s \in \m R, \quad (\Td_{l}w)(s) = \m 1_{|s|\le 1}P(s). \] 
Also $P$ has the parity of $\ds l+\frac{d+1}{2}$.
\end{lem}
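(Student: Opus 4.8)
The plan is to run the argument of Lemma~\ref{lmimaTpoly} essentially verbatim, the one substantive change being that the bookkeeping must be carried out for the genuine $L^2$ function $q:=\partial_s\Td_l w$, and not for $\Td_l w$ itself (which for $w\in\dot H^1_{rad,l}$ lies only in $\dot H^1(\m R)$ and is merely constant, not necessarily zero, on each of the half-lines $\{s>1\}$ and $\{s<-1\}$; this is also why the parity announced in the statement is that of $\partial_s\Td_l w$). Indeed, $w\in\Nd_{1,\ell}^1$ says exactly that $q\in L^2(\m R)$ — by the isometry \eqref{dTl_iso} — and that $q$ vanishes on $\{|s|\ge 1\}$; so it suffices to show that $q$ agrees on $]-1,1[$ with a polynomial $P$ of degree $\le l+\frac{d-3}{2}$, after which $q=\m 1_{|s|\le1}P$ is immediate.

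To establish the polynomial character I would prove that $q^{(k)}=0$ on $]-1,1[$ in the sense of distributions for every $k\ge l+\frac{d-1}{2}$. As in the $L^2$ case this cannot be read off the Gegenbauer formula \eqref{formuleGegen} directly, since elements of $\dot H^1_{rad,l}$ decay too slowly; the remedy is again density. Given $\e>0$, choose $\chi\in\q D(\{|r|>1\})\cap\dot H^1_{rad,l}$ with $\|w-\chi\|_{\dot H^1_{rad,l}}\le\e$, so that $\|q-\partial_s\Td_l\chi\|_{L^2}\le\e$ by \eqref{dTl_iso}. Being smooth and supported away from the origin, $\chi$ is of the form $r^l v(r^2)$, hence Proposition~\ref{lmRadldual} applies; writing $\partial_s\Td_l=c_0(-1)^{\frac{d-1}{2}}\partial_s^{\frac{d+1}{2}}\q R_l$ and using \eqref{formuleGegen}, one sees that on $]-1,1[$ the function $\q R_l\chi$ is a polynomial in $s$ of degree $\le l+(d-3)$ — here it is crucial that $d$ is odd, so that $(1-s^2/r^2)^{\frac{d-3}{2}}$ is an actual polynomial. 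Therefore $(\partial_s\Td_l\chi)^{(k)}=c_0(-1)^{\frac{d-1}{2}}\partial_s^{\,k+\frac{d+1}{2}}\q R_l\chi$ vanishes on $]-1,1[$ as soon as $k+\frac{d+1}{2}>l+d-3$, which holds for all $k\ge l+\frac{d-1}{2}$. Pairing with $\varphi\in\q D(]-1,1[)$ and letting $\e\to0$ (the error being bounded by $\|\varphi^{(k)}\|_{L^2}\e$, exactly as in Lemma~\ref{lmimaTpoly}) yields $\int\varphi^{(k)}q\,ds=0$, so $q$ is a polynomial of degree $\le l+\frac{d-3}{2}$ on $]-1,1[$.

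For the parity I would invoke the symmetry \eqref{eq:sym_T}: applied to $v=w\otimes Y_\ell$ it gives $(\Td_l w)(-s)=(-1)^{l+\frac{d-1}{2}}(\Td_l w)(s)$, and one differentiation yields $(\partial_s\Td_l w)(-s)=(-1)^{l+\frac{d+1}{2}}(\partial_s\Td_l w)(s)$; hence $P$ has the parity of $l+\frac{d+1}{2}$.

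The only real obstacle is the same low-decay issue already met in Lemma~\ref{lmimaTpoly}: the Gegenbauer representation \eqref{formuleGegen} is licit only for fast-decaying profiles, so the polynomial conclusion has to be transported from the dense class of compactly supported $\chi$ to the given $w$ through the $\dot H^1$-isometry \eqref{dTl_iso} together with the duality pairing, rather than obtained by a direct computation on $w$. Every other ingredient — the reduction to a single spherical harmonic carried out in Step~1, the bijective isometry property of $\partial_s\Td_l$, and the final passage from ``$L^2$, polynomial on $]-1,1[$, and null on $\{|s|\ge1\}$'' to $\m 1_{|s|\le1}P$ — is identical to the $L^2$ case.
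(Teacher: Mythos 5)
Your reading of the statement is the right one (the object must be $q=\partial_s\Td_l w$, which is what lies in $L^2$, and your parity argument is fine), but the density step on which your whole proof rests is false, and it is exactly the point where the $\dot H^1$ case differs from the $L^2$ case. An element $w\in\Nd_{1,\ell}^1$ does \emph{not} vanish on $[0,1)$: by definition of $\q N_1^1$ (the $\dot H^1$-orthogonal complement of $\dot H^1(|x|\le 1)$ inside $K_1^1$), $w$ is the harmonic extension inside the ball, $w(r)=w(1)\,r^l$ for $0\le r<1$, and $w(1)$ is in general nonzero --- the generators $f_k$ all have $w(r)=r^l$ there. Consequently there is no $\chi\in\q D(\{|r|>1\})$ with $\|w-\chi\|_{\dot H^1_{rad,l}}\le\e$: for $l\ge1$ the term $l(l+d-2)\|w/r\|_{L^2(r^{d-1}dr)}^2$ in the $\dot H^1_{rad,l}$-norm forces any limit of such $\chi$'s to vanish a.e.\ on $[0,1]$, and for $l=0$ the same follows from Hardy's inequality in $\m R^d$, $d\ge3$. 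So the approximating sequence you invoke does not exist whenever $w(1)\ne0$.

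This is not a repairable technicality: if your density claim were true, your own computation would give $q^{(k)}=0$ on $]-1,1[$ for every $k$ with $k+\frac{d+1}{2}>l+d-3$, i.e.\ for all $k\ge l+\frac{d-5}{2}$ (you only quoted $k\ge l+\frac{d-1}{2}$, but the inequality you derived gives more), hence $\deg P\le l+\frac{d-7}{2}$, which is strictly stronger than the lemma and is false. Concretely, take $d=3$, $l=0$, $w=f_0$, i.e.\ $w(r)=1$ for $r\le1$ and $w(r)=1/r$ for $r\ge1$: then $w\in\Nd_{1,\ell}^1$ by Theorem \ref{thmKer}, and since $\partial_s\Td$ is an isometry on $\dot H^1$, $q$ is a \emph{nonzero} element of $L^2$ supported in $[-1,1]$ (by the lemma, a nonzero constant there); your argument, applied with $k=0$ (allowed, since $0+\frac{d+1}{2}=2>0=l+d-3$), would give $q\equiv0$, i.e.\ $w=0$. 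The correct adaptation must keep the inner harmonic part in the approximants: take smooth $\chi$ of the form $r^l v(r^2)$ with $\chi(r)=w(1)r^l$ on $[0,1]$ and $\chi\to w$ in $\dot H^1_{rad,l}$ (approximate $w-w(1)r^l\psi(r^2)$, which does vanish on the unit ball, by elements of $\q D(\{|r|>1\})$). Then \eqref{formuleGegen} acquires, for $|s|<1$, the additional term $\int_{|s|}^1 C_l^{d/2-1}(s/r)\,w(1)\,r^{l+d-2}\bigl(1-\tfrac{s^2}{r^2}\bigr)^{\frac{d-3}{2}}dr$, which (using that $d$ is odd) is a polynomial in $s$ of degree $\le l+d-1$ --- two more than the outer contribution --- so that $(\partial_s\Td_l\chi)^{(k)}$ vanishes on $]-1,1[$ only for $k\ge l+\frac{d-1}{2}$; running your duality argument with this threshold gives exactly $\deg P\le l+\frac{d-3}{2}$ and the parity of $l+\frac{d+1}{2}$. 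This inner contribution is precisely why the stated bound is two degrees weaker than a verbatim copy of Lemma \ref{lmimaTpoly} would suggest; the paper's ``mutatis mutandis'' conceals this point, but the degree bound in the statement encodes it.
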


\begin{proof}
The $L^2$ proof adapts \emph{mutatis mutandis}, working on $q(s)=\partial_s \Td_{l}w$ which is in $L^{2}_{rad,l}$.
\end{proof}

\begin{cor}
$\Nd_{1,\ell}^0$ is finite dimensional, of dimension at most $\left\lfloor \frac{l}{2}+\frac{d+1}{4} \right\rfloor$.
\end{cor}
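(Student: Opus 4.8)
The plan is to repeat \emph{verbatim} the dimension-counting argument used for the $L^2$ statement in Step~2 of the proof of \eqref{decomp_ker_L2}, now fed with the $\dot H^1$ version of the polynomial bound. I would start from Lemma~\ref{lmimaTpolyH1}: for every $w\in\Nd_{1,\ell}^1$ the function $\partial_s\Td_l w$ is of the form $\m 1_{|s|\le 1}P$, where $P$ is a polynomial of degree at most $l+\frac{d-3}{2}$ whose parity is prescribed (that of $l+\frac{d+1}{2}$). Consequently the image $\partial_s\Td_l(\Nd_{1,\ell}^1)$ is contained in the finite-dimensional space of such functions, and it only remains to count the admissible monomials.

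For that count, recall that the polynomials of degree at most $m$ with a fixed parity form a space of dimension at most $\lfloor m/2\rfloor+1$. Since $d$ is odd, $m=l+\frac{d-3}{2}$ is a non-negative integer, and $\lfloor m/2\rfloor+1=\lfloor\frac{l}{2}+\frac{d+1}{4}\rfloor$, which is exactly the asserted bound. This is where the extra derivative in $\partial_s\Td_l=c_0(-1)^{\frac{d-1}{2}}\partial_s^{\frac{d+1}{2}}\q R_l$, compared with $\Td_l$ in the $L^2$ case, shifts the degree and parity by one and thereby upgrades $\frac{d-1}{4}$ into $\frac{d+1}{4}$.

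Finally I would transfer this bound back through the isometry. By \eqref{dTl_iso}, $\partial_s\Td_l$ is isometric from $\dot H^1_{rad,l}$ onto its range, hence injective, so $\Nd_{1,\ell}^1$ is isomorphic to its image and has the same dimension, which is at most $\lfloor\frac{l}{2}+\frac{d+1}{4}\rfloor$. I do not expect any genuine obstacle here: all the analytic content sits in Lemma~\ref{lmimaTpolyH1} (already reduced to the $L^2$ case \emph{mutatis mutandis}), and what is left is the elementary combinatorics of parity-constrained polynomials combined with the isometry property; the only point deserving a moment's care is confirming that $m=l+\frac{d-3}{2}$ is indeed an integer (using that $d$ is odd) so that the floor identity $\lfloor m/2\rfloor+1=\lfloor\frac{l}{2}+\frac{d+1}{4}\rfloor$ is exact.
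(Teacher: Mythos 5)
Your proof is correct and is essentially the paper's own argument: the paper likewise deduces the corollary from Lemma \ref{lmimaTpolyH1} (the image $\partial_s\Td_l(\Nd_{1,\ell}^1)$ lies in the space of parity-constrained polynomials of degree at most $l+\frac{d-3}{2}$ cut off to $[-1,1]$) combined with the injectivity of $\partial_s\Td_l$ coming from the isometry \eqref{dTl_iso}. Your explicit check of the floor identity $\lfloor m/2\rfloor+1=\left\lfloor \frac{l}{2}+\frac{d+1}{4}\right\rfloor$ for $m=l+\frac{d-3}{2}$ (an integer since $d$ is odd) is a detail the paper leaves implicit, and it is accurate.
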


\begin{proof}
As in the $L^2$, it is consequence of $\partial \Td_l$ being an isometry and $\partial \Td_l(\q N_{1,\ell}^1)$ being finite dimensional, in view of the previous lemma.
\end{proof}

\bigskip

\emph{Step 3: $\Nd_{1,\ell}^0$ is spanned by functions of the type $\ln(|r|)^p r^\alpha$, $\alpha \in \m C$, $p \in \m N$.}

\begin{lem}
For any $\lambda>1$, define the dilation/restriction operator $\tilde S_{\lambda}$ acting on functions $w \in \dot H^1_{rad,l}$ by  
\begin{align}  
\label{Alam1}
(\tilde S_{\lambda} := (\q P_l (r \mapsto w(\lambda r)).
\end{align} 
Then for any $\ell \in \m M$, $\tilde S_\lambda$ maps $\Nd_{1,\ell}^1$ into itself.
\end{lem}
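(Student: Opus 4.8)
The plan is to adapt, essentially verbatim, the $L^2$ argument given above for $S_\lambda$, replacing $\Td$ by $\partial_s\Td$ and the bare restriction by the harmonic extension $\q P_l$. The only genuinely new inputs will be the locality of $\partial_s\Td$ and its homogeneity, both already established in the preceding results. Fix $w\in\Nd_{1,\ell}^1$ and set
\[ v := (\tilde S_\lambda w)\otimes Y_\ell = \q P\big((w\otimes Y_\ell)\circ M_\lambda\big)\in \dot H^1_\ell. \]
By the very definition of $\q P_l$, the function $\tilde S_\lambda w$ equals $c\,r^l$ on $[0,1)$ (with $c=w(\lambda)$), so the first requirement for membership in $\Nd_{1,\ell}^1$ — that $v$ lie in the range of $\q P$, i.e.\ be harmonic in $B(0,1)$ — holds by construction. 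It therefore only remains to check that $\partial_s\Td v=0$ on $\{|s|>1\}$.

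The key observation is that $v$ and $(w\otimes Y_\ell)\circ M_\lambda$ coincide on $\{|x|>1\}$ in the distributional sense, since they differ only through the harmonic replacement inside the unit ball. Both functions lie in $\dot H^1(\R^d)$, hence in the domain on which Proposition \ref{propdsTH1} establishes $\partial_s\Td$ together with its locality property, so I would first invoke that locality to get
\[ (\partial_s\Td v)\big|_{\{|s|>1\}} = \big(\partial_s\Td((w\otimes Y_\ell)\circ M_\lambda)\big)\big|_{\{|s|>1\}}. \]
Next I would apply the homogeneity identity of Lemma \ref{lmRahomogen}, valid for $w\otimes Y_\ell\in\dot H^1$,
\[ \partial_s\Td\big((w\otimes Y_\ell)\circ M_\lambda\big) = \lambda^{1-\frac{d-1}{2}}\,\big(\partial_s\Td(w\otimes Y_\ell)\big)\circ M_\lambda. \]
Restricting to $\{|s|>1\}$ and recalling $M_\lambda(s,\omega)=(\lambda s,\omega)$: for such $s$ one has $|\lambda s|>\lambda>1$, while $w\in\Nd_{1,\ell}^1$ gives $\partial_s\Td(w\otimes Y_\ell)=(\partial_s\Td_l w)\otimes Y_\ell=0$ on $\{|s|\ge 1\}$. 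Hence the right-hand side vanishes on $\{|s|>1\}$, and combining the two displays yields $\partial_s\Td v=0$ there, that is $\tilde S_\lambda w\in\Nd_{1,\ell}^1$.

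All the steps are routine once the structure is in place; the only point requiring care — the analogue of the delicate part of the $L^2$ proof — is ensuring that the locality statement of Proposition \ref{propdsTH1} genuinely applies, which is why it is essential that $v$ and $(w\otimes Y_\ell)\circ M_\lambda$ agree \emph{exactly} (not merely approximately) outside the unit ball, and that both lie in $\bigcup_{\rho}H^\rho(\R^d)+\dot H^1(\R^d)$. No estimate beyond Lemma \ref{lmRahomogen} and Proposition \ref{propdsTH1} is needed, so I do not expect any serious obstacle; the harmonic-extension bookkeeping inside $B(0,1)$ is harmless precisely because $\partial_s\Td$ only sees the data on $\{|x|>1\}$.
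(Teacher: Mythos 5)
Your proof is correct and is essentially the paper's argument: both rest on exactly the same two ingredients, the locality of $\partial_s\Td$ from Proposition \ref{propdsTH1} and the scaling identity of Lemma \ref{lmRahomogen} (which you, like the paper, apply to a $\dot H^1$ function). The only difference is organizational: the paper writes $(\tilde S_\lambda w)\otimes Y_\ell = (\q P^\lambda(w\otimes Y_\ell))\circ M_\lambda$, introducing an auxiliary harmonic extension $\q P^\lambda$ on $B(0,\lambda)$, and applies homogeneity before locality, whereas you invoke locality first to replace $(\tilde S_\lambda w)\otimes Y_\ell$ by the plain dilation $(w\otimes Y_\ell)\circ M_\lambda$ on $\{|s|>1\}$, which lets you dispense with $\q P^\lambda$ altogether.
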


\begin{proof}
The proof follows the path of its $L^2$ counterpart with a little variation due to the harmonic extension. We also need the operator $\q P^\lambda$, which is defined as $\q P$ on $\dot H^1(\m R^d)$, but performs the harmonic extension on $B(0,\lambda)$ instead ($\q P^\lambda$ is a rescale of $\q P$, not to be confused with $\q P_l$, which is a quotient map of $\q P$).

Let $w \in\Nd_{1,\ell}^1$, and consider $v =  (\tilde S_\lambda w) \otimes Y_\ell$, so that $\partial_s \Td(v) = (\partial_s \Td_l(\tilde S_\lambda w) ) \otimes Y_\ell$.

The key point is that
\[ v = (\q P^\lambda (w \otimes Y_\ell)) \circ M_\lambda. \]
Now, using Lemma \ref{lmRahomogen}, we infer
\[ \partial_s \Td (v)=\lambda^{-\frac{d+1}{2}}\left(\partial_s \Td( \q P^\lambda (w \otimes Y_\ell) )\right) \circ M_{\lambda}. \]
Let $\varphi\in \q D(\R\times \m S^{d-1})$ so that $\varphi$ is supported in $\{|s|> 1\}$. We get
\begin{multline*} 
 \left< \partial_s \Td (v),\varphi \right>_{\mathcal{S}'(\R\times \m S^{d-1}),\mathcal{S}(\R\times \m S^{d-1})} \\
 = \lambda^{-\frac{d+1}{2}}\left< \Td( \q P^\lambda (w \otimes Y_\ell) ),\varphi \circ M_{1/\lambda}\right>_{\mathcal{S}'(\R\times \m S^{d-1}),\mathcal{S}(\R\times \m S^{d-1})}.
  \end{multline*} 
The assumption on $\Supp(\varphi)$ implies that $\varphi\circ M_{1/\lambda}$ is supported in $\{|s|> \lambda\}$. Also, $\q P^\lambda w =w$ on $\{|x| \ge \lambda$ so that, applying Proposition \ref{propdsTH1}, we get that 
\[ (\partial_s \Td (\q P^\lambda (w \otimes Y_\ell))_{\left|\{|s|> \lambda\}\times \m S^{d-1}\right.}=(\partial_s \Td (w\otimes Y_\ell))_{\left|\{|s|> \lambda\}\times \m S^{d-1}\right.}=0, \]
 since $w \in \Nd_{1,\ell}^0$ and $\lambda>1$. So, we have proved that for any test function $\varphi$ supported in $\{|s|> 1\}$
 \[ \left< \partial_s \Td (v),\varphi \right>_{\mathcal{S}'(\R\times \m S^{d-1}),\mathcal{S}(\R\times \m S^{d-1})}=0, \]
in other words, $\partial_s \Td (v)=0$ on $\{|s|> 1\}$:  hence $\m 1_{|s| \le 1} \partial_s \Td_l(\tilde S_\lambda w) =0$ and $\tilde S_{\lambda} w \in \Nd_{1,\ell}^1$. 
\end{proof}

Now observe that $\tilde S$ has the same action as $S$ for $|r| \ge 1$: for $w \in \dot H^1_{rad,l}$, and $r \ge 1$, $( \tilde S_\lambda w)(r) =w(\lambda r) = (S_\lambda w)(r)$, so that we can still use Lemma \ref{lmfunctionel} as is. We conclude that $\q N_{1,\ell}^1$ admits a base made of functions $w \in \dot H^1_{rad,l}$ such that for some $p \in \m N$ and $\alpha \in \m C$ and $r \ge 0$,
\begin{align} \label{def:w_ja_1}
w(r) = \begin{cases}  \ln(r)^p r^\alpha \text{ if } r \ge 1, \\
r^l \text{ if } 0 \le r < 1.
\end{cases}
\end{align}

Denote again $\tilde {\q B}$ the set of couples $(p,\alpha) \in \m N \times \m C$ which appear in this base: due to Lemma \ref{lmfunctionel}, $\tilde{\q B}$ is a finite union of $\{ (0,\alpha_i),(1,\alpha_i), \dots, (p_i,\alpha_i) \}$. 

The other stability result, suitably modified, also holds in the $\dot H^1$ context. The only subtlety is the definition of extension operator. We already defined the harmonic extension $\q P$, but it can easily be extended in the following way: if $f \in \q C(\{ |x| \ge 1 \})$ is continuous up to the boundary, one can consider the harmonic extension $g$ of $f|_\m S^{d-1}$ in $B(0,1)$:
\[ \Delta g =0 \text{ on } B(0,1), \quad g|_{\m S^{d-1}} = f |_{\m S^{d-1}}. \]
Then we still denote 
\[ \q Pf: x \mapsto \begin{cases}
g(x) & \text{ if } |x| \le 1, \\
f(x) & \text{ if } |x| \ge 1.
\end{cases} \]
If $w \in \q C([1,+\infty))$ then this is simply
\[ \q P(w \otimes Y_\ell) = \tilde w \otimes Y_\ell, \quad \text{where} \quad \tilde w(r) = \begin{cases}
(\sgn(r))^{\frac{d+1}{2}} |r|^l & \text{ if } |x| \le 1, \\
w(r) & \text{ if } |r| \ge 1. 
\end{cases}. \]
Observe that if $w \in H^1_{sym}(\{ |r| \ge 1 \})$ in the sense that:
\[ \| w \|_{\dot H^1_{rad,l}(\{ |r| \ge 1 \})}^2 :=  \| \partial_r w \|_{L^2(\{ |r| \ge 1 \}, |r|^{d-1} dr)}^2 + l(l+d-2) \left\| \frac{w}{r} \right\|_{L^2(\{ |r| \ge 1 \}, |r|^{d-1} dr)}^2 < +\infty, \]
then $\tilde w \otimes Y_\ell = \q P (w \otimes Y_\ell) \in \dot H^1_{\ell}$, and the map $w \mapsto \tilde w$ is a continuous linear map.

\begin{lem} \label{N1_stab_Delta}
Consider the operator $\tilde \Delta: f \mapsto \q P \left( (\Delta f)|_{\{ |x| \ge 1\}} \right)$. 
Then $\tilde \Delta$ (is well defined and) maps $\q N_{1}^1 \cap \dot H^1_{\ell}$ to itself: for all $f = w \otimes Y_\ell \in\q N_{1}^1 \cap \dot H^1_{\ell}$, we also have $\tilde \Delta f \in \q N_{1}^1 \cap \dot H^1_{\ell}$.
\end{lem}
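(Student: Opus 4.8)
The plan is to mirror the proof of Lemma~\ref{N_stab_Delta}, replacing the sharp cut-off $\m 1_{|x|>1}$ by the harmonic extension $\q P$ and using the $\partial_s\Td$-versions of the commutation \eqref{actionDeltaRadonDhs} and of the locality property in Proposition~\ref{propdsTH1}. Write $f = w\otimes Y_\ell$ with $w$ as in \eqref{def:w_ja_1}. Since $\tilde\Delta f = \q P\big((\Delta f)|_{\{|x|\ge 1\}}\big)$ is by construction harmonic on $B(0,1)$, it automatically lies in $\Im(\q P)$; hence, once we know $\tilde\Delta f\in\dot H^1_\ell$, the membership $\tilde\Delta f\in\q N_1^1\cap\dot H^1_\ell$ reduces to the single assertion
\[ \partial_s\Td(\tilde\Delta f) = 0 \quad \text{on } \{ |s| > 1 \}. \]

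First I would check that $\tilde\Delta f$ is a well-defined element of $\dot H^1_\ell$. On $\{|x|>1\}$ the function $f$ is smooth, and formula \eqref{formuLaplpowerlog} shows that $(\Delta f)|_{\{|x|>1\}}$ is a finite combination of terms $\ln(|x|)^j|x|^{\alpha-2}Y_\ell(x/|x|)$ with $0\le j\le p$. As $\alpha<1-d/2$ (the $\dot H^1$ threshold), each such term decays like $|x|^{\alpha-2}$, which lies in $L^2_\ell$; applying $\q P$ (which, by the explicit description of its action on $\dot H^1_\ell$, only modifies the profile on $B(0,1)$ by the harmonic profile $r^l$) produces a function in $\dot H^1_\ell$. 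This is the analogue of the first observation in the proof of Lemma~\ref{N_stab_Delta}, and is a direct computation.

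For the vanishing assertion I would proceed as follows. Let $F_\infty := \m 1_{|x|>1}(\Delta f)$, which by the decay just discussed lies in $L^2_\ell\subset\bigcup_\rho H^\rho(\m R^d)$. On the open set $\{|x|>1\}$ the distributions $\tilde\Delta f$, $F_\infty$ and the global distribution $\Delta f$ all coincide with the classical Laplacian of $f$ (they differ only inside $B(0,1)$ and on the sphere $\{|x|=1\}$, which is disjoint from $\{|x|>1\}$). Since $\Delta f = F_\infty + (\text{a single-layer distribution supported on } \{|x|=1\})$ belongs to $\bigcup_\rho H^\rho$, the locality property of $\partial_s\Td$ from Proposition~\ref{propdsTH1} applies and gives $\partial_s\Td(\tilde\Delta f) = \partial_s\Td(\Delta f)$ on $\{|s|>1\}$. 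Finally, the commutation \eqref{actionDeltaRadonDhs}, together with the defining property $\partial_s\Td f = 0$ on $\{|s|>1\}$ of $f\in\q N_1^1\cap\dot H^1_\ell$, yields $\partial_s\Td(\Delta f) = \partial_s^2(\partial_s\Td f) = 0$ on $\{|s|>1\}$. Combining the two identities closes the argument.

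The main obstacle is functional-analytic: the commutation \eqref{actionDeltaRadonDhs} is stated for $u\in H^\rho$, whereas $f\in\dot H^1$ need not lie in any $H^\rho$, so one cannot apply it to $f$ verbatim (indeed $\Td f$ itself is only accessible through $\partial_s\Td f$). The point to make rigorous is that $\Delta f$ nonetheless lands in $\bigcup_\rho H^\rho$ — it is an $L^2$ function plus a single layer on $\{|x|=1\}$ — so that $\Td(\Delta f)$, $\partial_s\Td(\Delta f)$ and the identity $\partial_s\Td(\Delta f)=\partial_s^2(\partial_s\Td f)$ all make sense on $\{|s|>1\}$ once interpreted via the duality definitions of Proposition~\ref{propdsTH1}; this is exactly where the decay hypothesis $\alpha<1-d/2$, placing the truncation $F_\infty$ in $L^2$, is used. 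Verifying that every object stays in the domain $\bigcup_\rho H^\rho+\dot H^1$ on which locality and commutation are available is the only genuinely delicate part; the rest is the $L^2$ argument of Lemma~\ref{N_stab_Delta} transported \emph{mutatis mutandis}.
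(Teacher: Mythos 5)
Your proof follows essentially the same route as the paper's: reduce to the explicit profiles \eqref{def:w_ja_1}, check that $\tilde\Delta f\in\dot H^1_\ell$ by direct computation from \eqref{formuLaplpowerlog}, and obtain the vanishing of $\partial_s\Td(\tilde\Delta f)$ on $\{|s|>1\}$ by combining the commutation \eqref{actionDeltaRadonDhs} with the locality statement of Proposition \ref{propdsTH1}. If anything, your treatment is slightly more careful than the paper's own proof (which invokes \eqref{actionDeltaRadonDhs} for $f\in\dot H^1$ without comment, although that identity is stated only for $u\in H^\rho$): your observation that $\Delta f\in L^2+\q E'\subset\bigcup_\rho H^\rho$, so that $\Td(\Delta f)$ is defined by duality and the mixed identity $\partial_s\Td(\Delta f)=\partial_s^2(\partial_s\Td f)$ can then be verified against the $\dot H^1$ definition \eqref{formdualH1}, supplies exactly the justification that step needs.
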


\begin{proof}
If suffices to prove it for $f = w \otimes Y_\ell$, where $w$ is of the form \eqref{def:w_ja_1}. A direct computation (like \eqref{formuLaplpowerlog}) yields that $(\Delta f)|_{|\{ |x| \ge 1 \}} = v \otimes Y_\ell$ where $v \in \q C(\{ |r| \ge 1 \})$, and recalling the form of the Laplacian in spherical coordinates, $v \in \dot H^1_{rad,l}(\{ |r| \ge 1 \})$. Now, from \eqref{actionDeltaRadonDhs}, we also have that
\[ \Td \Delta(w \otimes Y_\ell) = \partial_s^2 \Td(w \otimes Y_\ell), \]
so that $\partial_s \Td \Delta(w \otimes Y_\ell)  =0$ on $\{ |s| >1 \}$. By Proposition \ref{propdsTH1}, $\partial_s \Td (\q P (v \otimes Y_\ell))$ and $\partial_s \Td \Delta(w \otimes Y_\ell)$ coincide on $\{ |s| >1 \}$, and are both $0$ there. Hence $\tilde \Delta f = \q P (v \otimes Y_\ell) \in \q N_1^0$.
\end{proof}

\bigskip

\emph{Step 5: $\q N_1^1 \cap L^2_\ell$ is spanned by the $f_k$.}

\begin{lem} \label{lem:B_1}
Let $(p,\alpha) \in \tilde{\q B}$. Then $p=0$, $\alpha < -d/2+1$ and $\alpha = -l - d + 2(k+1)$ for some $k \in \m N$.
\end{lem}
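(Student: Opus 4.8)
The plan is to follow the proof of Lemma \ref{lem:B_0} essentially line by line, making only three substitutions: the $L^2$ integrability threshold $\alpha<-d/2$ is replaced by the $\dot H^1$ one, the stability operator $\check\Delta$ is replaced by $\tilde\Delta$ of Lemma \ref{N1_stab_Delta}, and the finite-dimensionality of $\q N_{1,\ell}^1$ (established earlier in this proof) plays the role previously played by that of $\q N_{1,\ell}^0$. Throughout, $w$ denotes the function of the form \eqref{def:w_ja_1} attached to the couple $(p,\alpha)\in\tilde{\q B}$, and $f=w\otimes Y_\ell\in\q N_1^1\cap\dot H^1_\ell$. All the homogeneity bookkeeping takes place on $\{|x|>1\}$, where $\tilde\Delta$ agrees with the genuine Laplacian $\Delta$; this is what lets us recycle the computations of the $L^2$ case verbatim.

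First I would establish $\alpha<-d/2+1$. Since $w\in\dot H^1_{rad,l}$, the norm $\|w\|_{\dot H^1_{rad,l}}$ is finite, and near $r=+\infty$ one has $|\partial_r w(r)|^2 r^{d-1}\sim r^{2\alpha+d-3}\ln^{2p}r$ and $|w(r)/r|^2 r^{d-1}\sim r^{2\alpha+d-3}\ln^{2p}r$; integrability at infinity forces $2\alpha+d-3<-1$, that is $\alpha<-d/2+1$, the logarithmic factor being harmless at this borderline. Next, for the identification $\alpha=-l-d+2(k+1)$, I would apply $\tilde\Delta$ repeatedly to $f$, which stays in the finite-dimensional space $\q N_1^1\cap\dot H^1_\ell$ by Lemma \ref{N1_stab_Delta}. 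On $\{|x|>1\}$, formula \eqref{formuLaplpowerlog} gives $\tilde\Delta^k f = C_k\ln^p(|x|)|x|^{\alpha-2k}Y_\ell+(\text{lower-order log powers})$, with $C_0=1$ and
\[ C_{k+1}=\big((\alpha-2k)(\alpha-2k+d-2)-l(l+d-2)\big)C_k. \]
Since the functions $\ln^j(|x|)|x|^{\alpha-2k}$ have pairwise distinct homogeneities, the $\tilde\Delta^k f$ would be linearly independent if all $C_k\ne0$, contradicting finite-dimensionality; hence some factor vanishes, so $(\alpha-2k)(\alpha-2k+d-2)=l(l+d-2)$, whose roots in $\alpha-2k$ are $l$ and $-l-d+2$. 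The root $l$ is excluded because $\alpha-2k\le\alpha<-d/2+1\le l$ for $d\ge3$, whence $\alpha-2k=-l-d+2$, i.e.\ $\alpha=-l-d+2(k+1)$.

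Finally, the most delicate bookkeeping is ruling out $p\ge1$, which I would carry out exactly as in the $L^2$ case. Assuming $p\ge1$, the structure of $\tilde{\q B}$ (a finite union of blocks $\{(0,\alpha_i),\dots,(p_i,\alpha_i)\}$) gives $(1,\alpha)\in\tilde{\q B}$, and I take $\alpha$ minimal with this property. Applying \eqref{formuLaplpowerlog} with $p=1$ shows that $\tilde\Delta(\log(|x|)|x|^\alpha Y_\ell)$ produces a nonzero $\log(|x|)|x|^{\alpha-2}Y_\ell$ term unless the coefficient $\alpha(\alpha+d-2)-l(l+d-2)$ vanishes; the former would give $(1,\alpha-2)\in\tilde{\q B}$, contradicting minimality, so $\alpha=-l-d+2$. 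At this value the surviving coefficient $2\alpha+d-2=-2l-d+2<0$ is nonzero, so $(0,\alpha-2)=(0,-l-d)\in\tilde{\q B}$; iterating $\tilde\Delta$ with $p=0$, whose coefficients $(\alpha-2k)(\alpha-2k+d-2)-l(l+d-2)$ remain strictly positive along the sequence $\alpha-2k=-l-d-2j$, would yield $(0,-l-d-2j)\in\tilde{\q B}$ for all $j$, contradicting the finiteness of $\tilde{\q B}$. Hence $p=0$. The one genuine point of vigilance is that $\tilde\Delta$ alters $f$ inside $B(0,1)$ through the harmonic extension $\q P$, so I must argue solely from the values on $\{|x|>1\}$ (which determine membership in $\tilde{\q B}$) and invoke Lemma \ref{N1_stab_Delta} to keep every iterate inside $\q N_1^1\cap\dot H^1_\ell$.
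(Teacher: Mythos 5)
Your proof is correct and follows exactly the route the paper takes: the paper's own proof of Lemma \ref{lem:B_1} consists of noting that the $\dot H^1_{rad,l}$ condition gives $\alpha<-d/2+1$ and that, since $\tilde\Delta$ acts as $\Delta$ on $\{|x|\ge 1\}$ (just as $\check\Delta$ does), the argument of Lemma \ref{lem:B_0} applies word for word. You have simply written out in full the three steps (integrability at infinity, iteration of $\tilde\Delta$ with the recursion for $C_k$ and finite-dimensionality, and the minimality argument excluding $p\ge 1$) that the paper invokes by reference, including the correct point of vigilance about the harmonic extension inside $B(0,1)$.
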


\begin{proof}
If $w \in \q N_{1,\ell}^0=$ is such that
\[ 
\forall r \ge 1, \quad w(r) = \ln(r)^p r^\alpha,
\] 
the condition that $w \in \dot H^1_{rad,l}$ writes $\alpha < -d/2+1$. 

Then as $\tilde \Delta$ acts as $\Delta$ on $|r| \ge 1$ (like $\check \Delta$ does), the proof of Lemma \ref{lem:B_0} works word for word.
\end{proof}

Let $\tilde N_\ell \in \m N$ be the maximum of the $k$ such that $(0,-l-d+2k) \in \q B$. Then by applying repetitively $\tilde \Delta$ (and using \eqref{formuLaplpowerlog}) to $w \otimes Y_\ell$ where $w \in \dot H^1_{rad,l}$ and $w(r) = r^{N_\ell}$ for $r \ge 1$, we get that for all $k \in \llbracket 1, \tilde N_\ell \rrbracket$, $(0,-l-d+2k) \in \tilde{\q B}$.
Recalling the definition of the $f_k$ \eqref{def:fk}, we can reformulate this by saying that
\begin{align} \label{N11_span_1}
\q N_{1}^1 \cap \dot H^1_\ell = \Span (f_k; k \in \llbracket 1, \tilde N_\ell \rrbracket).
\end{align}

\bigskip

\emph{Step 6: Conclusion.}

\begin{lem}
Let $k \in \m N$ such that $\alpha_k < -d/2+1$. Then $f_k \in \q N_{1}^1 \cap \dot H^1_\ell$.
\end{lem}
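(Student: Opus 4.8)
The plan is to mirror the concluding lemma of the $L^2$ case, replacing $\Td$, $\check\Delta$ and the $L^2$ finite-speed argument by their $\dot H^1$ analogues $\partial_s\Td$, $\tilde\Delta$ and the energy formula \eqref{equivfinallemmaH1_2}. First I would record the two easy facts: since $\alpha_k<-d/2+1$ we have $f_k\in\dot H^1_\ell$, and by \eqref{def:fk} the restriction of $f_k$ to $\{|x|<1\}$ is the harmonic polynomial $|x|^lY_\ell(x/|x|)$, so $f_k\in\Im(\q P)$. Hence $f_k\in\q N_1^1\cap\dot H^1_\ell$ is equivalent to $\partial_s\Td f_k=0$ on $\{|s|>1\}$. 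Writing $(\partial_s\Td f_k)|_{\{|s|>1\}}=h_k\otimes Y_\ell$ with $h_k\in L^2(\{|s|>1\})$ (legitimate because $\partial_s\Td$ preserves the $\ell$-sector and is $L^2$-valued on $\dot H^1$), the goal reduces to proving $h_k=0$.

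Next I would set up a second-order recursion in $k$. By \eqref{formuLaplpowerlog} with $p=0$, on $\{|x|>1\}$ one has $\Delta f_k=c_k f_{k-1}$, where $c_k=\alpha_k(\alpha_k+d-2)-l(l+d-2)$ and $\alpha_{k-1}=\alpha_k-2$; since the harmonic extension inside the ball of $f_{k-1}|_{\m S^{d-1}}$ is exactly the inner part of $f_{k-1}$, this upgrades to the identity $\tilde\Delta f_k=c_k f_{k-1}$ in $\dot H^1_\ell$. Applying $\partial_s\Td$ and restricting to $\{|s|>1\}$ gives $(\partial_s\Td\tilde\Delta f_k)|_{\{|s|>1\}}=c_k\,h_{k-1}\otimes Y_\ell$. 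On the other hand, exactly as in the proof of Lemma \ref{N1_stab_Delta}, combining part \eqref{itemactionDeltaRadonDhs} (the intertwining $\Td\Delta=\partial_s^2\Td$, hence $\partial_s\Td\Delta=\partial_s^2(\partial_s\Td)$) with the locality part \eqref{supporprop} of Proposition \ref{propdsTH1} (which lets me replace $\Delta f_k$ by $\tilde\Delta f_k$ on $\{|s|>1\}$) shows $(\partial_s\Td\tilde\Delta f_k)|_{\{|s|>1\}}=(\partial_s^2 h_k)\otimes Y_\ell$. Comparing the two expressions yields $\partial_s^2 h_k=c_k\,h_{k-1}$ on $\{|s|>1\}$.

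Finally I would run the induction. For the base case $k=0$ one has $\alpha_0=-l-d+2$, so $c_0=l(l+d-2)-l(l+d-2)=0$ and $f_0$ is harmonic on $\{|x|>1\}$; thus the static function $u(t,x)=f_0(x)$ solves $\Box u=0$ on the exterior cone $\{|x|>|t|+1\}$, and by finite speed of propagation the wave solution $w$ with data $(f_0,0)$ coincides with $f_0$ there. Since $\nabla f_0\in L^2$, we get $\|\nabla w(t)\|_{L^2(|x|\ge t+1)}=\|\nabla f_0\|_{L^2(|x|\ge t+1)}\to0$, so \eqref{equivfinallemmaH1_2} with $u_1=0$ forces $\partial_s\Td f_0=0$ on $\{s\ge1\}$, and the odd-dimensional symmetry obtained by differentiating \eqref{eq:sym_T} extends this to $\{|s|>1\}$, i.e.\ $h_0=0$. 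For the inductive step, $h_{k-1}=0$ together with the recursion gives $\partial_s^2 h_k=0$, so $h_k$ is affine on $\{s>1\}$; being $L^2$ it must vanish there, and symmetry again yields $h_k=0$ on $\{|s|>1\}$. This shows $f_k\in\q N_1^1\cap\dot H^1_\ell$ for every admissible $k$. I expect the only genuine obstacle to be the rigorous justification of the recursion for $\dot H^1$ data, where $\Td f_k$ itself is undefined and only $\partial_s\Td$ makes sense: the remedy is to work throughout with $\tilde\Delta$ (which stays in $\dot H^1_\ell$) and to invoke the locality property to transfer the intertwining relation onto $\{|s|>1\}$, precisely as is done in Lemma \ref{N1_stab_Delta}.
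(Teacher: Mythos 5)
Your proof is correct and follows essentially the same route as the paper's: the recursion $\partial_s^2 h_k = c_k h_{k-1}$ on $\{|s|>1\}$ obtained from \eqref{formuLaplpowerlog}, the intertwining \eqref{itemactionDeltaRadonDhs} and the locality property \eqref{supporprop} of Proposition \ref{propdsTH1}, then an induction whose base case uses the static solution $(f_0,0)$, finite speed of propagation and \eqref{equivfinallemmaH1}, and whose step kills the affine $L^2$ function $h_k$. Your explicit treatment of two minor points the paper leaves implicit (the identity $\tilde\Delta f_k = c_k f_{k-1}$ via the harmonic inner part, and the symmetry argument extending vanishing from $\{s\ge 1\}$ to $\{|s|>1\}$) is a welcome clarification, not a deviation.
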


\begin{proof}
Observe that, as $\alpha_k < -d/2+1$, $f_k \in \dot H^1_\ell$. Moreover, due to \eqref{formuLaplpowerlog} (with $p=0$), for $k \ge 1$,
\[ \m 1_{|x| >1} \Delta f_k = d_k \m 1_{|x|>1}f_{k-1} \quad \text{for some } d_k \in \m R, \]
and by definition of $\alpha_0$ and spherical harmonic,
\[ \m 1_{|x| >1} \Delta f_0 =0. \]

Therefore, due to Proposition \ref{propdsTH1}, there hold
\[ (\partial_s \Td (\m 1_{|x| >1} \Delta g_k))|_{\{ s > 1 \}} =  (\partial_s \Td ( \Delta g_k))|_{\{ s > 1 \}} = (\partial_s^2 (\partial_s \Td g_k))_{\{ s > 1 \}}. \]
As $g_k \in \dot H^1_\ell$, denote $h_k \in L^2(\{ |s| > 1 \})$ such that $(\partial_s \Td f_k)|_{\{|s| > 1\}} = h_k \otimes Y_\ell$: we obtained for $k \ge 1$ that
\begin{align} \label{diff:hk2}
\partial_s^2 h_k = d_k h_{k-1}.
\end{align}

Let us prove by induction on $k$ (such that $\alpha_k <-d/2+1$) that $h_k=0$.

For $k=0$, the function $u(t,x)= f_0(x)$ is solution of $\Box u=0$ on  $\{ |x| > |t|+1 \}$ (outside of the light cone). In particular, by finite speed of propagation, the solution $w$ to the wave equation with initial data $(f_0,0)$, satisfies $w(t,x)=f_{0}(x)$ on $\{ |x| > |t|+1 \}$. In particular, \eqref{equivfinallemmaH1} gives 
\begin{align*}
0=2 \lim_{t \to +\infty} \nor{\nabla w}{L^2(|x| \ge t+1)}^2  & =  \nor{\partial_s \Td f_0}{L^2(\{s \ge 1\}\times \m S^{d-1})}^2,
\end{align*}
This proves that $h_0=0$.

Let $k \ge 1$ and assume that $h_{k-1} =0$. In view of \eqref{diff:hk2}, we infer that there exists $\alpha, \beta \in \m R$ such that $h_k(s) = \alpha s+ \beta$ for $s >1$. As $h_k \in L^2(\{ |s| > 1 \},ds)$, $\alpha=\beta =0$ and $h_k =0$. This completes the induction.

Again, $h_k=0$ precisely means that $\m 1_{|s| \ge 1} \partial_s \Td f_k  = 0$, and thus $f_k \in \q N_1^1$.
\end{proof}

This concludes the proof of Theorem \ref{thmKer}.
\end{proof}

We can now complete the:

\begin{proof}[Proof of Theorem \ref{thmexteriorgene}]
The relations \eqref{proj:L2} and \eqref{proj:H1} write
\begin{align*}
\MoveEqLeft \| (u_0,u_1) \|_{\dot H^1 \times L^2}^2  = \| u_0 \|_{\dot H^1}^2 + \| u_1 \|_{L^2}^2 \\
& = \| \m 1_{|s| \ge R} \partial_s \Td u_0 \|_{L^2(\m R \times \m S^{d-1})}^2 + \| \pi_R^1 u_0 \|_{\dot H^1}^2 + \| \m 1_{|s| \ge R} \Td u_1 \|_{L^2(\m R \times \m S^{d-1})}^2 + \| \pi_R^0 u_1 \|_{L^2}^2.
\end{align*}
First, due to \eqref{eq:ortho_u0_u1} and symmetry, there hold
\begin{align*}
E_{ext,R}(u) & = \|  \partial_s \Td u_0 \|_{L^2([R,+\infty)\times \m S^{d-1})}^2 + \| \Td u_1 \|_{L^2([R,+\infty)\times \m S^{d-1})}^2 \\
& =\frac{1}{2} \left(\| \m 1_{|s| \ge R} \partial_s \Td u_0 \|_{L^2(\m R \times \m S^{d-1})}^2 + \| \m 1_{|s| \ge R} \Td u_1 \|_{L^2(\m R \times \m S^{d-1})}^2\right)
\end{align*}
Second, 
\[ \ker \pi_R = P(R) = K_R^1 \times K_R^0 = \ker \pi_R^1 \times \ker \pi_R^0, \]
so that
\[ \| \pi_R(u_0,u_1) \|_{\dot H^1 \times L^2}^2 = \| \pi_R^1 u_0 \|_{\dot H^1}^2 +  \| \pi_R^0 u_1 \|_{L^2}^2. \]
Therefore, we conclude that
\begin{align*}
\| (u_0,u_1) \|_{\dot H^1 \times L^2}^2  = 2E_{ext,R}(u) +  \| \pi_R(u_0,u_1) \|_{\dot H^1 \times L^2}^2.
\end{align*}
This is \eqref{channelprojectgene}. It remains to describe $u$ when $u_0 \in K_1^1$ and $u_1 \in K_1^0$, on the outer cone $\q C_1 := \{ |x| \ge t+1 \}$ for $t \ge 0$ (the case $t \le 0$ being treated with data $(u_0,-u_1)$ and by scaling, we get the description for any $R>0$). For this, it suffices to compute the solution $v_k$ to \eqref{eq:lw} with initial data $(f_k,0)$ on $\q C_1$ for any $k \in \m N$ (notice that for large $k$, these solutions to the wave equation do not belong to $\dot H^1 \times L^2$). Indeed, for $k \ge 0$, then $\partial_t v_{k+1}$ is the solution to \eqref{eq:lw} with initial data $(0, \Delta f_{k+1})$. As $(\Delta f_{k+1})|_{\{ |x| >1 \}} = c_k g_{k}|_{\{ |x| >1 \}}$ with $c_k \ne 0$ for $k \ge 1$, so that by finite speed of propagation $\frac{1}{c_{k}} \partial_t v_{k+1}$ coincide on on $\q C_1$ with the solution to \eqref{eq:lw} with initial data $(0,g_k)$.

We prove by induction on $k \in \m N$ that there exists $\alpha_{k,j} \in \m R$ for $j \in \llbracket 0,k \rrbracket$ such that
\begin{align} \label{ind:vk}
\forall (t,x) \in \q C_1, \quad v_k(t,x) = \sum_{j=0}^{k} \alpha_{k,j} t^{2(k-j)} f_j(x).
\end{align}
For $k=0$, simply recall that $(\Delta f_0)|_{|x|>1} =0$ so that for $(t,x) \in \q C_1$, $v_0(t,x) = f_0(x)$.

Assume that \eqref{ind:vk} holds for some $k \ge 0$, and let us prove it for $k+1$. Observe that $\Delta v_{k+1}$ is a solution to \eqref{eq:lw}, with initial data $(\Delta f_{k+1},0)$. As $(\Delta f_{k+1})|_{\{|x|>1\}} = c_k f_k|_{\{ |x| > 1 \}}$ for some $c_k \in \m R$, by uniqueness in the Cauchy problem for \eqref{eq:lw} and finite speed of propagation, we infer that
\[ \forall  (t,x) \in \q C_1, \quad \partial_{tt} v_{k+1}(t,x) = \Delta v_{k+1}(t,x) = c_k v_k(t,x). \]
By induction hypothesis, we infer that
\[ \forall  (t,x) \in \q C_1, \quad \partial_{tt} v_{k+1}(t,x) = \sum_{j=0}^{k} \alpha_{k,j} t^{2(k-j)} f_j(x). \]
Integrating in time twice for each fixed $x$, (with $\partial_t v_{k+1}(0,x) = 0$ and $v_{k+1}(0,x) = f_{k+1}(x)$, we get
\[ \forall  (t,x) \in \q C_1, \quad v_{k+1}(t,x) = c_k \sum_{j=0}^{k} \frac{\alpha_{k,j}}{(2(k-j)+1)(2(k-j)+2)} t^{2(k-j)+2} f_j(x) + f_{k+1}(x). \]
 This ends the induction step. Notice that $\alpha_{k,k}=1$ (by induction or by evaluation at $t=0$).

The proof of Theorem \ref{thmexteriorgene} is complete.
\end{proof}

\appendix

\section{Computations of the laplacian of some functions} \label{app:a}

 Using the Laplacian in polar coordinates
 \begin{align}  
 \label{polarLap}
 \Delta f=\frac{1}{r^{d-1}}\frac{\partial}{\partial r}\left(r^{d-1}\frac{\partial f}{\partial r}\right)+\frac{1}{r^{2}}\Delta_{\m S^{d-1}}f=\frac{\partial^{2} f}{\partial r^{2}}+\frac{ d-1}{r}\frac{\partial f}{\partial r}+\frac{1}{r^{2}}\Delta_{\m S^{d-1}}f
 \end{align} 
We first compute
\begin{align*}
\frac{1}{r^{d-1}}\frac{\partial}{\partial r}\left(r^{d-1}\frac{\partial }{\partial r}r^{\alpha}\right)=\alpha (\alpha+d-2)r^{\alpha-2}.
\end{align*}
In particular, since $|x|^{l}Y_{\ell}$ is a harmonic polynomial, we have
\[ 0=\Delta \left[r^{l}Y_{\ell}\right]=l (l+d-2)r^{l-2}Y_{\ell}+\frac{r^{l}}{r^{2}}\Delta_{\m S^{d-1}}Y_{\ell}, \]
 which gives $\Delta_{\m S^{d-1}}Y_{\ell}=-\lambda_{l}Y_{\ell}$, with $\lambda_{l}=l (l+d-2)$. It gives also 
  \begin{align}  
\label{formuLaplpower}
\Delta \left[r^{\alpha}Y_{\ell}\right]=\left[\alpha (\alpha+d-2)-l (l+d-2)\right]r^{\alpha-2}Y_{\ell}
 \end{align} 
 We also compute
 \begin{multline*}  
 \frac{1}{r^{d-1}}\frac{\partial}{\partial r}\left(r^{d-1}\frac{\partial }{\partial r}\log(r)^{p}r^{\alpha}\right)\\
 = r^{\alpha-2}\left[ \alpha (\alpha+d-2)\log(r)^{p} +p(2\alpha+d-2) \log(r)^{p-1}+p(p-1)\log(r)^{p-2} \right]
 \end{multline*} 
 which gives  \eqref{formuLaplpowerlog}, namely
 \begin{multline*}  
\Delta \left[\log(r)^{p}r^{\alpha}Y_{\ell}\right] = r^{\alpha-2}Y_{\ell}\Big[\left[\alpha (\alpha+d-2)-l (l+d-2)\right]\log(r)^{p} \\
\left. +p(2\alpha+d-2) \log(r)^{p-1}+p(p-1)\log(r)^{p-2} \right].
\end{multline*}

\bibliographystyle{alpha} 
\bibliography{biblio}

\end{document}